\pdfoutput=1
\documentclass[reqno, 12pt]{amsart}
\def\ssign{\textsection\nobreak\hspace{1pt plus 0.3pt}}
\makeatletter
\newif\if@sectsign \@sectsigntrue
\def\@seccntformat#1{\protect\textup{\protect\@secnumfont
		\ifnum\pdfstrcmp{#1}{section}=\z@
			\if@sectsign \protect\ssign\csname the#1\endcsname\protect\enspace
			\else \csname the#1\endcsname\protect\@secnumpunct\fi
		\else
			\csname the#1\endcsname\protect\@secnumpunct
		\fi
	}}
\g@addto@macro\appendix{\@sectsignfalse}\makeatother

\usepackage{amsmath,amssymb,amsthm}
\usepackage{mathrsfs}
\usepackage{mathabx}\changenotsign
\usepackage{dsfont}

\usepackage[T1]{fontenc}
\usepackage{lmodern}
\usepackage[british]{babel}
\usepackage[babel]{microtype}

\usepackage{geometry}
\usepackage{float}
\usepackage{tikz}
\usepackage{subcaption}
\usetikzlibrary{quotes}
\usetikzlibrary {graphs}
\usetikzlibrary{backgrounds}
\usetikzlibrary{fadings}
\pgfdeclarelayer{foreground}
\pgfsetlayers{background,main,foreground}
\tikzgraphsset{
	empty nodes,
nodes={circle, fill, draw, inner sep=0pt, minimum size=2.5pt},
}

\usepackage{xcolor}
\usepackage[backref]{hyperref}
\hypersetup{
	colorlinks,
	linkcolor={red!60!black},
	citecolor={green!60!black},
	urlcolor={blue!60!black}
}

\usepackage[open,openlevel=2,atend]{bookmark}

\usepackage[abbrev,msc-links,backrefs]{amsrefs}
\renewcommand{\MR}[1]{}

\usepackage{doi}

\renewcommand{\PrintDOI}[1]{\doi{#1}}

\numberwithin{equation}{section}
\numberwithin{figure}{section}

\usepackage{calc}\usepackage{enumitem}
\def\rmlabel{\upshape({\itshape \roman*\,})}

\def\alabel{\upshape(\makebox[\widthof{\itshape a}][c]{\itshape \alph*}\,)}

\def\nlabel{\upshape({\itshape \arabic*\,})}

\theoremstyle{plain}
\newtheorem{thm}{Theorem}[section]
\newtheorem{fact}[thm]{Fact}
\newtheorem{prop}[thm]{Proposition}
\newtheorem{cor}[thm]{Corollary}
\newtheorem{lemma}[thm]{Lemma}

\theoremstyle{definition}
\newtheorem{dfn}[thm]{Definition}

\newtheorem*{nquest}{Question}

\theoremstyle{remark}
\newtheorem{rem}[thm]{Remark}
\newtheorem{clm}[thm]{Claim}

\let\theta=\vartheta
\let\rho=\varrho
\let\phi=\varphi

\let\Ups=\varUpsilon

\def\NN{\mathds N}
\def\ZZ{\mathds Z}

\makeatletter
\def\@defcal#1{\expandafter\def\csname c#1\endcsname{{\mathcal{#1}}}}
\def\@defscr#1{\expandafter\def\csname cc#1\endcsname{{\mathscr{#1}}}}
\def\@deffrak#1{\expandafter\def\csname f#1\endcsname{{\mathfrak{#1}}}}
\@tfor\next:=ABCDEFGHIJKLMNOPQRSTUVWXYZ\do{\expandafter\@defcal\next}
\@tfor\next:=ABCDEFGHIJKLMNOPQRSTUVWXYZ\do{\expandafter\@defscr\next}
\@tfor\next:=ABCDEFGHIJKLMNOPQRSTUVWXYZ\do{\expandafter\@deffrak\next}
\@tfor\next:=abcdefghjklmnopqrstuvwxyz\do{\expandafter\@deffrak\next}\makeatother

\def\Fs{F_{\star}}

\let\polishlcross=\l
\DeclareRobustCommand{\l}{\ifmmode\ell\else\polishlcross\fi}
\ifdefined{\def\l{l}}\fi

\makeatletter
\def\moverlay{\mathpalette\mov@rlay}
\def\mov@rlay#1#2{\leavevmode\vtop{   \baselineskip\z@skip
		\lineskiplimit-\maxdimen
\ialign{\hfil$\m@th#1##$\hfil\cr#2\crcr}}}
\newcommand{\charfusion}[3][\mathord]{
	#1{\ifx#1\mathop\vphantom{#2}\fi
		\mathpalette\mov@rlay{#2\cr#3}
	}
\ifx#1\mathop\expandafter\displaylimits\fi}
\makeatother

\newcommand{\dcup}{\charfusion[\mathbin]{\cup}{\cdot}}
\newcommand{\bigdcup}{\charfusion[\mathop]{\bigcup}{\cdot}}

\def\tand{\ \text{and}\ }

\def\qqand{\qquad\text{and}\qquad}

\usepackage{scalerel}
\usepackage{graphicx}
\usepackage{trimclip}
\makeatletter
\newcommand{\hpsc}{0.6}    \newcommand{\hpgapn}{3}    \newcommand{\hpinL}{0.1}   \newcommand{\hpinR}{0.0}   \newcommand{\hpbot}{0.68}  \newcommand{\hpbarb}{0.55} \newcommand{\hpoh}{0.1}    \newcommand{\hpminf}{0.5}  \newcommand{\hpov}{0.04}   \newcommand{\hpdx}{0}      \newsavebox\hpA\newsavebox\hpG\newsavebox\hpB\newsavebox\hpF\newsavebox\hpL\newsavebox\hpSl
\newdimen\hp@li\newdimen\hp@L\newdimen\hp@gap\newdimen\hp@dx\newdimen\hp@bs\newdimen\hp@oh\newdimen\hp@rw\newdimen\hp@ov\newdimen\hp@ri
\def\hp@nil{}
\def\@hpfirst#1#2\hp@nil{\sbox\hpF{$\m@th\SavedStyle#1$}}
\def\@hplast#1{\ifx#1\hp@nil\else\def\@hplastv{#1}\expandafter\@hplast\fi}
\newcommand{\hp@common}[3]{\sbox\hpA{$\m@th\SavedStyle#3$}\@hpfirst#3\hp@nil \@hplast#3\hp@nil \sbox\hpL{$\m@th\SavedStyle\@hplastv$}\hp@li=#1\wd\hpF \hp@ri=#2\wd\hpL
  \hp@L=\wd\hpA \advance\hp@L-\hp@li \advance\hp@L-\hp@ri
  \@tempdimb=\hpminf\wd\hpG \ifdim\hp@L<\@tempdimb \hp@L=\@tempdimb \fi
  \hp@bs=\hpbarb\wd\hpG \hp@oh=\hpoh\ht\hpG
  \hp@rw=\hp@L \advance\hp@rw-\wd\hpG \advance\hp@rw\hp@bs
\dimen@=\fontdimen8\textfont3
  \if S\m@switch\dimen@=\fontdimen8\scriptfont3\fi \if s\m@switch\dimen@=\fontdimen8\scriptscriptfont3\fi
  \hp@gap=\hpgapn\dimen@ \@tempdima=\hpbot\ht\hpG \advance\hp@gap-\@tempdima
  \hp@dx=\hpdx\wd\hpA \hp@ov=\hpov\wd\hpG}
\newcommand{\hpdrawR}{\ifdim\hp@L<\wd\hpG
    \clipbox{\dimexpr\wd\hpG-\hp@L\relax{} 0pt 0pt \dimexpr-\hp@oh\relax{}}{\usebox\hpG}\else
    \hbox to\dimexpr\hp@rw+\hp@ov\relax{\cleaders\hbox{\usebox\hpSl}\hfil}\kern-\hp@ov\usebox\hpB \fi}
\newcommand{\hp@bR}[3]{\sbox\hpG{\scalebox{\hpsc}{$\m@th\SavedStyle\rightharpoonup$}}\hp@common{#1}{#2}{#3}\sbox\hpB{\clipbox{\dimexpr\hp@bs\relax{} 0pt 0pt \dimexpr-\hp@oh\relax{}}{\usebox\hpG}}\sbox\hpSl{\clipbox{\dimexpr\wd\hpG*1/5\relax{} 0pt \dimexpr\wd\hpG*39/50\relax{} 0pt}{\usebox\hpG}}\mathord{\vbox{\offinterlineskip\ialign{##\cr
    \hbox to\wd\hpA{\kern\dimexpr\wd\hpA-\hp@ri-\hp@L+\hp@dx\relax\hpdrawR\hfil}\cr
    \noalign{\kern\hp@gap}\hbox{\usebox\hpA}\cr}}}}
\NewDocumentCommand\rharp{O{\hpinL}O{\hpinR}m}{\ThisStyle{\hp@bR{#1}{#2}{#3}}}
\makeatother

\def\vF{\rharp[0.24][0.0]{F}}

\def\bl{\bigl(}
\def\br{\bigr)}

\let\emptyset=\varnothing
\let\vn=\varnothing

\let\lra=\longrightarrow
\let\to=\lra

\newlength\tarrowwd \newlength\tarrowmin \newlength\tarrowdp
\DeclareRobustCommand{\tarrow}[2][-1.6pt]{\settowidth\tarrowwd{$\scriptstyle#2$}\addtolength\tarrowwd{10pt}\settowidth\tarrowmin{$\longrightarrow$}\settodepth{\tarrowdp}{$\scriptstyle#2$}\mathrel{\mathop{\ifdim\tarrowwd>\tarrowmin \hbox to \tarrowwd{\rightarrowfill}\else \longrightarrow \fi}\limits^{\vphantom{f}\raisebox{\dimexpr#1+\tarrowdp\relax}[0pt][0pt]{$\scriptstyle#2$}}}}

\DeclareMathSymbol{\cstar}{\mathbin}{symbols}{"03}
\DeclareRobustCommand{\clra}{\tarrow[-2.1pt]{\cstar}}

\DeclareSymbolFont{stmry}{U}{stmry}{m}{n}
\DeclareMathSymbol\arrownot\mathrel{stmry}{"58}
\DeclareMathSymbol\Arrownot\mathrel{stmry}{"59}

\makeatletter
\newcommand{\pushright}[1]{\ifmeasuring@#1\else\omit\hfill$\displaystyle#1$\fi\ignorespaces}
\newcommand{\pushleft}[1]{\ifmeasuring@#1\else\omit$\displaystyle#1$\hfill\fi\ignorespaces}
\makeatother

\DeclareFontFamily{U}  {MnSymbolC}{}
\DeclareSymbolFont{MnSyC}         {U}  {MnSymbolC}{m}{n}
\DeclareFontShape{U}{MnSymbolC}{m}{n}{
	<-6>  MnSymbolC5
	<6-7>  MnSymbolC6
	<7-8>  MnSymbolC7
	<8-9>  MnSymbolC8
	<9-10> MnSymbolC9
	<10-12> MnSymbolC10
<12->   MnSymbolC12}{}
\DeclareMathSymbol{\powerset}{\mathord}{MnSyC}{180}

\DeclareMathOperator{\diam}{diam}
\DeclareMathOperator{\dist}{dist}

\newsavebox\separatedbox
\savebox\separatedbox{\tikz{
		\draw[black,fill=black] (180:2.25) circle (.35);
		\draw[black,fill=black] (180:0.75) circle (.35);
		\draw[black,fill=black] (0:0.75) circle (.35);
		\draw[black,fill=black] (0:2.25) circle (.35);
		\draw[black,line width=0.2cm] (180:2.25)--(180:0.75);
		\draw[black,line width=0.2cm] (0:2.25)--(0:0.75);
		\draw[opacity=0] (270:0.6) circle (0.1);
		\draw[opacity=0] (90:1.25) circle (0.1);
}}
\newcommand{\separated}{\protect\mathord{\scaleobj{1.2}{\scalerel*{\usebox{\separatedbox}}{x}}}}
\DeclareRobustCommand{\rseparated}{\separated}

\newsavebox\overtopbox
\savebox\overtopbox{\tikz{
		\coordinate (x) at (180:2.25);
		\coordinate (y) at (180:0.75);
		\coordinate (z) at (0:0.75);
		\coordinate (w) at (0:2.25);
		\draw[black,fill=black] (180:2.25) circle (.35);
		\draw[black,fill=black] (180:0.75) circle (.35);
		\draw[black,fill=black] (0:0.75) circle (.35);
		\draw[black,fill=black] (0:2.25) circle (.35);
		\draw[black,line width=0.2cm] (180:0.75)--(0:0.75);
		\draw[black,line width=0.2cm] (x) to[out=+45,in=+135] (w);
		\draw[opacity=0] (270:0.6) circle (0.1);
		\draw[opacity=0] (90:1.25) circle (0.1);
}}
\newcommand{\overtop}{\mathord{\scaleobj{1.2}{\scalerel*{\usebox{\overtopbox}}{x}}}}
\DeclareRobustCommand{\rovertop}{\overtop}

\newsavebox\crossingbox
\savebox\crossingbox{\tikz{
		\coordinate (x) at (180:2.25);
		\coordinate (y) at (180:0.75);
		\coordinate (z) at (0:0.75);
		\coordinate (w) at (0:2.25);
		\draw[black,fill=black] (180:2.25) circle (.35);
		\draw[black,fill=black] (180:0.75) circle (.35);
		\draw[black,fill=black] (0:0.75) circle (.35);
		\draw[black,fill=black] (0:2.25) circle (.35);
		\draw[black,line width=0.2cm] (x) to[out=+80,in=+100] (z);
		\draw[black,line width=0.2cm] (y) to[out=+80,in=+100] (w);
		\draw[opacity=0] (270:0.6) circle (0.1);
		\draw[opacity=0] (90:1.25) circle (0.1);
}}
\newcommand{\crossing}{\mathord{\scaleobj{1.2}{\scalerel*{\usebox{\crossingbox}}{x}}}}
\DeclareRobustCommand{\rcrossing}{\crossing}

\newsavebox\ptwobox
\savebox\ptwobox{\tikz{
		\coordinate (x) at (180:1.5);
		\coordinate (y) at (0,0);
		\coordinate (z) at (0:1.5);
		\draw[black,fill=black] (x) circle (.35);
		\draw[black,fill=black] (y) circle (.35);
		\draw[black,fill=black] (z) circle (.35);
		\draw[black,line width=0.2cm] (x) to (y);
		\draw[black,line width=0.2cm] (y) to (z);
		\draw[opacity=0] (270:0.6) circle (0.1);
		\draw[opacity=0] (90:1.25) circle (0.1);
}}
\newcommand{\ptwo}{\mathord{\scaleobj{1.2}{\scalerel*{\usebox{\ptwobox}}{x}}}}
\DeclareRobustCommand{\rptwo}{\ptwo}

\newsavebox\lcherrybox
\savebox\lcherrybox{\tikz{
		\coordinate (x) at (180:1.5);
		\coordinate (y) at (0,0);
		\coordinate (z) at (0:1.5);
		\draw[black,fill=black] (x) circle (.35);
		\draw[black,fill=black] (y) circle (.35);
		\draw[black,fill=black] (z) circle (.35);
		\draw[black,line width=0.2cm] (x) to (y);
		\draw[black,line width=0.2cm] (x) to[out=+80,in=+100] (z);
		\draw[opacity=0] (270:0.6) circle (0.1);
		\draw[opacity=0] (90:1.25) circle (0.1);
}}
\newcommand{\lcherry}{\mathord{\scaleobj{1.2}{\scalerel*{\usebox{\lcherrybox}}{x}}}}
\DeclareRobustCommand{\rlcherry}{\lcherry}

\newsavebox\rcherrybox
\savebox\rcherrybox{\tikz{
		\coordinate (x) at (180:1.5);
		\coordinate (y) at (0,0);
		\coordinate (z) at (0:1.5);
		\draw[black,fill=black] (x) circle (.35);
		\draw[black,fill=black] (y) circle (.35);
		\draw[black,fill=black] (z) circle (.35);
		\draw[black,line width=0.2cm] (x) to[out=+80,in=+100] (z);
		\draw[black,line width=0.2cm] (y) to (z);
		\draw[opacity=0] (270:0.6) circle (0.1);
		\draw[opacity=0] (90:1.25) circle (0.1);
}}
\newcommand{\rcherry}{\mathord{\scaleobj{1.2}{\scalerel*{\usebox{\rcherrybox}}{x}}}}
\DeclareRobustCommand{\rrcherry}{\rcherry}

\newsavebox\lcherryredbox
\savebox\lcherryredbox{\tikz{
		\coordinate (x) at (180:1.5);
		\coordinate (y) at (0,0);
		\coordinate (z) at (0:1.5);
		\draw[red,line width=0.2cm] (x) to (y);
		\draw[red,line width=0.2cm] (x) to[out=+80,in=+100] (z);
		\draw[black,fill=black] (x) circle (.35);
		\draw[black,fill=black] (y) circle (.35);
		\draw[black,fill=black] (z) circle (.35);
		\draw[opacity=0] (270:0.6) circle (0.1);
		\draw[opacity=0] (90:1.25) circle (0.1);
}}
\newcommand{\lcherryred}{\mathord{\scaleobj{1.2}{\scalerel*{\usebox{\lcherryredbox}}{x}}}}
\newsavebox\lcherryvioletbox
\savebox\lcherryvioletbox{\tikz{
		\coordinate (x) at (180:1.5);
		\coordinate (y) at (0,0);
		\coordinate (z) at (0:1.5);
		\draw[violet,line width=0.2cm] (x) to (y);
		\draw[violet,line width=0.2cm] (x) to[out=+80,in=+100] (z);
		\draw[black,fill=black] (x) circle (.35);
		\draw[black,fill=black] (y) circle (.35);
		\draw[black,fill=black] (z) circle (.35);
		\draw[opacity=0] (270:0.6) circle (0.1);
		\draw[opacity=0] (90:1.25) circle (0.1);
}}
\newcommand{\lcherryviolet}{\mathord{\scaleobj{1.2}{\scalerel*{\usebox{\lcherryvioletbox}}{x}}}}
\newsavebox\rcherrymixbox
\savebox\rcherrymixbox{\tikz{
		\coordinate (x) at (180:1.5);
		\coordinate (y) at (0,0);
		\coordinate (z) at (0:1.5);
		\draw[red,line width=0.2cm] (x) to[out=+80,in=+100] (z);
		\draw[green!45!black,line width=0.2cm] (y) to (z);
		\draw[black,fill=black] (x) circle (.35);
		\draw[black,fill=black] (y) circle (.35);
		\draw[black,fill=black] (z) circle (.35);
		\draw[opacity=0] (270:0.6) circle (0.1);
		\draw[opacity=0] (90:1.25) circle (0.1);
}}
\newcommand{\rcherrymix}{\mathord{\scaleobj{1.2}{\scalerel*{\usebox{\rcherrymixbox}}{x}}}}

\newsavebox\crossinggreenredbox
\savebox\crossinggreenredbox{\tikz{
		\coordinate (x) at (180:2.25);
		\coordinate (y) at (180:0.75);
		\coordinate (z) at (0:0.75);
		\coordinate (w) at (0:2.25);
		\draw[green!45!black,line width=0.2cm] (x) to[out=+80,in=+100] (z);
		\draw[red,line width=0.2cm] (y) to[out=+80,in=+100] (w);
		\draw[black,fill=black] (x) circle (.35);
		\draw[black,fill=black] (y) circle (.35);
		\draw[black,fill=black] (z) circle (.35);
		\draw[black,fill=black] (w) circle (.35);
		\draw[opacity=0] (270:0.6) circle (0.1);
		\draw[opacity=0] (90:1.25) circle (0.1);
}}
\newcommand{\crossinggreenred}{\mathord{\scaleobj{1.2}{\scalerel*{\usebox{\crossinggreenredbox}}{x}}}}
\newsavebox\crossingredbox
\savebox\crossingredbox{\tikz{
		\coordinate (x) at (180:2.25);
		\coordinate (y) at (180:0.75);
		\coordinate (z) at (0:0.75);
		\coordinate (w) at (0:2.25);
		\draw[red,line width=0.2cm] (x) to[out=+80,in=+100] (z);
		\draw[red,line width=0.2cm] (y) to[out=+80,in=+100] (w);
		\draw[black,fill=black] (x) circle (.35);
		\draw[black,fill=black] (y) circle (.35);
		\draw[black,fill=black] (z) circle (.35);
		\draw[black,fill=black] (w) circle (.35);
		\draw[opacity=0] (270:0.6) circle (0.1);
		\draw[opacity=0] (90:1.25) circle (0.1);
}}
\newcommand{\crossingred}{\mathord{\scaleobj{1.2}{\scalerel*{\usebox{\crossingredbox}}{x}}}}
\newsavebox\crossingvioletbox
\savebox\crossingvioletbox{\tikz{
		\coordinate (x) at (180:2.25);
		\coordinate (y) at (180:0.75);
		\coordinate (z) at (0:0.75);
		\coordinate (w) at (0:2.25);
		\draw[violet,line width=0.2cm] (x) to[out=+80,in=+100] (z);
		\draw[violet,line width=0.2cm] (y) to[out=+80,in=+100] (w);
		\draw[black,fill=black] (x) circle (.35);
		\draw[black,fill=black] (y) circle (.35);
		\draw[black,fill=black] (z) circle (.35);
		\draw[black,fill=black] (w) circle (.35);
		\draw[opacity=0] (270:0.6) circle (0.1);
		\draw[opacity=0] (90:1.25) circle (0.1);
}}
\newcommand{\crossingviolet}{\mathord{\scaleobj{1.2}{\scalerel*{\usebox{\crossingvioletbox}}{x}}}}

\newsavebox\separatedgreenredbox
\savebox\separatedgreenredbox{\tikz{
		\draw[green!45!black,line width=0.2cm] (180:2.25)--(180:0.75);
		\draw[red,line width=0.2cm] (0:2.25)--(0:0.75);
		\draw[black,fill=black] (180:2.25) circle (.35);
		\draw[black,fill=black] (180:0.75) circle (.35);
		\draw[black,fill=black] (0:0.75) circle (.35);
		\draw[black,fill=black] (0:2.25) circle (.35);
		\draw[opacity=0] (270:0.6) circle (0.1);
		\draw[opacity=0] (90:1.25) circle (0.1);
}}
\newcommand{\separatedgreenred}{\mathord{\scaleobj{1.2}{\scalerel*{\usebox{\separatedgreenredbox}}{x}}}}
\newsavebox\separatedredbox
\savebox\separatedredbox{\tikz{
		\draw[red,line width=0.2cm] (180:2.25)--(180:0.75);
		\draw[red,line width=0.2cm] (0:2.25)--(0:0.75);
		\draw[black,fill=black] (180:2.25) circle (.35);
		\draw[black,fill=black] (180:0.75) circle (.35);
		\draw[black,fill=black] (0:0.75) circle (.35);
		\draw[black,fill=black] (0:2.25) circle (.35);
		\draw[opacity=0] (270:0.6) circle (0.1);
		\draw[opacity=0] (90:1.25) circle (0.1);
}}
\newcommand{\separatedred}{\mathord{\scaleobj{1.2}{\scalerel*{\usebox{\separatedredbox}}{x}}}}
\newsavebox\separatedvioletredbox
\savebox\separatedvioletredbox{\tikz{
		\draw[violet,line width=0.2cm] (180:2.25)--(180:0.75);
		\draw[red,line width=0.2cm] (0:2.25)--(0:0.75);
		\draw[black,fill=black] (180:2.25) circle (.35);
		\draw[black,fill=black] (180:0.75) circle (.35);
		\draw[black,fill=black] (0:0.75) circle (.35);
		\draw[black,fill=black] (0:2.25) circle (.35);
		\draw[opacity=0] (270:0.6) circle (0.1);
		\draw[opacity=0] (90:1.25) circle (0.1);
}}
\newcommand{\separatedvioletred}{\mathord{\scaleobj{1.2}{\scalerel*{\usebox{\separatedvioletredbox}}{x}}}}

\newsavebox\ptwogreenredbox
\savebox\ptwogreenredbox{\tikz{
		\coordinate (x) at (180:1.5);
		\coordinate (y) at (0,0);
		\coordinate (z) at (0:1.5);
		\draw[green!45!black,line width=0.2cm] (x) to (y);
		\draw[red,line width=0.2cm] (y) to (z);
		\draw[black,fill=black] (x) circle (.35);
		\draw[black,fill=black] (y) circle (.35);
		\draw[black,fill=black] (z) circle (.35);
		\draw[opacity=0] (270:0.6) circle (0.1);
		\draw[opacity=0] (90:1.25) circle (0.1);
}}
\newcommand{\ptwogreenred}{\mathord{\scaleobj{1.2}{\scalerel*{\usebox{\ptwogreenredbox}}{x}}}}
\newsavebox\ptworedbox
\savebox\ptworedbox{\tikz{
		\coordinate (x) at (180:1.5);
		\coordinate (y) at (0,0);
		\coordinate (z) at (0:1.5);
		\draw[red,line width=0.2cm] (x) to (y);
		\draw[red,line width=0.2cm] (y) to (z);
		\draw[black,fill=black] (x) circle (.35);
		\draw[black,fill=black] (y) circle (.35);
		\draw[black,fill=black] (z) circle (.35);
		\draw[opacity=0] (270:0.6) circle (0.1);
		\draw[opacity=0] (90:1.25) circle (0.1);
}}
\newcommand{\ptwored}{\mathord{\scaleobj{1.2}{\scalerel*{\usebox{\ptworedbox}}{x}}}}
\newsavebox\ptwovioletbox
\savebox\ptwovioletbox{\tikz{
		\coordinate (x) at (180:1.5);
		\coordinate (y) at (0,0);
		\coordinate (z) at (0:1.5);
		\draw[violet,line width=0.2cm] (x) to (y);
		\draw[violet,line width=0.2cm] (y) to (z);
		\draw[black,fill=black] (x) circle (.35);
		\draw[black,fill=black] (y) circle (.35);
		\draw[black,fill=black] (z) circle (.35);
		\draw[opacity=0] (270:0.6) circle (0.1);
		\draw[opacity=0] (90:1.25) circle (0.1);
}}
\newcommand{\ptwoviolet}{\mathord{\scaleobj{1.2}{\scalerel*{\usebox{\ptwovioletbox}}{x}}}}
\newsavebox\ptwovioletredbox
\savebox\ptwovioletredbox{\tikz{
		\coordinate (x) at (180:1.5);
		\coordinate (y) at (0,0);
		\coordinate (z) at (0:1.5);
		\draw[violet,line width=0.2cm] (x) to (y);
		\draw[red,line width=0.2cm] (y) to (z);
		\draw[black,fill=black] (x) circle (.35);
		\draw[black,fill=black] (y) circle (.35);
		\draw[black,fill=black] (z) circle (.35);
		\draw[opacity=0] (270:0.6) circle (0.1);
		\draw[opacity=0] (90:1.25) circle (0.1);
}}
\newcommand{\ptwovioletred}{\mathord{\scaleobj{1.2}{\scalerel*{\usebox{\ptwovioletredbox}}{x}}}}

\def\seplabel{($\rseparated$)$_{\arabic*}$}
\def\otoplabel{($\rovertop$)$_{\arabic*}$}
\def\crosslabel{($\rcrossing$)$_{\arabic*}$}
\def\ptwolabel{($\rptwo$)$_{\arabic*}$}
\def\lchlabel{($\rlcherry$)$_{\arabic*}$}

\def\bchlabel{$\genfrac{(}{)}{0pt}{2}{\raisebox{-1.5pt}[0pt]{$\textstyle\rlcherry$}}{\raisebox{-0.5pt}[0pt]{$\textstyle\rrcherry$}}_{\arabic*}$}
\def\og{\mathrm{og}}
\let\ol=\overline

\def\pt{\mathrm{pt}}
\def\HJ{\mathrm{HJ}}
\def\CPL{\mathrm{CPL}}
\newcommand{\conc}{\charfusion[\mathbin]{+}{\times}}
\let\sm=\smallsetminus
\let\wt=\widetilde
\def\Fbu{{F_\bullet}}
\def\sstar{{\star\star}}
\def\ccirc{{\circ\circ}}
\def\dip{\mathrm{dp}}

\usepackage{tikz-cd}
\tikzcdset{arrow style=tikz,
 diagrams={>={Stealth[round,length=4pt,width=5pt,inset=2.75pt]}}}
\usetikzlibrary{arrows.meta, positioning}

\makeatletter
\def\greek#1{\expandafter\@greek\csname c@#1\endcsname}
\def\Greek#1{\expandafter\@Greek\csname c@#1\endcsname}
\def\@greek#1{\ifcase#1
	\or $\alpha$\or $\beta$\or $\gamma$\or $\delta$\or
	$\epsilon$\or $\zeta$\or $\eta$\or
	$\theta$\or $\iota$\or $\kappa$\or $\lambda$	\or $\mu$\or
	$\nu$\or $\xi$\or $o$\or
	$\pi$\or $\rho$\or $\sigma$\or $\tau$\or $\upsilon$\or
	$\phi$\or $\chi$\or $\psi$\or
$\omega$\fi}
\def\@Greek#1{\ifcase#1
	\or $\mathrm{A}$\or $\mathrm{B}$\or $\Gamma$\or $\Delta$\or
	$\mathrm{E}$\or $\mathrm{Z}$\or
	$\mathrm{H}$\or $\Theta$\or $\mathrm{I}$\or $\mathrm{K}$\or
	$\Lambda$\or $\mathrm{M}$\or
	$\mathrm{N}$\or $\Xi$\or $\mathrm{O}$\or $\Pi$\or
	$\mathrm{P}$\or $\Sigma$\or
	$\mathrm{T}$\or $\mathrm{Y}$\or $\Phi$\or $\mathrm{X}$\or
$\Psi$\or $\Omega$\fi}

\AddEnumerateCounter{\greek}{\@greek}{24}
\AddEnumerateCounter{\Greek}{\@Greek}{12}

\makeatother

\makeatletter
\def\@setaddresses{\par
  \nobreak \begingroup
\footnotesize
  \def\author##1{\nobreak\addvspace\medskipamount}\def\\{\unskip, \ignorespaces}\interlinepenalty\@M
  \def\address##1##2{\begingroup
    \par\addvspace\medskipamount\indent
    \@ifnotempty{##1}{(\ignorespaces##1\unskip) }{\scshape\ignorespaces##2}\par\endgroup}\def\curraddr##1##2{\begingroup
    \@ifnotempty{##2}{\nobreak\indent\curraddrname
      \@ifnotempty{##1}{, \ignorespaces##1\unskip}\/:\space
      ##2\par}\endgroup}\def\email##1##2{\begingroup
    \@ifnotempty{##2}{\nobreak\indent\emailaddrname
      \@ifnotempty{##1}{, \ignorespaces##1\unskip}\/:\space
      \ttfamily##2\par}\endgroup}\def\urladdr##1##2{\begingroup
    \def~{\char`\~}\@ifnotempty{##2}{\nobreak\indent\urladdrname
      \@ifnotempty{##1}{, \ignorespaces##1\unskip}\/:\space
      \ttfamily##2\par}\endgroup}\addresses
  \endgroup
}
\makeatother

\begin{document}
\title[Clean canonical Ramsey theorem]{Canonical Ramsey
theorem for graphs with clean intersections}

\author[M. Az\'ocar]{Mat\'ias Az\'ocar Carvajal}
\address{Fachbereich Mathematik, Universit\"at Hamburg,
Hamburg, Germany}
\email{matias.azocar.carvajal@uni-hamburg.de}

\author[A. Basu]{Ayush Basu}
\address{Department of Mathematics, Emory University, Atlanta, GA, USA}
\email{ayush.basu@emory.edu}

\author[Chr. Reiher]{Christian Reiher}
\address{Fachbereich Mathematik, Universit\"at Hamburg,
Hamburg, Germany}
\email{christian.reiher@math.uni-hamburg.de}

\author[V. R\"odl]{Vojt\v ech R\"odl}
\address{Department of Mathematics, Emory University, Atlanta, GA, USA}
\email{vrodl@emory.edu}

\author[G. Santos]{Giovanne Santos}
\address{Departamento de Ingeniería Matemática, Universidad
de Chile, Santiago, Chile}
\email{gsantos@dim.uchile.cl}

\author[M. Schacht]{Mathias Schacht}
\address{Fachbereich Mathematik, Universit\"at Hamburg,
Hamburg, Germany}
\email{schacht@math.uni-hamburg.de}

\thanks{The first author was supported by ANID and DAAD under
	ANID-PFCHA/Doctorado Acuerdo Bilateral DAAD Becas Chile/2023-62230021.
	The fourth author was supported by NSF grant DMS 2300347.
	The fifth author was supported by ANID Becas/Doctorado
Nacional~21221049.}

\keywords{Ramsey theory, canonical colourings, induced Ramsey theorem, clean intersections}
\subjclass[2020]{05D10 (primary), 05C55 (secondary)}

\begin{abstract}
	Extending earlier results of
	Ne\v set\v ril and R\"odl
	[\emph{Selective graphs and hypergraphs}, Ann. Discrete
	Math. \textbf{3} (1978), 181--189],
	we show that for every ordered graph~$F$ there exist an
	ordered graph~$H$ and a system~$\ccH_F$ of induced copies of~$F$
	such that every colouring of the edges of~$H$ yields a
	canonically coloured copy of~$F$ from~$\ccH_F$
	and any two copies from~$\ccH_F$ intersect
	either in a vertex or an edge or not at all.

	As a consequence, this allows us to construct, for any
	given ordered graph~$F$, canonical Ramsey graphs~$H$
	enjoying additional structural properties.
	In particular, $H$ can have the same clique number as~$F$
	and, provided~$F$ is not bipartite, the same odd girth.
	Moreover,
	if~$F$ is connected, then 
	the copies of~$F$ from~$\ccH_F$ are not only induced, but
	their pairs of vertices
	also have the same distances in~$H$ as in~$F$.
\end{abstract}

\maketitle
\setcounter{footnote}{1}

\section{Introduction}
\label{sec:introduction}

\subsection{Structural Ramsey theory}
Following Erd\H os
and Rado~\cites{ER53,ER56} we use the shorthand notation
\begin{equation}\label{eq:Ramsey}
	H\lra(F)_r
\end{equation}
for graphs~$F$ and~$H$
and some number of colours~$r$ to signify the Ramsey property
that every~$r$-colouring of $E(H)$ yields a monochromatic and
\emph{induced} copy of~$F$ in~$H$. Without requiring the monochromatic copy
of~$F$ to be induced, the existence of~$H$ would be a consequence of
Ramsey's theorem~\cite{R30} for (finite) graphs.
Henson~\cite{He73} asked whether the induced version holds as well.
This was answered affirmatively independently by Deuber~\cite{Deu75},
by Erd\H{o}s, Hajnal, and P\'osa~\cite{EHP75},
and by R\"odl~\cites{R73,R76}. In particular, those authors established the
\emph{induced Ramsey theorem} asserting the existence of
a graph~$H$ satisfying~\eqref{eq:Ramsey}
for any given~$F$ and~$r$.

Structural Ramsey theory aims at
a better understanding of unavoidable properties
that such a Ramsey graph~$H$ must display.
A first general result in this direction is due to
Folkman~\cite{F70}, who addressed
a question of Erd\H{o}s and Hajnal~\cite{EH67} and showed that
for two colours there exist Ramsey graphs~$H$ for the
clique~$K_k$ with the minimal possible clique number,
i.e., $\omega(H)=k$.

Around the same time Galvin (see~\cite{EH70}) formulated the
more general
question of the extent to which restricted classes of graphs
are closed under the Ramsey
property. He specifically asked this question
for the class of triangle-free graphs. This problem was addressed
by Ne\v{s}et\v{r}il and R\"odl~\cite{NR75} by showing
that for every triangle-free graph~$F$ and every number of colours~$r$
there exists a triangle-free graph~$H$ satisfying the Ramsey
property~\eqref{eq:Ramsey}.
This result was further extended in two directions: firstly, for
graphs with given clique number~\cite{NR-ColorFolkman},
which generalises Folkman's theorem,
and secondly, for graphs without
odd cycles up to a given length~\cite{NR-OddGirth}.

Since then quite a few results in structural Ramsey theory have emerged
and we point the interested reader to the surveys of
Ne\v{s}et\v{r}il~\cite{N95} and of
Hubi\v{c}ka and Kone\v{c}n\'{y}~\cite{HK26} and the
references therein. Many of the proofs in the area
are based on the \emph{partite construction method}
introduced by Ne\v{s}et\v{r}il and R\"odl~\cite{NR-SimpleProofPartCons}.
In more advanced applications of that method a careful analysis of
the distribution of the copies of~$F$ in the constructed
Ramsey graph~$H$ is essential. Putting trivial cases aside
(e.g., graphs~$F$ with at most one edge),
it is easy to see that the copies of~$F$ in any Ramsey graph~$H$ cannot
be edge disjoint. Erd\H os, Ne\v{s}et\v{r}il, and R\"odl (see~\cite{Er75}) asked
whether all other pairwise intersections can be avoided in carefully
constructed Ramsey graphs for~$F$ being a clique; this was verified
in~\cite{NR-SimpleProofPartCons}. For general graphs~$F$ this leads to the
following concept of clean intersections, which
appeared in~\cite{NR84}*{Proposition~2.3}.

We say two graphs~$F$ and $F'$ have a \emph{clean
intersection} (or they \emph{intersect cleanly}) if
\[
	|V(F)\cap V(F')|\leq 1
	\qquad\text{or}\qquad
	V(F)\cap V(F')\in E(F)\cap E(F')\,.
\]
Moreover, a family of graphs has clean
intersections if any two distinct members
intersect cleanly.
Note that for disconnected~$F$ no Ramsey graph~$H$ can have
the property that all copies of~$F$ in~$H$ pairwise intersect
cleanly, since copies of the components of~$F$ may combine
into copies of~$F$ violating this property. For that reason
we consider \emph{Ramsey systems}, i.e., distinguished
families of copies~$\ccH_F$ of~$F$ in~$H$ witnessing the Ramsey
property. Ramsey systems with clean intersections
are not only interesting in their own right, as they exhibit
the minimal possible pairwise intersections---they also turned out to be useful
for obtaining further results in structural Ramsey theory.

In this work we employ the partite construction method
to obtain systems of copies~$\ccH_F$ of any given graph~$F$
with clean intersections that possess the \emph{canonical}
Ramsey property.
These constructions of
systems of copies with clean intersections will allow us
to establish analogues of the structural Ramsey results
discussed above in the canonical setting (see
Theorems~\ref{thm:main-simple} and~\ref{thm:main} below).

\subsection{Canonical colourings}
\label{sec:canonical}
The canonical Ramsey theorem of Erd\H os and Rado~\cite{ER50}
generalises Ramsey's theorem to an unbounded number of
colours. Obviously,
with this relaxation we can no longer ensure the appearance of
monochromatic copies of the target graph~$F$. However,
Erd\H os and Rado characterised all canonical colour patterns
that are unavoidable in such colourings of edge sets.
As it turns out, the ordering of the underlying vertex set
plays a r\^ole in that context and, hence, we
consider \emph{ordered graphs} with
totally ordered vertex sets.
We recall that a colouring $\phi\colon E(F)\to \NN$ of an
ordered graph~$F$ is \emph{canonical}
if one of the following four patterns arises:
\begin{enumerate}[label=\rmlabel]
	\item $\phi$ is \emph{monochromatic}, i.e., all edges of
		$F$ have the same colour;
	\item\label{it:min-colouring} $\phi$ is
		\emph{min-coloured}, i.e., for~$e$, $e'\in E(F)$ we have
		$\phi(e)=\phi(e')$ $\Longleftrightarrow$
		$\min e=\min e'$;
	\item $\phi$ is \emph{max-coloured}, i.e., for~$e$, $e'\in
		E(F)$ we have $\phi(e)=\phi(e')$ $\Longleftrightarrow$
		$\max e=\max e'$;
	\item $\phi$ is \emph{rainbow}, i.e., all edges of~$F$ have
		distinct colours.
\end{enumerate}
For graphs~$F$ and~$H$ we denote by $\binom{H}{F}$ the set of all
\emph{induced} copies of~$F$ in~$H$. If~$H$ and~$F$ are
ordered, then
we naturally restrict $\binom{H}{F}$ to the ordered induced copies
of~$F$ in~$H$.

For ordered graphs~$F$ and~$H$ we write
\[
	H\clra(F)
\]
to signify that~$H$ has the \emph{canonical Ramsey property} for~$F$,
i.e., for every colouring $\phi\colon E(H)\to\NN$ there exists a copy
$F_\star\in\binom{H}{F}$ such that~$\phi$ restricted to
$E(F_\star)$ is canonical.
Moreover, we write
\[
	\ccH_F\clra(F)
\]
for a system of copies $\ccH_F\subseteq \binom{H}{F}$ to
signify that the guaranteed
canonically coloured copy is always an element of~$\ccH_F$.

An induced version of the canonical Ramsey theorem with
preserved clique number
was obtained by Ne\v set\v ril and R\"odl~\cite{NR-78}.
We strengthen this result by showing
that we can further restrict to
a system of copies $\ccH_F\subseteq \binom{H}{F}$ with clean
intersections.
When~$F$ is a clique, such a result was recently obtained
by Kam\v cev and Schacht~\cite{KS25} by considering
(appropriate subgraphs of) sparse random graphs.
The proof presented here is constructive and in addition we
also maintain the same odd girth.
We recall that~$\og(F)$, the \emph{odd girth} of a graph~$F$,
is defined as the length of a
shortest odd cycle in~$F$ if such a cycle exists and we set
$\og(F)=\infty$ for bipartite graphs~$F$.
With this notation at hand our main result can be stated as follows.

\begin{thm}\label{thm:main-simple}
	For every ordered
	graph~$F$ there exist an ordered graph~$H$ and a system of
	copies $\ccH_F\subseteq \binom{H}{F}$
	with clean intersections such that $\ccH_F\clra(F)$.
	Moreover, the graph~$H$ has the same clique number as~$F$
	and, in case~$F$ is not bipartite, it also has the same
	odd girth. Finally, if \(F\) is connected, then any two
	vertices of any copy \(F_\star\in\ccH_F\) have the same
	distance in \(H\) as in \(F_\star\).
\end{thm}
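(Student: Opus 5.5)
The plan is to derive Theorem~\ref{thm:main-simple} from a more technical statement, proved by the partite construction method. That statement produces an ordered graph~$H$ and a system $\ccH_F\subseteq\binom{H}{F}$ with clean intersections and $\ccH_F\clra(F)$. The additional structural properties (clique number, odd girth, distances) are then read off from the clean-intersection structure.

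\emph{Step 1: a partite canonical lemma.} First I establish a partite version. For a $|V(F)|$-partite ordered pattern $G$ whose vertex classes follow the order of $V(F)$, I need a partite~$P$ and a system of partite copies of~$G$ in~$P$ with clean intersections such that every colouring of~$E(P)$ yields a copy in which every "crossing" bipartite part is coloured canonically with respect to the lift. For a single pair of classes, I apply the canonical Erd\H{o}s--Rado theorem in its product/Hales--Jewett form. This yields, on each bipartite part, one of four behaviours: constant, depending only on the left end, depending only on the right end, or injective. To keep intersections clean, I build the bipartite pieces as disjoint unions of stars or matchings indexed by Hales--Jewett lines, so that two copies share at most one edge of that pair.

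\emph{Step 2: the partite construction.} I start from a picture $\Pi_0$ that is a disjoint union of copies of~$F$ placed along an ordered canonical Ramsey graph for the underlying index structure. One option is a hypergraph of large girth with the canonical property for $K_{|V(F)|}$ vertex sets, whose edges carry copies of~$F$, as in Ne\v set\v ril--R\"odl. Large girth ensures that the initial copies meet cleanly and that no short cycles arise across copies. I then iterate over all pairs $\{i,j\}\in E(F)$, amalgamating along the partite lemma: $\Pi_{k}$ is obtained by gluing copies of $\Pi_{k-1}$ on the pair $(i,j)$ of the partite lemma graph. Each amalgamation only identifies copies along a single vertex or edge of the partite lemma graph, a free amalgamation along a forest-like pattern. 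By induction, the system of distinguished copies keeps clean intersections, every clique of the final~$H$ lies inside some copy of~$F$, and every short odd cycle or shortcut lies inside one copy. The canonical colour pattern is propagated backwards through the pictures. At the end, there is a copy whose pairs are each coloured in one of the four ways. Consistency across pairs, together with the ordering, forces one global canonical pattern; this uses the Erd\H{o}s--Rado argument on the initial picture to synchronise the pair types.

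\emph{Step 3: structural consequences.} For cliques: a clique $K$ of~$H$ must lie in a single copy, because every edge lies in some copy and pairwise clean intersections along a tree-like amalgamation cannot create a triangle using edges from three different copies. For odd girth and distances, I use that the "copy–vertex" incidence structure has large girth, achieved by choosing the initial hypergraph of large girth. Then any cycle of length less than the girth, and any path shortcut of bounded length between two vertices of one copy, stays inside one copy. Since copies are induced, $\omega(H)=\omega(F)$, $\og(H)=\og(F)$, and for connected~$F$ the distances are preserved.

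The main obstacle will be Step 2. The difficulty is maintaining clean intersections and large girth of the incidence structure through the amalgamations, while also coordinating the canonical patterns of different pairs into a single global pattern. I would first try tracking an invariant: the auxiliary bipartite graph between distinguished copies and the vertices/edges they share is a forest locally, up to the prescribed length. I would then verify that each amalgamation step preserves this invariant.
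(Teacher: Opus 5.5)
There is a genuine gap, and it sits where the content of the theorem lies: your plan has no mechanism that forces a \emph{canonical} pattern. Canonicity on a copy of $F$ is a statement about coincidences between colours of edges lying in \emph{different} pairs of classes, because a partite copy of $F$ has only one edge in each bipartite part. So the output of your Step~1 says nothing about any single copy, and everything is deferred to the claim that consistency across pairs, together with the ordering, forces a global pattern.

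That claim is false. Let $F$ be the ordered path $u<v<w<w'$ and colour every edge of an arbitrary $F$-partite host by its endpoint in $V_u\cup V_w$. Every bipartite part is then left- or right-coloured, i.e.\ canonical in your sense. Yet every copy of $F$ receives exactly two colours, with $vw$ and $ww'$ sharing one, so no copy is canonical. Hence no per-pair partite lemma can be the engine; the forcing must come from configurations in the vertical structure. Your picture zero, however, is a disjoint union of copies of $F$ whose colourings are mutually unrelated, so no Ramsey statement about it can synchronise anything.

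The paper supplies the missing ingredients as follows.
\begin{enumerate}
\item Colour each copy of $F$ by the equivalence relation its edge colours induce on $E(F)$. This reduces the unbounded-colour problem to a bounded number $r$ of patterns.
\item Apply a Ramsey theorem for $F$-systems (Proposition~\ref{prop:final}) to find a copy of a gadget $(B,\ccB_F)$ on which $\phi$ is homogeneous.
\item Build the gadget (Proposition~\ref{prop:hom2canon}) so that homogeneity forces canonicity. For two disjoint edges, or two edges $e,e'$ with $\max e=\min e'$, a long cycle of copies transports $\phi(e)=\phi(e')$ to $\phi(e)=\phi(f)$ for every other edge $f$. For cherries, a star of copies yields non-strict min- or max-colourings. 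A zig-zag cycle alternating both cherries then yields monochromatic copies.
\end{enumerate}
The cycles are made long precisely so that $(B,\ccB_F)$ stays $n$-conform.

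Your Step~3 rests on an invariant you never establish. Large girth of the copy--vertex incidence structure is imposed only on the initial hypergraph, and nothing in your amalgamations preserves it, since the copies produced by the partite lemma are glued across different standard copies. Without it, the claims that every clique, every short odd cycle and every short shortcut stays inside one copy are unproved. Clean pairwise intersections alone do not rule out a triangle whose three edges lie in three different copies. Controlling cycles of copies is the much harder girth Ramsey theory of Reiher and R\"odl, which the paper explicitly does not achieve and does not need.

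Instead the paper argues directly:
\begin{itemize}
\item It shows $\omega(\Pi_\alpha)=\omega(B)$ by locating each clique in $H_\alpha$ or in a single standard copy.
\item It pushes $n$-inducedness and odd girth at least $2n+1$ through every amalgamation (Lemma~\ref{lem:61}) by induction on $n$. It uses $\Ups^{(n-1)}$ as the vertical conglomerate, so that $\ccG_F\subseteq\binom{G}{F}_{n-1}$ and $\og(G)\ge 2n-1$, together with distance preservation of the partite copies.
\item The distance statement of the theorem then follows from $n$-inducedness with $n\ge\diam(F)$.
\end{itemize}
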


We remark that in general for ordered bipartite graphs~$F$
we cannot insist that~$H$ is bipartite as well.
For example, the path~$P$ of length three with vertices
$u<v<w<w'$ and edges $uv$, $vw$, $ww'$ is clearly bipartite, but
for any given ordered bipartite graph~$H$ with vertex
partition $X\dcup Y$ we may colour the edges $xy$ with $x\in
X$ and $y\in Y$
by the colour~$x$. Since every
copy of~$P$ in~$H$ has two vertices in~$X$, this colouring
induces exactly two colours on any copy of~$P$, while every
canonical colouring of~$P$ is either monochromatic or uses exactly
three colours.
However, our proof shows that we can require the odd girth
of~$H$ to be arbitrarily large, if~$F$ is bipartite (see
Theorem~\ref{thm:main} below).

Our proof of Theorem~\ref{thm:main-simple} falls short of maintaining
the girth itself.
Such a result likely requires more delicate control
over the systems of copies arising throughout the construction.
For example, such copies would need to expand and not create
short cycles themselves. In the non-canonical context, this level
of control was recently established by
Reiher and R\"odl~\cite{RR-girth} and we believe that similar
results also hold for the canonical Ramsey property.

Furthermore, Theorem~\ref{thm:main-simple} provides a
system $\ccH_F$ of copies of~$F$ and leaves open when
this system can be taken to consist of all copies, that is,
when $\ccH_F=\binom{H}{F}$ can be achieved.
For cliques such a result follows from the
probabilistic proof of Kam\v cev and Schacht~\cite{KS25} and is
also delivered by the constructive proof presented here.
In general, it is plausible that such a strengthening of
Theorem~\ref{thm:main-simple} holds for most sufficiently
connected graphs~$F$.

\section{Overview of the proof}\label{sec:summary}
In this section we decompose the proof of our main result
into two steps---a preliminary
construction and a statement from structural Ramsey theory,
where instead of edges we
colour subgraphs of a fixed isomorphism type.

For a fixed graph~$F$, by an \emph{$F$-system} we mean a pair
$(B, \ccB_F)$ consisting of
a graph~$B$ and a set~$\ccB_F$ of induced copies of~$F$ in
$B$. Similarly, for an ordered
graph~$F$ an ordered~$F$-system $(B, \ccB_F)$ has an ordered
underlying graph~$B$ and
the copies in~$\ccB_F$ need to respect the vertex ordering.

In the first step of our proof we construct for every ordered
graph~$F$
an auxiliary ordered~$F$-system $(B, \ccB_F)$ with several
desirable properties. We begin with a property of edge
colourings, which connects
the two parts of our argument.

\begin{dfn}
	Given an ordered graph~$F$ and an ordered~$F$-system $(B, \ccB_F)$,
	an edge colouring~${\phi\colon E(B)\lra\NN}$ is
	\emph{$\ccB_F$-homogeneous}
	if for any two copies $F_1, F_2\in \ccB_F$ the strictly increasing
	map $\eta\colon V(F_1)\lra V(F_2)$ (which is an
	isomorphism between~$F_1$ and~$F_2$)
	preserves the relation of having the same colour, i.e.,
	for all edges $e, f\in E(F_1)$ the following holds:
	\[
		\phi(e)=\phi(f) \quad \Longleftrightarrow \quad
		(\phi\circ\eta)(e)=(\phi\circ\eta)(f)\,.
	\]
\end{dfn}

So, intuitively speaking, $\phi$ is~$\ccB_F$-homogeneous if
$\phi$ induces the same
colour pattern on any two copies of~$F$ in~$\ccB_F$.

In order to preserve the odd girth in our main result, we
sometimes need to work with
copies that are not only induced but also inherit bounded
distances from their
host graph. This is made precise by the following concepts.

\begin{dfn}
	Let~$F$ be a subgraph of another graph~$B$.
	\begin{enumerate}[label=\rmlabel]
		\item We call~$F$ a \emph{distance preserving} subgraph
			of~$B$ if any two
			vertices of~$F$ have the same distance in~$B$ as
			in~$F$; the set of
			distance preserving copies of~$F$ in~$B$ is denoted
			by~$\binom BF_\dip$.

		\item For a natural number~$n$, we say that~$F$ is
			\emph{$n$-induced} in~$B$
			provided that the following holds: If for two distinct
			vertices $x, y\in V(F)$
			there is an~$x$-$y$-path~$P$ in~$B$ which is not
			contained in~$F$ and
			whose length is at most~$n$, then the distance of~$x$
			and~$y$ in~$F$
			is strictly smaller than the length of~$P$. Moreover,
			we write $\binom BF_n$
			for the set of~$n$-induced copies of~$F$ in~$B$.
	\end{enumerate}
\end{dfn}

Clearly, distance preserving copies must be induced and
we have the monotone inclusions
\[
	\binom BF
	=\binom BF_1
	\supseteq \dots
	\supseteq \binom BF_n
	\supseteq \binom BF_{n+1}
	\supseteq \cdots \,.
\]
Moreover,
\[
	\bigcap_{n\in\NN}\binom BF_n\subseteq\binom BF_\dip\subseteq \binom BF
\]
and, if~$F$ is connected, then we also have
\begin{equation}\label{eq:dipincl}
	\binom BF_{\diam(F)}\subseteq\binom BF_\dip\,.
\end{equation}

The following definition encapsulates the desired structural
properties for the
ordered~$F$-system $(B, \ccB_F)$ we intend to construct.

\begin{dfn}\label{def:conf}
	For an ordered graph~$F$ and an integer $n\geq 1$,
	we say that an ordered~$F$-system $(B, \ccB_F)$ is
	\emph{$n$-conform} if
	\begin{enumerate}[label=\rmlabel]
		\item\label{def:conf-clean} the copies in~$\ccB_F$ have
			clean intersections,
		\item\label{def:conf-omega} $\omega(B)=\omega(F)$,
		\item\label{def:conf-oddg} $\og(B)\geq\min\{\og(F),2n+1\}$,
		\item\label{def:conf-induced} and $\ccB_F\subseteq \binom{B}{F}_n$.
	\end{enumerate}
\end{dfn}
For preserving the odd girth one may expect that
part~\ref{def:conf-oddg} in the definition above should simply
be $\og(B)=\og(F)$. However, as discussed before, such a
strong version of
Theorem~\ref{thm:main-simple} is false for bipartite graphs,
which have infinite odd girth.
In this case we can still guarantee canonical Ramsey graphs
of arbitrarily large odd girth and
this leads to the formulation chosen in
part~\ref{def:conf-oddg} of the definition.
The strong version of Theorem~\ref{thm:main-simple} can now
be stated as follows.
\begin{thm}\label{thm:main}
	For every ordered
	graph~$F$ and every integer $n\geq 1$
	there exists an~$n$-conform~$F$-system $(H,\ccH_F)$ such
	that $\ccH_F\clra(F)$.
\end{thm}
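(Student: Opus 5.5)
The plan follows the two-step decomposition announced in Section~\ref{sec:summary}. In the first step we construct, for the given ordered graph~$F$ and $n\ge 1$, an auxiliary $n$-conform ordered $F$-system $(B,\ccB_F)$ with the following property: every colouring $\phi\colon E(B)\to\NN$ that is $\ccB_F$-homogeneous induces a canonical colouring on every (equivalently, on some) copy in~$\ccB_F$.

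To build $(B,\ccB_F)$, start from $F$ and adjoin gadgets, glued along single edges or vertices, that force the colour pattern. For every pair of edges of~$F$ whose colour relation is not determined by canonicity, attach a copy of~$F$. The attached copy shares exactly one edge with the original, or one vertex, in a position chosen so that it relabels edges under the increasing isomorphism. Homogeneity transfers ``same colour'' and ``different colour'' relations between positions. The forbidden non-canonical patterns (for instance two disjoint edges equal while an edge through the shared minimum differs) then become contradictory, just as in the Erd\H{o}s--Rado argument via the configurations $\rseparated,\rovertop,\rcrossing,\rptwo,\rlcherry$. Gluing copies of~$F$ tree-like along single vertices and edges keeps all intersections clean. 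It also creates no new cliques or short odd cycles: any cycle not contained in one copy must pass through an amalgamation in a way controlled by an iterated partite or tree amalgamation, which guarantees $\omega(B)=\omega(F)$, the odd girth bound, and that copies are $n$-induced.

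In the second step we prove a structural Ramsey statement for colouring copies of~$F$ rather than edges. Given an $n$-conform ordered $F$-system $(B,\ccB_F)$, we want an $n$-conform ordered $F$-system $(H,\ccH_F)$, together with a family~$\ccH_B$ of copies of~$B$ in~$H$, each carrying its system~$\ccB_F$ inside~$\ccH_F$, such that every edge colouring of~$H$ is $\ccB_F$-homogeneous on some copy $B_\star\in\ccH_B$. Since $F$ has finitely many edges, the colour pattern on a copy of~$F$ is one of finitely many partitions of~$E(F)$. So an edge colouring induces a finite colouring of~$\ccH_F$, and it suffices to find a copy of~$B$ all of whose $\ccB_F$-members receive the same pattern. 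This is a Ramsey theorem for $F$-systems with $r$ colours on the copies. We prove it by the partite construction: a partite lemma obtained from Hales--Jewett amalgamations, then iterated amalgamation over the vertex classes of a Ramsey structure for the pattern colouring.

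Combining the two steps, the copy~$B_\star$ yields a canonically coloured member of~$\ccB_F\subseteq\ccH_F$, so $\ccH_F\clra(F)$.

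The main obstacle is preserving $n$-conformity through the partite construction. Amalgamations along copies can create new cliques, short odd cycles, unclean intersections between copies of~$F$ coming from different pictures, or short paths destroying $n$-inducedness. My first attempt is to perform every amalgamation only along the edge sets of the previous picture, so that copies meet in at most an edge. Each picture is then a ``forest of pictures,'' and any short cycle or clique projects homomorphically onto one picture. An inductive invariant, stating that each partite picture is $n$-conform and that its projection to the ground graph is a homomorphism preserving these parameters, should then pass through each step. Verifying this invariant, especially odd girth and $n$-inducedness when a short cycle wanders through several amalgamated copies, is the delicate part.
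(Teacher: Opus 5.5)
Your two-step decomposition and the final combination match the paper's (Propositions~\ref{prop:hom2canon} and~\ref{prop:final}). However, the construction you propose for the first step cannot have the required property.

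\textbf{First step.} If copies of~$F$ are glued tree-like, each new copy meeting the earlier ones in at most one edge or one vertex, then \emph{every} colour pattern on~$F$, canonical or not, extends to a $\ccB_F$-homogeneous colouring. Colour~$F_0$ by the pattern. Colour each new copy by the transported pattern, giving the class of the shared edge its already fixed colour and fresh colours to all other classes. Homogeneity only pushes an equality $\phi(e)=\phi(e')$ outwards into new edges. It never comes back to constrain a third edge~$f$ of~$F_0$, so no non-canonical pattern ever becomes contradictory.

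The missing idea is to close the chain. For each $f\in E(F_0)\sm\{e,e'\}$ the paper builds a cycle of copies with skeleton $f=e_0,\,e'=e_1,\,e_2,\dots$. Consecutive skeleton edges play the r\^oles of $e,e'$, with r\^ole reversals in the $\rcrossing$ and $\rptwo$ cases. The last copy contains~$f$ again, giving $\phi(e)=\phi(e')=\phi(e_2)=\dots=\phi(f)$. The closing copies meet the existing system in two edges, or in an edge and a vertex. That is exactly where clean intersections, the clique number, the odd girth and $n$-inducedness can fail. This is why the cycles must be long in terms of~$n$ and~$\og(F)$, and why Lemmata~\ref{lem:extend-family-ve}, \ref{lem:extend-family} and~\ref{lem:Scycle-conform} are needed.

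The cherries also need more than an appeal to Erd\H{o}s--Rado. A monochromatic $\rlcherry$ only forces a non-strictly min-coloured pattern (Lemma~\ref{lemma:cherries}). When a $\rlcherry$ and a $\rrcherry$ are both monochromatic, a separate zigzag cycle alternating the two r\^oles (Lemma~\ref{lemma:two-mono-cherries}) is required to force a monochromatic copy.

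\textbf{Second step.} Your general direction is right: a partite construction, a Hales--Jewett partite lemma, and a Ramsey class of ordered graphs. The invariant you propose, however, is not what makes it work.
\begin{itemize}
\item The ground graph from Theorem~\ref{thm:41} has no control over clique number or odd girth, so projecting homomorphically onto it preserves nothing.
\item Distinct standard copies in a partite amalgamation genuinely overlap in whole copies of~$F$. The Ramsey property forces distinct copies of~$B$ to share members of~$\ccH_F$, so a ``forest'' picture in which copies meet in at most an edge is not available.
\end{itemize}
The paper proceeds in three parts instead.
\begin{itemize}
\item It uses a clean partite lemma, itself a partite construction over Hales--Jewett (Proposition~\ref{prop:CPL}), so that two copies of~$B$ meet cleanly or in a common member of~$\ccH_F$.
\item It controls the clique number directly through the amalgamation: a clique lies either in the new $H_\alpha$ or in a single standard copy.
\item It bootstraps odd girth and $n$-inducedness by induction on~$n$, using $\Ups^{(n-1)}$ as the vertical conglomerate. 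This gives $\ccG_F\subseteq\binom GF_{n-1}$ and $\og(G)\ge\min\{2n-1,\og(B)\}$, which is exactly the input Lemma~\ref{lem:61} needs to reach level~$n$.
\end{itemize}
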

In view of the inclusion~\eqref{eq:dipincl}, Theorem~\ref{thm:main}
applied for connected graphs~$F$ and with $n=\diam(F)$ yields
a system~$\ccH_F$ of distance preserving copies of~$F$.
In the context of Ramsey's theorem
such a structural result was obtained by Dellamonica and
R\"odl~\cite{DR12}.

Clearly, Theorem~\ref{thm:main} implies Theorem~\ref{thm:main-simple}:
it suffices to apply it with
\[
	n=\max\bigl\{\diam(F),\tfrac{\og(F)-1}{2},1\bigr\}\,,
\]
where the
terms $\diam(F)$ and $\tfrac{\og(F)-1}{2}$ are only taken into
account if~$F$ is connected or not bipartite, respectively.
Moreover, $\ccH_F\clra(F)$ entails $\ccH_F\ne\vn$, so~$H$
contains an induced copy of~$F$ and, hence, also
$\og(H)\le\og(F)$.
The rest of this work is devoted to the proof of
Theorem~\ref{thm:main}.
For that we first establish
the following result in \ssign\ref{sec:conform}.

\begin{prop}\label{prop:hom2canon}
	For every ordered graph~$F$ and every integer $n\geq 1$ there exists
	an ordered~$n$-conform~$F$-system $(B, \ccB_F)$ with \(\ccB_F\ne\vn\)
	such that for every~$\ccB_F$-homogeneous colouring
	\mbox{$\phi\colon E(B)\to\NN$} all copies in~$\ccB_F$ are canonically
	coloured.
\end{prop}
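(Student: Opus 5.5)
Our plan is to build $(B,\ccB_F)$ as an amalgamation of copies of~$F$ designed so that homogeneity across copies forces every possible non-canonical colour pattern on~$F$ to propagate into a contradiction. For a single copy~$F_1$, a $\ccB_F$-homogeneous colouring induces a fixed equivalence relation $\sim$ on $E(F)$ (edges with the same colour). We want to ensure that $\sim$ must be one of the four canonical relations. The natural tool is to glue copies of~$F$ along single edges or single vertices, because such gluings keep intersections clean. If two copies $F_1,F_2\in\ccB_F$ share an edge~$e$ which plays the r\^ole of edge $a$ in $F_1$ and edge $b$ in~$F_2$, then the colour classes of $a$ in $F_1$ and $b$ in~$F_2$ coincide in colour; with suitably chosen chains of such gluings (and homogeneity transporting patterns between copies) one derives relations such as ``$a\sim a'$ implies $b\sim b'$'' for different edge pairs.

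The key steps, in order: \emph{(1)} Classify the relevant local configurations of two edges $e,f$ in the ordered graph~$F$ by their order type (the pictograms $\separated$, $\overtop$, $\crossing$, $\ptwo$, $\lcherry$, $\rcherry$ defined in the preamble suggest exactly this), and note that an equivalence relation on $E(F)$ that is ``consistent'' on each type (i.e.\ whether $e\sim f$ depends only on the order type of $\{e,f\}$, with the appropriate combinatorial closure) must be canonical; this is essentially the Erd\H os--Rado analysis. \emph{(2)} For each pair of order-type instances $\{e,f\}$ and $\{e',f'\}$ of the same type in~$F$, construct a gadget: copies of~$F$ glued cleanly (along single vertices or edges, respecting vertex orders) so that homogeneity forces $e\sim f\Leftrightarrow e'\sim f'$, and also forces the transitivity-type implications (e.g.\ two cherries of one side being equal forces the separated/crossing ones). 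This is done via an ordered amalgamation where one copy's edge is identified with another copy's edge in a different position, so the order of the host must be interleaved consistently; we take $B$ to be a union of all gadgets sharing a base copy. \emph{(3)} Verify $n$-conformity: clean intersections hold by construction if the gluing pattern is a tree-like (forest) arrangement of copies meeting in at most an edge; then any clique, short odd cycle, or short path leaving a copy must live inside a single copy, since cycles in a tree-like amalgamation of blocks glued along cliques of size $\le2$ are contained in blocks. This gives $\omega(B)=\omega(F)$, $\og(B)=\og(F)$ when the gadget is a tree of copies, and $\ccB_F\subseteq\binom BF_n$ for every~$n$.

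The main obstacle is the tension between step (2) and step (3): forcing relations between different edge pairs typically requires the copies to close up into cycles of copies (e.g.\ a copy's pattern must be compared to itself via a chain returning to it), and such cyclic gluings create new short cycles, new cliques, or non-clean overlaps. Our first attempt is to avoid cycles entirely by exploiting homogeneity, which already identifies patterns across \emph{all} copies, so a tree of copies suffices: a path $F_1$--$F_2$ of two copies glued at one edge already relates the class of one position to another, and homogeneity transports it back. If some configuration genuinely needs longer cycles of copies, we would make them long (length exceeding $2n+1$ and using the ordering to keep intersections of size $\le 2$), so that any new cycle is longer than $2n+1$ and non-induced paths between vertices of a copy have length $>n$, preserving the conformity conditions.
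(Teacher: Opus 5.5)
There is a genuine gap, and it sits exactly where you hoped to avoid difficulty. Homogeneity transports only the relation ``having the same colour'' between copies, never the colours themselves, so an acyclic gluing of copies forces nothing.

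Suppose every new copy meets the union of the earlier ones in at most one edge~$g$, or in a vertex. Then for \emph{any} equivalence relation~$\equiv$ on~$E(F)$ you can colour the first copy so as to realise~$\equiv$. You then colour each later copy by giving the $\equiv$-class of the r\^ole of~$g$ the colour~$\phi(g)$ and every other class a fresh colour. This colouring is homogeneous and induces~$\equiv$, canonical or not, on every copy. So two copies glued along one edge identify the colour of a class in~$F_1$ with that of a class in~$F_2$, but homogeneity cannot carry this back as a constraint on the pattern. Consequently the block-tree argument of your step~(3) only applies to systems that cannot do the job.

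Every forced implication needs a closed chain of copies. In the paper, for $\separated$, the implication $\phi(e)=\phi(e')\Rightarrow\phi(e)=\phi(f)$ comes from copies $F_0,F_1,\dots,F_{k+1}$. Here $F_i$ contains skeleton edges $e_i,e_{i+1}$ in the r\^oles of $e,e'$, starting from $e_1=e'$. The closing copy contains $e_{k+1}$ together with $f\in E(F_0)$, and hence meets the rest of the system in two edges, i.e.\ in three or four vertices.

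Showing that such cycles of copies remain $n$-conform is the real substance of the proof, and you leave it as an unverified fallback. The paper needs extension lemmas for adding a copy that meets the system in a vertex and an edge, in two disjoint edges, or in two edges sharing a vertex. These lemmas carry distance hypotheses such as $\dist_B(ab,\Fs)+\dist_B(y,\Fs)\geq\max\{3,\min\{\og(F)-1,2n\},n+1\}$. They yield $n$-conformity of $S$-cycles with at least $\max\{10,\min\{2\og(F)+2,4n+4\},2n+6\}$ copies, which is more than merely ``longer than $2n+1$''. The cycles for different~$f$ are then glued freely over~$F_0$.

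The closing copies must also be realised compatibly with the vertex ordering, which is not automatic. For~$\crossing$ a second r\^ole-reversing copy is needed. For~$\ptwo$ the resulting system is not even an $S$-cycle.

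Your combinatorial reduction in steps~(1)--(2) also needs repair. A relation on~$E(F)$ in which $e\sim f$ depends only on the order type of $\{e,f\}$ need not be canonical. For the ordered path $u<v<w<w'$ with edges $uv,vw,ww'$, identifying only the $\separated$-pair $\{uv,ww'\}$ is type-consistent but not canonical.

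Conversely, some implications can never be forced. Colouring each edge by its smaller endpoint is homogeneous on every ordered system and min-colours every copy. Hence a monochromatic $\lcherry$ can never force a monochromatic $\separated$- or $\crossing$-pair, which is how your parenthetical example reads.

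What has to be forced, and what the paper forces, is the following:
\begin{enumerate}
\item A monochromatic pair of type $\separated$, $\overtop$, $\crossing$ or $\ptwo$ makes the copy monochromatic.
\item A monochromatic $\lcherry$ makes it non-strictly min-coloured. This uses a star of copies around the common centre, closed by a copy containing $e_0$ and $e_{k+1}$ in the r\^oles of an arbitrary second $\lcherry$ of~$F_0$.
\item Monochromatic copies of $\lcherry$ and $\rcherry$ together make it monochromatic. This uses a zigzag cycle alternating between the two cherry types.
\end{enumerate}
Canonicity then follows from a short case analysis.
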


Roughly speaking, our intended way of using this result is
that if for any ordered
graph~$H$ we have an arbitrary edge colouring $\phi\colon
E(H)\lra \NN$, then each copy
in $\binom HF$ receives one of boundedly many colour
patterns. Indeed, the number of
possible colour patterns agrees with the number of
equivalence relations on $E(F)$, which
is a function of $e(F)$.
Thus when~$H$ is constructed in such a way that
$\binom{H}{B}$ is ``sufficiently rich'', then there exists
a copy $B^\star\in\binom HB$
such that the corresponding~$F$-system $(B^\star,
\ccB^\star_F)$ isomorphic to~$(B, \ccB_F)$
receives a $\ccB^\star_F$-homogeneous colouring, meaning that
its copies of~$F$
are canonically coloured.

We shall now introduce some language that will allow us to
present this plan in a more
precise form. An~$F$-system $(B, \ccB_F)$ is said to be an
\emph{induced~$F$-subsystem} of
another~$F$-system $(H, \ccH_F)$ if~$B$ is an induced
subgraph of~$H$ and,
furthermore,
\[
	\ccB_F=\ccH_F\cap \binom BF\,.
\]
We write $\binom{(H, \ccH_F)}{(B, \ccB_F)}$ for the set of
all induced~$F$-subsystems
of $(H, \ccH_F)$ which are, in the obvious sense, isomorphic
to $(B, \ccB_F)$.
By a \emph{$(B, \ccB_F)$-conglomerate} we mean a triple $(H,
\ccH_B, \ccH_F)$ such
that $(H, \ccH_F)$ is an~$F$-system and $\ccH_B\subseteq
\binom{(H, \ccH_F)}{(B, \ccB_F)}$. Given such a conglomerate
and a number of colours~$r$
the partition relation
\[\ccH_B\lra(B, \ccB_F)^F_r
\]indicates that for every colouring $\phi\colon \ccH_F\lra
[r]$ there exists a copy
$(B^\star, \ccB_F^\star)\in \ccH_B$ such that $\ccB_F^\star$
is monochromatic with
respect to~$\phi$.

These concepts extend to ordered graphs and systems in the expected way.
Most importantly, if~$F$ is an ordered graph and $(B,
\ccB_F)$, $(H, \ccH_F)$
are ordered~$F$-systems, then~$\binom{(H, \ccH_F)}{(B, \ccB_F)}$
only contains copies of $(B, \ccB_F)$ that are ordered
correctly; similarly,
for an {ordered $(B, \ccB_F)$-conglomerate} $(H, \ccH_B,
\ccH_F)$ the copies
in~$\ccH_F$ and~$\ccH_B$ need to respect the vertex orderings
of~$F$, $B$, and~$H$.

\begin{prop}\label{prop:final}
	For every ordered graph~$F$, every $n\geq 1$,
	every ordered $n$-conform $F$-system $(B, \ccB_F)$, and
	every number of colours~$r$,
	there exists an ordered $(B, \ccB_F)$-conglomerate
	$(H, \ccH_B, \ccH_F)$ such that
	\begin{enumerate}[label=\alabel]
		\item\label{it:final-a} $\ccH_B\lra (B, \ccB_F)^F_r$
		\item\label{it:final-b} and $(H,\ccH_F)$ is an~$n$-conform~$F$-system.
	\end{enumerate}
\end{prop}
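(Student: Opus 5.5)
We prove Proposition~\ref{prop:final} by the partite construction method of Ne\v set\v ril and R\"odl, treating the ordered $F$-system $(B,\ccB_F)$ as a ``hypergraph'' whose hyperedges are the copies in $\ccB_F$. Write $V(B)=\{b_1<\dots<b_m\}$.

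\emph{Partite lemma.} We start with a partite lemma. Let $(A,\ccA_F)$ be a $B$-partite $F$-system: $V(A)$ is partitioned into ordered classes $X_1<\dots<X_m$, the projection to $B$ is a homomorphism, and the copies in $\ccA_F$ are transversal. The lemma asks for a $B$-partite $F$-system $(P,\ccP_F)$, together with a family $\ccP_A$ of copies of $(A,\ccA_F)$ in it, such that every $r$-colouring of $\ccP_F$ leaves some member of $\ccP_A$ with $\ccA_F$ monochromatic. We obtain it from the Hales--Jewett theorem (equivalently, an amalgamation along a large product). Take $N=\HJ(|\ccA_F|,r)$. Let $X'_i=X_i^N$ and let the copies of $F$ be the ``lines''. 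Concretely, a copy of $F$ is determined by a word over $\ccA_F$, and $\ccP_F$ consists of these. Two copies share at most one vertex or one edge because coordinates are taken independently and $(A,\ccA_F)$ already has clean intersections. Edges are only those lying in some member of $\ccP_F$, which keeps everything locally looking like $A$.

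\emph{Partite construction.} Next comes the partite construction. First let $(H_0,\ccH^0_B,\ccH^0_F)$ be a $B$-partite picture over the classes $b_1,\dots,b_m$. We take any $B$-partite $n$-conform system in which every $r$-colouring of the copies of $F$ is handled: using Ramsey's theorem for $|\ccB_F|$-uniform set systems, we choose disjoint copies of $(B,\ccB_F)$, one for each $|\ccB_F|$-subset of a large ground set of copies of $F$, glued along the $F$'s. The paper wants this to be cleaner, so instead we use the standard ``Picture Zero'' consisting of disjoint copies of $B$ placed on the $|\ccB_F|$-subsets. Then, for each $F$-position of $(B,\ccB_F)$ in turn, we apply the partite lemma to the $F$-subsystem of the current picture induced on the corresponding classes. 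We amalgamate copies of the current picture along the copies provided by the lemma, with disjoint extensions elsewhere. After processing all copies in $\ccB_F$, the usual backward argument gives \ref{it:final-a}: restrict the colouring successively to a picture in which copies of each given position are monochromatic, and end in Picture Zero, whose Ramsey property yields a monochromatic $(B^\star,\ccB^\star_F)$.

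\emph{Conformity.} The main obstacle is \ref{it:final-b}, i.e.\ showing that $n$-conformity is preserved under these amalgamations. The plan is to prove that each amalgamation step preserves the properties and then induct. Clean intersections are preserved because two copies of $F$ from different glued pictures meet only inside the amalgamation copy, where the lemma guarantees cleanness. For the clique number, we argue that any clique lies within one picture copy, since amalgamation is along induced subgraphs and cross edges do not exist. For the odd girth and $n$-inducedness, the key point is that a short cycle or short path leaving a copy of $F$ must project via the homomorphism to $B$ onto a closed walk. We need the gluing to behave like a ``tree-like'' forest amalgamation over subsystems, so that any short path re-entering the same glued part can be replaced by a path within a single picture without increasing its length. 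This is where I expect real difficulty. I would first try to make the partite lemma produce systems whose copy hypergraph has large girth, exceeding $2n+1$, so that any short walk crossing several glued pieces projects into a single piece. Then the conform properties of the piece transfer, using that $(B,\ccB_F)$ is $n$-conform and that projection to $B$ is a homomorphism preserving odd cycle lengths.
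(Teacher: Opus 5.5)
Your proposal leaves the real difficulty unresolved: parts~\ref{def:conf-oddg} and~\ref{def:conf-induced} of Definition~\ref{def:conf}, i.e.\ odd girth and $n$-inducedness. The mechanism you sketch for them cannot work.

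\emph{The projection to $B$ is not available.} You want to transfer these properties from $(B,\ccB_F)$ by projecting onto $B$. But a host with a homomorphism to $B$ cannot satisfy property~\ref{it:final-a} in general. Take $F$ a single edge and $B$ the ordered path $u<v<w$ with $\ccB_F=E(B)$; this system is $n$-conform for every $n$. Any host with a homomorphism to $B$ is bipartite. Colour each edge by the side of the bipartition containing its smaller endpoint. Then in every ordered copy of $B$ the edges $uv$ and $vw$ get different colours, since $u$ and $v$ lie on opposite sides.

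\emph{The partite construction is set up incorrectly.} The pictures must be partite over a ground conglomerate $(G,\ccG_B,\ccG_F)$ that itself has the Ramsey property, e.g.\ $G\lra(B)^F_r$ from Theorem~\ref{thm:41}; a $|\ccB_F|$-uniform Ramsey statement is not what is needed. You must iterate over the copies in $\ccG_F$, not over the positions in $\ccB_F$; otherwise the backward argument only makes each position monochromatic separately. The canonical projection then goes to $G$, not to $B$. Such a $G$ gives no control over odd girth or inducedness, so everything hinges on short paths and cycles that run through several standard copies. Your fallback, a copy hypergraph of girth above $2n+1$, is a substantial Reiher--R\"odl-type construction that you do not carry out, and on its own it says nothing about short paths through $G$.

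\emph{The missing idea is a bootstrapping induction on $n$.} The paper builds $\Ups^{(n)}_r(B,\ccB_F)$ by a partite construction whose ground is $\Ups^{(n-1)}_r(B,\ccB_F)$. That ground already has clean copies, $\ccG_F\subseteq\binom{G}{F}_{n-1}$, and $\og(G)\ge\min\{2n-1,\og(B)\}$; the case $n=1$ uses Theorem~\ref{thm:41}. Lemma~\ref{lem:61} then gains one unit per level:
\begin{itemize}
\item A segment of a path of length at most $n$ that leaves the horizontal piece has length at most $n-1$. Its projection can therefore be shortcut inside $F_\bullet$ using $(n-1)$-inducedness in $G$.
\item Segments inside the horizontal piece are shortcut because the copies of the constituent are distance preserving there.
\item A parity version of the same argument controls odd cycles.
\end{itemize}

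\emph{What the horizontal lemma must provide.} It must give distance-preserving copies, preserve $n$-inducedness (Corollary~\ref{cor:CPLe}), and be clean in the sense of Definition~\ref{dfn:54}, including clause~\ref{it:cliii}. Your Hales--Jewett product fails that clause. Take two lines with the same moving coordinates whose constants differ in one coordinate, where those two copies share an edge in classes $x$ and $y$; the lines then meet in their entire $x$- and $y$-classes. Since the ground copies need not be clean (e.g.\ $\ccG_F=\binom GF$), your claim that copies from different standard copies meet cleanly ``inside the amalgamation copy'' depends on exactly this clause. That is why the paper first builds the clean partite lemma (Proposition~\ref{prop:CPL}) by a partite construction with Hales--Jewett both vertically and horizontally.

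Two smaller points. Keeping only edges that lie in copies of $F$ breaks inducedness of the constituent copies whenever the constituent has other edges. Also, cliques inside the horizontal piece are not confined to one standard copy, so the clique bound needs the partite lemma's homomorphism to the constituent.
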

The proof of this proposition employs the partite construction
method and is deferred to
\S\ssign\ref{sec:Hales}\,--\,\ref{sec:conform-cong}.
We conclude this section with the proof of Theorem~\ref{thm:main}
assuming Propositions~\ref{prop:hom2canon} and~\ref{prop:final}.

\begin{proof}[Proof of Theorem~\ref{thm:main}]
	Fix an ordered graph~$F$ and an integer $n\geq 1$.
	By Proposition~\ref{prop:hom2canon}, there exists an~$n$-conform
	ordered~$F$-system $(B, \ccB_F)$ with $\ccB_F\ne\vn$ such that every
	$\ccB_F$-homogeneous colouring of $E(B)$ colours
	all copies of~$F$ in~$\ccB_F$ canonically.

	Let~$r$ be the number of equivalence relations on~$E(F)$,
	which equals the number of colour patterns that any edge colouring
	can induce on a copy of~$F$.
	We apply Proposition~\ref{prop:final} with~$(B, \ccB_F)$ and~$r$ to
	obtain an ordered $(B, \ccB_F)$-conglomerate $(H, \ccH_B, \ccH_F)$
	satisfying properties~\ref{it:final-a} and~\ref{it:final-b}
	of the proposition.
	In particular, $(H, \ccH_F)$ is an~$n$-conform~$F$-system
	by assertion~\ref{it:final-b}. We now argue that
	$\ccH_F\clra(F)$ holds.

	Given any edge colouring $\phi\colon E(H)\to\NN$, each copy
	$F_\star\in\ccH_F$ inherits from~$\phi$ an equivalence relation
	on~$E(F_\star)$, determining one of the~$r$ colour patterns.
	This defines a colouring $\psi\colon \ccH_F\to[r]$ of the
	copies of~$F$.
	By assertion~\ref{it:final-a} of Proposition~\ref{prop:final},
	there exists a copy $(B^\star,\ccB_F^\star)\in\ccH_B$
	such that $\ccB_F^\star$ is monochromatic
	under~$\psi$. Consequently, $\phi$ induces the same colour pattern on
	every $F_\star\in\ccB_F^\star$, i.e., $\phi$ is
	$\ccB_F^\star$-homogeneous.
	Since $(B^\star, \ccB_F^\star)$ is isomorphic to $(B, \ccB_F)$,
	Proposition~\ref{prop:hom2canon} implies that every copy of~$F$
	in~$\ccB_F^\star\subseteq\ccH_F$ is canonically coloured by~$\phi$.
	In particular, as $\ccB_F\ne\vn$, the set $\ccB_F^\star$ is
	non-empty and, hence,~$\ccH_F$ indeed contains a canonically
	coloured copy of~$F$, i.e., $\ccH_F\clra(F)$.
\end{proof}

\section{Conform systems being canonical on homogeneous
colourings}\label{sec:conform}
This section is devoted to a constructive proof of
Proposition~\ref{prop:hom2canon}.
For that we first study in \ssign\ref{sec:simpleext} simple
operations on systems of copies that preserve conformity.
In \ssign\ref{sec:Scycle} and~\ssign\ref{sec:pf-prophom2canon} we
utilise these operations
to obtain the desired conform~$F$-system $(B,\ccB_F)$, for
which homogeneous colourings induce
canonical copies of~$F$.

\subsection{Extensions of conform systems}\label{sec:simpleext}
We study simple operations on conform systems that keep
conformity intact.
For example, it is easy to check that removing a copy $F'$
of~$F$ from an
$n$-conform~$F$-system (consisting of at least two copies), where
we only remove vertices and edges that are only contained in
$F'$ and no other copy of~$F$,
results in an~$n$-conform system. Below we analyse the situation when
an~$n$-conform system is enlarged.

The first observation shows
that the union of conform~$F$-systems intersecting in exactly
one copy of~$F$ is again conform.

\begin{lemma}\label{lem:union-conform}
	Suppose $(B,\ccB_F)$ and $(B',\ccB'_F)$ are two~$n$-conform
	$F$-systems
	satisfying $B\cap B'\in\ccB_F\cap\ccB'_F$.
	Then the union
$(B\cup B',\ccB_F\cup \ccB'_F)$
is again an~$n$-conform~$F$-system.
\end{lemma}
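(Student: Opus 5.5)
We verify the four conditions of Definition~\ref{def:conf} for the union $(B\cup B',\ccB_F\cup\ccB'_F)$, writing $F_0=B\cap B'$ for the common copy, $U=B\cup B'$, and $S=V(F_0)$. We first record that $S$ separates $U$: every edge of $U$ lies in $B$ or in $B'$, and an edge between $V(B)\sm S$ and $V(B')\sm S$ would lie in neither. Since $F_0$ is induced in $B$ and in $B'$, the graph $U[S]$ equals $F_0$. Hence any path in $U$ decomposes into segments alternating between $B$ and $B'$, consecutive segments meeting at vertices of $S$.

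\emph{Clean intersections.} Two copies from the same system intersect cleanly by hypothesis. For $F_1\in\ccB_F\sm\{F_0\}$ and $F_2\in\ccB'_F\sm\{F_0\}$ we have $V(F_1)\cap V(F_2)\subseteq S$, so $V(F_1)\cap V(F_2)\subseteq V(F_1)\cap V(F_0)$. Since $F_1$ and $F_0$ intersect cleanly, this set has at most one vertex or is an edge of $F_0$; the same holds with $F_2$ in place of $F_1$. If $|V(F_1)\cap V(F_2)|=2$, then both $F_1\cap F_0$ and $F_2\cap F_0$ are exactly this edge, so it lies in $E(F_1)\cap E(F_2)$. We also check that all copies remain induced in $U$. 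New edges between vertices of $B$ could only join two vertices of $S$, and these edges already belong to $F_0\subseteq B$.

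\emph{Clique number.} A clique in $U$ cannot contain vertices from both $V(B)\sm S$ and $V(B')\sm S$, because no edges join these sets. So every clique lies in $B$ or in $B'$, and $\omega(U)=\omega(F)$.

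\emph{Odd girth and $n$-inducedness.} These are the main points, and we treat them with a shortcutting argument. Let $C$ be a shortest odd cycle in $U$ and suppose it lies in neither $B$ nor $B'$. Then $C$ splits into an even number of arcs alternating between $B$ and $B'$, with endpoints in $S$. Take an arc $P$ in $B'$, say, with endpoints $x\ne y$ in $S$ and length $\ell$. If $\ell<n+1$ or some similar bound holds, $n$-inducedness of $F_0$ in $B'$ gives a path $Q$ in $F_0$ with $|Q|\le\ell$. Moreover $Q$ can be chosen so that $P\cup Q$ closes to an even cycle; otherwise $P\cup Q$ is an odd closed walk of length at most $2\ell$ inside $B'$, which is compatible with the bound we need. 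Replacing $P$ by $Q$ yields a closed walk of length at most $|C|$ with smaller intersection with $B'\sm S$, and we proceed by induction.

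The parity bookkeeping is delicate. If $|C|\le 2n+1$, then every arc has length at most $2n$, but $n$-inducedness only controls paths of length at most $n$. To handle this we choose the arc of length at most $|C|/2\le n$; such an arc exists because we may pair arcs, or because $C$ minus the other arcs is short. Given such an arc $P$ with $\ell\le n$, we have $d_{F_0}(x,y)<\ell$, which gives a shorter walk $C'=(C-P)\cup Q$. If $C'$ is odd, it contains an odd cycle shorter than $C$, a contradiction. If $C'$ is even, then $P\cup Q$ is an odd closed walk in $B'$ of length less than $2\ell\le|C|$. This again contains a shorter odd cycle, a contradiction. The result is $\og(U)\ge\min\{\og(F),2n+1\}$.

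For the remaining condition, take a copy $F_1\in\ccB_F$ and a path of length at most $n$ between two of its vertices that is not contained in $F_1$. The excursions of this path into $B'$ have length at most $n$ and endpoints in $S$, so the same shortcutting (using $d_{F_0}(x,y)$ at most the excursion length) turns it into a walk in $B$ that is no longer. If the walk is unchanged, $n$-inducedness of $F_1$ in $B$ applies directly. If it changed, the original path had strictly greater length than a walk in $B$, which bounds $d_{F_1}$ from above provided the walk is not inside $F_1$. If the walk is inside $F_1$, the distance in $F_1$ is at most the walk length, which is strictly less than the path length. The step I expect to be the main obstacle is this strictness, together with the parity argument in the odd girth step; both rest on the strict inequality supplied by $n$-inducedness of $F_0$.
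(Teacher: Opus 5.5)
Your proposal is correct and takes essentially the same route as the paper. The ingredients match: $V(F_0)$ as a separator; for the odd girth, the shorter of an excursion into $V(B)\sm V(F_0)$ and one into $V(B')\sm V(F_0)$, together with the parity split (this choice of arc is exactly what gives $2\ell\le|C|$ and guarantees that $n$-inducedness of $F_0$ applies); and for $n$-inducedness, replacing excursions into the other side by strictly shorter paths in $F_0$. The one thing to clean up is your first odd-girth paragraph: the claim that $Q$ ``can be chosen'' to have a prescribed parity is not true in general, and the appeal to induction is not needed, since your second paragraph already completes the argument.
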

\begin{proof}
	Obviously, $(B\cup B',\ccB_F\cup \ccB'_F)$ is still an
	$F$-system  with
	\[
		\omega(B\cup B')=\max\big\{\omega(B),\omega(B')\big\}=\omega(F)
	\]
	and the copies of~$F$ in $\ccB_F\cup \ccB'_F$ have clean
	intersections.

	Concerning the odd girth, suppose~$C$ is a shortest odd
	cycle in $B\cup B'$. We may assume that~$C$ is neither
	contained in~$B$ nor in~$B'$. Let $v_1,\dots,v_k$ be the
	vertices of the unique shared copy
	$\Fs\in\ccB_F\cap\ccB'_F$ appearing on the cycle
	$C$ in cyclic order. For $i\in\ZZ/k\ZZ$
	let~$P_i$ be the corresponding~$v_i$-$v_{i+1}$-subpath in
	$C$. By our choice of~$C$ there are $i\neq j$
	such that~$P_i$ contains a vertex from $V(B)\sm
	V(B')$ and~$P_j$ contains a vertex from $V(B')\sm V(B)$.
	Let~$\l_i$ and~$\l_j$ denote the lengths of these two paths
	and suppose $\l_i\leq \l_j$.

	If $\l_i\geq n$, then the length of~$C$ is at least $2n+1$ and
	we are done. Assuming $\l_i<n$ allows us to appeal
	to the~$n$-inducedness of $(B,\ccB_F)$, which yields a
	shorter~$v_i$-$v_{i+1}$-path~$Q_i$ within~$\Fs$.
	If~$Q_i$ and~$P_i$ have the same parity, then replacing
	$P_i$ by~$Q_i$ in~$C$ leads to
	a shorter closed odd walk in $B\cup B'$ contradicting the
	minimal choice of~$C$. Otherwise
	the union of~$P_i$ and~$Q_i$ forms a closed odd walk~$W$ with
	\[
		\text{length of $W$}
		=\text{length of $Q_i$}+\text{length of $P_i$}
		<2\l_i
		\leq \l_i+\l_j
		\leq\text{length of $C$}\,,
	\]
	which leads to the same contradiction.

	For the~$n$-inducedness we consider an~$x$-$y$-path~$P$ of
	length~$m\leq n$
	in $B\cup B'$ connecting two vertices in a copy $F_0\in
	\ccB_F$, but such that \(P\not\subseteq F_0\). If~$P$ is
	contained in~$B$, then the~$n$-inducedness of $(B,\ccB_F)$
	implies that the
	distance in~$F_0$ is less than~$m$. Otherwise, any maximal
	segment $aPb$ with all inner vertices from
	$V(B')\sm V(B)$ has the property that~$a$ and~$b$ are
	both contained in
	the unique shared copy $\Fs\in\ccB_F\cap\ccB'_F$.
	Consequently, the~$n$-inducedness of $(B',\ccB'_F)$ asserts
	that the segment $aPb$ can be replaced by a shorter path
	contained in $\Fs\subseteq B$.
\end{proof}

Other simple extension operations concern adding a single
copy of~$F$ to a conform system with restricted
intersections. For example, it is easy to see that adding a
single copy of~$F$,
which only intersects the given system in a single vertex or
a single edge, has no effect on the conformity, which we
summarise in the following fact.
\begin{fact}\label{fact:simple-ext}
	Suppose $(B,\ccB_F)$ is an~$n$-conform~$F$-system and $F'$
	is a copy of~$F$ such that
	\[
		\big|V(F')\cap V(B)\big|\leq 1
		\qquad\text{or}\qquad
		V(F')\cap V(B)\in E(F')\cap E(B)\,.
	\]
	Then the extension $(B\cup F',\ccB_F\cup \{F'\})$
	is also an~$n$-conform~$F$-system.\qed
\end{fact}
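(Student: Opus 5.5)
The plan is to reduce Fact~\ref{fact:simple-ext} to Lemma~\ref{lem:union-conform}, or, where that is awkward, to rerun the argument of that lemma with a trivial shared piece. Throughout we assume that $F'$ is an induced copy of~$F$ in $B\cup F'$. This holds automatically, since $F'$ meets $B$ in at most one vertex or in one edge of~$F'$.

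We verify the four conditions of Definition~\ref{def:conf} one at a time.

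\emph{Clean intersections.} Any $F_0\in\ccB_F$ satisfies $V(F_0)\cap V(F')\subseteq V(F')\cap V(B)$. This set is either of size at most one, or is an edge $e$ of both $F'$ and~$B$. In the latter case, either $|V(F_0)\cap V(F')|\le 1$, or $V(F_0)\cap V(F')=e$. Since $F_0$ is induced in~$B$ and $e\in E(B)$, we get $e\in E(F_0)$, so $F_0$ and $F'$ intersect cleanly.

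\emph{Clique number.} A clique in $B\cup F'$ that meets both $V(B)\sm V(F')$ and $V(F')\sm V(B)$ would need an edge between these two sets, and no such edge exists. Hence every clique lies in $B$ or in $F'$, and $\omega(B\cup F')=\omega(F)$.

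\emph{Odd girth and $n$-inducedness.} The key observation is that the shared part $S=B\cap F'$ (empty, a vertex, or an edge) separates $B$ from~$F'$. Consider a shortest odd cycle $C$ in $B\cup F'$.
\begin{itemize}
\item If $|V(S)|\le 1$, then $S$ is a cut of size at most one, so every cycle, and in particular~$C$, lies entirely in $B$ or in~$F'$.
\item If $S=\{u,v\}$ with $uv$ an edge of both graphs, suppose $C$ uses vertices on both sides. Then $C$ consists of a $u$-$v$-path $P$ through $F'$ and a $u$-$v$-path $P'$ through $B$. Each of $P+uv$ and $P'+uv$ is a cycle in $F'$ or in~$B$. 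Their lengths sum to $|C|+2$, which is odd, so one of them is an odd cycle of length at most $|C|$ that lies inside $B$ or~$F'$.
\end{itemize}
In either case, $\og(B\cup F')\ge\min\{\og(B),\og(F)\}\ge\min\{\og(F),2n+1\}$.

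For $n$-inducedness, take $x,y$ in a copy $F_0\in\ccB_F\cup\{F'\}$ and an $x$-$y$-path $P$ of length at most $n$ with $P\not\subseteq F_0$.
\begin{itemize}
\item If $P$ stays on one side, we use the $n$-inducedness of $(B,\ccB_F)$ when $F_0\in\ccB_F$. When $F_0=F'$, any path lying inside $F'$ is contained in $F_0$, so there is nothing to check.
\item Otherwise $P$ makes excursions through the other side. Each excursion leaves and re-enters through $S$, so $S$ must be the edge $uv$ and the excursion is a $u$-$v$-subpath of length at least~$2$.
\end{itemize}
Replace every such excursion by the edge $uv$. This produces a strictly shorter walk on the home side of~$F_0$, from which we extract a strictly shorter $x$-$y$-path.
\begin{itemize}
\item If this new path leaves~$F_0$, then $n$-inducedness of $(B,\ccB_F)$ gives $\dist_{F_0}(x,y)$ smaller than its length, hence smaller than $|P|$.
\item If it lies in~$F_0$, its length is already less than~$|P|$.
\end{itemize}

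The only point needing care is this last reduction when $F_0=F'$ and the path dips into~$B$. The reduction shows $\dist_{F'}(x,y)<|P|$ directly, because $uv\in E(F')$.
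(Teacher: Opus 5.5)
Your argument is correct and takes the same approach the paper implicitly intends: the paper leaves this fact as an easy check, and your direct verification of the four conditions of Definition~\ref{def:conf} uses the same mechanism the paper uses in the proofs of Lemmas~\ref{lem:extend-family-2m} and~\ref{lem:extend-family}. That mechanism is that there are no edges between $V(B)\sm V(F')$ and $V(F')\sm V(B)$, together with shortcutting the unique possible excursion through the shared edge $uv$. Your opening suggestion to reduce to Lemma~\ref{lem:union-conform} would not apply literally, since the shared part is not a common copy of~$F$; you never actually use it, so nothing is lost.
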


The following lemmata analyse the situation when the added
copy intersects a
conform system in three or four vertices. For that we recall
the following standard graph-theoretic concepts.
For a graph $B=(V,E)$ and~$x$, $y\in V$
we denote by $\dist_B(x,y)$ the length of a shortest
$x$-$y$-path in~$B$ if such a path exists and~$\infty$ otherwise.
More generally, for sets of vertices
$X$, $Y\subseteq V$ we write $\dist_B(X,Y)$ for the minimum
of $\dist_B(x,y)$ over all
$x\in X$ and $y\in Y$. In particular, $\dist_B(X,Y)=\infty$
if one of the sets~$X$ or~$Y$ is empty.
For vertices $v\in V$, edges $xy\in E$, and subgraphs $\Fs\subseteq B$
we also simply write $\dist_B(v,\Fs)$ and $\dist_B(xy,\Fs)$
instead of $\dist_B(\{v\},V(\Fs))$ and
$\dist_B(\{x,y\},V(\Fs))$, respectively.

The first lemma addresses the case when the added copy of~$F$
intersects the given conform system
in a vertex and an edge.

\begin{lemma}\label{lem:extend-family-ve}
	Suppose $(B,\ccB_F)$ is an~$n$-conform~$F$-system
	with an edge $ab\in E(B)$ and a vertex $y\in
	V(B)\sm\{a,b\}$ satisfying $B=\bigcup\ccB_F$ and
	\begin{equation}\label{eq:distance-ve}
		\dist_B(ab,\Fs)+\dist_B(y,\Fs)
		\geq
		\max\big\{3,\,\min\{\og(F)-1,2n\},\,n+1\big\}
	\end{equation}
	for every $\Fs\in\ccB_F$.

	Then for every copy $F'$ of~$F$ satisfying $V(F')\cap
	V(B)=\{a,b,y\}$ and
	$ab\in E(F')$ the extension $(B\cup F',\ccB_F\cup \{F'\})$
	is an~$n$-conform~$F$-system.
\end{lemma}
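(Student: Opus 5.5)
We verify the four conditions of Definition~\ref{def:conf} for $B'=B\cup F'$ with $\ccB'_F=\ccB_F\cup\{F'\}$. The argument mirrors the proof of Lemma~\ref{lem:union-conform}. The new feature is that $F'$ meets $B$ in the set $\{a,b,y\}$, which is not covered by a single copy of~$F$. Hypothesis~\eqref{eq:distance-ve} replaces the role of the shared copy by forcing long detours. First note that $ay,by\notin E(B)$, since otherwise for any $\Fs\ni a$ we would get a left-hand side of~\eqref{eq:distance-ve} of at most $1<3$. Also $ab$ and $y$ lie at distance at least $3$ from one another in $B$, by the triangle inequality applied to any copy containing $a$.

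\emph{Clean intersections.} For $\Fs\in\ccB_F$ we have $V(\Fs)\cap V(F')\subseteq\{a,b,y\}$. Because $\dist_B(ab,\Fs)+\dist_B(y,\Fs)\ge3$, the copy $\Fs$ cannot meet both $\{a,b\}$ and $\{y\}$. Hence the intersection is contained in $\{a,b\}$ or equals $\{y\}$. If it is $\{a,b\}$, it is the edge $ab\in E(\Fs)\cap E(F')$, since $\Fs$ is induced.

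\emph{Clique number.} Any clique of $B'$ lies in $B$ or in $F'$, because the only possible edges between $V(F')\sm V(B)$ and $V(B)\sm V(F')$ do not exist, and $y$ is not adjacent to $a$ or $b$ in $B$. Should $F'$ use edges $ay$ or $by$, these are new edges. A clique containing such an edge together with a vertex of $B\sm F'$ would need that vertex adjacent to $y$ and to $a$ in $B$. That yields a path $a\,z\,y$, giving distance at most $2$ from $y$ to the copy containing $az$, which contradicts~\eqref{eq:distance-ve}. So $\omega(B')=\omega(F)$.

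\emph{Odd girth and $n$-inducedness.} These are the main work. Take a shortest odd cycle $C$ in $B'$ not contained in $B$ or in $F'$. It decomposes into segments alternating between $F'$-parts and $B$-parts, with endpoints in $\{a,b,y\}$. Segments joining $a$ and $b$ inside $B$ can be replaced by the edge $ab$, or can be used to form a shorter closed odd walk, using $n$-inducedness of the copies containing $ab$. So we may assume that some $B$-segment $P$ joins $y$ to $\{a,b\}$. Since $B=\bigcup\ccB_F$, the segment $P$ starts in a copy containing $a$ or $b$ and ends in a copy containing $y$. Walking along $P$ through the copies of $\ccB_F$, the intention is to show that $|P|$ is at least $\min_{\Fs}\bigl(\dist_B(ab,\Fs)+\dist_B(y,\Fs)\bigr)$. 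The cleanest case is when a single copy $\Fs$ is passed through; the general case requires showing that the passage from $ab$ to $y$ must cross some $\Fs$ whose two distances sum to at most $|P|$. This crossing step is the expected obstacle. I would attack it by choosing $\Fs$ to be a copy containing a vertex of $P$ that is closest to both ends in a suitable sense.

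Granting this bound, the remainder is bookkeeping. Add the closing path inside $F'$, of length at least $1$ from $y$ to $\{a,b\}$. Then $|C|\ge\min\{\og(F)-1,2n\}+1$, which gives $\og(B')\ge\min\{\og(F),2n+1\}$.

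For $n$-inducedness, consider an $x$-$y'$ path of length at most $n$ between two vertices of a copy. If the path leaves that copy through $F'$ (respectively through $B$), it either only uses the edge $ab$, in which case it can be shortcut inside the copy by inducedness, or it connects $y$ with $\{a,b\}$. In the latter case its length is at least $n+1$ by the same bound, which is impossible.
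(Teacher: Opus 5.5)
Your plan is the paper's: the paper proves the two-edge version, Lemma~\ref{lem:extend-family-2m}, and says the same reasoning gives this lemma. Your treatment of clean intersections and of the clique number is fine. The gap is at the step you yourself call the main work. The lower bound on a $B$-segment $P$ of $C$ joining some $u\in\{a,b\}$ to $y$ is only ``granted'', and you expect it to need an argument about $P$ crossing copies. No such argument is needed. For any vertex $w$ of $P$, use $B=\bigcup\ccB_F$ to choose a copy $F_w\in\ccB_F$ with $w\in V(F_w)$. Since $u\in\{a,b\}$ and $w\in V(F_w)$, we get
\[
	|E(P)|\ge\dist_B(u,w)+\dist_B(w,y)\ge\dist_B(ab,F_w)+\dist_B(y,F_w)\,.
\]
This is the paper's one-line argument, with $w$ an inner vertex. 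Taking $w=u$ shows that your opening observation already gives more than you extracted. Using a copy containing $a$ (or $b$), both $\dist_B(y,a)$ and $\dist_B(y,b)$ are at least $\max\{3,\min\{\og(F)-1,2n\},n+1\}$, not merely~$3$. This settles the odd girth, since $|E(C)|\ge|E(P)|+1$. It also settles $n$-inducedness of the new copy~$F'$. A path between vertices of $F'$ that leaves $F'$ makes a detour through $V(B)\sm V(F')$ between two of $a,b,y$. Such a detour is either replaceable by the edge $ab$ or has length at least $n+1$. Incidentally, the $a$-$b$ segments in the odd-girth part need no $n$-inducedness; the chord argument with $ab$ suffices.

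Your $n$-inducedness argument for the old copies $\Fs\in\ccB_F$ has a second, related gap: ``the same bound'' is the wrong tool there. A $u$-$v$-path $P$ with $u,v\in V(\Fs)$ may get from $y$ to $\{a,b\}$ \emph{inside} $F'$, for instance along a new edge $ay\in E(F')$. A bound on $B$-paths between $y$ and $\{a,b\}$ then says nothing. Instead, use \eqref{eq:distance-ve} for the particular copy $\Fs$. Suppose $P$ has length at most~$n$ and uses an edge of $E(F')\sm E(B)$. Then one of two things happens.
\begin{itemize}
\item The first and last vertices of $P$ in $\{a,b,y\}$ are $a$ and $b$. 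Replacing the part between them by the edge $ab$ gives a strictly shorter $u$-$v$-path inside $B$, to which the $n$-inducedness of $(B,\ccB_F)$ applies.
\item These first and last vertices are $y$ and a vertex of $\{a,b\}$. The initial and terminal subpaths of $P$ then lie in $B$, so
\[
	|E(P)|>\dist_B(ab,\Fs)+\dist_B(y,\Fs)\ge n+1\,,
\]
which is impossible.
\end{itemize}
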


The next lemma deals with a very similar case, when the
single vertex~$y$ is replaced by
an edge $xy$ disjoint from $ab$. Below we shall only
prove that version of the lemma, since the same reasoning
yields Lemma~\ref{lem:extend-family-ve}.

\begin{lemma}\label{lem:extend-family-2m}
	Suppose $(B,\ccB_F)$ is an~$n$-conform~$F$-system
	with disjoint edges $ab$, $xy\in E(B)$ satisfying
	$B=\bigcup\ccB_F$ and
	\[
		\dist_B(ab,\Fs) + \dist_B(xy,\Fs)
		\geq
		\max\big\{3,\,\min\{\og(F)-1,2n\},\,n+1\big\}
	\]
	for every $\Fs\in\ccB_F$.

	Then for every copy $F'$ of~$F$ satisfying $V(F')\cap
	V(B)=\{a,b,x,y\}$ and
	$ab$, $xy\in E(F')$ the extension $(B\cup F',\ccB_F\cup
	\{F'\})$ is an~$n$-conform~$F$-system.
\end{lemma}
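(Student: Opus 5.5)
We verify the four conditions of Definition~\ref{def:conf} for $(B\cup F',\ccB_F\cup\{F'\})$ one at a time. Throughout, write $D:=\max\{3,\min\{\og(F)-1,2n\},n+1\}$ for the bound in the hypothesis, and note that $F'$ meets $B$ exactly in $\{a,b,x,y\}$. Among these four vertices, $ab$ and $xy$ are edges of both $B$ and $F'$. The pairs $ax,ay,bx,by$ may or may not be edges of $F'$, but they cannot be edges of $B$, and here is why. Since $B=\bigcup\ccB_F$, every edge of $B$ lies in some copy $\Fs$, and such a copy would have $\dist_B(ab,\Fs)=\dist_B(xy,\Fs)=0$, contradicting $D\ge 3$. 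In fact the hypothesis says more: no copy $\Fs\in\ccB_F$ comes within combined distance less than $D$ of both edges.

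\emph{Clean intersections.} Fix $\Fs\in\ccB_F$. Then $V(\Fs)\cap V(F')\subseteq\{a,b,x,y\}$. If $\Fs$ meets both $\{a,b\}$ and $\{x,y\}$, the distance sum is $0<3$, which is impossible. So $\Fs$ meets at most one of the two edges. If it contains $a$ and $b$, then the intersection is exactly $\{a,b\}$, which is an edge of both copies; if it contains only one of them, the intersection is a single vertex. The case of $\{x,y\}$ is symmetric.

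\emph{Clique number.} Any clique of $B\cup F'$ lies entirely in $B$ or entirely in $F'$, unless it contains a vertex of $V(F')\sm V(B)$ together with a vertex of $V(B)\sm V(F')$. Such a pair is never adjacent, so this mixed case cannot occur, and therefore $\omega(B\cup F')=\omega(F)$.

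\emph{Odd girth and $n$-inducedness.} These are the main part, and I would model the argument on the proof of Lemma~\ref{lem:union-conform}. Take a shortest odd cycle $C$ that lies neither in $B$ nor in $F'$. It decomposes into segments whose endpoints lie in $\{a,b,x,y\}$, alternating between $B$-segments and $F'$-segments. The delicate case, and the one I expect to be the main obstacle, is a $B$-segment joining $\{a,b\}$ to $\{x,y\}$. Such a segment has length at least $\dist_B(ab,xy)$, and this distance must be compared with $D$. To do so, pick a copy $\Fs\ni a$; it has $\dist_B(ab,\Fs)=0$, so $\dist_B(xy,\Fs)\ge D$, and hence $\dist_B(\{a,b\},\{x,y\})\ge D$. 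If $C$ uses two such crossing segments, or one crossing segment closed up by a path inside $F'$ of length at least $1$, then $|C|\ge D+1\ge\min\{\og(F),2n+1\}$ and we are done. Segments of $C$ inside $F'$ between $a$ and $b$ can be replaced by the edge $ab$, or, if the parity is wrong, they close up with $ab$ to give a shorter odd cycle inside $F'$; segments between $x$ and $y$ are handled the same way. Likewise, $B$-segments from $a$ to $b$ (or from $x$ to $y$) can be replaced by the edge $ab$ (or $xy$). In the remaining case $C$ has the form of a crossing $B$-path plus a crossing $F'$-path, and this case is exactly handled by the bound $D\ge\min\{\og(F)-1,2n\}$ just obtained.

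For $n$-inducedness of a copy $F_0\in\ccB_F$, take a short path $P$ of length at most $n$ that leaves $B$. The detour through $F'$ must enter and exit via $\{a,b,x,y\}$. If it enters and exits through the same edge, replace the detour by that edge. If it crosses from $\{a,b\}$ to $\{x,y\}$, then the parts of $P$ inside $B$ together have length at least $\dist_B(ab,F_0)+\dist_B(xy,F_0)\ge n+1$, so $P$ is too long. For $F'$ itself, take an $F'$-vertex pair joined by a path through $B$ of length at most $n$. This path leaves $F'$ via $B$-segments. Segments between $a$ and $b$ (or between $x$ and $y$) are shortcut by an edge. A crossing segment has length at least $D\ge n+1$, which again exceeds the budget. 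In every case the shortcut is strictly shorter, so $F'$ is $n$-induced.
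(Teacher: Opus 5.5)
\paragraph{Overall.} Your strategy is the paper's: check the four conditions one by one, and use the distance hypothesis to forbid copies meeting both $\{a,b\}$ and $\{x,y\}$. Then show that $B$-paths from $\{a,b\}$ to $\{x,y\}$ are long, and shortcut everything else through $ab$ or $xy$. Your clean-intersection and odd-girth arguments are fine. Your bound $\dist_B(\{a,b\},\{x,y\})\ge D$, obtained from copies containing $a$ and $b$, is a valid substitute for the paper's use of a copy $F_w$ through an inner vertex $w$.

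\paragraph{The gap: clique number.} You assert that every clique of $B\cup F'$ lies in $B$ or in $F'$ unless it contains a vertex of $V(F')\sm V(B)$ and a vertex of $V(B)\sm V(F')$. This does not follow from the structure of $B\cup F'$ alone. A clique all of whose vertices lie in $V(B)$ may use a new edge $uv\in E(F')\sm E(B)$ with $u\in\{a,b\}$ and $v\in\{x,y\}$, together with further vertices $w\in V(B)\sm\{a,b,x,y\}$. Such a clique lies neither in $B$ nor in $F'$, and you never rule it out. Ruling it out is exactly where the term $3$ in the hypothesis is used, and your argument never invokes the hypothesis at this point.

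The fix is short. Every such $w$ lies in some $F_w\in\ccB_F$ (because $B=\bigcup\ccB_F$), so $\dist_B(u,w)+\dist_B(v,w)\ge\dist_B(ab,F_w)+\dist_B(xy,F_w)\ge 3$. Alternatively, use your own bound: $\dist_B(u,v)\ge D\ge 3$. Either way, $w$ cannot be adjacent to both $u$ and $v$. Hence such a clique has its vertex set inside $\{a,b,x,y\}$. It then lies in $F'$, since $B$ has no edges between $\{a,b\}$ and $\{x,y\}$.

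\paragraph{A smaller point: $n$-inducedness of $F_0\in\ccB_F$.} If ``the detour'' means each individual excursion of $P$ into $F'$, the crossing case is false as stated. For example, $P$ may run $a\to x$ inside $F'$, then $x\to y$ inside $B$, then $y\to b$ inside $F'$. In that case the $B$-parts of $P$ need not have total length at least $\dist_B(ab,F_0)+\dist_B(xy,F_0)$.

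Argue instead with the first and last vertices $s,t$ of $P$ in $\{a,b,x,y\}$; the initial and final segments of $P$ then lie in $B$. Either $\{s,t\}$ is $\{a,b\}$ or $\{x,y\}$, and $sPt$ is shortcut by that edge. Otherwise $|P|\ge\dist_B(ab,F_0)+\dist_B(xy,F_0)+1\ge n+2$. This is what the paper gets by taking $P$ to be a shortest path.
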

\begin{proof} Property~\ref{def:conf-clean} of
	Definition~\ref{def:conf} concerns clean intersections and for
	the extended system~$\ccB_F\cup\{F'\}$ this is a direct
	consequence of the distance assumption, since it prevents
	any copy $\Fs\in\ccB_F$ from containing a vertex from $\{a,b\}$
	and $\{x,y\}$ at the same time. In particular,
	this also shows that all copies of~$F$ in the extended
	system $\ccB_F\cup \{F'\}$ are still
	induced in $B\cup F'$, regardless of whether $F'$ contains an edge
	with one vertex in $\{a,b\}$ and one in $\{x,y\}$.

	Next we address property~\ref{def:conf-omega} of
	Definition~\ref{def:conf}. For that we
	suppose for a contradiction that there is a clique
	$K_m\subseteq B\cup F'$ with $m>\omega(B)=\omega(F)$.

	If $V(K_m)\subseteq V(B)$,
	then~$K_m$ contains an edge $uv\in E(F')$ with
	$u\in\{a,b\}$ and $v\in\{x,y\}$ and another vertex
	$w\in V(B)\sm\{a,b,x,y\}$. Since $B=\bigcup\ccB_F$
	there is some copy $F_w\in\ccB_F$ containing~$w$ and owing
	to the distance assumption we have
	\[
		\dist_B(u,w)+\dist_B(v,w)
		\geq \dist_B(ab,F_w)+\dist_B(xy,F_w)
		\geq 3\,.
	\]
	Hence, one of the edges $uw$ or $vw$ must be missing in~$B$, as
	well as in $B\cup F'$, ruling out this case.

	In the other case~$K_m$ must contain vertices $w\in
	V(B)\sm V(F')$ and
	$w'\in V(F')\sm V(B)$, which contradicts the fact
	that $ww'\not\in E(B\cup F')$.

	For property~\ref{def:conf-oddg} of Definition~\ref{def:conf}
	we consider a shortest odd cycle~$C$ in $B\cup F'$ not
	contained in~$B$ and not contained in $F'$.
	Such a cycle~$C$ must contain at least two distinct
	vertices from the set $\{a,b,x,y\}$. In fact, $C$ contains
	a~$u$-$v$-path~$P$ with~$u$, $v\in\{a,b,x,y\}$ and all
	internal vertices from
	$V(B)\sm V(F')$. If $uv$ equals the edge $ab$, then
	either $C=P+ab$ contradicting
	that~$C$ is not contained in~$B$ or~$ab$ is a chord in~$C$.
	However, such a chord~$ab$ either contradicts the minimal
	choice of~$C$ or it reveals that the length of~$C$ is at
	least $\min\{\og(B),\og(F)\}$ and we are done.

	Consequently, it remains to address the case $u\in\{a,b\}$
	and $v\in\{x,y\}$. In this case~$P$ contains some internal
	vertex~$w$ contained in some copy $F_w\in\ccB_F$ and we arrive at
	\begin{align*}
		|E(C)|
		>
		|E(P)|
		&\geq
		\dist_B(u,w)+\dist_B(v,w)\\
		&\geq
		\dist_B(ab,F_w)+\dist_B(xy,F_w)
		\geq
		\min\{\og(F)-1,2n\}\,,
	\end{align*}
	which concludes the verification of
	property~\ref{def:conf-oddg} of Definition~\ref{def:conf}.

	Finally we turn our attention to the inducedness properties
	of the extended system.
	For that we first consider a copy $\Fs\in\ccB_F$ and two
	vertices~$u$, $v\in V(\Fs)$ for which there
	exists a shortest~$u$-$v$-path~$P$ in $B\cup F'$ of length
	at most~$n$. Moreover, we can assume
	that~$P$ contains some edge from $E(F')\sm E(B)$. In
	particular, $P$ must contain two vertices
	from $\{a,b,x,y\}$ and appealing to the minimality of~$P$
	we infer that~$P$ contains a vertex from~$\{a,b\}$
	and a vertex from $\{x,y\}$, since otherwise we may use one
	of the edges $ab$ or $xy$ to locate a shorter path~$Q$,
	which either contradicts the choice of~$P$ or it is already
	contained in~$B$ allowing us to appeal to the assumption
	that $(B,\ccB_F)$ is~$n$-induced.
	However, with~$P$ containing a vertex from $\{a,b\}$ and
	from $\{x,y\}$, we can invoke the distance assumption
	\[
		|E(P)|
		> \dist_B(ab,\Fs)+\dist_B(xy,\Fs)
		\ge n+1\,,
	\]
	which contradicts the choice of~$P$.

	It remains to consider the case when~$u$, $v$ are vertices
	of the added copy $F'$. However, in this case the reasoning
	above applies again: a shortest~$u$-$v$-path of length at
	most~$n$ not contained in~$F'$ either admits a shortcut
	through one of the edges $ab$, $xy\in E(F')$, or it meets
	both $\{a,b\}$ and $\{x,y\}$ and has length at least~$n+1$.
\end{proof}

The last lemma addresses the situation
when the extended copy intersects the given system in two
edges sharing a vertex~$y$.
For the statement we consider distances in the given graph
with~$y$ removed. For a graph $B=(V,E)$,
distinct vertices~$x$, $y\in V$, and a subgraph $F\subseteq B$ we set
\[
	\dist_{B-y}(x,F)=\dist_{B'}(x,V(F)\sm\{y\})
\]
for the induced subgraph $B'$ of~$B$ induced on
$V\sm\{y\}$. Note that we make no assumption
on whether the considered subgraph $F\subseteq B$ contains
the vertex~$y$ or not.

\begin{lemma}\label{lem:extend-family}
	Suppose $(B,\ccB_F)$ is an~$n$-conform~$F$-system
	with edges $xy$, $yz\in E(B)$ satisfying $B=\bigcup\ccB_F$ and
	\begin{equation}\label{eq:distance-P2}
		\dist_{B-y}(x,\Fs) + \dist_{B-y}(z,\Fs)
		\geq
		\max\{3,\,n+1\}
	\end{equation}
	for every $\Fs\in\ccB_F$.

	Then for every copy $F'$ of~$F$ satisfying $V(F')\cap
	V(B)=\{x,y,z\}$ and
	$xy$, $yz\in E(F')$ the extension $(B\cup F',\ccB_F\cup
	\{F'\})$ is an~$n$-conform~$F$-system.
\end{lemma}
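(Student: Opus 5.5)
We follow the scheme of the proof of Lemma~\ref{lem:extend-family-2m} and verify the four conditions of Definition~\ref{def:conf} for $(B\cup F',\ccB_F\cup\{F'\})$. The new feature is that $y$ is shared by both prescribed edges, so every path leaving $F'$ through $\{x,y,z\}$ and returning to $F'$ either passes through $y$ or runs from $x$ to $z$ inside $B-y$. The hypothesis~\eqref{eq:distance-P2} is tailored to the latter case: a path from $x$ to $z$ avoiding $y$ that meets a copy $\Fs\in\ccB_F$ in a vertex other than $y$ has length at least $\dist_{B-y}(x,\Fs)+\dist_{B-y}(z,\Fs)\ge\max\{3,n+1\}$.

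\emph{Clean intersections.} Since the sum in~\eqref{eq:distance-P2} is at least $3\ge1$, no $\Fs\in\ccB_F$ contains both $x$ and $z$. Hence $V(\Fs)\cap V(F')\subseteq\{x,y\}$ or $\subseteq\{y,z\}$. If two vertices are shared, say $\{x,y\}$, then $xy$ lies in $E(\Fs)$ because $\Fs$ is induced in $B$, and $xy\in E(F')$; so the intersection is clean. For the same reason the copies $\Fs$ stay induced in $B\cup F'$, since a possible new edge $xz$ never joins two vertices of one $\Fs$.

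\emph{Clique number.} A clique outside $B$ and outside $F'$ cannot contain both a vertex of $V(B)\sm V(F')$ and one of $V(F')\sm V(B)$. So it lies in $V(B)$, uses the new edge $xz$, and has a further vertex $w\ne y$. Take $F_w\ni w$; the edges $xw$, $zw$ would give $\dist_{B-y}(x,F_w)+\dist_{B-y}(z,F_w)\le2<3$, a contradiction. If $xz\notin E(F')$, nothing new arises.

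\emph{Odd girth and $n$-inducedness.} Take a shortest odd cycle $C$ in neither $B$ nor $F'$. It contains a segment $P$ with both ends in $\{x,y,z\}$ and all interior vertices in $V(B)\sm V(F')$. If the ends of $P$ are $x,y$ or $y,z$, the chord $xy$ resp.\ $yz$ either shortens $C$, which is impossible, or gives length at least $\og$ of $B$ or $F'$. Otherwise $P$ is an $x$-$z$-path avoiding $y$ with an interior vertex $w$ (unless $P=xz$, which is an edge of $B$ and can be handled like a chord). Using $F_w$ we get $|E(P)|\ge n+1$, so $|E(C)|\ge n+2$.

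This is the main obstacle. Unlike Lemma~\ref{lem:extend-family-2m}, the hypothesis has no $\min\{\og(F)-1,2n\}$ term, so I need length $2n+1$ from the remaining part $C-P$ as well. I would try to show that $C-P$ must be an odd $x$-$z$-path in $F'$, or else has length comparable to $P$. If it is in $F'$ and has length at most $n$, the $n$-inducedness of $F'$ in itself (trivial) does not help. Instead I compare with the path $x\,y\,z$ of length $2$ and use the parities of $P$, $C-P$ and $xyz$: two of the three cycles $P\cup xyz$, $(C-P)\cup xyz$, $C$ are odd. Replacing either part by $xyz$ therefore yields an odd cycle inside $B$ or inside $F'$, and I would combine this with the length bound on $P$ to get a contradiction with minimality or conclude $|C|\ge\min\{\og(F),2n+1\}$.

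For $n$-inducedness, a shortest $u$-$v$-path of length at most $n$ that uses new edges enters and leaves $F'$. By minimality, shortcuts through $xy$, $yz$ or $y$ reduce to the case of an $x$-$z$-segment avoiding $y$, which has length at least $n+1$ by~\eqref{eq:distance-P2}, a contradiction. The same argument applies with $u,v\in V(F')$.
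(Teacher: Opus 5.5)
Your arguments for clean intersections, the clique number and $n$-inducedness match the paper's proof. One small addition: $F'$ itself stays induced because $xz\notin E(B)$, since an edge $xz$ of $B=\bigcup\ccB_F$ would lie in a copy containing both $x$ and $z$. For the same reason your aside that ``$P=xz$ is an edge of $B$'' is wrong.

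The gap is in the odd-girth part. This is exactly where this lemma differs from Lemma~\ref{lem:extend-family-2m}, and there you only outline a plan. Its key claim is that replacing $P$ or $C-P$ by $x$-$y$-$z$ yields an odd cycle inside $B$ or inside $F'$. This fails as stated, because $C-P$ need not lie in $F'$. Suppose $C$ passes through $x$, $z$, $y$ in this cyclic order, with $P$ the $x$-$z$ segment in $B$, a $z$-$y$ segment whose inner vertices lie in $V(B)\sm V(F')$, and a $y$-$x$ segment inside $F'$. Then $(C-P)+xyz$ mixes edges of $B$ and of $F'$. Also, the bound $|E(P)|\ge n+1$ is a red herring here. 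What the argument actually needs is parity together with $|E(P)|\ge 2$ and $|E(C-P)|\ge 2$. The latter holds because $xz\notin E(B)$ and, without loss of generality, $xz\notin E(F')$ (otherwise $\og(F)=3$ and the condition is trivial).

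The paper closes this step with a global parity argument. Cut $C$ at all its vertices from $\{x,y,z\}$; every segment lies entirely in $B$ or entirely in $F'$ and has end pair $\{x,y\}$, $\{y,z\}$ or $\{x,z\}$. Keep one segment and replace each of the others by $xy$, $yz$ or $x$-$y$-$z$. These lie in both $B$ and $F'$ and are no longer than what they replace. Replacing all segments gives a closed walk in the path $x$-$y$-$z$, hence of even length. As $C$ is odd, some segment differs in parity from its replacement. Keeping that one gives an odd closed walk of length at most $|E(C)|$ inside $B$ or inside $F'$, so $|E(C)|\ge\min\{\og(B),\og(F)\}\ge\min\{\og(F),2n+1\}$.

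Your single-segment version can also be repaired by ordering the cases. First dispose of all $B$-segments with inner vertices and ends $\{x,y\}$ or $\{y,z\}$ by your chord argument. Then the only segment of $C$ not contained in $F'$ is the $x$-$z$ segment $P$, so $C-P\subseteq F'$. Your parity comparison now yields an odd closed walk of length at most $|E(C)|$ in $B$ (if $|E(P)|$ is odd) or in $F'$ (if $|E(P)|$ is even).
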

The proof of Lemma~\ref{lem:extend-family} parallels the
proof of Lemma~\ref{lem:extend-family-2m}
and the main difference concerns proving
property~\ref{def:conf-oddg} of Definition~\ref{def:conf}.
\begin{proof}
	Again the distance assumption ensures that no copy of~$F$
	in~$\ccB_F$ can contain both vertices~$x$ and~$z$.
	As a result all copies of~$F$ in the extended system
	$\ccB_F\cup \{F'\}$ are induced in $B\cup F'$.
	Moreover, since no copy of~$F$ in~$\ccB_F$ contains~$x$ and
	$z$, the extended system still enjoys clean
	intersections.

	For property~\ref{def:conf-omega} of
	Definition~\ref{def:conf} we again have only to consider cliques~$K_m$
	contained in $V(B)$ that appeared by adding the edge $xz$
	to~$B$. However, the distance assumption
	combined with $B=\bigcup\ccB_F$ ensures that at least one
	of the edges $wx$ or $wz$ must be missing in~$B$
	for every $w\in V(B)\sm \{x,y,z\}$.

	Regarding the odd girth in form of
	property~\ref{def:conf-oddg} of Definition~\ref{def:conf},
	we may assume
	that $xz\not\in E(F')$, since otherwise $\og(F)=3$ and
	property~\ref{def:conf-oddg} is trivial.
	We consider a shortest odd cycle~$C$ in $B\cup F'$ and we
	may assume that it is neither contained in~$B$ nor in~$F'$.
	The vertices of $\{x,y,z\}$ traversed by~$C$ split it into
	at least two segments whose internal vertices avoid
	$\{x,y,z\}$. Consequently, every such segment lies
	completely in~$B$ or completely in~$F'$ and its endvertices
	form one of the pairs $\{x,y\}$, $\{y,z\}$, or $\{x,z\}$.
	Now we fix one segment and replace every other segment by
	the edge~$xy$, the edge~$yz$, or the~$2$-edge path
	$x$-$y$-$z$, respectively, all of which are available in
	both~$B$ and~$F'$. This results in a closed walk, which is
	not longer than~$C$ and which is completely contained
	in~$F'$ or in~$B$, and a parity check shows that the kept
	segment can be chosen such that this walk is odd.

	It is left to show that $(B\cup F',\ccB_F\cup \{F'\})$ is~$n$-induced.
	For that we first consider a copy $\Fs\in\ccB_F$ and two
	vertices~$u$, $v\in V(\Fs)$ for which there
	exists a shortest~$u$-$v$-path~$P$ in $B\cup F'$ of length
	at most~$n$. Moreover, we can assume
	that~$P$ contains some edge from $E(F')\sm E(B)$. In
	particular, $P$ must contain two vertices
	from $\{x,y,z\}$ and owing to the minimality of~$P$ we infer that
$V(P)\cap\{x,y,z\}=\{x,z\}\,,$
since otherwise we locate a shorter path contained in~$B$
	allowing us to appeal to $\ccB_F\subseteq\binom{B}{F}_n$.
	Consequently, the distance assumption yields the contradiction
$|E(P)|
	>
	\dist_{B-y}(x,\Fs)+\dist_{B-y}(z,\Fs)
	\ge n+1\,.$

	It remains to consider the case when~$u$, $v$ are vertices
	of the added copy $F'$. However, in this case the reasoning
	above applies again: a shortest~$u$-$v$-path of length at
	most~$n$ not contained in~$F'$ either admits a shortcut
	through one of the edges $xy$, $yz\in E(F')$, or it contains
	a segment from~$x$ to~$z$ avoiding~$y$ that passes through a
	vertex $w\in V(B)\sm V(F')$ and, considering the
	copy~$\Fs$ containing~$w$, has length at least
	\[
		\dist_{B-y}(x,\Fs)+\dist_{B-y}(z,\Fs)
		\geq n+1\,,
	\]
	which is absurd. This concludes the proof of
	Lemma~\ref{lem:extend-family}.
\end{proof}

\subsection{Conformity of cycles of copies}
\label{sec:Scycle}
Next we consider special systems resembling the structure of
a cycle of copies of a given graph~$F$.
\begin{dfn}[cycles of copies]\label{def:Scycle}
	For $\l\geq 3$ let $S=(V_S,E_S)$ be a graph with no
	isolated vertices and~$\l$ edges,
	which is either an~$\l$-cycle or a family of pairwise
	vertex disjoint paths.
	Let $E_S=\{e_1,\dots,e_\l\}$ be an enumeration of the edges such that
	only consecutive edges (modulo~$\l$) share a vertex.

	We say an~$F$-system $(Z,\ccZ_F)$ with $Z=\bigcup\ccZ_F$ is
	an \emph{$S$-cycle}
	if there is an enumeration $\ccZ_F=\{F_1,\dots,F_\l\}$ such that
	we have
	\begin{enumerate}[label=\rmlabel]
		\item\label{it:Scyclea}	$E(F_i)\cap E_S=\{e_i,e_{i+1}\}$ and
			$V(F_i)\cap V_S=e_i\cup e_{i+1}$ for all $i\in\ZZ/\l\ZZ$
		\item\label{it:Scycleb} and
			$V(F_i) \cap V(F_{j})=(e_i\cup e_{i+1})\cap (e_j\cup e_{j+1})$
			for all distinct~$i$, $j\in \ZZ/\l\ZZ$.
	\end{enumerate}
	For an~$S$-cycle $(Z,\ccZ_F)$ we refer to $S\subseteq Z$ as
	its \emph{skeleton}.
\end{dfn}
It follows from part~\ref{it:Scycleb} of this definition and
the structural requirement on~$S$ that
for all~$i$, $j\in\ZZ/\l\ZZ$ we have
\[
	V(F_i)\cap V(F_j)\neq\emptyset\quad\Longrightarrow\quad
	j\in\{i-2,i-1,i,i+1,i+2\}\,.
\]
Moreover, for $\l\geq 5$ part~\ref{it:Scyclea} implies for
all distinct~$i$, $j\in\ZZ/\l\ZZ$
\[
	\big|V(F_i)\cap V(F_j)\big|\geq 2
	\ \Longrightarrow\
	j\in\{i-1,i+1\}\tand V(F_i)\cap V(F_j)\in E_S\,.
\]
In particular, for $\l\geq 5$ such~$S$-cycles have clean intersections.
Moreover, for sufficiently large~$\l$ such systems will be~$n$-conform.

\begin{lemma}\label{lem:Scycle-conform}
	For every integer $n\geq 1$ and for every~$S$-cycle
	$(Z,\ccZ_F)$ the following holds.
	If $|E(S)|\geq \max\big\{10,\,\min\{2\og(F)+2,\,4n+4\},\,2n+6\big\}$,
	then $(Z,\ccZ_F)$ is~$n$-conform.
\end{lemma}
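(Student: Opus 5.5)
The plan is to build the $S$-cycle $(Z,\ccZ_F)$ one copy at a time along the enumeration $F_1,\dots,F_\l$. We check conformity after each step using Fact~\ref{fact:simple-ext}, and for the last copy we use Lemma~\ref{lem:extend-family} or Lemma~\ref{lem:extend-family-2m}.

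\emph{Base case and chain.} The system $(F_1,\{F_1\})$ is trivially $n$-conform: $F_1$ is $n$-induced in itself and $\og(F_1)=\og(F)$. For $2\le i\le \l-1$ set $Z_i=F_1\cup\dots\cup F_i$. By part~\ref{it:Scycleb} of Definition~\ref{def:Scycle}, $V(F_i)$ meets $F_{i-1}$ in $e_i$ and meets $F_{i-2}$ in $e_{i-1}\cap e_i\subseteq e_i$. For $2\le i\le\l-2$ it meets no other earlier copy, and the copy $F_{\l-1}$ may additionally meet $F_1$ only in $e_\l\cap e_1$. For $i\le \l-2$ the intersection is therefore contained in the edge $e_i$ (a single vertex or the edge itself), so Fact~\ref{fact:simple-ext} applies. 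For $i=\l-1$ an extra vertex $e_\l\cap e_1$ can occur only when $S$ is a cycle. To avoid this asymmetry, I would instead stop the chain at $Z_{\l-1}$ by adding the copies in an order that leaves $F_\l$ to be added last. The point is that every copy of the chain except the last one meets the earlier ones inside a single edge of $S$. After this step $(Z_{\l-1},\{F_1,\dots,F_{\l-1}\})$ is an $n$-conform ``path of copies''.

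\emph{Closing copy.} The copy $F_\l$ meets $Z_{\l-1}$ exactly in $e_\l\cup e_1$, and both edges lie in $F_\l$. If $e_\l$ and $e_1$ share a vertex $y$, which is always the case when $S$ is a cycle, we write $e_\l=xy$ and $e_1=yz$ and apply Lemma~\ref{lem:extend-family}. If they are disjoint, which can happen only when $S$ is a union of paths, we apply Lemma~\ref{lem:extend-family-2m}, or, if one side is disconnected from the other, simply observe that the distances are infinite. Hence everything reduces to verifying the distance hypotheses~\eqref{eq:distance-P2} and its analogue in Lemma~\ref{lem:extend-family-2m} for every $\Fs=F_j$ with $j\le\l-1$.

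\emph{Distance estimate (main obstacle).} The key claim is a separator structure. In $Z_{\l-1}$, every path from $F_{\l-1}$ to $F_j$ must pass through the edges $e_{\l-1},e_{\l-2},\dots,e_{j+1}$, since each $e_i$ separates $F_1\cup\dots\cup F_{i-1}$ from $F_i\cup\dots\cup F_{\l-1}$. Similarly, every path from $F_1$ must pass through $e_2,\dots,e_j$. Inside a single copy $F_i$, the two separating edges $e_i$ and $e_{i+1}$ are at distance at least $0$, and jumping from $e_i$ to $e_{i+2}$ costs at least one edge. Consequently $\dist(e_\l,F_j)\ge\lfloor(\l-1-j)/2\rfloor-O(1)$ and $\dist(e_1,F_j)\ge \lfloor (j-1)/2\rfloor-O(1)$, and the same bounds hold in $Z_{\l-1}-y$, since removing $y$ only lengthens paths. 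Summing gives
\[
\dist(e_\l,F_j)+\dist(e_1,F_j)\ \ge\ \tfrac{\l}{2}-c
\]
for a small absolute constant $c$ (I expect $c\le 3$). The hypothesis $\l\ge\max\{10,\min\{2\og(F)+2,4n+4\},2n+6\}$ is then exactly what makes this sum at least $\max\{3,\min\{\og(F)-1,2n\},n+1\}$. The delicate part is pinning down the constant through a careful count of how many skeleton edges one copy can be traversed past with a single step. Here I would argue via the cut-vertex and cut-edge structure of the chain rather than through explicit path surgery, and I would treat the boundary copies $F_1$ and $F_{\l-1}$, which contain $e_1$ or $e_\l$ themselves, separately.
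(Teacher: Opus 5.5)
Your overall plan (a chain of simple extensions, then the extension lemmas, with separator-based distance bounds) is the right kind of argument. However, the step claiming that $(Z_{\ell-1},\{F_1,\dots,F_{\ell-1}\})$ is an $n$-conform ``path of copies'' built from Fact~\ref{fact:simple-ext} alone fails when $S$ is a cycle, and no reordering rescues it. By part~(i) of Definition~\ref{def:Scycle} we have $e_1\in E(F_1)$ and $e_\ell\in E(F_{\ell-1})$, so $F_1\cup\dots\cup F_{\ell-1}$ already contains the whole skeleton cycle $S$. The copies $F_1$ and $F_{\ell-1}$ share $y=e_\ell\cap e_1$, so the structure is a closed necklace, not a path.

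Whichever of $F_1,\dots,F_{\ell-1}$ you add last, it meets the others outside a single edge:
\begin{itemize}
\item $F_1$ meets them in $e_2\cup\{y\}$;
\item $F_{\ell-1}$ meets them in $e_{\ell-1}\cup\{y\}$;
\item $F_i$ with $2\le i\le \ell-2$ meets them in the two-edge path $e_i\cup e_{i+1}$.
\end{itemize}
Hence the hypothesis of Lemma~\ref{lem:extend-family} that $B=Z_{\ell-1}$ is already $n$-conform is unjustified in your argument. For the same reason, your claim that $e_i$ separates $F_1\cup\dots\cup F_{i-1}$ from $F_i\cup\dots\cup F_{\ell-1}$ is false in $Z_{\ell-1}$, because of the detour through $y$. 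It only holds after deleting $y$. A telling symptom: in your cycle case the term $\min\{2\og(F)+2,4n+4\}$ of the hypothesis is never used, since Lemma~\ref{lem:extend-family} only asks for $\max\{3,n+1\}$.

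The missing idea is an intermediate application of Lemma~\ref{lem:extend-family-ve}. First build only $Z_{--}=F_1\cup\dots\cup F_{\ell-2}$ by simple extensions; this genuinely is a path of copies. Then add $F_{\ell-1}$, which meets $Z_{--}$ exactly in $e_{\ell-1}\cup\{y\}$, using $ab=e_{\ell-1}$ and the vertex $y$. In $Z_{--}$ your separator count is valid and gives $\dist_{Z_{--}}(y,F_i)\ge\frac{i-1}{2}$ and $\dist_{Z_{--}}(e_{\ell-1},F_i)\ge\frac{\ell-3-i}{2}$, so the sum is at least $\frac{\ell}{2}-2$. This is where the full hypothesis on $|E(S)|$ is consumed, to reach $\max\{3,\min\{\og(F)-1,2n\},n+1\}$ in~\eqref{eq:distance-ve}. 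Only afterwards do you close with $F_\ell$ via Lemma~\ref{lem:extend-family}, measuring distances in $Z_{\ell-1}-y$.

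Your hedge $c\le 3$ would also not suffice: the hypothesis is calibrated for $c=2$ exactly. With $c=3$, the bound $\ell\ge 2n+6$ only yields $n<n+1$, and $\ell\ge 10$ only yields $2<3$.

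In the path case your plan is fine once you rotate the enumeration so that $e_\ell\cap e_1=\emptyset$. This is always possible since the paths have a break, and the definition of an $S$-cycle is invariant under cyclic shifts. After rotating, the chain really consists of simple extensions and Lemma~\ref{lem:extend-family-2m} closes it.
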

\begin{proof}  For the proof we assume that the skeleton~$S$ of the
	given~$S$-cycle $(Z,\ccZ_F)$ is a graph cycle of
	length~$\l=|E(S)|$.
	The proof for the case when~$S$ is a collection of paths is
	almost identical.

	Let $E_S=\{e_1,\dots,e_\l\}$ and $\ccZ_F=\{F_1,\dots,F_\l\}$
	be the enumerations exemplifying that
	$(Z,\ccZ_F)$ is an~$S$-cycle.
	Appealing to $\l-3$ applications of
	Fact~\ref{fact:simple-ext} tells us that the~$F$-system
	$(Z_{--},\ccZ_F^{--})$ given by $\ccZ_F^{--}=\{F_1,\dots,F_{\l-2}\}$
	and $Z_{--}=\bigcup\ccZ_F^{--}$ is~$n$-conform.

	\begin{figure}[H]
		\centering
		\begin{tikzpicture}[
			line width=0.8pt,
			vert/.style={circle, fill, inner sep=0pt, minimum size=4.4pt},
			bandtwoB/.style={gray!50, line width=14.6pt, line cap=round},
			bandtwoF/.style={gray!18, line width=13pt,   line cap=round},
			bandslimmB/.style={gray!50,  line width=11.6pt,  line cap=round},
			bandslimmF/.style={gray!18,  line width=10pt,    line cap=round},
			bandredB/.style={red!55,  line width=8.6pt,  line cap=round},
			bandredF/.style={red!22,  line width=7pt,    line cap=round},
			copylab/.style={font=\footnotesize, gray!40!black},
			elab/.style={font=\footnotesize},
		]
\begin{pgfonlayer}{foreground}
		\node at (-3.55,0) {$\cdots$};
		\node[vert] (u0) at ( -2.80,0) {};
		\node[vert] (u) at ( -1.40,0) {};
		\node[vert, red!60!black] (w) at ( 0.00,0) {}; \node[below=7pt] at (w) {\textcolor{red!60!black}{\footnotesize $a$}};
		\node[vert, red!60!black] (x) at ( 1.40,0) {}; \node[below=4.6pt] at (x) {\textcolor{red!60!black}{\footnotesize $b$}};
		\node[vert, red!60!black] (y) at ( 2.80,0) {}; \node[below=7pt] at (y) {\textcolor{red!60!black}{\footnotesize $y$}};
		\node[vert] (z) at ( 4.20,0) {};
		\node[vert] (v) at ( 5.60,0) {};
		\node at ( 6.35,0) {$\cdots$};
		\end{pgfonlayer}
\begin{scope}[on background layer]
		\draw[bandtwoB] (2.80,0) -- (5.60,0);
		\draw[bandtwoF] (2.80,0) -- (5.60,0);
		\draw[bandtwoB] (-1.40,0) -- (1.40,0);
		\draw[bandtwoF] (-1.40,0) -- (1.40,0);
		\draw[bandslimmB] (-2.80,0) -- (0.00,0);
		\draw[bandslimmF] (-2.80,0) -- (0.00,0);
\draw[bandredB] (0.00,0) -- (2.80,0);
		\draw[bandredF] (0.00,0) -- (2.80,0);
		\end{scope}
\node[copylab] at (4.90,-0.95) {$F_1$};
		\node[copylab, red!60!black] at (1.40,-0.95) {$F_{\l-1}$};
		\node[copylab] at (-0.70,-0.95) {$F_{\l-2}$};
		\node[copylab] at (-2.10,-0.95) {$F_{\l-3}$};
\draw[line width=0.7pt] (u0) -- (u);
		\draw[line width=0.7pt] (u) -- (w);
		\draw[line width=0.7pt, red!60!black] (w) -- (x);
		\draw[line width=0.7pt, gray] (x) -- (y);
		\draw[line width=0.7pt] (y) -- (z);
		\draw[line width=0.7pt] (z) -- (v);
\node[elab] at (-2.10,0.43) {$e_{\l-3}$};
		\node[elab] at (-0.70,0.43) {$e_{\l-2}$};
		\node[elab] at (0.70,0.43) {\textcolor{red!60!black}{$e_{\l-1}$}};
		\node[elab, gray] at (2.10,0.43) {$e_\l$};
		\node[elab] at (3.50,0.43) {$e_1$};
		\node[elab] at (4.90,0.43) {$e_2$};
		\end{tikzpicture}
		\caption{Adding~$F_{\l-1}$ to $(Z_{--},\ccZ_F^{--})$ in the proof of
		Lemma~\ref{lem:Scycle-conform}.}\label{fig:add-Flm}
	\end{figure}

	For the extension of  $\ccZ_F^{--}$ by~$F_{\l-1}$ we shall appeal to
	Lemma~\ref{lem:extend-family-ve} applied for the unique
	vertex~$y$ in $e_1\cap e_\l$
	and the edge $ab=e_{\l-1}$. For that we consider a copy
	$F_i$ with $i\in[\l-2]$ and verify that
	it meets the distance assumption given in
	inequality~\eqref{eq:distance-ve}. Since the vertex set of
	every edge~$e_j$ with $j\in\{2,\dots,i-2\}$
	separates the vertex~$y$ from the vertex set of~$F_i$ in
	$Z_{--}$, every~$y$-$F_i$-path in~$Z_{--}$
	has at least $(i-3)/2$ internal vertices, i.e.,
	\[
		\dist_{Z_{--}}(y,F_i)
		\geq \frac{i-1}{2}\,.
	\]
	Applying the same reasoning for paths connecting~$F_i$ with
	$e_{\l-1}$ in~$Z_{--}$ implies
	\[
		\dist_{Z_{--}}(y,F_i)+\dist_{Z_{--}}(ab,F_i)
		\geq \frac{i-1}{2}+\frac{\l-3-i}{2}
		=\frac{\l}{2}-2
	\]
	and the assumption on $\l=|E(S)|$ yields
	inequality~\eqref{eq:distance-ve}.
	Consequently, the~$F$-system $(Z_-,\ccZ_F^{-})$ given by
	$Z_-=Z_{--}\cup F_{\l-1}$ and
	$\ccZ_F^{-}=\{F_1,\dots,F_{\l-1}\}$ is~$n$-conform.

	\begin{figure}[H]
		\centering
		\begin{tikzpicture}[
			line width=0.8pt,
			vert/.style={circle, fill, inner sep=0pt, minimum size=4.4pt},
			bandtwoB/.style={gray!50, line width=14.6pt, line cap=round},
			bandtwoF/.style={gray!18, line width=13pt,   line cap=round},
			bandslimmB/.style={gray!50,  line width=11.6pt,  line cap=round},
			bandslimmF/.style={gray!18,  line width=10pt,    line cap=round},
			bandredB/.style={red!55,  line width=8.6pt,  line cap=round},
			bandredF/.style={red!22,  line width=7pt,    line cap=round},
			copylab/.style={font=\footnotesize, gray!40!black},
			elab/.style={font=\footnotesize},
		]
\begin{pgfonlayer}{foreground}
		\node at (-0.75,0) {$\cdots$};
		\node[vert] (w) at ( 0.00,0) {};
		\node[vert, red!60!black] (x) at ( 1.40,0) {}; \node[below=5pt] at (x) {\textcolor{red!60!black}{\footnotesize $x$}};
		\node[vert, red!60!black] (y) at ( 2.80,0) {}; \node[below=5pt] at (y) {\textcolor{red!60!black}{\footnotesize $y$}};
		\node[vert, red!60!black] (z) at ( 4.20,0) {}; \node[below=5pt] at (z) {\textcolor{red!60!black}{\footnotesize $z$}};
		\node[vert] (v) at ( 5.60,0) {};
		\node at ( 6.35,0) {$\cdots$};
		\end{pgfonlayer}
\begin{scope}[on background layer]
		\draw[bandtwoB] (2.80,0) -- (5.60,0);
		\draw[bandtwoF] (2.80,0) -- (5.60,0);
		\draw[bandslimmB] (0.00,0) -- (2.80,0);
		\draw[bandslimmF] (0.00,0) -- (2.80,0);
\draw[bandredB] (1.40,0) -- (4.20,0);
		\draw[bandredF] (1.40,0) -- (4.20,0);
		\end{scope}
\node[copylab] at (4.90,-0.95) {$F_1$};
		\node[copylab] at (0.70,-0.95) {$F_{\l-1}$};
		\node[copylab, red!60!black] at (2.80,-0.95) {$F_\l$};
\draw[line width=0.7pt] (w) -- (x);
		\draw[line width=0.7pt, red!60!black] (x) -- (y);
		\draw[line width=0.7pt, red!60!black] (y) -- (z);
		\draw[line width=0.7pt] (z) -- (v);
\node[elab] at (0.70,0.43) {$e_{\l-1}$};
		\node[elab] at (2.10,0.43) {\textcolor{red!60!black}{$e_\l$}};
		\node[elab] at (3.50,0.43) {\textcolor{red!60!black}{$e_1$}};
		\node[elab] at (4.90,0.43) {$e_2$};
		\end{tikzpicture}
		\caption{Closing the~$S$-cycle in the proof of
		Lemma~\ref{lem:Scycle-conform}. The copy $F_\l$ added
		last is drawn in red.}\label{fig:closing-cycle}
	\end{figure}

	Finally, we add $F_\l$ to $(Z_-,\ccZ_F^{-})$ (see
	Figure~\ref{fig:closing-cycle}) and for that
	we shall appeal to Lemma~\ref{lem:extend-family}
	applied for the edges $e_{\l}=xy$ and $e_1=yz$. In fact,
	the distance assumption~\eqref{eq:distance-P2}
	follows by the same argument, since in $Z_--y$ still the vertex set of
	every edge~$e_j$ with $j\in\{2,\dots,i-2\}$ separates~$z$
	from the vertex set of the copy~$F_i$.
\end{proof}

\subsection{Forcing canonical patterns from homogeneous colourings}
\label{sec:pf-prophom2canon}
For the proof of Proposition~\ref{prop:hom2canon} we shall
consider several constructions---one for each
of the six ordered two-edge graphs.
Roughly speaking, for $T\in\{\separated\,, \overtop\,,
\crossing\,, \ptwo\}$
we will add a family of copies of~$F$
such that if the edges of some copy of~$T$ in a homogeneous colouring
have the same colour, then every copy of~$F$ will be monochromatic.
Moreover, for~$T$ being~$\lcherry$ or~$\rcherry$ it may
result in a common relaxation of
monochromatic or min- or max-coloured copies. However,
this restriction will be strong enough
to guarantee canonical copies in combination with the other
cases (see proof of Proposition~\ref{prop:hom2canon} at the
end of this section).
In the complementary case, when no two edges in~$F$ are monochromatic,
the homogeneous colouring restricted to any copy of~$F$
must be rainbow.

We have to ensure that the final~$F$-system is~$n$-conform. However, the
constructions for different pairs of edges can be carried out
``independently''
and we shall only consider unions of systems that intersect in a
copy of~$F$.
Owing to Lemma~\ref{lem:union-conform} the union of all those
systems will
preserve~$n$-conformity, if we ensure that each single system
is~$n$-conform.

\subsubsection{Forcing equal colours with~$\rseparated$ and~$\rovertop$}
We start with the case $T=\separated$, which pivots on the
following construction.

\begin{dfn}\label{dfn:construct-matching}
	Let~$F_0$ be a copy of an ordered graph~$F$ with
	edges~$e$, $e'=x_1y_1\in E(F_0)$ forming an ordered copy of
	$\separated$, i.e., $\max e<x_1<y_1$.
	For an edge $f\in E(F_0)\sm\{e,e'\}$ and an integer $k\geq 1$
	we define the system \emph{$\ccZ_F^k(e,e';f)$ on~$F_0$} as follows:
	\begin{enumerate}[label=\seplabel]
		\item\label{it:sep1}
			Add new vertices $x_2,\dots,x_{k+1}$ and
			$y_2,\dots,y_{k+1}$ defining the edges $e_i=x_iy_i$
			for $i\in\{2,\dots,k+1\}$ and obeying the ordering
			\[
				\max V(F_0)
				< x_2
				< y_2
				< x_3
				< y_3
				< \dots
				< x_{k+1}
				< y_{k+1}\,.
			\]
		\item\label{it:sep2}
			Let $S=(V_S,E_S)$ be the graph with enumerated edge set~$E_S$ given by
			\[
				E_S=\big\{e_0=f,e_1=e',e_2,e_3,\dots,e_{k+1}\big\}\,.
			\]
		\item\label{it:sep3}
			Let $(Z,\ccZ_F^k(e,e';f))$ be an~$S$-cycle with
			$\ccZ_F^k(e,e';f)=\{F_0,F_1,\dots,F_{k+1}\}$
			(see Figure~\ref{fig:constructB1}),
			where for every $i\neq k+1$ the edges
			$e_i$ and~$e_{i+1}$ are contained in~$F_i$ playing the
			r\^oles of~$e$ and~$e'$ in~$F_0$,
			while for~$F_{k+1}$ the edge $e_0=f$ plays the r\^ole
			of~$e$ and~$e_{k+1}$ plays the r\^ole of $e'$.
			So, with an obvious notation, we ensure
			\[
				(F_0, e, e')
				\cong (F_i, e_i, e_{i+1})
				\cong (F_{k+1}, f, e_{k+1})
			\]
			for every \(i\in [k]\).
	\end{enumerate}
	The ordering of the remaining vertices of the copies of~$F$
	is not essential and any linear extension
	is suitable.

	We also define the union of these systems over all suitable
	choices of~$f$ in the fixed ordered graph~$F_0$
	and set
	\[
		\ccB^k_F(F_0,e,e')=\bigcup_{f\in E(F_0)\sm\{e,e'\}}
		\ccZ^k_F(e,e';f)\,.
	\]
	This is meant to be a free amalgamation over \(F_0\), i.e.,
	for different edges \(f, f'\in E(F_0)\sm\{e,e'\}\) the
	cycles \(\ccZ^k_F(e,e';f)\) and \(\ccZ^k_F(e,e';f')\) are supposed
	to intersect only in the copy \(F_0\).
\end{dfn}

\begin{figure}[H]
	\centering
	\begin{tikzpicture}[scale=1.1,
		line width=0.8pt,
		vert/.style={circle, fill, inner sep=0pt, minimum size=4.4pt},
		bandoneB/.style={gray!75, line width=8.6pt,  line cap=round},
		bandoneF/.style={gray!38, line width=7pt,    line cap=round},
		bandtwoB/.style={gray!50, line width=14.6pt, line cap=round},
		bandtwoF/.style={gray!18, line width=13pt,   line cap=round},
		bandtkB/.style ={gray!60, line width=10.6pt, line cap=round},
		bandtkF/.style ={gray!28, line width=9pt,    line cap=round},
		copylab/.style={font=\footnotesize, gray!40!black},
		elab/.style={font=\footnotesize},
	]
\begin{pgfonlayer}{foreground}
	\node[vert] (fL)  at ( 0.55,0) {};
	\node[vert] (fR)  at ( 1.25,0) {};
	\node[vert] (eL)  at ( 1.95,0) {};
	\node[vert] (eR)  at ( 2.65,0) {};
	\node[vert] (x1)  at ( 3.45,0) {}; \node[below=5pt] at (x1)  {\footnotesize $x_1$};
	\node[vert] (y1)  at ( 4.25,0) {}; \node[below=5pt] at (y1)  {\footnotesize $y_1$};
	\node[vert] (x2)  at ( 5.70,0) {}; \node[below=5pt] at (x2)  {\footnotesize $x_2$};
	\node[vert] (y2)  at ( 6.40,0) {}; \node[below=5pt] at (y2)  {\footnotesize $y_2$};
	\node[vert] (x3)  at ( 7.20,0) {}; \node[below=5pt] at (x3)  {\footnotesize $x_3$};
	\node[vert] (y3)  at ( 7.90,0) {}; \node[below=5pt] at (y3)  {\footnotesize $y_3$};
	\node[vert] (x4)  at ( 8.70,0) {}; \node[below=5pt] at (x4)  {\footnotesize $x_4$};
	\node[vert] (y4)  at ( 9.40,0) {}; \node[below=5pt] at (y4)  {\footnotesize $y_4$};
	\node at (10.15,0) {$\cdots$};
	\node[vert] (xk)  at (10.90,0) {}; \node[below=5pt] at (xk)  {\footnotesize $x_k$};
	\node[vert] (yk)  at (11.60,0) {}; \node[below=5pt] at (yk)  {\footnotesize $y_k$};
	\node[vert] (xk1) at (12.40,0) {}; \node[below=5pt, xshift=-3pt] at (xk1) {\footnotesize $x_{k+1}$};
	\node[vert] (yk1) at (13.10,0) {}; \node[below=5pt, xshift=3pt]  at (yk1) {\footnotesize $y_{k+1}$};
	\end{pgfonlayer}
\begin{scope}[on background layer]
		\fill[gray!12] (2.55,-0.08) ellipse [x radius=2.45, y radius=0.95];
		\node[copylab] at (2.55,-1.45) {$F_0$};
		\draw[gray!50,line width=0.8pt] (2.55,-0.08) ellipse [x radius=2.45, y radius=0.95];
	\end{scope}
\begin{scope}[on background layer]
	\draw[bandtwoB] (5.70,0) -- (7.90,0);
	\draw[bandtwoF] (5.70,0) -- (7.90,0);
	\draw[bandoneB] (3.45,0) -- (6.40,0);
	\draw[bandoneF] (3.45,0) -- (6.40,0);
\draw[bandtkB] (0.55,0) -- (1.25,0);
	\draw[bandtkB] (0.55,0) .. controls (4.2,3.2) and (9.5,3.2) .. (13.10,0);
	\draw[bandtkB] (12.40,0) -- (13.10,0);
	\draw[bandtkF] (0.55,0) -- (1.25,0);
	\draw[bandtkF] (0.55,0) .. controls (4.2,3.2) and (9.5,3.2) .. (13.10,0);
	\draw[bandtkF] (12.40,0) -- (13.10,0);
	\end{scope}
\node[copylab] at ( 6.84, 2.83) {$F_{k+1}\,(\separatedvioletred)$};
	\node[copylab] at ( 4.93,-1.45) {$F_1\,(\separatedgreenred)$};
	\node[copylab] at ( 6.80,-1.45) {$F_2\,(\separatedred)$};
\draw[violet, line width=0.7pt] (fL) -- (fR);
	\node[elab] at (-0.20,0.4) {\textcolor{violet}{$e_0\!=\!f$}};
\draw[green!45!black, line width=0.7pt] (eL) -- (eR);
	\node[elab] at (2.30,0.40) {\textcolor{green!45!black}{$e$}};
	\draw[green!45!black, line width=0.7pt] (x1) -- (y1);
	\node[elab] at (3.85,0.40) {\textcolor{green!45!black}{$e_1=e'$}};
\draw[red, line width=0.7pt] (x2) -- (y2);
	\draw[red, line width=0.7pt] (x3) -- (y3);
	\draw[red, line width=0.7pt] (x4) -- (y4);
	\draw[red, line width=0.7pt] (xk) -- (yk);
	\draw[red, line width=0.7pt] (xk1) -- (yk1);
	\node[elab] at ( 6.05,0.40) {\textcolor{red}{$e_2$}};
	\node[elab] at ( 7.55,0.40) {\textcolor{red}{$e_3$}};
	\node[elab] at ( 9.05,0.40) {\textcolor{red}{$e_4$}};
	\node[elab] at (11.25,0.40) {\textcolor{red}{$e_k$}};
	\node[elab] at (13.45,0.4) {\textcolor{red}{$e_{k+1}$}};
	\end{tikzpicture}
	\caption{Construction of $\ccZ_F^k(e,e';f)$ for~$e$ and~$e'$
	forming a copy of~$\rseparated$
	(see Definition~\ref{dfn:construct-matching}) with the copies
	$F_0$, $F_1$, $F_2$, and~$F_{k+1}$ shaded in grey.
	The coloured copies of~$\rseparated$ next to the labels
	indicate the r\^oles played by the corresponding pairs of
	skeleton edges.}\label{fig:constructB1}
\end{figure}

The relevance of this construction is given by the following lemma.
\begin{lemma}\label{lemma:separated-edges}
	For every ordered graph~$F$ and all integers $k, n\geq 1$ the
	following holds:
	If the edges~$e$ and $e'$  form a monochromatic copy of~$\separated$
	in a $\ccB_F^k(F_0,e,e')$-homogeneous colouring
	$\phi\colon E(B)\to \NN$ for $B=\bigcup \ccB_F^k(F_0,e,e')$,
	then~$\phi$ is monochromatic on every copy of~$F$ in
	$\ccB_F^k(F_0,e,e')$.

	Moreover, if we have $k\geq
	\max\big\{8,\,\min\{2\og(F),\,4n+2\},\,2n+4\big\}$,
	then the~$F$-system $(B,\ccB_F^k(F_0,e,e'))$ is~$n$-conform.
\end{lemma}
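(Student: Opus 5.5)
The plan splits into the colouring part and the conformity part, handled separately.

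\emph{Colouring part.} Fix a $\ccB_F^k(F_0,e,e')$-homogeneous colouring $\phi$ with $\phi(e)=\phi(e')$, and fix an edge $f\in E(F_0)\sm\{e,e'\}$. Consider the cycle $\ccZ_F^k(e,e';f)$ with copies $F_0,\dots,F_{k+1}$. By construction, for every $i\in[k]$ the increasing isomorphism $F_0\to F_i$ sends $(e,e')$ to $(e_i,e_{i+1})$. Homogeneity transfers the relation ``$e$ and $e'$ share a colour'' to each such pair, so $\phi(e_i)=\phi(e_{i+1})$ for $i\in[k]$. Since $e_1=e'$, we obtain
\[
\phi(e)=\phi(e_1)=\phi(e_2)=\dots=\phi(e_{k+1}).
\]
The copy $F_{k+1}$ is isomorphic to $(F_0,e,e')$ via a map sending $e\mapsto f$ and $e'\mapsto e_{k+1}$. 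Hence homogeneity gives $\phi(f)=\phi(e_{k+1})=\phi(e)$. As $f$ was arbitrary, $F_0$ is monochromatic. Homogeneity then transfers the pattern ``all edges equal'' from $F_0$ to every copy in $\ccB_F^k(F_0,e,e')$.

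\emph{Conformity part.} Each $\ccZ_F^k(e,e';f)$ is an $S$-cycle in the sense of Definition~\ref{def:Scycle}. I will verify this first. Its skeleton has the $\l=k+2$ edges $e_0=f,e_1=e',e_2,\dots,e_{k+1}$, and I must check that $S$ is a cycle or a union of vertex-disjoint paths in which only cyclically consecutive edges meet.

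\begin{itemize}
\item The new edges $e_2,\dots,e_{k+1}$ are pairwise disjoint and disjoint from $F_0$.
\item $e'$ and $f$ may share a vertex, and indeed they are cyclically consecutive (indices $0$ and $1$).
\item Likewise $f=e_0$ and $e_{k+1}$ are consecutive; they are disjoint here, which is allowed since $S$ may be a union of paths.
\item The intersections of the copies are as required, because all added copies are otherwise fresh vertices.
\end{itemize}

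The condition $\l\ge\max\{10,\min\{2\og(F)+2,4n+4\},2n+6\}$ then translates exactly into the stated bound on $k$. So Lemma~\ref{lem:Scycle-conform} shows that each $\ccZ_F^k(e,e';f)$ is $n$-conform.

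Finally, $B$ is the free amalgamation of these cycles over $F_0$, and any two of them meet exactly in the common copy $F_0$. Iterated applications of Lemma~\ref{lem:union-conform} therefore yield $n$-conformity of $(B,\ccB_F^k(F_0,e,e'))$. One point needs checking: that the union's family of copies is exactly the union of the families. This holds by definition of $\ccB_F^k(F_0,e,e')$.

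The main obstacle I expect is the verification that the skeleton fits Definition~\ref{def:Scycle}, in the case where $f$ shares a vertex with $e'$ or with $e$. If $f$ meets $e$, nothing changes, since $e\notin E_S$. However, condition~\ref{it:Scyclea} requires $V(F_0)\cap V_S=e_0\cup e_1$, and this holds because $e$ is not a skeleton edge. I would also double-check that $S$ has no isolated vertices and that the degenerate case $\l\ge 5$ is guaranteed, which follows from $k\ge 8$.
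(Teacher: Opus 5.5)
Your proposal is correct and follows the paper's proof essentially verbatim. Homogeneity propagates the colour along $e, e'=e_1, \dots, e_{k+1}$ and then back to $f$ via $F_{k+1}$, and conformity follows from Lemma~\ref{lem:Scycle-conform} (with $|E(S)|=k+2$) for each cycle $\ccZ_F^k(e,e';f)$ together with Lemma~\ref{lem:union-conform} for the free amalgamation over $F_0$. Your extra verification that the construction satisfies Definition~\ref{def:Scycle} is harmless but unnecessary, since the paper builds this property into the definition of $\ccZ_F^k(e,e';f)$.
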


\begin{proof}
	Combining the assumption $\phi(e)=\phi(e')$ and invoking
	the $\ccB_F^k(F_0,e,e')$-homogeneity restricted to
	$\ccZ_F^k(e,e';f)$ on
	the skeleton~$S_f$ with edge set
	$\{e^f_{0}=f,e^f_1=e',e_2^f,\dots,e^f_{k+1}\}$ for a
	fixed edge~$f$ yields
	\[
		\phi(e)
		=\phi(e')
		=\phi(e^f_1)
		=\phi(e^f_2)
		=\dots
		=\phi(e^f_{k+1})
		=\phi(e^f_{0})
		=\phi(f)\,.
	\]
	Since $\ccB^k_F(F_0,e,e')=\bigcup_{f\in
	E(F_0)\sm\{e,e'\}} \ccZ^k_F(e,e';f)$, we infer that
	$\phi$ is monochromatic on~$F_0$, and by homogeneity, on
	every copy of~$F$ in $\ccB_F^k(F_0,e,e')$.

	For the moreover-part,
	we note that the skeletons of the cycles $\ccZ^k_F(e,e';f)$
	have size~${k+2}$ and, hence,
	Lemma~\ref{lem:Scycle-conform} implies that all those cycles
	are~$n$-conform. Since any two of these cycles in $\ccB^k_F(F_0,e,e')$
	share only~$F_0$, Lemma~\ref{lem:union-conform} tells us
	that also $\ccB^k_F(F_0,e,e')$ is~$n$-conform.
\end{proof}

We remark that the proof above also shows that~$\phi$ is
monochromatic on all edges of~$B$, i.e.,
every copy of~$F$ in $\ccB^k_F(F_0,e,e')$ is monochromatic with
the same colour, but we make no use of this information.

Next we turn to the ordered matching of type~$\overtop$. In
fact the corresponding construction of
$\ccZ^k_F(e,e';f)$ is almost identical and the only
difference appears in the ordering of the additional vertices
$x_2,y_2,\dots,x_{k+1},y_{k+1}$. More precisely, given a copy
of~$\overtop$ by edges~$e$ and $e'=x_1y_1$
with
\[
	x_1< \min e<\max e <y_1\,,
\]
and another edge~$f$ of~$F_0$,
we replace~\ref{it:sep1} in
Definition~\ref{dfn:construct-matching} by the following:
\begin{enumerate}[label=\otoplabel]
	\item
		Add new vertices $x_2,\dots,x_{k+1}$ and
		$y_2,\dots,y_{k+1}$ defining the edges $e_i=x_iy_i$
		for $i\in\{2,\dots,k+1\}$ and obeying the ordering
		\[
			x_{k+1}
			<\dots
			<x_3
			<x_2
			<\min V(F_0)
			<\max V(F_0)
			< y_2
			< y_3
			< \dots
			< y_{k+1}\,.
		\]
\end{enumerate}
Having defined the cycle of copies $\ccZ^k_F(e,e';f)$ this
way, for~$e$ and $e'$ fixed
we again set
\[
	\ccB^k_F(F_0,e,e')=\bigcup_{f\in E(F_0)\sm\{e,e'\}}
	\ccZ^k_F(e,e';f)\,.
\]
This way we obtain a  version of Lemma~\ref{lemma:separated-edges}
for copies of~$\overtop$ by an identical proof and we omit the details.
\begin{lemma}\label{lemma:overtop-edges}
	For every ordered graph~$F$ and all integers $k, n\geq 1$ the
	following holds:
	If the edges~$e$ and $e'$  form a monochromatic copy of~$\overtop$
	in a $\ccB_F^k(F_0,e,e')$-homogeneous colouring
	$\phi\colon E(B)\to \NN$ for $B=\bigcup \ccB_F^k(F_0,e,e')$,
	then~$\phi$ is monochromatic on every copy of~$F$ in
	$\ccB_F^k(F_0,e,e')$.

	Moreover, if we have $k\geq
	\max\big\{8,\,\min\{2\og(F),\,4n+2\},\,2n+4\big\}$,
	then the~$F$-system $(B,\ccB_F^k(F_0,e,e'))$ is~$n$-conform. \qed
\end{lemma}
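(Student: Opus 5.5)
The plan is to copy the proof of Lemma~\ref{lemma:separated-edges} almost verbatim; the only new point is that the modified ordering in the $\overtop$-version of step~\ref{it:sep1} still makes every pair of consecutive skeleton edges an ordered copy of~$\overtop$ in the right r\^ole. First I would check that $\ccZ^k_F(e,e';f)$ is well defined. We have $x_1<\min e<\max e<y_1$, all of $V(F_0)$ lies between $x_2$ and $y_2$, and $x_{i+1}<x_i<y_i<y_{i+1}$ for $i\ge2$. Hence for each $i\in[k]$ the pair $(e_i,e_{i+1})$ is an ordered $\overtop$ in which $e_i$ is the inner edge and $e_{i+1}$ the outer one, matching the r\^oles of $(e,e')$ in~$F_0$. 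For $F_{k+1}$ the pair $(f,e_{k+1})$ also forms a $\overtop$ with $f$ inner, since $f\subseteq V(F_0)$ lies strictly between $x_{k+1}$ and $y_{k+1}$.

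The new vertices of each $F_i$ can then be placed in the total order consistently with the ordering of $F$: each $F_i$ has only two or three prescribed positions, the remaining vertices are fresh, and we take any linear extension. As in Definition~\ref{dfn:construct-matching}, the union over all $f$ is a free amalgamation over~$F_0$.

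For the colouring statement, let $\phi(e)=\phi(e')$ and let $\eta_i$ be the increasing isomorphism $F_0\to F_i$. Then $\eta_i$ sends $(e,e')$ to $(e_i,e_{i+1})$, and $\eta_{k+1}$ sends it to $(f,e_{k+1})$. Homogeneity gives $\phi(e_i)=\phi(e_{i+1})$ for all $i\in[k]$ and $\phi(f)=\phi(e_{k+1})$. Chaining these equalities yields $\phi(f)=\phi(e)$ for every $f\in E(F_0)\sm\{e,e'\}$, so $F_0$ is monochromatic. Homogeneity then transfers this pattern to every copy in $\ccB^k_F(F_0,e,e')$.

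For the moreover-part, each $\ccZ^k_F(e,e';f)$ is an $S$-cycle with skeleton $f,e',e_2,\dots,e_{k+1}$ of $k+2$ edges. Here $f$ and $e'$ are disjoint, because $e'$ is not adjacent to $f$ in the skeleton enumeration; the skeleton is therefore a family of paths or a cycle, depending on whether $f$ and $e_{k+1}$ share a vertex. They do not, so it is a family of paths, which is fine under Definition~\ref{def:Scycle}. The bound on $k$ gives $k+2\ge\max\{10,\min\{2\og(F)+2,4n+4\},2n+6\}$, so Lemma~\ref{lem:Scycle-conform} shows that each cycle is $n$-conform. Since distinct cycles share only~$F_0$, repeated application of Lemma~\ref{lem:union-conform} yields $n$-conformity of the union. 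The only step needing any care is the ordering verification above, and it is routine.
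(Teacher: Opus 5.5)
Your proposal is correct and takes the paper's route: the paper omits this proof as identical to that of Lemma~\ref{lemma:separated-edges}. That argument chains the colour equalities around each skeleton via homogeneity, then applies Lemma~\ref{lem:Scycle-conform} to each cycle of $k+2$ copies and Lemma~\ref{lem:union-conform} to the free amalgamation over~$F_0$; your ordering check for the $\overtop$ variant is exactly the detail left to the reader.

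One side remark is wrong but harmless. $e_0=f$ and $e_1=e'$ \emph{are} consecutive in the cyclic skeleton enumeration, and $f$ may share a vertex with $e'$ (e.g.\ $f=x_1z$ for some $z\in V(F_0)$). In that case the skeleton is still a disjoint union of paths, now containing the $2$-edge path $f\cup e'$, and Lemma~\ref{lem:Scycle-conform} applies unchanged.
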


\subsubsection{Forcing equal colours with~$\rcrossing$}
Next we focus on the ordered matching of type
$\rcrossing$. As in the previous two matching cases we
construct a cycle of copies.

\begin{dfn}\label{dfn:construct-crossing}
	Let~$F_0$ be a copy of an ordered graph~$F$ whose
	edges~$e$, \mbox{$e'=x_1y_1\in E(F_0)$} form an ordered copy of
	$\rcrossing$, i.e., $\min e<x_1 <\max e<y_1$.
	For an edge \mbox{$f\in E(F_0)\sm\{e,e'\}$} and an integer $k\geq 1$
	we define the system \emph{$\ccZ_F^k(e,e';f)$ on~$F_0$} as follows:
	\begin{enumerate}[label=\crosslabel]
		\item\label{it:cro1}
			Add new vertices $x_2,\dots,x_{k+1}$ and
			$y_2,\dots,y_{k+1}$ defining the edges $e_i=x_iy_i$
			for $i\in\{2,\dots,k+1\}$ and obeying the ordering
			\[
				x_1
				<x_2
				<y_1
				\leq \max V(F_0)
				<x_3
				<y_2
				<x_4
				<y_3
				<\dots
				< x_{k+1}
				<y_{k}
				< y_{k+1}\,.
			\]
			In addition we add the edge $e_{k+2}=x_{k+2}y_{k+2}$ satisfying
			\[
				\min f<x_{k+2}<\max f
				\qqand
				x_{k+1} < y_{k+2} < y_{k+1}\,.
			\]
		\item\label{it:cro2}
			Let $S=(V_S,E_S)$ be the graph with enumerated edge set~$E_S$ given by
			\[
				E_S=\big\{e_0=f,e_1=e',e_2,e_3,\dots,e_{k+1},e_{k+2}\big\}\,.
			\]
		\item\label{it:cro3}
			Let $(Z,\ccZ_F^k(e,e';f))$ be an~$S$-cycle with
			$\ccZ_F^k(e,e';f)=\{F_0,F_1,\dots,F_{k+2}\}$
			(see Figure~\ref{fig:construct-crossing-cycle}),
			where for every $i\in [k]$ the edges
			$e_i$ and~$e_{i+1}$ are contained in~$F_i$ playing the
			r\^oles of~$e$ and~$e'$ in~$F_0$
			while for~$F_{k+1}$
			the edge~$e_{k+2}$ plays the r\^ole of~$e$ and
			$e_{k+1}$ plays the r\^ole of $e'$,
			and for~$F_{k+2}$
			the edge $f=e_0$ plays the r\^ole of~$e$ and~$e_{k+2}$
			plays the r\^ole of $e'$.\footnote{Comparing with the
				constructions for~$\separated$ and~$\overtop$, we
				note that the cycle
				$\ccZ_F^k(e,e';f)$ for~$\crossing$ consists of $k+3$
				copies of~$F$ (instead of $k+2$ copies)
				and the r\^oles of the edges are ``reversed'' twice,
				for the copies~$F_{k+1}$ and~$F_{k+2}$
			(instead of just once).}
			In other words, for every \(i\in [k]\) we ensure
			\[
				(F_0, e, e')
				\cong (F_i, e_i, e_{i+1})
				\cong (F_{k+1}, e_{k+2}, e_{k+1})
				\cong (F_{k+2}, f, e_{k+2})\,.
			\]
	\end{enumerate}
	Moreover, fix some linear extension of the partial order of
	the vertices defined above.
	Finally, we define the union of these systems over
	all suitable choices of~$f$ in the fixed ordered graph~$F_0$ and set
	\[
		\ccB^k_F(F_0,e,e')=\bigcup_{f\in E(F_0)\sm\{e,e'\}}
		\ccZ^k_F(e,e';f)\,.
	\]
\end{dfn}

\begin{figure}[t]
	\centering
	\begin{tikzpicture}[scale=1.1,
		line width=0.8pt,
		declare function={para(\x,\a,\b,\h)=\h*(1-((2*\x-\a-\b)/(\b-\a))^2);},
		vert/.style={circle, fill, inner sep=0pt, minimum size=4.4pt},
		bandoneB/.style={gray!75, line width=8.6pt,  line cap=round},
		bandoneF/.style={gray!38, line width=7pt,    line cap=round},
		bandtwoB/.style={gray!50, line width=14.6pt, line cap=round},
		bandtwoF/.style={gray!18, line width=13pt,   line cap=round},
		bandtkB/.style ={gray!60, line width=10.6pt, line cap=round},
		bandtkF/.style ={gray!28, line width=9pt,    line cap=round},
		copylab/.style={font=\footnotesize, gray!40!black},
		elab/.style={font=\footnotesize},
	]
\begin{pgfonlayer}{foreground}
	\node[vert] (fL)  at ( 0.85,0) {};
	\node[vert] (xk2) at ( 1.50,0) {}; \node[below=5pt] at (xk2) {\footnotesize $x_{k+2}$};
	\node[vert] (fR)  at ( 2.15,0) {};
	\node[vert] (eL)  at ( 3.05,0) {};
	\node[vert] (x1)  at ( 3.70,0) {}; \node[below=5pt] at (x1)  {\footnotesize $x_1$};
	\node[vert] (eR)  at ( 4.35,0) {};
	\node[vert] (x2)  at ( 5.10,0) {}; \node[below=5pt] at (x2)  {\footnotesize $x_2$};
	\node[vert] (y1)  at ( 5.85,0) {}; \node[below=5pt] at (y1)  {\footnotesize $y_1$};
	\node[vert] (x3)  at ( 8.10,0) {}; \node[below=5pt] at (x3)  {\footnotesize $x_3$};
	\node[vert] (y2)  at ( 8.80,0) {}; \node[below=5pt] at (y2)  {\footnotesize $y_2$};
	\node[vert] (x4)  at ( 9.50,0) {}; \node[below=5pt] at (x4)  {\footnotesize $x_4$};
	\node[vert] (y3)  at (10.20,0) {}; \node[below=5pt] at (y3)  {\footnotesize $y_3$};
	\node at (10.95,0) {$\cdots$};
	\node[vert] (xk1) at (11.70,0) {}; \node[below=5pt] at (xk1) {\footnotesize $x_{k+1}$};
	\node[vert] (yk)  at (12.40,0) {}; \node[below=5pt] at (yk)  {\footnotesize $y_k$};
	\node[vert] (yk2) at (13.10,0) {}; \node[below=5pt, xshift=-2pt] at (yk2) {\footnotesize $y_{k+2}$};
	\node[vert] (yk1) at (13.80,0) {}; \node[below=5pt, xshift=2pt] at (yk1) {\footnotesize $y_{k+1}$};
	\end{pgfonlayer}
\begin{scope}[on background layer]
		\fill[gray!12] (3.8,-0.08) ellipse [x radius=3.5, y radius=1.05];
		\node[copylab] at (3.80,-1.45) {$F_0$};
		\draw[gray!50,line width=0.8pt] (3.8,-0.08) ellipse [x radius=3.5, y radius=1.05];
	\end{scope}
\begin{scope}[on background layer]
	\draw[bandtwoB] plot[domain=5.10:8.80,  samples=70] (\x,{para(\x,5.10,8.80,1.50)});
	\draw[bandtwoB] plot[domain=8.10:10.20, samples=60] (\x,{para(\x,8.10,10.20,1.50)});
	\draw[bandtwoF] plot[domain=5.10:8.80,  samples=70] (\x,{para(\x,5.10,8.80,1.50)});
	\draw[bandtwoF] plot[domain=8.10:10.20, samples=60] (\x,{para(\x,8.10,10.20,1.50)});
	\draw[bandoneB] plot[domain=3.70:5.85,  samples=50] (\x,{para(\x,3.70,5.85,0.60)});
	\draw[bandoneB] plot[domain=5.10:8.80,  samples=70] (\x,{para(\x,5.10,8.80,1.50)});
	\draw[bandoneF] plot[domain=3.70:5.85,  samples=50] (\x,{para(\x,3.70,5.85,0.60)});
	\draw[bandoneF] plot[domain=5.10:8.80,  samples=70] (\x,{para(\x,5.10,8.80,1.50)});
	\draw[bandtkB] plot[domain=0.85:2.15, samples=40] (\x,{para(\x,0.85,2.15,0.40)});
	\draw[bandtkB] (1.5,0) .. controls (5.0,3.4) and (9.8,3.4) .. (13.1,0);
	\draw[bandtkF] plot[domain=0.85:2.15, samples=40] (\x,{para(\x,0.85,2.15,0.40)});
	\draw[bandtkF] (1.5,0) .. controls (5.0,3.4) and (9.8,3.4) .. (13.1,0);
\fill[white] (1.50,0) circle (0.21);
	\draw[gray!50, line width=0.8pt] (1.50,0) circle (0.21);
	\fill[white] (5.10,0) circle (0.21);
	\draw[gray!50, line width=0.8pt] (5.10,0) circle (0.21);
	\end{scope}
\node[copylab] at ( 1.50,-1.45) {$F_{k+2}\,(\crossingviolet)$};
	\node[copylab] at ( 5.50,-1.45) {$F_1\,(\crossinggreenred)$};
	\node[copylab] at ( 8.45,-1.45) {$F_2\,(\crossingred)$};
\draw[green!45!black, line width=0.7pt] plot[domain=3.05:4.35,samples=40] (\x,{para(\x,3.05,4.35,0.45)});
	\draw[green!45!black, line width=0.7pt] plot[domain=3.70:5.85,samples=50] (\x,{para(\x,3.70,5.85,0.60)});
	\node[elab] at (3.70,1.115) {\textcolor{green!45!black}{$e$}};
	\node[elab] at (4.78,1.15) {\textcolor{green!45!black}{$e_1\!=\!e'$}};
\draw[violet, line width=0.7pt] plot[domain=0.85:2.15,samples=40] (\x,{para(\x,0.85,2.15,0.40)});
	\draw[violet, line width=0.7pt] (1.5,0) .. controls (5.0,3.4) and (9.8,3.4) .. (13.1,0);
	\node[elab] at (1.45,1.11) {\textcolor{violet}{$e_0\!=\!f$}};
	\node[elab] at (7.15,2.95) {\textcolor{violet}{$e_{k+2}$}};
\draw[red, line width=0.7pt] plot[domain=5.10:8.80,  samples=70] (\x,{para(\x,5.10,8.80,1.50)});
	\draw[red, line width=0.7pt] plot[domain=8.10:10.20, samples=60] (\x,{para(\x,8.10,10.20,1.50)});
	\draw[red, line width=0.7pt, path fading=east] plot[domain=9.50:10.55, samples=40] (\x,{para(\x,9.50,11.60,1.50)});
	\draw[red, line width=0.7pt, path fading=west] plot[domain=11.35:12.40, samples=40] (\x,{para(\x,10.30,12.40,1.50)});
	\draw[red, line width=0.7pt] plot[domain=11.70:13.80, samples=60] (\x,{para(\x,11.70,13.80,1.50)});
	\node[elab] at ( 6.95,1.90) {\textcolor{red}{$e_2$}};
	\node[elab] at ( 9.15,1.90) {\textcolor{red}{$e_3$}};
	\node[elab] at (12.75,1.90) {\textcolor{red}{$e_{k+1}$}};
	\end{tikzpicture}
	\caption{Construction of $\ccZ_F^k(e,e';f)$ for~$e$ and~$e'$
	forming a copy of~$\rcrossing$
	(see Definition~\ref{dfn:construct-crossing}) with $F_0$, $F_1$, $F_2$, and~$F_{k+2}$ shaded in grey.
	The white circles indicate that~$x_2$ and~$x_{k+2}$ are
	not vertices of~$F_0$ and the coloured crossings next to
	the labels indicate the r\^oles played by the skeleton edges.}
\label{fig:construct-crossing-cycle}
\end{figure}

This construction yields an analogous variant of
Lemmata~\ref{lemma:separated-edges}
and~\ref{lemma:overtop-edges} for~$\crossing$.

\begin{lemma}\label{lemma:crossing-edges}
	For every ordered graph~$F$ and all integers $k, n\geq 1$ the
	following holds:
	If the edges~$e$ and $e'$  form a monochromatic copy of~$\crossing$
	in a $\ccB_F^k(F_0,e,e')$-homogeneous colouring
	$\phi\colon E(B)\to \NN$ for $B=\bigcup \ccB_F^k(F_0,e,e')$,
	then~$\phi$ is monochromatic on every copy of~$F$ in
	$\ccB_F^k(F_0,e,e')$.

	Moreover, if we have $k\geq
	\max\big\{8,\,\min\{2\og(F),\,4n+2\},\,2n+4\big\}$,
	then the~$F$-system $(B,\ccB_F^k(F_0,e,e'))$ is~$n$-conform.
\end{lemma}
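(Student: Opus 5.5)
We follow the proof of Lemma~\ref{lemma:separated-edges} closely, the only differences being one extra copy in each cycle and the two reversals of r\^oles. Fix an edge $f\in E(F_0)\sm\{e,e'\}$ and consider the cycle $\ccZ^k_F(e,e';f)=\{F_0,\dots,F_{k+2}\}$ with skeleton edges $e_0=f,e_1=e',e_2,\dots,e_{k+2}$.

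For the colour part we use homogeneity one copy at a time. Since $\phi(e)=\phi(e')$ holds in $F_0$, homogeneity transfers the equality along the isomorphisms $(F_0,e,e')\cong(F_i,e_i,e_{i+1})$. For $i\in[k]$ this gives $\phi(e_i)=\phi(e_{i+1})$, so that $\phi(e')=\phi(e_1)=\dots=\phi(e_{k+1})$. The isomorphism $(F_0,e,e')\cong(F_{k+1},e_{k+2},e_{k+1})$ gives $\phi(e_{k+2})=\phi(e_{k+1})$. The order in which the pair appears is irrelevant here, because homogeneity only preserves the equivalence relation ``same colour''. Finally, $(F_0,e,e')\cong(F_{k+2},f,e_{k+2})$ gives $\phi(f)=\phi(e_{k+2})$. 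Chaining these equalities yields $\phi(f)=\phi(e)$. As $f$ ranges over all of $E(F_0)\sm\{e,e'\}$, the colouring $\phi$ is monochromatic on $F_0$, and by homogeneity it is monochromatic on every copy in $\ccB^k_F(F_0,e,e')$.

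For the conformity part, we first check that each $\ccZ^k_F(e,e';f)$ really is an $S$-cycle in the sense of Definition~\ref{def:Scycle}. The skeleton $S$ with edges $e_0,\dots,e_{k+2}$ is a cycle or a union of disjoint paths in which only consecutive edges share vertices. Indeed, $e_1,\dots,e_{k+1}$ are new disjoint edges except for $e_1=e'$, and $e_{k+2}$ consists of two new vertices. Since $e$ and $f$ may share a vertex with $e'$, we get at worst a path or cycle structure, and the free amalgamation ensures conditions~\ref{it:Scyclea} and~\ref{it:Scycleb}. The main obstacle is the ordering: the prescribed partial order must be consistent, so that each $F_i$ is an \emph{ordered} copy playing the stated r\^oles. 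We verify this using $\min e<x_1<\max e<y_1$:
\begin{itemize}
\item $e_i=x_iy_i$ and $e_{i+1}$ form a crossing with $x_i<x_{i+1}<y_i<y_{i+1}$;
\item $e_{k+2},e_{k+1}$ satisfy $x_{k+2}<x_{k+1}<y_{k+2}<y_{k+1}$;
\item $f,e_{k+2}$ satisfy $\min f<x_{k+2}<\max f<y_{k+2}$, using $\max f\le\max V(F_0)<y_{k+2}$.
\end{itemize}
The remaining vertices of each $F_i$ can be placed by a linear extension realising the ordered type of $F$ relative to the two skeleton edges. Because different $F_i$ share only skeleton vertices, these placements do not interfere.

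Once this is in place, the skeleton of each cycle has $k+3\ge\max\{10,\min\{2\og(F)+2,4n+4\},2n+6\}$ edges. Hence Lemma~\ref{lem:Scycle-conform} shows that each $\ccZ^k_F(e,e';f)$ is $n$-conform. Since distinct cycles intersect only in $F_0$, repeated application of Lemma~\ref{lem:union-conform} gives that $(B,\ccB^k_F(F_0,e,e'))$ is $n$-conform.
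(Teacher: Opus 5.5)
Your proof is correct and follows the same route as the paper. The colour part propagates $\phi(e)=\phi(e')$ along the skeleton $f=e_0,e_1,\dots,e_{k+2}$ via homogeneity, and conformity follows from Lemma~\ref{lem:Scycle-conform} applied to skeletons with $k+3$ edges together with Lemma~\ref{lem:union-conform} for the free amalgamation over $F_0$. Your explicit check that the prescribed vertex ordering makes each consecutive pair of skeleton edges an ordered crossing in the stated r\^oles is a detail the paper leaves implicit.
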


\begin{proof}
	Owing to the assumption $\phi(e)=\phi(e')$, the
	$\ccB_F^k(F_0,e,e')$-homogeneity of~$\phi$ restricted
	to the skeleton~$S_f$ of $\ccZ_F^k(e,e';f)$ with edge set
	$\{f=e_0^f, e'=e_1^f,\dots,e_{k+2}^f\}$ yields
	\[
		\phi(e)=\phi(e')=\phi(e^f_1)=\phi(e^f_2)=\dots=\phi(e^f_{k+2})=\phi(e^f_{0})=\phi(f)\,.
	\]
	Since $\ccB^k_F(F_0,e,e')=\bigcup_{f\in
	E(F_0)\sm\{e,e'\}} \ccZ^k_F(e,e';f)$, we infer that
	$\phi$ is monochromatic on~$F_0$ and, by homogeneity, it is
	monochromatic
	on every copy of~$F$ in $\ccB_F^k(F_0,e,e')$.

	Since $(B,\ccB_F^k(F_0,e,e'))$ is the union of sufficiently
	long cycles of copies of~$F$
	intersecting in the fixed copy~$F_0$, the conformity follows from
	Lemmata~\ref{lem:union-conform} and~\ref{lem:Scycle-conform}.
\end{proof}

This concludes the discussion for the pairs of edges forming
a matching and it remains to address the situation when the
two edges share a vertex.

\subsubsection{Forcing equal colours with~$\rptwo$}
The definition of the construction in this case is somewhat
similar to that
for~$\crossing$. Roughly speaking, the following construction
arises from
Definition~\ref{dfn:construct-crossing} by identifying
$x_{i+1}$ with~$y_i$ for every $i\in [k]$
and, in addition, $y_k=x_{k+1}=y_{k+2}$ and $\max f=x_{k+2}$.
However, due to these identifications
the resulting system is not a cycle of copies in the sense
of Definition~\ref{def:Scycle} and we therefore
omit the second step in the construction below.
\begin{dfn}\label{dfn:construct-ptwo}
	Let~$F_0$ be a copy of an ordered graph~$F$ with
	edges~$e$, $e'=x_1x_2\in E(F_0)$ forming an ordered copy of
	$\ptwo$, i.e., $\max e=x_1<x_2$.
	For an edge $f\in E(F_0)\sm\{e,e'\}$ and an integer $k\geq 1$
	we define the system \emph{$\ccY_F^k(e,e';f)$ on~$F_0$} as follows:
	\begin{enumerate}[label=\ptwolabel]
		\item
			Add new vertices $x_3,\dots,x_{k+2}$ and edges $e_i=x_ix_{i+1}$
			for $i\in\{2,\dots,k+1\}$ obeying the ordering
			\[
				\max V(F_0) < x_3 < x_4 < \dots < x_{k+1} <x_{k+2}\,.
			\]
			We also add the edge~$e_{k+2}$ joining $\max f$
			with~$x_{k+1}$ and we set $e_0=f$ and $e_1=e'$.
\setcounter{enumi}{2}
		\item\label{it:ptwo3}  Let
			$\ccY_F^k(e,e';f)=\{F_0,F_1,\dots,F_{k+1},F_{k+2}\}$ be
			a system of copies of~$F$
			(see Figure~\ref{fig:construct-ptwo}),
			where for every $i\in[k]$ the edges
			$e_i$ and~$e_{i+1}$ are contained in~$F_i$ playing the
			r\^oles of~$e$ and~$e'$ in~$F_0$,
			while for~$F_{k+1}$
			the edge~$e_{k+2}$ plays the r\^ole of~$e$ and
			$e_{k+1}$ plays the r\^ole of~$e'$,
			and for~$F_{k+2}$
			the edge $f=e_0$ plays the r\^ole of~$e$ and~$e_{k+2}$
			plays the r\^ole of~$e'$. So we guarantee
			\[
				(F_0, e, e')
				\cong (F_i, e_i, e_{i+1})
				\cong (F_{k+1}, e_{k+2}, e_{k+1})
				\cong (F_{k+2}, f, e_{k+2})
			\]
			for all \(i\in [k]\).
			Moreover, besides the necessary intersections given
			above, all those copies are as vertex disjoint as possible.
	\end{enumerate}
	The ordering of the remaining vertices of the copies of~$F$
	is not essential and any linear extension
	is suitable.

	We also define the union of these systems over all suitable
	choices of~$f$ in the fixed ordered graph~$F_0$
	and set
	\[
		\ccB^k_F(F_0,e,e')=\bigcup_{f\in E(F_0)\sm\{e,e'\}}
		\ccY^k_F(e,e';f)\,.
	\]
\end{dfn}

\begin{figure}[H]
	\centering
	\begin{tikzpicture}[
		line width=0.8pt,
		vert/.style={circle, fill, inner sep=0pt, minimum size=4.4pt},
		bandoneB/.style={gray!75, line width=8.6pt,  line cap=round},
		bandoneF/.style={gray!38, line width=7pt,    line cap=round},
		bandtwoB/.style={gray!50, line width=14.6pt, line cap=round},
		bandtwoF/.style={gray!18, line width=13pt,   line cap=round},
		bandtkB/.style ={gray!60, line width=14.6pt, line cap=round},
		bandtkF/.style ={gray!28, line width=13pt,   line cap=round},
		copylab/.style={font=\footnotesize, gray!40!black},
		elab/.style={font=\footnotesize},
	]
\begin{pgfonlayer}{foreground}
	\node[vert] (fL)  at ( 0.55,0) {};
	\node[vert] (fR)  at ( 1.25,0) {};
	\node[vert] (eL)  at ( 2.30,0) {};
	\node[vert] (x1)  at ( 3.00,0) {}; \node[below=5pt] at (x1)  {\footnotesize $x_1$};
	\node[vert] (x2)  at ( 4.10,0) {}; \node[below=5pt] at (x2)  {\footnotesize $x_2$};
	\node[vert] (x3)  at ( 5.65,0) {}; \node[below=5pt] at (x3)  {\footnotesize $x_3$};
	\node[vert] (x4)  at ( 6.85,0) {}; \node[below=5pt] at (x4)  {\footnotesize $x_4$};
	\node[vert] (x5)  at ( 8.05,0) {}; \node[below=5pt] at (x5)  {\footnotesize $x_5$};
	\node at ( 8.95,0) {$\cdots$};
	\node[vert] (xk)  at ( 9.85,0) {}; \node[below=5pt] at (xk)  {\footnotesize $x_k$};
	\node[vert] (xk1) at (11.05,0) {}; \node[below=5pt] at (xk1) {\footnotesize $x_{k+1}$};
	\node[vert] (xk2) at (12.25,0) {}; \node[below=5pt] at (xk2) {\footnotesize $x_{k+2}$};
	\end{pgfonlayer}
\begin{scope}[on background layer]
		\fill[gray!12] (2.55,-0.08) ellipse [x radius=2.45, y radius=0.95];
		\node[copylab] at (2.55,-1.45) {$F_0$};
		\draw[gray!50,line width=0.8pt] (2.55,-0.08) ellipse [x radius=2.45, y radius=0.95];
	\end{scope}
\begin{scope}[on background layer]
	\draw[bandtwoB] (4.10,0) -- (6.85,0);
	\draw[bandtwoF] (4.10,0) -- (6.85,0);
	\draw[bandtwoB] (9.85,0) -- (12.25,0);
	\draw[bandtwoF] (9.85,0) -- (12.25,0);
\draw[bandtkB] (0.55,0) -- (1.25,0);
	\draw[bandtkB] (1.25,0) .. controls (4.5,3.1) and (8.3,3.1) .. (11.05,0);
	\draw[bandtkF] (0.55,0) -- (1.25,0);
	\draw[bandtkF] (1.25,0) .. controls (4.5,3.1) and (8.3,3.1) .. (11.05,0);
	\draw[bandoneB] (3.00,0) -- (5.65,0);
	\draw[bandoneF] (3.00,0) -- (5.65,0);
\draw[bandoneB] (1.25,0) .. controls (4.5,3.1) and (8.3,3.1) .. (11.05,0);
	\draw[bandoneB] (11.05,0) -- (12.25,0);
	\draw[bandoneF] (1.25,0) .. controls (4.5,3.1) and (8.3,3.1) .. (11.05,0);
	\draw[bandoneF] (11.05,0) -- (12.25,0);
	\end{scope}
\node[copylab] at ( 1.50,1.5) {$F_{k+2}\,(\ptwoviolet)$};
	\node[copylab] at ( 3.90,-1.45) {$F_1\,(\ptwogreenred)$};
	\node[copylab] at ( 5.70,-1.45) {$F_2\,(\ptwored)$};
	\node[copylab] at (11.05,-1.45) {$F_k\,(\ptwored)$};
	\node[copylab] at (11.05,1.5) {$F_{k+1}\,(\ptwovioletred)$};
\draw[violet, line width=0.7pt] (fL) -- (fR);
	\node[elab] at (-0.15,0.44) {\textcolor{violet}{$e_0\!=\!f$}};
	\draw[violet, line width=0.7pt] (1.25,0) .. controls (4.5,3.1) and (8.3,3.1) .. (11.05,0);
	\node[elab] at (6.45,2.80) {\textcolor{violet}{$e_{k+2}$}};
\draw[green!45!black, line width=0.7pt] (eL) -- (x1);
	\node[elab] at (2.65,0.40) {\textcolor{green!45!black}{$e_{\phantom{1}}$}};
	\draw[green!45!black, line width=0.7pt] (x1) -- (x2);
	\node[elab] at (3.6,0.45) {\textcolor{green!45!black}{$e_1\!=\!e'$}};
\draw[red, line width=0.7pt] (x2) -- (x3);
	\draw[red, line width=0.7pt] (x3) -- (x4);
	\draw[red, line width=0.7pt] (x4) -- (x5);
	\draw[red, line width=0.7pt] (xk) -- (xk1);
	\draw[red, line width=0.7pt] (xk1) -- (xk2);
	\node[elab] at ( 4.95,0.40) {\textcolor{red}{$e_2$}};
	\node[elab] at ( 6.25,0.40) {\textcolor{red}{$e_3$}};
	\node[elab] at ( 7.45,0.40) {\textcolor{red}{$e_4$}};
	\node[elab] at (10.05,0.40) {\textcolor{red}{$e_k$}};
	\node[elab] at (11.70,0.40) {\textcolor{red}{$e_{k+1}$}};
	\end{tikzpicture}
	\caption{Construction of $\ccY_F^k(e,e';f)$ for~$e$ and~$e'$
	forming a copy of~$\rptwo$
	(see Definition~\ref{dfn:construct-ptwo}) with the copies
	$F_0$, $F_1$, $F_2$, $F_k$, $F_{k+1}$, and~$F_{k+2}$ shaded
	in grey.
	The coloured copies of~$\rptwo$ next to the labels
	indicate the r\^oles played by the corresponding pairs of
	skeleton edges.}\label{fig:construct-ptwo}
\end{figure}

Even though the construction above is not a cycle of copies
of~$F$, this still yields an~$n$-conform~$F$-system
for sufficiently large~\(k\). In fact we shall derive the following
lemma for this construction.

\begin{lemma}{\label{lemma:ptwo}}
	For every ordered graph~$F$ and all integers $k, n\geq 1$ the
	following holds:
	If the edges~$e$ and $e'$  form a monochromatic copy of~$\ptwo$
	in a $\ccB_F^k(F_0,e,e')$-homogeneous colouring
	$\phi\colon E(B)\to \NN$ for $B=\bigcup \ccB_F^k(F_0,e,e')$,
	then~$\phi$ is monochromatic on every copy of~$F$ in
	$\ccB_F^k(F_0,e,e')$.

	Moreover, if we have $k\geq
	\max\big\{9,\,\min\{2\og(F)+1,4n+3\},\,2n+5\big\}$,
	then the~$F$-system $(B,\ccB_F^k(F_0,e,e'))$ is~$n$-conform.
\end{lemma}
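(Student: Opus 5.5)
The colour part follows the earlier lemmata verbatim. Fix $f\in E(F_0)\sm\{e,e'\}$ and consider $\ccY_F^k(e,e';f)$. For $i\in[k]$ the copy $F_i$ contains $e_i,e_{i+1}$ in the r\^oles of $e,e'$. Hence homogeneity together with $\phi(e)=\phi(e')$ gives $\phi(e_i)=\phi(e_{i+1})$. Next, $F_{k+1}$ gives $\phi(e_{k+2})=\phi(e_{k+1})$, and $F_{k+2}$ gives $\phi(f)=\phi(e_{k+2})$. Chaining these equalities yields $\phi(f)=\phi(e)$ for every $f$, so $F_0$ is monochromatic, and by homogeneity so is every copy in $\ccB^k_F(F_0,e,e')$.

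Before this, I would check the orderings that make these r\^oles possible. Since $e,e'$ form $\ptwo$ with $\max e=x_1$, the copy $F_i$ needs $x_i<x_{i+1}<x_{i+2}$, which holds. The copy $F_{k+1}$ needs $e_{k+2}=\{\max f,x_{k+1}\}$ and $e_{k+1}=x_{k+1}x_{k+2}$ with $\max e_{k+2}=x_{k+1}<x_{k+2}$, which holds. The copy $F_{k+2}$ needs $\max f$ as the shared vertex, which is fine.

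For conformity, Lemma~\ref{lem:union-conform} reduces matters to a single $\ccY^k_F(e,e';f)$, since distinct $f$ are amalgamated freely over $F_0$. For one such system I would mimic the proof of Lemma~\ref{lem:Scycle-conform} by building the system copy by copy. Start from $F_0$ and add $F_1,\dots,F_{k}$ in order. Each $F_i$ meets the current union exactly in the edge $e_i$, so Fact~\ref{fact:simple-ext} applies. This is true because $F_i$ meets $F_{i-1}$ in $e_i$, and the requirement that the copies be ``as vertex disjoint as possible'' rules out other intersections.

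Next add $F_{k+1}$. It meets the current system in the edge $e_{k+1}=x_{k+1}x_{k+2}$ (from $F_k$) and in the vertex $\max f\in V(F_0)$. This is exactly the situation of Lemma~\ref{lem:extend-family-ve}, with $ab=e_{k+1}$ and $y=\max f$. The required inequality~\eqref{eq:distance-ve} is checked by the separator argument: in the path-like union $F_0\cup\dots\cup F_k$, the vertex $x_j$ separates consecutive blocks, so $\dist(y,F_i)\ge i-1$ and $\dist(e_{k+1},F_i)\ge k-i$. Their sum is at least $k-1$, which is large by the hypothesis on $k$. Here the separators are single vertices rather than edges, so the bound is roughly twice the one obtained for $S$-cycles; this is why the constants only shift by one.

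Finally, add $F_{k+2}$. It meets the current system in $f\subseteq F_0$ and in $e_{k+2}\subseteq F_{k+1}$. These two edges share the vertex $\max f$, so they form a path $x$-$y$-$z$ with $y=\max f$, $x=\min f$ and $z=x_{k+1}$, and we apply Lemma~\ref{lem:extend-family}. For~\eqref{eq:distance-P2} we need $\dist_{B-y}(x,\Fs)+\dist_{B-y}(z,\Fs)\ge\max\{3,n+1\}$ for every copy $\Fs$. With $y$ deleted, the union no longer has the shortcut $F_{k+1}$ through $\max f$. Paths from $x_{k+1}$ back to $F_0$ must therefore run through the chain $x_{k+1},x_k,\dots,x_2$ or through $F_{k+1}$ and then the vertex $x_{k+2}$ into $F_k$. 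In either case the sum of distances is about $k-O(1)$, which suffices.

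The main obstacle I expect is this last distance verification. Care is needed for the copies adjacent to the junctions, namely $F_0$, $F_1$, $F_k$ and $F_{k+1}$, where one of the two distances can be $0$. For these copies I must confirm that the other distance, measured in $B-y$, is at least $\max\{3,n+1\}$. For instance, for $\Fs=F_0$ we have $\dist_{B-y}(x,F_0)=0$, so $\dist_{B-y}(x_{k+1},F_0)$ must be large. This holds because any such path, after avoiding $y$, must traverse the chain $F_1,\dots,F_k$. The exact threshold $k\ge\max\{9,\min\{2\og(F)+1,4n+3\},2n+5\}$ should emerge from bookkeeping these worst cases.
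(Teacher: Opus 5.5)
Your plan matches the paper's proof step for step: the colour chain, the reduction to a single $\ccY^k_F(e,e';f)$ via Lemma~\ref{lem:union-conform}, Fact~\ref{fact:simple-ext} for $F_1,\dots,F_k$, Lemma~\ref{lem:extend-family-ve} with $ab=e_{k+1}$ and $y=\max f$, and Lemma~\ref{lem:extend-family} for $f$ and $e_{k+2}$. The gap is in the one non-routine step, the distance verification, and there your key claim is false.

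A single vertex $x_j$ does not separate the chain. It lies in $F_{j-2}$, $F_{j-1}$ and $F_j$, and $F_{j-1}-x_j$ still contains both $x_{j-1}$ and $x_{j+1}$. Two concrete failures:
\begin{itemize}
\item If $\min e\,x_2\in E(F_0)$, then $x_{j-1}x_{j+1}\in E(F_{j-1})$ for all $j\in\{2,\dots,k+1\}$. Walking $x_{k+1},x_{k-1},x_{k-3},\dots$ shows that $\dist(e_{k+1},F_i)$ is at most $\lceil (k-i-1)/2\rceil$, not at least $k-i$.
\item $\max f$ can equal $x_2$: take any $f=zx_2\in E(F_0)$ with $z<x_2$ and $z\neq x_1$. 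Then $y\in V(F_2)$, so $\dist(y,F_2)=0<1$.
\end{itemize}
Hence your sum ``$\ge k-1$'', and ``about $k-O(1)$'' in the last step, are unjustified. Note also that with your bounds roughly half the stated threshold on~$k$ would suffice, so your explanation of the constants cannot be right.

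What is needed is the separator argument of Lemma~\ref{lem:Scycle-conform}, with \emph{edges} as separators. For $2\le j\le k$ the set $e_j=\{x_j,x_{j+1}\}$ separates $F_0\cup\dots\cup F_{j-1}$ from $F_j\cup\dots\cup F_k$. Consecutive sets $e_j$ share a vertex, so each vertex of a path meets at most two of them. This gives half-rate bounds:
\begin{itemize}
\item The $k-i-3$ sets $e_j$ with $i+3\le j\le k-1$ give $\dist(e_{k+1},F_i)\ge (k-i-1)/2$.
\item In the worst case $\max f=x_2$, which is the case the paper singles out, the sets $e_j$ with $3\le j\le i-2$ give $\dist(\max f,F_i)\ge (i-2)/2$.
\end{itemize}
The sum is at least $(k-3)/2$. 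The inequality $(k-3)/2\ge\max\{3,\min\{\og(F)-1,2n\},n+1\}$ is \emph{equivalent} to $k\ge\max\{9,\min\{2\og(F)+1,4n+3\},2n+5\}$, so the hypothesis is calibrated exactly for this half-rate bound.

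The final step works the same way in $B-\max f$. There $F_{k+1}$ is attached to the chain only along $e_{k+1}$. In the worst case $\min f=x_2$, the distances from $\min f$ and from $x_{k+1}$ to any copy again sum to at least $(k-3)/2\ge\max\{3,n+1\}$.
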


\begin{proof}The proof of the first part is identical to that part of
	the proof of Lemma~\ref{lemma:crossing-edges}.
	In fact,  owing to the assumption $\phi(e)=\phi(e')$ the fact that
	$\phi$ is $\ccB_F^k(F_0,e,e')$-homogeneous restricted to the set
	$\{f=e_0^f, e'=e_1^f,\dots,e_{k+2}^f\}$
	in  $\ccY_F^k(e,e';f)$ yields
	\[
		\phi(e)=\phi(e')=\phi(e^f_1)=\phi(e^f_2)=\dots=\phi(e^f_{k+2})=\phi(e^f_{0})=\phi(f)\,.
	\]
	Since $\ccB^k_F(F_0,e,e')=\bigcup_{f\in
	E(F_0)\sm\{e,e'\}} \ccY^k_F(e,e';f)$, we infer that
	$\phi$ is monochromatic on~$F_0$ and, by homogeneity, it is
	monochromatic
	on every copy of~$F$ in $\ccB_F^k(F_0,e,e')$.

	For the proof of the moreover-part we first note that in
	view of Lemma~\ref{lem:union-conform}
	and the definition of $\ccB^k_F(F_0,e,e')$ it suffices to
	establish the conformity of
	$\ccY_F^k(e,e';f)$. Even though $\ccY_F^k(e,e';f)$ is not a
	cycle of copies, below we shall parallel
	the proof of Lemma~\ref{lem:Scycle-conform} to derive the
	conformity with an adjusted choice of~$k$.

	Let $f\in E(F_0)\sm \{e,e'\}$ be fixed. Clearly,
	$\{F_0\}$ is an~$n$-conform
	$F$-system. Owing to the construction in step~\ref{it:ptwo3}
	in Definition~\ref{dfn:construct-ptwo}, we note that by
	adding~$F_i$ sequentially for every $i\in [k]$
	we can invoke Fact~\ref{fact:simple-ext}. Consequently,
	the system $\{F_0,\dots,F_k\}$ is~$n$-conform.

	Next we employ Lemma~\ref{lem:extend-family-ve} to
	extend $\{F_0,\dots,F_k\}$ by~$F_{k+1}$, where $ab=e_{k+1}$
	and $y=\max f$. For that we appeal to the assumed lower
	bound on~$k$, to verify the required distance
	assumption~\eqref{eq:distance-ve} of the lemma in a similar
	way as in the proof of Lemma~\ref{lem:Scycle-conform}.
	We remark that the worst case for the assumption on~$k$
	occurs when the vertex $\max f$ coincides with~$x_2$.

	Finally, we extend $\{F_0,\dots,F_k, F_{k+1}\}$ by
	$F_{k+2}$ in a similar way by appealing to
	Lemma~\ref{lem:extend-family} applied to the edges~$f$ and
	$e_{k+2}$, which share the vertex $\max f$.
\end{proof}

\subsubsection{Forcing non-strict patterns with~$\rlcherry$
and~$\rrcherry$}
In the remaining case we consider monochromatic pairs of
edges isomorphic to~$\lcherry$ or~$\rcherry$
and by symmetry it will be sufficient to address it for~$\lcherry$.
While the general strategy is very similar, there are two
main differences:
firstly, the construction yields a family of copies more
resembling the structure of a \emph{star} instead of a cycle
(see the construction in Definition~\ref{dfn:construct-lcherry} below).
Secondly, homogeneous colourings of the system may result in
non-strict min-coloured copies of~$F$, which is a common relaxation of
monochromatic and min-coloured copies. More precisely we
relax the equivalence in the definition of min-colourings
and we say a colouring $\phi\colon E(F)\to \NN$  is
\emph{non-strictly min-coloured}
if for all edges~$e$, $e'\in E(F)$ we have
\[
	\min e=\min e'\quad\Longrightarrow\quad \phi(e)=\phi(e')\,.
\]
Roughly speaking, in non-strictly min-coloured copies of~$F$
all stars whose centres are smaller than all their leaves
are monochromatic, but some of these stars may use the same colour.
The notion of \emph{non-strictly max-coloured} colourings is
defined analogously (with~$\max$ in place of~$\min$).

\begin{dfn}\label{dfn:construct-lcherry}
	Let~$F_0$ be a copy of an ordered graph~$F$ with
	edges~$e_0=yx_0$ and \mbox{$e_1=yx_1\in E(F_0)$} forming an ordered
	copy of~$\rlcherry$, i.e., $y<x_0<x_1$.
	For another copy of~$\rlcherry$ given by $f_0=ab_0$,
	$f_1=ab_1\in E(F_0)$ with $a<b_0<b_1$
	and an integer $k\geq 1$
	we define the system \emph{$\ccS_F^k(e_0,e_1;f_0,f_1)$ on
	$F_0$} as follows:
	\begin{enumerate}[label=\lchlabel]
		\item\label{it:lche1}
			Add new vertices $x_2,\dots,x_{k+1}$ and edges $e_{i+1}=yx_{i+1}$
			for $i\in[k]$ obeying the ordering
			\[
				\max V(F_0) < x_2 < \dots < x_{k+1}\,.
			\]
			\setcounter{enumi}{2}
		\item\label{it:lche3}  Let
			$\ccS_F^k(e_0,e_1;f_0,f_1)=\{F_0,F_1,\dots,F_{k+1}\}$
			be a system of copies of~$F$ where for every $i\in [k]$ the edges
			$e_i$ and~$e_{i+1}$ are contained in~$F_i$ playing the
			r\^oles of~$e_0$ and~$e_1$ in~$F_0$
			while for~$F_{k+1}$ the edge~$e_0$ plays the r\^ole of
			$f_0$ and~$e_{k+1}$ plays the r\^ole
			of~$f_1$. In other words, we ensure
			\[
				\hspace{1.4cm} (F_0, e_0, e_1)
				\cong (F_i, e_i, e_{i+1}) \text{ for every } i
				\in [k] \text{ and } (F_{k+1}, e_0, e_{k+1})
				\cong (F_0, f_0, f_1)\,.
			\]
			Moreover, besides the necessary intersections given
			above, all those copies are as vertex disjoint as possible.
	\end{enumerate}
	The ordering of the remaining vertices of the copies of~$F$
	is not essential and any linear extension
	is suitable.

	We also define the union of these systems over all choices
	of copies of~$\lcherry$ given by suitable edges
	$f_0=ab_0$ and $f_1=ab_1$ in the fixed ordered graph~$F_0$
	and set
	\[
		\ccB^k_F(F_0,e_0,e_1)=\bigcup_{\substack{ab_0,ab_1\in
		E(F_0)\\a<b_0<b_1}} \ccS^k_F(e_0,e_1;ab_0,ab_1)\,.
	\]
\end{dfn}
As before, the construction is not a cycle of copies of~$F$
in the sense of Definition~\ref{def:Scycle}, but for
sufficiently large~$k$ it still yields an~$n$-conform
$F$-system:

\begin{lemma}\label{lemma:cherries}
	For every ordered graph~$F$ and all integers $k, n\geq 1$ the
	following holds:
	If the edges~$e_0$ and~$e_1$ form a monochromatic copy of~$\lcherry$
	in a $\ccB_F^k(F_0,e_0,e_1)$-homogeneous colouring
	$\phi\colon E(B)\to \NN$ for $B=\bigcup \ccB_F^k(F_0,e_0,e_1)$,
	then~$\phi$ is non-strictly min-coloured on every copy
	of~$F$ in $\ccB_F^k(F_0,e_0,e_1)$.

	Moreover, if we have $k\geq \max\{3,\,n+1\}$, then the
	$F$-system $(B,\ccB_F^k(F_0,e_0,e_1))$ is~$n$-conform.
\end{lemma}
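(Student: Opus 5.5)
The proof has two parts, mirroring Lemmata~\ref{lemma:separated-edges}--\ref{lemma:ptwo}: a colour-propagation argument along the star of copies, and a conformity check via the extension lemmata.

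\emph{Colour propagation.} Fix a copy of~$\lcherry$ in~$F_0$ given by $f_0=ab_0$ and $f_1=ab_1$ with $a<b_0<b_1$, and consider $\ccS^k_F(e_0,e_1;f_0,f_1)$. Since $(F_i,e_i,e_{i+1})\cong(F_0,e_0,e_1)$ and $\phi$ is homogeneous, the assumption $\phi(e_0)=\phi(e_1)$ transfers along $F_1,\dots,F_k$. This gives
\[
	\phi(e_0)=\phi(e_1)=\dots=\phi(e_{k+1})\,.
\]
In particular $\phi(e_0)=\phi(e_{k+1})$. In $F_{k+1}$ the pair $(e_0,e_{k+1})$ plays the r\^ole of $(f_0,f_1)$, so homogeneity applied to $F_{k+1}$ and~$F_0$ yields $\phi(f_0)=\phi(f_1)$. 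As this holds for every pair $f_0,f_1$ of edges of~$F_0$ with common minimum, $\phi$ is non-strictly min-coloured on~$F_0$. By homogeneity the same is true for every copy of~$F$ in $\ccB_F^k(F_0,e_0,e_1)$.

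\emph{Conformity.} By Lemma~\ref{lem:union-conform}, since the systems $\ccS^k_F$ for different pairs $(f_0,f_1)$ meet only in~$F_0$, it suffices to show that a single $\ccS^k_F(e_0,e_1;f_0,f_1)$ is $n$-conform. Starting from $\{F_0\}$, the copies $F_1,\dots,F_k$ are added one at a time. Copy~$F_i$ meets the previous union only in the edge $e_i=yx_i$: the vertex $x_{i+1}$ is new, and otherwise the copies are as disjoint as possible. So Fact~\ref{fact:simple-ext} applies at each step, and $\{F_0,\dots,F_k\}$ is $n$-conform.

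The remaining step is adding~$F_{k+1}$, which meets $B'=F_0\cup\dots\cup F_k$ exactly in $\{y,x_0,x_{k+1}\}$ and contains the edges $yx_0$ and $yx_{k+1}$. We apply Lemma~\ref{lem:extend-family} with $x=x_0$, the shared vertex~$y$, and $z=x_{k+1}$. We must verify~\eqref{eq:distance-P2}, namely
\[
	\dist_{B'-y}(x_0,F_i)+\dist_{B'-y}(x_{k+1},F_i)\ge\max\{3,n+1\}
	\quad\text{for every } i\in\{0,\dots,k\}\,.
\]
Once $y$ is deleted, $B'-y$ is a path-like chain: the only vertex that $F_{j-1}$ and~$F_j$ share besides~$y$ is~$x_j$, and non-consecutive copies meet only in~$y$. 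Hence every path in $B'-y$ from~$x_0$ to~$F_i$ passes through the cut vertices $x_1,\dots,x_i$, so $\dist_{B'-y}(x_0,F_i)\ge i$. Similarly every path from $x_{k+1}$ to~$F_i$ passes through $x_k,\dots,x_{i+1}$, so $\dist_{B'-y}(x_{k+1},F_i)\ge k-i$. The sum is therefore at least $k\ge\max\{3,n+1\}$, as required.

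\emph{Main obstacle.} The delicate part is the bookkeeping behind these distance bounds. One must check that removing~$y$ really disconnects the chain at each~$x_j$; this uses that the copies are as vertex disjoint as possible. One must also confirm that $x_0$ and $x_{k+1}$ lie in the claimed copies, and that $x_0\ne x_{k+1}$ even when $e_0$ coincides with $f_0$ or~$f_1$. The ordering conditions also need checking: placing $x_{k+1}$ after $\max V(F_0)$ makes the triple $y<x_0<x_{k+1}$ match $a<b_0<b_1$, so $F_{k+1}$ can be ordered consistently. Once these are in place, Lemma~\ref{lem:extend-family} yields $n$-conformity of each star system, and Lemma~\ref{lem:union-conform} yields it for the whole system.
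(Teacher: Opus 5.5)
Your proposal is correct and follows the paper's proof essentially step for step. It uses the same propagation of $\phi(e_0)=\phi(e_1)$ along $e_0,\dots,e_{k+1}$, transferred to $(f_0,f_1)$ via $F_{k+1}$. It also uses the same conformity argument: Lemma~\ref{lem:union-conform}, then Fact~\ref{fact:simple-ext} for $F_1,\dots,F_k$, then Lemma~\ref{lem:extend-family} for $F_{k+1}$ with the cut-vertex distance bounds $i$ and $k-i$ in $B'-y$.
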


\begin{proof}
	Owing to the homogeneity of~$\phi$ we may assume that the
	monochromatic copy of~$\lcherry$ is contained in~$F_0$, say
	on edges $e_0=yx_0$ and $e_1=yx_1$ with $y<x_0<x_1$. Note
	that being non-strictly min-coloured is equivalent
	to the property that all copies of~$\lcherry$ in~$F_0$ are
	monochromatic.

	For any given pair of
	edges $f_0=ab_0$ and $f_1=ab_1$ in~$F_0$ with $a<b_0<b_1$
	we consider the system $\ccS^k_F(e_0,e_1;f_0,f_1)$.
	It follows from the construction in
	Definition~\ref{dfn:construct-lcherry} combined with the
	homogeneity of~$\phi$
	that all edges $e_0,\dots,e_{k+1}$ in
	$\ccS^k_F(e_0,e_1;f_0,f_1)$ have the same colour. In
	particular, the pair
	of edges playing the r\^oles of~$f_0$ and~$f_1$ in the
	$(k+1)$-st copy of~$F$ in~$\ccS^k_F(e_0,e_1;f_0,f_1)$ have
	the same colour.
	Appealing once more to the homogeneity of~$\phi$, this
	implies $\phi(f_0)=\phi(f_1)$. Therefore, $\phi$ is non-strictly
	min-coloured on~$F_0$ and, by homogeneity, on every copy of
	$F$ in $\ccB_F^k(F_0,e_0,e_1)$.

	For the moreover-part we again exploit
	Lemma~\ref{lem:union-conform}
	and the definition of~$\ccB^k_F(F_0,e_0,e_1)$, which reduces the
	conformity of~$\ccB^k_F(F_0,e_0,e_1)$ to the conformity of
	$\ccS_F^k(e_0,e_1;f_0,f_1)$.
	For that we start with $\{F_0\}$, which clearly is an
	$n$-conform~$F$-system.
	Owing to the construction in step~\ref{it:lche3} in
	Definition~\ref{dfn:construct-lcherry},
	we note that by adding~$F_i$ sequentially for every $i\in [k]$
	we can invoke Fact~\ref{fact:simple-ext}. Consequently,
	the system $\{F_0,\dots,F_k\}$ is~$n$-conform.

	Finally, we extend the system $\{F_0,\dots,F_k\}$ by
	$F_{k+1}$ by applying
	Lemma~\ref{lem:extend-family} over the edges $e_0=yx_0$
	and~$e_{k+1}=yx_{k+1}$, which share the vertex~$y$.
	For the verification of the required distance
	assumption~\eqref{eq:distance-P2} fix two vertices~$u$,
	$v\in V(F_i)$ for
	some $i\in\{0,\dots,k\}$. Observe that every~$u$-$x_0$-path
	avoiding~$y$ must contain
	the vertices $x_0,x_1,\dots,x_i$. Consequently, it has
	length at least~$i$. Similarly, every  $v$-$x_{k+1}$-path
	avoiding~$y$ must pass through the vertices
	$x_{i+1},\dots,x_{k},x_{k+1}$ and has length at least $k-i$.
	Consequently, for $S'=F_0\cup\dots\cup F_k$ the lower bound
	on~$k$ yields
	\[
		\dist_{S'-y}(x_0,F_i)+\dist_{S'-y}(x_{k+1},F_i)
		\geq k
		\geq \max\{3,n+1\}
	\]
	and, hence,  $\ccS_F^k(e_0,e_1;f_0,f_1)$ is~$n$-conform by
	Lemma~\ref{lem:extend-family}.
\end{proof}

We close this section with a final construction dealing with the situation when both
a copy of~$\lcherry$ and a copy of~$\rcherry$ in~$F_0$ are monochromatic. In this situation,
by alternating between both types of cherries, we can transfer the colour to other edges of~$F$, since the centres of the stars are no longer fixed. Consequently, we can enforce monochromatic copies of~$F$ again (see Lemma~\ref{lemma:two-mono-cherries} below).
\begin{dfn}\label{dfn:two-mono-cherries}
Let~$F_0$ be a copy of an ordered graph~$F$ with
	edges~$e_0=x_0x_1$, $e_1=x_0x_2$, $e_0'=x_0'x_1'$, and \mbox{$e_1'=x_0'x_2'\in E(F_0)$} such that $e_0,e_1$ form an ordered copy of~$\rlcherry$, and $e_0',e_1'$ form a copy of~$\rrcherry$, meaning $x_0<x_1<x_2$ and $x_2'<x_1'<x_0'$.
	Additionally, for an edge $f\in E(F_0)\sm\{e_0,e_1,x_1x_2\}$ with $\min f\neq x_0$ and an integer $k\geq 1$
	we define the system \emph{$\ccZ_F^k(e_0,e_1,e_0',e_1';f)$ on
	$F_0$} as follows:
	\begin{enumerate}[label=\bchlabel]
		\item\label{it:speciallche1}
			Fix a vertex~$y_1$ in $\{x_1,x_2\}\sm f$
			and add new vertices $y_2,\dots,y_{2k-1}$
			obeying the ordering
			\begin{align*}
				\quad\qquad y_{2k-2}
				< \dots
				< y_4
				< y_2
				< \min V(F_0)
				< y_1
				\leq \max V(F_0)
				< y_3
				< y_5
				< \dots
				< y_{2k-1}\,.
			\end{align*}
			Moreover, we introduce edges
			$\tilde e_1=x_0y_1$ (which coincides with either~$e_0$ or~$e_1$),
			$\tilde e_{i+1}=y_{i}y_{i+1}$
			for $i\in[2k-2]$,
			$\tilde e_{2k}$ joining $\min f$ with~$y_{2k-1}$, and $\tilde e_{2k+1}=f$.
		\item\label{it:speciallche2}
			Let~$S$ be the graph with enumerated edge set
			$E_S=\big\{\tilde e_1,\tilde e_2,\dots,\tilde e_{2k-1},
				\tilde e_{2k},\tilde e_{2k+1}\big\}$.
		\item\label{it:speciallche3}  Let
			$\{F_1,\dots,F_{2k}\}$
			be a system of copies of~$F$ where for every $i\in [2k-2]$ the edges~$\tilde e_i$ and~$\tilde e_{i+1}$ are contained in~$F_i$ playing the
			r\^oles of
			\begin{itemize}
				\item $e_0'$ and~$e_1'$ in~$F_0$ for odd~$i$
				\item and~$e_0$ and~$e_1$ in~$F_0$ for even~$i$,
			\end{itemize}
			while for~$F_{2k-1}$ the edges~$\tilde e_{2k-1}$ and~$\tilde e_{2k}$ play the r\^oles of $e_1'$ and $e_0'$, and for~$F_{2k}$ we have that~$\tilde e_{2k}$ and~$f$ play the r\^oles of~$e_1$ and~$e_0$. In other words, we ensure
			\[
				(F_0, e_0, e_1)
				\cong (F_{2i}, \tilde e_{2i}, \tilde e_{2i+1})
				\cong (F_{2k}, f, \tilde e_{2k})\,,
			\]
			and
			\[
				(F_0, e_0', e_1')
				\cong (F_{2i-1}, \tilde e_{2i-1}, \tilde e_{2i})
				\cong (F_{2k-1}, \tilde e_{2k}, \tilde e_{2k-1})
			\]
			for all \(i\in [k-1]\).
			Moreover, besides the necessary intersections given
			above, all those copies are as vertex disjoint as possible
			from each other and from~$F_0$. Finally, note that adding~$F_0$ to the family
			$\{F_1,\dots,F_{2k}\}$ defines an~$S$-cycle, which we denote by
			$\ccZ^k_F(e_0,e_1,e_0',e_1';f)$ (see Figure~\ref{fig:zigzag}).
	\end{enumerate}

	\begin{figure}[t]
	\centering
	\begin{tikzpicture}[
		line width=0.8pt,
		declare function={para(\x,\a,\b,\h)=\h*(1-((2*\x-\a-\b)/(\b-\a))^2);},
		vert/.style={circle, fill, inner sep=0pt, minimum size=4.4pt},
		gvert/.style={circle, fill=gray!65, inner sep=0pt, minimum size=4.0pt},
		bandoneB/.style={gray!75, line width=8.6pt,  line cap=round},
		bandoneF/.style={gray!38, line width=7pt,    line cap=round},
		bandtwoB/.style={gray!50, line width=14.6pt, line cap=round},
		bandtwoF/.style={gray!18, line width=13pt,   line cap=round},
		bandtkB/.style ={gray!60, line width=10.6pt, line cap=round},
		bandtkF/.style ={gray!28, line width=9pt,    line cap=round},
		copylab/.style={font=\footnotesize, gray!40!black},
		elab/.style={font=\footnotesize},
	]
\begin{pgfonlayer}{foreground}
	\node[vert] (y2km2) at ( 0.00,0) {}; \node[below=5pt] at (y2km2) {\footnotesize $y_{2k-2}$};
	\node at ( 0.95,0) {$\cdots$};
	\node[vert] (y4)    at ( 1.80,0) {}; \node[below=5pt] at (y4)    {\footnotesize $y_4$};
	\node[vert] (y2)    at ( 2.80,0) {}; \node[below=5pt] at (y2)    {\footnotesize $y_2$};
	\node[vert] (fL)    at ( 4.00,0) {};
	\node[vert] (fR)    at ( 4.80,0) {};
	\node[below=2.4pt]at (4.40,0) {\footnotesize \textcolor{violet}{$\tilde e_{2k+1}\!=\!f$}};
	\node[vert] (x0)    at ( 6.90,0) {}; \node[below=5pt] at (x0)    {\footnotesize $x_0$};
	\node[vert] (xm)    at ( 7.70,0) {}; \node[below=5pt] at (xm)    {\footnotesize $x_1$};
	\node[vert] (y1)    at ( 8.50,0) {}; \node[below=5pt, xshift=3pt] at (y1)    {\footnotesize $y_1\!=\!x_2$};
	\node[vert] (y3)    at (10.40,0) {}; \node[below=5pt] at (y3)    {\footnotesize $y_3$};
	\node[vert] (y5)    at (11.40,0) {}; \node[below=5pt] at (y5)    {\footnotesize $y_5$};
	\node at (12.35,0) {$\cdots$};
	\node[vert] (y2km3) at (13.20,0) {}; \node[below=5pt] at (y2km3) {\footnotesize $y_{2k-3}$};
	\node[vert] (y2km1) at (14.20,0) {}; \node[below=5pt] at (y2km1) {\footnotesize $y_{2k-1}$};
	\end{pgfonlayer}
\begin{scope}[on background layer]
		\fill[gray!12] (6.5,-0.08) ellipse [x radius=3.32, y radius=0.85];
		\node[copylab] at (6.50,-1.2) {$F_0$};
		\draw[gray!50,line width=0.8pt] (6.5,-0.08) ellipse [x radius=3.32, y radius=0.85];
	\end{scope}
\begin{scope}[on background layer]
	\draw[bandtwoB] plot[domain=2.8:8.5,  samples=70] (\x,{para(\x,2.8, 8.5,2.20)});
	\draw[bandtwoB] plot[domain=2.8:10.4, samples=70] (\x,{para(\x,2.8,10.4,2.85)});
	\draw[bandtwoF] plot[domain=2.8:8.5,  samples=70] (\x,{para(\x,2.8, 8.5,2.20)});
	\draw[bandtwoF] plot[domain=2.8:10.4, samples=70] (\x,{para(\x,2.8,10.4,2.85)});
	\draw[bandoneB] plot[domain=6.9:8.5,  samples=50] (\x,{para(\x,6.9, 8.5,0.55)});
	\draw[bandoneB] plot[domain=2.8:8.5,  samples=70] (\x,{para(\x,2.8, 8.5,2.20)});
	\draw[bandoneF] plot[domain=6.9:8.5,  samples=50] (\x,{para(\x,6.9, 8.5,0.55)});
	\draw[bandoneF] plot[domain=2.8:8.5,  samples=70] (\x,{para(\x,2.8, 8.5,2.20)});
	\draw[bandtkB] (4.0,0) .. controls (6.7,2.3) and (11.6,2.4) .. (14.2,0);
	\draw[bandtkB] (4.0,0) -- (4.8,0);
	\draw[bandtkF] (4.0,0) .. controls (6.7,2.3) and (11.6,2.4) .. (14.2,0);
	\draw[bandtkF] (4.0,0) -- (4.8,0);
	\end{scope}
\node[copylab] at ( 2.80,-1.2) {$F_2\,(\lcherryred)$};
	\node[copylab] at ( 4.45,-1.2) {$F_{2k}\,(\lcherryviolet)$};
	\node[copylab] at ( 8.50,-1.2) {$F_1\,(\rcherrymix)$};
\draw[red, line width=0.7pt] plot[domain=2.8:8.5,  samples=70] (\x,{para(\x, 2.8, 8.5,2.20)});
	\draw[red, line width=0.7pt] plot[domain=2.8:10.4, samples=70] (\x,{para(\x, 2.8,10.4,2.85)});
	\draw[red, line width=0.7pt] plot[domain=1.8:10.4, samples=70] (\x,{para(\x, 1.8,10.4,3.50)});
	\draw[red, line width=0.7pt] plot[domain=1.8:11.4, samples=70] (\x,{para(\x, 1.8,11.4,4.15)});
	\draw[red, line width=0.7pt] plot[domain=0.0:13.2, samples=80] (\x,{para(\x, 0.0,13.2,5.45)});
	\draw[red, line width=0.7pt] plot[domain=0.0:14.2, samples=80] (\x,{para(\x, 0.0,14.2,6.10)});
\node[gvert] (xs2) at (5.55,0) {}; \node[below=1.6pt, gray!55!black] at (xs2) {\footnotesize $x_2'$};
	\node[gvert] (xs1) at (6.00,0) {}; \node[below=1.6pt, gray!55!black] at (xs1) {\footnotesize $x_1'$};
	\node[gvert] (xs0) at (6.45,0) {}; \node[below=1.6pt, gray!55!black] at (xs0) {\footnotesize $x_0'$};
	\draw[gray!65, line width=0.7pt] (xs0) -- (xs1);
	\draw[gray!65, line width=0.7pt] plot[domain=5.55:6.45,samples=40] (\x,{para(\x,5.55,6.45,0.40)});
\draw[green!45!black, line width=0.7pt] (x0) -- (xm);
	\draw[green!45!black, line width=0.7pt] plot[domain=6.9:8.5,samples=50] (\x,{para(\x,6.9,8.5,0.55)});
	\node[elab] at (6.75,0.54) {\textcolor{green!45!black}{$\tilde e_1$}};
\draw[violet, line width=0.7pt] (fL) -- (fR);
	\draw[violet, line width=0.7pt] (4.0,0) .. controls (6.7,2.3) and (11.6,2.4) .. (14.2,0)
		node[elab, black, pos=0.38, above=3pt] {\textcolor{violet}{$\tilde e_{2k}$}};
\node[elab] at (6.75,2.35) {\textcolor{red}{\footnotesize $\tilde e_2$}};
	\node[elab] at (6.75,3.15) {\textcolor{red}{\footnotesize $\tilde e_3$}};
	\node[elab] at (6.75,3.64) {\textcolor{red}{\footnotesize $\tilde e_4$}};
	\node[elab] at (6.75,4.39) {\textcolor{red}{\footnotesize $\tilde e_5$}};
	\node at (6.75,5.06) {\textcolor{red}{\footnotesize $\vdots$}};
	\node[elab] at (6.75,5.69) {\textcolor{red}{\footnotesize $\tilde e_{2k-2}$}};
	\node[elab] at (6.75,6.32) {\textcolor{red}{\footnotesize $\tilde e_{2k-1}$}};
	\end{tikzpicture}
	\caption{Construction of $\ccZ^k_F(e_0,e_1,e_0',e_1';f)$ with
	the copies~$F_0$, $F_1$, $F_2$, and~$F_{2k}$ shaded in grey.
	The coloured cherries next to their labels indicate the
	r\^oles played by the corresponding pairs of consecutive
	skeleton edges and here $y_1=x_2$ is a suitable choice.}\label{fig:zigzag}
\end{figure}

	The ordering of the remaining vertices of the copies of~$F$
	is not essential and any linear extension
	is suitable.

	We also define the union of these systems over all suitable edges
	$f=ab$ in the fixed ordered graph~$F_0$, i.e., $f\notin\{e_0=x_0x_1,e_1=x_0x_2,x_1x_2\}$
	and $\min f\neq x_0$
	and set
	\[
		\ccB^k_F(F_0,e_0,e_1,e'_0,e'_1)=\bigcup_{\substack{ab\in
		E(F_0)\\a<b}} \ccZ^k_F(e_0,e_1,e'_0,e'_1;ab)\,.
	\]
\end{dfn}

\begin{lemma}\label{lemma:two-mono-cherries}
	For every ordered graph~$F$ and all integers $k, n\geq 1$ the
	following holds:
	If the edges~$e_0$ and~$e_1$ form a monochromatic copy of~$\lcherry$,
	and the edges $e_0'$ and $e_1'$ form a monochromatic copy of~$\rcherry$ in a $\ccB_F^k(F_0,e_0,e_1,e_0',e_1')$-homogeneous colouring
	$\phi\colon E(B)\to \NN$ for the graph $B=\bigcup \ccB_F^k(F_0,e_0,e_1,e_0',e_1')$,
	and if~$\phi$ is in addition non-strictly min- and
	non-strictly max-coloured on~$F_0$,
	then~$\phi$ is monochromatic on every copy
	of~$F$ from $\ccB_F^k(F_0,e_0,e_1,e_0',e_1')$.

	Moreover, if $k \geq \max\big\{5,\,\min\{\og(F)+1,\,2n+2\},\,n+3\big\}$, then the
	resulting~$F$-system $(B,\ccB_F^k(F_0,e_0,e_1,e_0',e_1'))$ is~$n$-conform.
\end{lemma}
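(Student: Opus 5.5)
Both parts follow the pattern of the earlier lemmas: a colour-propagation argument along the skeleton, and then a conformity argument.

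\emph{Colour propagation.} Fix an admissible edge $f$ and consider the $S$-cycle $\ccZ^k_F(e_0,e_1,e_0',e_1';f)$ with skeleton edges $\tilde e_1,\dots,\tilde e_{2k+1}$. By homogeneity, every copy $F_{2i-1}$ with $i\in[k-1]$ carries a monochromatic copy of $\rcherry$ in the r\^oles of $e_0',e_1'$. Every copy $F_{2i}$ carries a monochromatic $\rlcherry$ in the r\^oles of $e_0,e_1$. This gives
\[
\phi(\tilde e_1)=\phi(\tilde e_2)=\dots=\phi(\tilde e_{2k-1}).
\]
The copy $F_{2k-1}$ has $\tilde e_{2k-1},\tilde e_{2k}$ in the (swapped) r\^oles of $e_1',e_0'$, which are still equally coloured, so the equality extends to $\tilde e_{2k}$. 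Similarly $F_{2k}$ gives $\phi(\tilde e_{2k})=\phi(f)$. Since $\tilde e_1\in\{e_0,e_1\}$ and $\phi(e_0)=\phi(e_1)$, every admissible $f$ receives the colour $c=\phi(e_0)$.

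It remains to treat the excluded edges, namely $x_1x_2$ and the edges $f$ with $\min f=x_0$. Every edge $f$ with $\min f=x_0$ lies in a $\rlcherry$ together with $e_0$ or $e_1$, or coincides with one of them. Non-strict min-colouring of $F_0$ therefore gives $\phi(f)=c$ for all such $f$. For $x_1x_2$ (if present), this edge and $e_1=x_0x_2$ share the maximum $x_2$, so non-strict max-colouring gives $\phi(x_1x_2)=c$. Hence $F_0$ is monochromatic, and by homogeneity so is every copy in the system. I expect one small check here: the vertex $y_1\in\{x_1,x_2\}\sm f$ must exist. This holds because $f\neq x_1x_2$.

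\emph{Conformity.} By Lemma~\ref{lem:union-conform} it suffices to show that each $\ccZ^k_F(e_0,e_1,e_0',e_1';f)$ is $n$-conform. This is an $S$-cycle whose skeleton has $2k+1$ edges, and the copy $F_0$ contains the two consecutive skeleton edges $\tilde e_{2k+1}=f$ and $\tilde e_1$. To apply Lemma~\ref{lem:Scycle-conform} we need
\[
2k+1\ge\max\{10,\min\{2\og(F)+2,4n+4\},2n+6\}.
\]
With $k\ge\max\{5,\min\{\og(F)+1,2n+2\},n+3\}$ we get $2k+1\ge 11$, $2k+1\ge 2\min\{\og(F)+1,2n+2\}+1$, and $2k+1\ge 2n+7$, so the bound holds.

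The main obstacle is verifying that the system genuinely satisfies Definition~\ref{def:Scycle}. The skeleton $S$ must be a cycle in which only consecutive edges meet. This is clear for the new vertices $y_2,\dots,y_{2k-1}$. The delicate points are the endpoints: $\tilde e_{2k}$ joins $\min f$ to $y_{2k-1}$, and $\tilde e_1=x_0y_1$ must meet $f$ in at most the right way. Here the hypotheses $\min f\neq x_0$ and $y_1\notin f$ are exactly what is needed.

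If $f$ and $\tilde e_1$ are disjoint, $S$ is a family of paths rather than a cycle. This case is allowed by the definition, and Lemma~\ref{lem:Scycle-conform} covers it.

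If instead $\max f=x_0$, then $f$ and $\tilde e_1$ share a vertex. In that case I would check that $F_0$ contains exactly the two consecutive skeleton edges, and that $V(F_0)\cap V_S=f\cup\tilde e_1$. Both follow because all new vertices are fresh. Condition~\ref{it:Scycleb} then follows from choosing the copies as vertex disjoint as possible.
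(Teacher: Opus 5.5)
Your proposal is correct and follows the paper's proof. You propagate the colour along the skeleton $\tilde e_1,\dots,\tilde e_{2k+1}$ of each cycle $\ccZ^k_F(e_0,e_1,e_0',e_1';f)$ via homogeneity, treat the excluded edges (those with $\min f=x_0$, and $x_1x_2$) by the non-strict min- and max-colouring of~$F_0$, and get conformity from Lemma~\ref{lem:Scycle-conform} with $|E(S)|=2k+1$ combined with Lemma~\ref{lem:union-conform}. Your extra checks are details the paper leaves implicit, and they are sound: that $y_1$ exists, the numerical bound, and that the system genuinely is an $S$-cycle (with a path skeleton when $x_0\notin f$).
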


\begin{proof}
	Owing to the homogeneity of~$\phi$ we may assume that the
	monochromatic copies of~$\lcherry$ and~$\rcherry$ are contained
	in~$F_0$, say on edges $e_0=x_0x_1$ and $e_1=x_0x_2$ with $x_0<x_1<x_2$, and $e_0'=x_0'x_1'$ and $e_1'=x_0'x_2'$ with $x_2'<x_1'<x_0'$.

	For any suitable edge $f=ab$ in~$F_0$ with $a<b$
	we consider the system $\ccZ^k_F(e_0,e_1,e_0',e_1';f)$.
	It follows from the construction in
	Definition~\ref{dfn:two-mono-cherries} combined with the
	homogeneity of~$\phi$
	that all edges $e_0, \tilde e_1,\dots,\tilde e_{2k},\tilde e_{2k+1}=f$
	have the same colour, since any consecutive pair either forms a copy of~$e_0$, $e_1$
	or a copy of $e'_0$, $e'_1$ in some copy of~$F$ from $\ccZ^k_F(e_0,e_1,e_0',e_1';f)$.
	In particular, this implies $\phi(e_0)=\phi(f)$ for every
	suitable edge~$f$.

	For the remaining cases, when $\min f=x_0$ or $f=x_1x_2$ (which are excluded in Definition~\ref{dfn:two-mono-cherries})
	we appeal to the additional assumption: if $\min f=x_0$, then
	$\phi(f)=\phi(e_0)$, since~$\phi$ is non-strictly min-coloured,
	and for $f=x_1x_2$ we have $\max f=x_2=\max e_1$ and the
	non-strictly max-coloured assumption yields
	$\phi(f)=\phi(e_1)=\phi(e_0)$. Consequently, $\phi$ is
	monochromatic on~$F_0$ and, by homogeneity, on
	every copy of~$F$ in $\ccB^k_F(F_0,e_0,e_1,e_0',e_1')$.

	For the moreover-part, note that $\ccZ^k_F(e_0,e_1,e_0',e_1';f)$ are~$S$-cycles and that their skeletons (as described in~\ref{it:speciallche2}) have size $|E(S)|=2k+1$ for any fixed edge~$f$. Hence, Lemma~\ref{lem:Scycle-conform} implies that all those cycles are~$n$-conform. Since any two of these cycles in~$\ccB^k_F(F_0,e_0,e_1,e_0',e_1')$ share only~$F_0$, Lemma~\ref{lem:union-conform} tells us that also $\ccB^k_F(F_0,e_0,e_1,e_0',e_1')$ is~$n$-conform.
\end{proof}

We close this section with the proof of
Proposition~\ref{prop:hom2canon}, which is a simple consequence of
Lemmata~\ref{lemma:separated-edges},
\ref{lemma:overtop-edges}, \ref{lemma:crossing-edges}, \ref{lemma:ptwo}, \ref{lemma:cherries}, and~\ref{lemma:two-mono-cherries}.

\begin{proof}[Proof of Proposition~\ref{prop:hom2canon}]
	If $e(F)\leq 1$, then every colouring of a copy of~$F$ is
	canonical and the~$F$-system $(F,\{F\})$ has all required
	properties. Hence, we may assume $e(F)\geq 2$.

	Given~$F$ and $n\geq 1$, let~$k$ be sufficiently large so that
	we can appeal to the lemmata from this section. In fact, in
	that direction Lemma~\ref{lemma:ptwo} is most restrictive
	and we set
	\[
		k=\max\big\{ 9,\,\min\{2\og(F)+1,4n+3\},\,2n+5 \big\}\,.
	\]
	We fix some copy~$F_0$ of~$F$. If~$F_0$ contains
	both a copy  of~$\lcherry$ on edges $e_0,e_1$ and a copy of~$\rcherry$ on edges
	$e_0',e_1'$, then let
	\[
		\ccB_F(\lcherry\,,\rcherry)=\ccB^k_F(F_0,e_0,e_1,e_0',e_1')\,,
	\]
	which is provided by Definition~\ref{dfn:two-mono-cherries},
	and otherwise $\ccB_F(\lcherry\,,\rcherry)=\emptyset$.
	Next we define the~$F$-system
	\[
		\ccB_F=\bigcup\big\{\ccB^k_F(F_0,e,e')\colon e,e'\in
		E(F_0)\tand e\neq e'\big\}\cup \ccB_F(\lcherry\,,\rcherry)\,,
	\]
	where we appeal to the other constructions for all
	$T\in\{\separated,\, \overtop,\, \crossing,\, \ptwo,\,
	\lcherry,\, \rcherry\}$
	laid out in this section. Since~$F_0$ is fixed, it follows
	from Lemma~\ref{lem:union-conform} combined with
	Lemmata~\ref{lemma:separated-edges}, \ref{lemma:overtop-edges},
	\ref{lemma:crossing-edges}, \ref{lemma:ptwo}, \ref{lemma:cherries}, and~\ref{lemma:two-mono-cherries} that the~$F$-system $(B,\ccB_F)$
	with $B=\bigcup \ccB_F$
	is~$n$-conform.

	It remains to consider a~$\ccB_F$-homogeneous colouring
	$\phi\colon E(B)\to \NN$. Obviously, if no two edges in~$F_0$
	have the same colour, then~$\phi$ is rainbow on~$F_0$ and we are done.
	Hence, we assume there are two edges~$e$ and $e'$ in~$F_0$
	such that $\phi(e)=\phi(e')$.

	In case~$e$ and $e'$ form a copy from $\{ \separated,\,
	\overtop,\, \crossing,\, \ptwo \}$, then
	$\phi$ is monochromatic on~$F_0$ owing to the corresponding lemma among
	Lemmata~\ref{lemma:separated-edges},
	\ref{lemma:overtop-edges}, \ref{lemma:crossing-edges},
	and~\ref{lemma:ptwo}.

	In the remaining case, we may assume that
	$F_0$ contains a monochromatic copy of~$\lcherry$ (the case
	of a monochromatic copy of~$\rcherry$ is symmetric) and in view of
	Lemma~\ref{lemma:cherries} the colouring~$\phi$ is, possibly non-strictly, min-coloured.
	We shall show that either~$F_0$ is in fact (strictly) min-coloured or
	it is monochromatic.

	Suppose~$F_0$ is not min-coloured. This gives rise to two
	edges~$f$ and $f'$ of the same colour, which do not form a copy
	of~$\lcherry$. From the discussion above we can also assume that
	$f$ and~$f'$ do not form a copy from the set $\{ \separated,\,
	\overtop,\, \crossing,\, \ptwo \}$ and, therefore,
	they form a copy of~$\rcherry$. Consequently, $F_0$ contains 
	a monochromatic copy of~$\lcherry$
	and a monochromatic copy
	of~$\rcherry$. Therefore, by
	Lemma~\ref{lemma:cherries} the colouring
	$\phi$ is non-strictly min- and non-strictly max-coloured on~$F_0$.
	Moreover, the system $\ccB_F(\lcherry\,,\rcherry)\neq\emptyset$ and, therefore,
	we conclude with Lemma~\ref{lemma:two-mono-cherries} that~$\phi$ is monochromatic on~$F_0$.
\end{proof}

\section{The Hales--Jewett construction}\label{sec:Hales}
Our proof of Proposition~\ref{prop:final} is based on the
\emph{partite construction
method} introduced by Ne\v{s}et\v{r}il
and R\"{o}dl~\cite{NR-SimpleProofPartCons},
which has found
many applications since then (see, e.g.,~\cites{HK26,
RR-girth} and the references therein).
This method allows
us to decompose the proof of Proposition~\ref{prop:final}
into several easier
problems.
One of these subproblems, treated in the present section,
deals with partite structures.
Therefore all systems and conglomerates appearing in this
section will be ``partite'' in
some sense, while vertex orderings will be irrelevant for
now. In fact, the first moment
they become relevant again is in~\ssign\ref{subsec:Allgemeinbildung}.

The first partite lemma we obtain will be
derived from the Hales--Jewett theorem~\cite{HJ}, which we recall below.
Roughly speaking, it asserts that for every set~$A$ and
every~$r\in\NN$
every~$r$-colouring of a sufficiently high-dimensional
discrete space over~$A$ has a
monochromatic line. It will be convenient to identify for
every set~$A$ and every
dimension~${n\in\NN}$ the Cartesian power~$A^n$ with the set
of all functions from~$[n]$
to~$A$.

\begin{dfn}\label{dfn:comb-emb}
	Given a set~$A$ and a dimension~$n$ let $[n]=C\dcup M$ be a
	partition with $M\ne \vn$
	and let $g\colon C\lra A$ be a function. Defining
	$\ol{g}\colon [n]\times A\lra A$ by
	\[
		\ol{g}(i, a)
		=
		\begin{cases}
			g(i) & \text{ if } i\in C \\
			a      & \text{ if } i\in M
		\end{cases}
	\]
	we call the map
	\[
		\eta_g\colon A\lra A^n\,,
		\quad
		a\longmapsto \bigl(\ol{g}(1, a), \dots, \ol{g}(n, a)\bigr)
	\]
	a {\it combinatorial embedding} and its image $L_g=\eta_g[A]$ the
	associated {\it combinatorial line}.
\end{dfn}

We think of the elements of~$C$ as ``constant coordinates'',
while the coordinates in~$M$
are ``moving''. For instance, if $A=[3]$, $n=4$, and $M=\{2, 4\}$,
then $L_g=\{2{\bf 1}1{\bf 1}, 2{\bf 2}1{\bf 2}, 2{\bf 3}1{\bf 3}\}$
is the line corresponding to the function $g\colon \{1,
3\}\lra [3]$ defined by $g(1)=2$
and $g(3)=1$.

\begin{thm}[Hales \& Jewett]\label{thm:HJ}
	For every finite set~$A$ and every number of colours~$r$
	there exists a dimension~$n$
	such that for every colouring $\phi\colon A^n\lra [r]$
	there exists a monochromatic
	combinatorial line. \qed
\end{thm}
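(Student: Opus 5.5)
The plan is to prove Theorem~\ref{thm:HJ} by Shelah's argument, which gives primitive recursive bounds, or alternatively by the classical double induction on $|A|$ with colour focusing. I would take the classical route, since it is shorter to present. We induct on $|A|$. The case $|A|=1$ is trivial with $n=1$. For the induction step, write $A=A'\dcup\{a^*\}$ and assume that $\HJ(A',r')$ exists for every number of colours $r'$.

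The key intermediate statement is a \emph{focusing lemma}. For every $s\le r+1$ there is an $N=N(s)$ such that every colouring $\phi\colon A^N\to[r]$ admits one of two outcomes.
\begin{enumerate}[label=\rmlabel]
	\item There is a monochromatic combinatorial line.
	\item There are $s$ combinatorial lines $L_1,\dots,L_s$ that are \emph{colour focused} at a common point $p$. This means that each $L_j$ is a line with its point corresponding to $a^*$ equal to $p$, that each $L_j\sm\{p\}$ is monochromatic, and that the colours of these $s$ sets are pairwise distinct.
\end{enumerate}
Taking $s=r+1$, outcome \rmlabel\ refers to the first outcome above, and the second outcome is impossible by pigeonhole: the colour $\phi(p)$ must coincide with the colour of some $L_j\sm\{p\}$, so that $L_j$ is monochromatic.

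The focusing lemma is proved by induction on $s$. For $s=1$, apply $\HJ(A',r)$ on $A'^{\,n}\subseteq A^n$. This yields a line whose restriction to $A'$ is monochromatic, and we extend it by setting the moving coordinates to $a^*$.

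For the step $s\to s+1$, split the coordinates as $N=m+N(s)$ with $m=\HJ\bl A',r^{|A|^{N(s)}}\br$. Colour each $u\in A^m$ by the whole function $v\mapsto\phi(u,v)$ on $A^{N(s)}$. By the induction hypothesis on $|A|$, there is a line $L$ in $A^m$ such that for all $u\in L\sm\{\text{$a^*$-point}\}$ the induced colouring of $A^{N(s)}$ is the same. Apply the focusing lemma for $s$ to that common colouring. If it gives a monochromatic line, we are done. Otherwise we obtain $s$ focused lines at a point $q$. Lift them by combining $L$ with each of them, and add the new line $\{(\eta_L(a),q)\}$, which is monochromatic off its focus point. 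This gives $s+1$ lines focused at $(\text{$a^*$-point of }L,q)$, provided the new colour $\phi(\cdot,q)$ differs from the previous $s$ colours. If it coincides with one of them, the corresponding lifted line, which is now diagonal in both blocks, is monochromatic, and we are done.

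The main obstacle is checking that the lifted lines really are combinatorial lines and are focused correctly. Concretely, one has to combine the moving coordinates of $L$ with those of each $L_j$ into a single moving set, and use that $\phi(\eta_L(a),\cdot)$ is independent of $a\neq a^*$. The remaining steps are routine bookkeeping.
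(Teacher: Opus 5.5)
The paper does not prove this statement: it quotes the Hales--Jewett theorem as a classical result, with a citation and no proof. Your colour-focusing argument therefore supplies something the paper takes as a black box. Your scheme is the standard one: induction on $|A|$, a focusing lemma proved by induction on $s$, the split $N=m+N(s)$ with $m=\mathrm{HJ}(A',r^{|A|^{N(s)}})$, and lifting the focused lines plus adding $a\mapsto(\eta_L(a),q)$. However, one step is false as written.

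Let $\chi\colon A^{N(s)}\to[r]$ be the common colouring $v\mapsto\phi(\eta_L(a),v)$ for $a\in A'$. Let $\ell_1,\dots,\ell_s$ be the lines focused at $q$, with colours $c_1,\dots,c_s$. In the case $\chi(q)=c_j$ you assert that the lifted line $a\mapsto(\eta_L(a),\ell_j(a))$, diagonal in both blocks, is monochromatic. Its points with $a\in A'$ do have colour $\chi(\ell_j(a))=c_j$. Its $a^*$-point, however, is $(\eta_L(a^*),q)$, and nothing controls the colour $\phi(\eta_L(a^*),q)$.

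Indeed, $L$ was obtained from $\phi$ restricted to $A'^{\,m}\times A^{N(s)}$ only. The colouring $\chi$ only sees $\phi$ on $\{\eta_L(a_0)\}\times A^{N(s)}$ for some $a_0\in A'$. But $\eta_L(a^*)\notin A'^{\,m}$, so you may recolour that single point without changing $L$, $\chi$, the $\ell_i$ or $q$. The claimed line is then not monochromatic. This is exactly the caveat you noted yourself: $\phi(\eta_L(a),\cdot)$ is independent of $a$ only for $a\neq a^*$.

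The repair is to freeze the first block. Fix $a_0\in A'$ and use the line $a\mapsto(\eta_L(a_0),\ell_j(a))$, which is a combinatorial line with moving set $m+M_{\ell_j}$. Its colours are $\chi(\ell_j(a))=c_j$ for $a\in A'$ and $\chi(q)=c_j$ at $a=a^*$. You need the same frozen line $a\mapsto(\eta_L(a_0),\ell(a))$ in the other case, where the focusing lemma for $s$ returns a $\chi$-monochromatic line $\ell$; there you only wrote ``we are done'', and the diagonal lift would fail for the same reason.

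With this correction the argument is complete. The $s+1$ lines focused at $(\eta_L(a^*),q)$ in the remaining case are fine as they stand, because the focusing property never requires the focus point to have any particular colour.
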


Let us fix any (not necessarily ordered) graph~$F$. By an
{\it $F$-partite graph}
we mean a graph~$L$ together with a distinguished partition
\[
	V(L)=\bigdcup_{x\in V(F)}V_x(L)
\]
of its vertex set into independent sets such that for every
edge $v_xv_y$ of~$L$
with $v_x\in V_x(L)$ and $v_y\in V_y(L)$ the pair $xy$ is an
edge of~$F$. In other words,
the projection from~$L$ to~$F$ sending each vertex class
$V_x(L)$ to~$x$ is demanded to be
a graph homomorphism.

When viewing~$F$ itself as an~$F$-partite graph we always
have the partition $V_x(F)=\{x\}$
in mind. For two~$F$-partite graphs~$L$ and~$N$ the former is
said to be an {\it $F$-partite
subgraph} of the latter if it is a subgraph in the ordinary
sense and, moreover,
$V_x(L)\subseteq V_x(N)$ holds for every $x\in V(F)$. We
write $\binom{N}{L}^\pt$ for the
set of all~$F$-partite copies of~$L$ in~$N$, i.e., for the
set of all induced~$F$-partite
subgraphs of~$N$ which are isomorphic to~$L$ as~$F$-partite
graphs. In particular,
this defines $\binom{N}{F}^\pt$ for every~$F$-partite graph~$N$.

By a {\it partite~$F$-system} we mean a pair $(L, \ccL_F)$
consisting of an~$F$-partite
graph~$L$ and a subset $\ccL_F\subseteq \binom{L}{F}^\pt$. In
other words, an~$F$-system
$(L, \ccL_F)$ is called a partite~$F$-system if both~$L$ and
$\ccL_F$ come with
an~$F$-partite structure. As expected, $(L, \ccL_F)$ is
called an {\it induced
partite~$F$-subsystem} of another partite~$F$-system $(N, \ccN_F)$ if
\begin{itemize}
	\item[$\bullet$] $L$ is an induced~$F$-partite subgraph of~$N$
	\item[$\bullet$] and $\ccL_F=\ccN_F\cap\binom{L}{F}^\pt$.
\end{itemize}
Furthermore we write $\binom{(N, \ccN_F)}{(L, \ccL_F)}^\pt$
for the collection of the induced partite~$F$-subsystems
of~$(N, \ccN_F)$ which are---in an obvious sense---isomorphic
to $(L, \ccL_F)$.
Finally, a {\it partite $(L,\ccL_F)$-conglomerate} is an
$(L,\ccL_F)$-conglomerate
$(N, \ccN_L, \ccN_F)$ such that $\ccN_L\subseteq \binom{(N,
\ccN_F)}{(L, \ccL_F)}^\pt$.

The {\it Hales--Jewett construction}, which we describe in
steps~\ref{it:HJ1}--\ref{it:HJ5} below,
associates to every
partite~$F$-system $(L, \ccL_F)$ and every number of colours~$r$ a
partite $(L,\ccL_F)$-conglomerate $\HJ_r(L, \ccL_F)=(N, \ccN_L, \ccN_F)$
such that $\ccN_L\lra (L, \ccL_F)^F_r$.

In the degenerate case $|\ccL_F|\le 1$ we simply set $(N,
\ccN_F)=(L, \ccL_F)$
and $\ccN_L=\{(L, \ccL_F)\}$. Assuming~$|\ccL_F|\ge 2$ from
now on, we intend to
satisfy the desired partition relation, roughly speaking, by
taking $(N, \ccN_F)$ to
be a sufficiently high-dimensional power of $(L, \ccL_F)$ and
letting~$\ccN_L$
correspond to the combinatorial lines in the associated
discrete space over~$\ccL_F$.
\begin{enumerate}[label=\nlabel]
	\item\label{it:HJ1} Let~$n$ denote the dimension delivered
		by Theorem~\ref{thm:HJ}
		applied to~${A=\ccL_F}$ and~$r$. In other words, $n$ is
		chosen so large that
		for every~$r$-colouring of~$\ccL_F^n$ there exists a
		monochromatic combinatorial
		line.
	\item\label{it:HJ2} The vertex classes of the new graph~$N$
		are the $n^{\mathrm{th}}$
		Cartesian powers of the corresponding vertex classes of~$L$, i.e.,
		we set $V_x(N)=V_x(L)^n$ for every $x\in V(F)$.
	\item\label{it:HJ3} Similarly, the edges of~$N$ are defined
		such that the entire
		graph~$N$ is, in the following sense, a Cartesian power of~$L$.
		Two vertices $(u_1, \dots, u_n)$
		and $(v_1, \dots, v_n)$ of~$N$ are declared to be
		adjacent if $u_iv_i\in E(L)$
		holds for every $i\in [n]$. In particular, $N$ is indeed an~$F$-partite
		graph.
	\item\label{it:HJ4} The set~$\ccN_F$ is constructed
		together with a {\it canonical
		bijection}~${\lambda\colon \ccL_F^n\lra\ccN_F}$.
		Given an~$n$-tuple $\vF=(F_1, \dots, F_n)\in \ccL_F^n$
		we need to specify a corresponding
		copy~${\lambda(\vF)\in \binom{N}{F}^\pt}$,
		which we intend to put into~$\ccN_F$. To this end we denote for
		all pairs $(i, x)\in [n]\times V(F)$ the unique vertex in
		$V(F_i)\cap V_x(L)$
		by~$u_{ix}$. This defines the vertex $(u_{1x}, \dots,
		u_{nx})\in V_x(L)^n$
		and we define~${\lambda(\vF)}$ to have the vertex
		set~$\{(u_{1x}, \dots, u_{nx})\colon x\in V(F)\}\subseteq
		V(N)$. The definition of $E(L)$
		in Step~\ref{it:HJ3} immediately implies that this set
		induces a partite copy
		of~$F$ in~$N$ and justifies the definition
		$\ccN_F=\{\lambda(\vF)\colon \vF\in \ccL_F^n\}$.
	\item\label{it:HJ5} Finally, we implement the idea that the
		copies in~$\ccN_L$ should
		correspond to combinatorial lines. Resuming the notation of
		Definition~\ref{dfn:comb-emb} we suppose that
		$\eta\colon\ccL_F\lra\ccL_F^n$
		denotes the combinatorial embedding corresponding to the
		partition $[n]=C\dcup M$
		with $M\ne\vn$ and the function $g\colon C\lra\ccL_F$.
		For each $i\in C$
		the graph $g(i)$ is a partite copy~$F_i$ of~$F$ in~$L$
		and thus its vertex set can be
		written in the form $V(g(i))=\{u_{ix}\colon x\in V(F)\}$
		with $u_{ix}\in V_x(L)$
		for every $x\in V(F)$. These vertices can be used to
		define combinatorial embeddings
		$\eta_{x}\colon V_x(L)\lra V_x(N)$ associated again to
		the partition $[n]=C\dcup M$
		and to the functions
		\[
			g_x\colon C\lra V_x(L)\,, \quad i\longmapsto u_{ix}\,.
		\]
		In more explicit terms this means that we set
		\[
			\eta_{x}(v_x)=\bigl(\ol{g}_x(1, v_x), \dots,
			\ol{g}_x(n, v_x)\bigr)
		\]
		for every $v_x\in V_x(L)$, where
		\[
			\ol{g}_x(i, v_x)
			=
			\begin{cases}
				u_{ix}, & \text{ if } i\in C, \\
				v_x,      & \text{ if } i\in M.
			\end{cases}
		\]
		As we shall verify in the claim below, for every
		combinatorial embedding \(\eta\) the
		union~$\bigdcup_{x\in V(F)} \eta_{x}[V_x(L)]$ induces
		an~$F$-partite copy of the
		system $(L, \ccL_F)$ in $(N, \ccN_F)$. We write~$\ccN_L$
		for the collection of all
		such copies as~$\eta$ varies over the combinatorial
		embeddings from~$\ccL_F$
		to~$\ccN_F$.
\end{enumerate}

Having thus completed our description of the construction
$\HJ$, it remains to
address the assertion in Step~\ref{it:HJ5} on induced
subsystems. The following
claim provides more information than would be required for this purpose.

\begin{clm}\label{clm:HJlines}
	In the notation of Step~\ref{it:HJ5}, the vertex
	set $\bigdcup_{x\in V(F)} \eta_x[V_x(L)]$
	induces a copy $(L^\eta, \ccL^\eta_F)\in \binom{(N,
	\ccN_F)}{(L, \ccL_F)}^\pt$
	such that $L^\eta$ is distance preserving in~$N$.
	Moreover, for every $F_\star\in\ccN_F$ there is some
	$F_\star^\eta\in \ccL^\eta_F$
	such that $V(L^\eta)\cap V(F_\star)\subseteq
	V(F^\eta_\star)$.\end{clm}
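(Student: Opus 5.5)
The plan is to work entirely with the coordinatewise description of $N$ and to define a retraction from $N$ onto $L^\eta$. Let $\eta^\ast\colon V(L)\to V(N)$ be the union of the maps $\eta_x$, i.e.\ $\eta^\ast(v)=\eta_x(v)$ for $v\in V_x(L)$. First we check that $\eta^\ast$ is injective; this holds because $M\ne\vn$ and the $M$-coordinates of $\eta^\ast(v)$ equal $v$. Next we check that $\eta^\ast$ is an induced embedding respecting the partition. If $uv\in E(L)$ with $u\in V_x(L)$ and $v\in V_y(L)$, then the $M$-coordinates form the edge $uv$. The $C$-coordinates are $u_{ix}u_{iy}$, which is an edge of $g(i)$ because $xy\in E(F)$ ($L$ is $F$-partite) and $g(i)$ is an induced partite copy of $F$. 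Conversely, adjacency in $N$ forces adjacency in every coordinate, in particular in some $M$-coordinate. Hence $L^\eta:=N[\eta^\ast[V(L)]]\cong L$ as $F$-partite graphs.

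For distance preservation we fix $m\in M$ and consider the projection $\pi_m\colon V(N)\to V(L)$ onto the $m$-th coordinate. This map is a graph homomorphism $N\to L$, and $\pi_m\circ\eta^\ast=\mathrm{id}$. Therefore every path in $N$ between $\eta^\ast(u)$ and $\eta^\ast(v)$ projects to a walk in $L$ of the same length between $u$ and $v$. This gives $\dist_N\ge\dist_L=\dist_{L^\eta}$, while the reverse inequality is trivial.

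Next we identify $\ccL^\eta_F=\ccN_F\cap\binom{L^\eta}{F}^\pt$ with $\eta^\ast$ applied to $\ccL_F$. For $F'\in\ccL_F$, the copy $\eta^\ast[F']$ is exactly $\lambda(\eta(F'))$: its vertex in class $x$ has $C$-coordinates $u_{ix}$ and $M$-coordinates the $x$-vertex of $F'$. So $\eta^\ast[F']\in\ccN_F$. Conversely, suppose $\lambda(\vF)$ with $\vF=(F_1,\dots,F_n)$ lies inside $L^\eta$. Then for each $m\in M$ its $m$-th coordinates give $F_m$, and all these coincide with $\pi_m$ of the preimage copy. The latter is an induced partite copy $F'$ of $F$ in $L$, and $F'=F_m\in\ccL_F$. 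Hence $\ccL_F^\eta=\eta^\ast[\ccL_F]$, and $(L^\eta,\ccL^\eta_F)$ is an isomorphic induced partite $F$-subsystem.

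For the moreover part we take $F_\star=\lambda(\vF)$ and fix some $m\in M$. We claim that $F^\eta_\star:=\eta^\ast[F_m]$ works. Let $w\in V(L^\eta)\cap V(F_\star)$ be in class $x$, so $w=\eta^\ast(v)$. Since $w\in F_\star$, its $m$-th coordinate is the $x$-vertex of $F_m$, and therefore $v$ is that vertex. Thus $w=\eta^\ast(v)\in V(\eta^\ast[F_m])$, as required.

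The main obstacle is the bookkeeping identifying $\ccN_F$-members inside $L^\eta$. One must ensure that every $M$-coordinate yields the same copy and that the $C$-coordinates automatically match $g$. This follows because a vertex of $L^\eta$ is determined by the common $M$-coordinate and has fixed $C$-coordinates.
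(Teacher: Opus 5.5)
Your proof is correct and follows essentially the same route as the paper. You prove injectivity and edge preservation coordinatewise, distance preservation via the projection onto a moving coordinate (which the paper phrases as the retraction $\eta_\bullet\circ\pi_i$), and the moreover part by reading off the $m$-th coordinate of $F_\star=\lambda(\vF)$, while the only minor differences are that you verify the inducedness of $L^\eta$ and the equality $\ccN_F\cap\binom{L^\eta}{F}^\pt=\eta^\ast[\ccL_F]$ directly, whereas the paper deduces the former from distance preservation and the latter from the moreover part.
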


\begin{proof}
	{\bf Stage A:} First we note that, since the set of moving
	coordinates is non-empty,
	the mapping $\eta_\bullet=\bigdcup_{x\in V(F)} \eta_x$ is
	an injection from~$V(L)$ into~$V(N)$.
	Next we convince ourselves that $\eta_\bullet$
	sends edges of~$L$ to edges of~$N$. So given any two
	adjacent vertices $v_x\in V_x(L)$
	and $v_y\in V_y(L)$ we need to show $\eta_x(v_x)\eta_y(v_y)\in E(N)$,
	which due to the definition of~$N$ in Step~\ref{it:HJ3} means
	that $\ol{g}_x(i, v_x)\ol{g}_y(i, v_y)\in E(L)$ holds for
	every $i\in [n]$.
	Indeed for $i\in M$ this is equivalent to $v_xv_y\in E(L)$
	and for $i\in C$
	we have
	\[
		\ol{g}_x(i, v_x)\ol{g}_y(i, v_y)
		=u_{ix}u_{iy}
		\in E(F_i)
		\subseteq E(L)\,,
	\]
	where $F_i\in \ccL_F$ is given by $F_i=g(i)$.
	Thus $\eta_\bullet$ maps~$L$ to an isomorphic subgraph
	$L^\eta$ of~$N$, but
	we have not checked yet that this graph $L^\eta$ is induced in~$N$.

	{\bf Stage B:} Our next step is to show $L^\eta\in \binom
	NL_\dip$, which implies,
	in particular, that~$L^\eta$ is an induced subgraph of~$N$.
	To this end we pick
	an arbitrary moving coordinate $i\in M$, let $\pi_i\colon
	V(N)\lra V(L)$ be the
	projection to the $i^{\mathrm{th}}$ coordinate, and define the
	map $\rho\colon V(N)\lra V(L^\eta)$ by
	\[
		\rho(v_x)=(\eta_x\circ\pi_i)(v_x)
	\]
	whenever $v_x\in V_x(N)$ and $x\in V(F)$. This is the
	composition of the graph
	homomorphisms
	\begin{center}
		\begin{tikzcd}[column sep=1.3cm, row sep=.8cm]
			{N}\arrow{r}{\pi_i} & {L}\arrow{r}{\eta_\bullet} & {L^\eta}\,.
\end{tikzcd}
	\end{center}
	Therefore, $\rho$ maps paths in~$N$ to walks in $L^\eta$.
	In particular, any shortest path in~$N$
	connecting two vertices  of $L^\eta\subseteq N$ is mapped
	to a walk within $L^\eta$. Combined with
	the observation that~$\rho$ is the identity on $L^\eta$,
	this implies that $L^\eta$
	is distance preserving in~$N$.

	{\bf Stage C:}
	Next, for every copy $F'\in \ccL_F$ the copy
	$(\lambda\circ\eta)(F')\in \ccN_F$
	(see Step~\ref{it:HJ4}) has the set of vertices
	\begin{equation}\label{eq:41}
		V\bl(\lambda\circ\eta)(F')\br
		=
		\bigl\{\eta_x(v_x)\colon x\in V(F) \text{ and } v_x\in
		V(F')\cap V_x(L)\bigr\}
		=
		\eta_\bullet[V(F')]\,.
	\end{equation}
	Setting $\ccL^\eta_F=(\lambda\circ\eta)[\ccL_F]$ this tells us
	that $(L^\eta, \ccL^\eta_F)$ is a partite~$F$-subsystem of
	$(N, \ccN_F)$,
	which is isomorphic to $(L, \ccL_F)$, but we have not
	checked yet that it
	is induced, i.e., that
	$\ccL^\eta_F\supseteq\binom{L^\eta}{F}^\pt\cap \ccN_F$
	holds as well. As this follows from the moreover-part of
	our claim by applying
	it to any $F_\star\in \binom{L^\eta}{F}^\pt\cap \ccN_F$, it
	suffices to
	establish the latter statement.

	{\bf Stage D:}
	So let any copy $F_\star\in\ccN_F$ be given. Owing to
	Step~\ref{it:HJ4}
	there exists an~$n$-tuple $\vF=(F_1, \dots,
	F_n)\in\ccL_F^n$ such that
	$F_\star=\lambda(\vF)$. Picking an arbitrary index $i\in M$
	we contend that $F^\eta_\star=(\lambda\circ\eta)(F_i)$ has the
	desired property $V(L^\eta)\cap V(F_\star)\subseteq
	V(F^\eta_\star)$.
	To see this, we consider any vertex in $V(L^\eta)\cap
	V(F_\star)$ and write
	it in the form $\eta_x(v_x)$ with $x\in V(F)$ and $v_x\in V_x(L)$.
	In the $i^{\mathrm{th}}$ coordinate of $\eta_x(v_x)\in V(F_\star)$
	we see the statement $v_x\in V(F_i)$. By the first equality
	in~\eqref{eq:41}
	applied to~$F_i$ in place of~$F'$ this leads to
	$\eta_x(v_x)\in V(F^\eta_\star)$.
\end{proof}

The next proposition summarises all properties of the Hales--Jewett
construction we need
in the sequel.

\begin{prop}\label{lem:HJconstr}
	For every partite~$F$-system $(L, \ccL_F)$ and every
	number of colours~$r$ the partite conglomerate $\HJ_r(L,
	\ccL_F)=(N, \ccN_L, \ccN_F)$
	satisfies
	\begin{enumerate}[label=\alabel]
		\item\label{it:HJa} $\ccN_L\lra (L, \ccL_F)^F_r$,
		\item\label{it:HJb} $\ccN_L\subseteq \binom NL_\dip$,
		\item\label{it:HJc} $\omega(N)=\omega(L)$,
		\item\label{it:HJd} $\og(N)=\og(L)$,
		\item\label{it:HJe} and if for some $m\in\NN$ we have
			$\ccL_F\subseteq\binom LF_m$,
			then $\ccN_F\subseteq \binom NF_m$ follows.
	\end{enumerate}
	Furthermore, if the copies in~$\ccL_F$ have clean intersections, then
	\begin{enumerate}[label=\alabel, resume]
		\item\label{it:HJf} so do the copies in~$\ccN_F$;
		\item\label{it:HJg} and for every $(L^\eta, \ccL^\eta_F)\in \ccN_L$
			and every $F_\star\in \ccN_F\sm \ccL^\eta_F$
			the intersection of~$F_\star$ and~$L^\eta$ is clean.
	\end{enumerate}
\end{prop}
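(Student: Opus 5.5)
We treat the degenerate case $|\ccL_F|\le 1$ separately; there $N=L$ and everything is trivial. Otherwise we verify the properties one by one from the product structure of~$N$. The workhorse throughout is that each coordinate projection $\pi_i\colon N\to L$ is an $F$-partite graph homomorphism.

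For~\ref{it:HJa}, given $\psi\colon\ccN_F\to[r]$, we pull it back along the bijection~$\lambda$ to a colouring of~$\ccL_F^n$. Theorem~\ref{thm:HJ} then yields a monochromatic combinatorial line with embedding~$\eta$. By~\eqref{eq:41}, the copy $(L^\eta,\ccL^\eta_F)$ has $\ccL^\eta_F=(\lambda\circ\eta)[\ccL_F]$, which is therefore monochromatic. Property~\ref{it:HJb} is exactly Claim~\ref{clm:HJlines}.

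For~\ref{it:HJc} and~\ref{it:HJd}, we note that $L$ embeds into~$N$, for instance via any $L^\eta$, which is induced. This gives $\omega(N)\ge\omega(L)$ and $\og(N)\le\og(L)$. Conversely, $\pi_1$ is a homomorphism, so a clique in~$N$ maps injectively to a clique in~$L$, since adjacent vertices have adjacent and hence distinct images. An odd closed walk in~$N$ maps to an odd closed walk of the same length in~$L$, which contains an odd cycle of at most that length.

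For~\ref{it:HJe}, let $F_\star=\lambda(F_1,\dots,F_n)$ and suppose $P$ is a path of length $m'\le m$ between $u,v\in V(F_\star)$ with $P\not\subseteq F_\star$. If $P$ leaves $V(F_\star)$ or uses a non-edge of~$F_\star$ (impossible, since $F_\star$ is induced), then some vertex $w$ of~$P$ lies outside $F_\star$. Hence there is a coordinate~$i$ with $\pi_i(w)\notin V(F_i)$. The projected walk $\pi_i(P)$ joins $\pi_i(u),\pi_i(v)\in V(F_i)$ with length~$m'$ and is not contained in~$F_i$. Shortening it to a path $P'$ that still leaves $F_i$ needs care: a walk leaving $F_i$ contains a subwalk between two vertices of~$F_i$ whose interior avoids $F_i$. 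Applying $m$-inducedness to that subpath and splicing in a shorter $F_i$-path, we get $\dist_{F_i}(\pi_i u,\pi_i v)<m'$. Since $F_\star\cong F_i$ via $\pi_i$ as partite graphs, the same bound holds for $\dist_{F_\star}(u,v)$. This splicing argument is the same as the one in the proof of Lemma~\ref{lem:union-conform}. The degenerate case $u=v$ does not arise since the vertices are distinct.

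For~\ref{it:HJf}, take distinct $F_\star=\lambda(\vF)$ and $F'_\star=\lambda(\vF')$ sharing two vertices. These lie in distinct classes $V_x$ and $V_y$, say, since partite copies meet each class once. Because $\vF\ne\vF'$, some coordinate~$i$ has $F_i\ne F'_i$. The shared vertices project to two common vertices of $F_i$ and $F'_i$ in classes $x$ and $y$. By clean intersections, $V(F_i)\cap V(F_i')=\{u_{ix},u_{iy}\}$ is an edge, so $xy\in E(F)$ and the two shared vertices are adjacent in both copies. A third shared vertex in class~$z$ would give three common vertices of $F_i$ and $F'_i$, which is impossible. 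Hence the intersection is exactly this edge.

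For~\ref{it:HJg}, we use the moreover-part of Claim~\ref{clm:HJlines}. It gives $V(L^\eta)\cap V(F_\star)\subseteq V(F^\eta_\star)$ for some $F^\eta_\star\in\ccL^\eta_F\subseteq\ccN_F$. Since $F_\star\notin\ccL^\eta_F$, we have $F_\star\ne F^\eta_\star$, so by~\ref{it:HJf} their intersection is clean. Therefore $V(L^\eta)\cap V(F_\star)$ is contained in a clean intersection: either at most one vertex, or a subset of an edge lying in both graphs, and that edge belongs to $L^\eta$ because $L^\eta$ is induced.

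The step I expect to be the main obstacle is~\ref{it:HJe}. Defining $m$-inducedness only for paths makes the projection argument delicate, because projections of paths are walks. The subwalk-and-splice reduction must be done carefully, including the case where the projected walk stays inside~$F_i$ even though $P$ left~$F_\star$; the choice of~$i$ via the vertex~$w$ is designed to exclude that case.
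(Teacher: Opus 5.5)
Your proposal is correct and follows the paper's proof essentially step by step: pulling the colouring back along~$\lambda$ for~(a), Claim~\ref{clm:HJlines} for~(b), a homomorphism $N\to L$ together with $\ccN_L\ne\vn$ for~(c) and~(d), a coordinate~$i$ with $\pi_i(w)\notin V(F_i)$ for~(e), a coordinate with $F_i\ne F'_i$ for~(f), and the moreover-part of the claim combined with~(f) for~(g). Your handling of~(e) is more careful than the paper's, which goes from the projected walk to a shorter path in~$F_i$ without comment, and transferring the bound through the isomorphism $\pi_i|_{V(F_\star)}\colon F_\star\to F_i$ is a cleaner shortcut than the paper's detour through~$F$; when you write it out, splice \emph{every} excursion of the walk outside~$F_i$, and simply delete any excursion that returns to its starting vertex, since $m$-inducedness says nothing about closed walks.
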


Strictly speaking, there is some abuse of language in
clause~\ref{it:HJb}, because
in reality~$\ccN_L$ consists of copies of the form $(L^\eta,
\ccL^\eta_F)$, and what
we claim is that their underlying graphs~$L^\eta$ are
distance preserving in~$N$.

\begin{proof}
	In the degenerate case $|\ccL_F|\le 1$ we have $(N,
	\ccN_F)=(L, \ccL_F)$
	and $\ccN_L=\{(L, \ccL_F)\}$, so that all seven statements are clear.
	It remains to discuss the case $|\ccL_F|\ge 2$, where the conglomerate
	$(N, \ccN_L, \ccN_F)$ is defined according to the five
	Steps~\ref{it:HJ1}\,--\,\ref{it:HJ5} described above.

	The partition relation~\ref{it:HJa} follows from the fact
	that every colouring
	$\phi\colon\ccN_F\lra [r]$ induces, via~$\lambda^{-1}$, an
	$r$-colouring of the discrete
	space~$\ccL_F^n$. By our choice of~$n$ in Step~\ref{it:HJ1}
	this colouring contains
	a monochromatic combinatorial line, which translates back to a
	copy $(L^\eta, \ccL^\eta_F)\in\ccN_L$ such that
	$\ccL_F^\eta$ is monochromatic with
	respect to~$\phi$.

	Part~\ref{it:HJb} was already obtained in Claim~\ref{clm:HJlines}. The
	statements~\ref{it:HJc} and~\ref{it:HJd} follow from the
	fact, implicit in
	Stage~B of the proof of Claim~\ref{clm:HJlines}, that there
	exists a graph
	homomorphism from~$N$ to~$L$ combined with $\ccN_L\neq\emptyset$.

	Proceeding with~\ref{it:HJe} we assume $\ccL_F\subseteq
	\binom LF_m$ for
	some $m\in\NN$ and consider any copy ${F_\star=\lambda(F_1,
	\dots, F_n)\in\ccN_F}$.
	Let $P\subseteq N$ be a path which connects two
	vertices~${u, w\in V(F_\star)}$, which is
	not completely contained in $F_\star$, and which has length
	at most~$m$.
	Take an arbitrary vertex ${v\in V(P)\sm V(F_\star)}$, write it as
	an~$n$-tuple $v=(v_1, \dots, v_n)$, and notice that due to
	$v\not\in V(F_\star)$
	there is a coordinate direction $i\in [n]$ such that
	$v_i\not\in V(F_i)$.

	Now for each $j\in [n]$ the projection $\pi_j\colon V(N)\lra V(L)$
	to the $j^{\mathrm{th}}$ coordinate is a graph homomorphism
	from~$N$ to~$L$.
	In particular, $\pi_i[P]\subseteq L$ is a walk connecting
	the vertices $\pi_i(u)$
	and $\pi_i(w)$ of~$F_i$ and passing through~$v_i$. Since
	$F_i\in \binom LF_m$,
	this leads to a~$\pi_i(u)$-$\pi_i(w)$-path $Q_i\subseteq
	F_i$, which is strictly
	shorter than~$P$.

	Next, the projection $\psi\colon V(L)\lra V(F)$ mapping
	each vertex class $V_x(L)$
	to~$x$ is a graph homomorphism as well. As $\psi|_{V(F_i)}$
	is a graph isomorphism
	between~$F_i$ and~$F$, it sends~$Q_i$ to
	a $(\psi\circ\pi_i)(u)$-$(\psi\circ\pi_i)(w)$-path $Q\subseteq F$.
	For each $j\in [n]$ the graph isomorphism~$\psi|_{V(F_j)}$
	between~$F_j$ and~$F$
	pulls~$Q$ back to a $\pi_j(u)$-$\pi_j(w)$-path $Q_j\subseteq F_j$.
	In the same way as the~$n$-tuple $(F_1, \dots, F_n)$
	corresponds to the
	copy $F_\star\in \binom NF^\pt$, the~$n$-tuple $(Q_1,
	\dots, Q_n)$ corresponds
	to a~$u$-$w$-path $Q_\star\subseteq F_\star$ of the same
	length as~$Q$.
	This completes the proof of~$F_\star\in\binom NF_m$ and,
	hence, of clause~\ref{it:HJe}.

	Throughout the remainder of the argument we suppose that
	the copies in~$\ccL_F$
	have clean intersections.
	For Part~\ref{it:HJf} we consider two distinct~$n$-tuples
	$(F_1, \dots, F_n)$
	and $(F'_1, \dots, F'_n)$ in~$\ccL_F^n$ such that the copies
	$F_\star=\lambda(F_1, \dots, F_n)$
	and $F_\sstar=\lambda(F'_1, \dots, F'_n)$ in~$\ccN_F$ have
	a set~$e$ of
	at least two vertices in common. We need to show that~$e$
	is a common edge of these
	two copies.

	To this end we pick an index $i\in [n]$ such that $F_i\ne
	F'_i$. Every $x\in V(F)$
	with $e\cap V_x(N)\ne\vn$ needs to satisfy $V(F_i)\cap
	V(F'_i)\cap V_x(L)\ne \vn$.
	This can hold for at most two vertices $x\in V(F)$, because by
	our hypothesis on $(L, \ccL_F)$ the intersection of~$F_i$
	and~$F'_i$ is clean.
	Together with $|e|\ge 2$ this shows that there are two vertices
	of~$F$, say~$x$ and~$y$, such that $V_x(N)$ and $V_y(N)$
	are the only vertex classes
	of~$N$ intersected by~$e$.

	Using again that the intersection of~$F_i$ and~$F'_i$
	is clean, we see that the vertices in $V(F_i)\cap V(F'_i)\cap V_x(L)$
	and $V(F_i)\cap V(F'_i)\cap V_y(L)$ are adjacent. In
	particular, $xy$ is an edge
	of~$F$. Exploiting the product structure of $E(N)$
	(cf.\ Step~\ref{it:HJ3})
	it is now easy to confirm that~$e$ is an edge from~$V_x(N)$
	to $V_y(N)$
	belonging to $E(F_\star)\cap E(F_{\star\star})$.

	Concluding with~\ref{it:HJg} we refer to the moreover-part
	of Claim~\ref{clm:HJlines}, which yields a
	copy~$F_\star^\eta$ in $\ccL^\eta_F$
	such that $V(L^\eta)\cap V(F_\star)\subseteq V(F^\eta_\star)$.
	Due to $F_\star\not\in \ccL^\eta_F$ we have $F_\star\ne
	F^\eta_\star$, so according
	to~\ref{it:HJf} the intersection of $F_\star$ and
	$F^\eta_\star$ is clean.
	It follows that the intersection of~$F_\star$ and $L^\eta$
	is clean as well.
\end{proof}

\section{Clean Ramsey conglomerates}

Our goal in this section is to establish a version of
Proposition~\ref{prop:final}
not addressing distances and odd girth yet, but preserving
clean intersections and the clique number.
For that we define an adequate notion of clean intersections
for conglomerates.

\begin{dfn}\label{dfn:54}
	Given a graph~$F$ and an~$F$-system $(B, \ccB_F)$ with
	clean intersections,
	a $(B, \ccB_F)$-conglomerate $(H, \ccH_B, \ccH_F)$ is said
	to be {\it clean}
	provided that
	\begin{enumerate}[label=\rmlabel]
		\item\label{it:cli} the copies in~$\ccH_F$ have clean intersections;
		\item\label{it:clii} for every $(B^\star,
			\ccB^\star_F)\in \ccH_B$ and
			every $F_{\star\star}\in\ccH_F\sm \ccB^\star_F$ the intersection
			of $B^\star$ and~$F_{\star\star}$ is clean;
		\item\label{it:cliii} and for all distinct $(B^\star, \ccB^\star_F),
			(B^{\star\star}, \ccB^{\star\star}_F)\in \ccH_B$ the intersection
			of~$B^\star$ and~$B^{\star\star}$ is either clean or a copy of~$F$
			belonging to $\ccB^\star_F\cap \ccB^{\star\star}_F$.
	\end{enumerate}
\end{dfn}

The following proposition yields clean conglomerates that
preserve the clique number and
enjoy the Ramsey property for colouring copies of~$F$.

\begin{prop}\label{prop:warmup}
	For every ordered~$F$-system $(B, \ccB_F)$ with clean intersections
	and every number of colours~$r$ there exists a clean
	ordered $(B, \ccB_F)$-conglomerate\footnote{The superscript
	in $\Ups^{(1)}_r$ indicates it is the first member of
	a family of constructions $\Ups^{(n)}_r$ defined in
	\ssign\ref{sec:conform-cong}.}
	\[
		\Ups^{(1)}_r(B, \ccB_F)=(H, \ccH_B, \ccH_F)
	\]
	such that $\ccH_B\lra (B, \ccB_F)^F_r$ and $\omega(H)=\omega(B)$.
\end{prop}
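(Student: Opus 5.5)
We prove Proposition~\ref{prop:warmup} by the partite construction, with Proposition~\ref{lem:HJconstr} as the partite lemma. First we fix an ordered host for the picture zero. Apply an ordered induced Ramsey-type result for the vertex set: take an ordered graph $G$ and a system $\ccG_B$ of ordered induced copies of $B$ in $G$ such that every $r$-colouring of the vertex subsets of size $v(F)$ yields a copy $B^\star\in\ccG_B$ on which all copies of $F$ in $\ccB^\star_F$ receive the same colour. Concretely, $G$ is the complete graph on $[N]$, where $N$ is given by the hypergraph Ramsey theorem for $v(B)$-subsets, with $\ccG_B$ all ordered copies. Moreover, $G$ need not respect cliques; only the pictures will.

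Enumerate $\ccG_B=\{B^1,\dots,B^m\}$. The picture zero $P_0$ is the $G$-partite graph obtained as a disjoint union of one partite copy of $(B,\ccB_F)$ per $B^j$, placed over $B^j$. The copies of $F$ in it are the images of $\ccB_F$, and each of its systems of copies is a disjoint copy of $(B,\ccB_F)$. This is clean and has clique number $\omega(B)$.

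Given a picture $P_{j-1}$ (a $G$-partite $F$-system together with a family of copies of $(B,\ccB_F)$), let $(L,\ccL_F)$ be the partite $F$-system induced by $P_{j-1}$ on the vertex classes over $V(B^j)$, viewed as $B$-partite via the identification $B^j\cong B$. Here $\ccL_F$ consists of the $F$-copies of $P_{j-1}$ lying over copies in $\ccB^j_F$. Apply $\HJ_r(L,\ccL_F)=(N,\ccN_L,\ccN_F)$ from Proposition~\ref{lem:HJconstr}. For every $(L^\eta,\ccL^\eta_F)\in\ccN_L$ we then amalgamate a fresh copy of $P_{j-1}$ onto $N$ along $L^\eta$, with all copies otherwise vertex disjoint. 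The result is $P_j$, and its $F$-copies and $B$-copies are the union of those of the amalgamated pictures together with $\ccN_F$.

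The Ramsey property follows by the usual backwards induction. Colour the $F$-copies of $H=P_m$; picture $m$ contains a copy of $P_{m-1}$ whose restriction to the class over $B^m$ is monochromatic in the appropriate sense. Iterating down to $P_0$ yields a copy of $P_0$ in which the colour of each $F$-copy depends only on its projection into $G$. The choice of $G$ then gives a $B^\star$ whose copy in $P_0$ is monochromatic. Clique number is preserved because each amalgamation step glues along induced subgraphs. Any clique lies in one amalgamated copy or in $N$, and $\omega(N)=\omega(L)$ by \ref{it:HJc}. The ordering is inherited lexicographically: classes first by the order on $V(G)$, then arbitrarily within a class. This works because every copy is partite over an ordered copy in $G$.

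\textbf{Main obstacle: cleanness.} We must maintain the three conditions of Definition~\ref{dfn:54} inductively through the amalgamations. For $F$-copies, two copies in distinct amalgamated pictures meet inside $N$, hence inside some $L^\eta$ or in $N$ outside. We then invoke \ref{it:HJf} and \ref{it:HJg}, together with the fact that an $F$-copy of a picture meeting $L^\eta$ in two or more vertices must be one of the copies in $\ccL^\eta_F$. Condition~\ref{it:cliii} for two $B$-copies in different amalgamated pictures uses \ref{it:HJg}. Their intersection lies within the intersection of the two $L^{\eta}$'s, which by Claim~\ref{clm:HJlines}-type arguments is contained in a single $F$-copy. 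Cleanness of the previous picture then forces that intersection to be an edge, a vertex, or a common $F$-copy. Checking this case analysis, especially for $B$-copies that straddle the class over $B^j$ and other classes, is where the work lies.
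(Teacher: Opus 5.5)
Your proposal has a genuine gap. The central step does not typecheck, and the cleanness step would fail even if it did.

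\textbf{The partite lemma cannot be applied over a copy of $B$.} You amalgamate over whole copies $B^j\in\ccG_B$ and apply $\HJ_r$ to the sub-picture $(L,\ccL_F)$ over $V(B^j)$. Proposition~\ref{lem:HJconstr} does not cover this.

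The Hales--Jewett construction needs a partite $F$-system: every vertex of $L$ lies in a class $V_x(L)$ with $x\in V(F)$ and plays the r\^ole of $x$ in every copy of $\ccL_F$ containing it. Both $\lambda$ in Step~\ref{it:HJ4} and the embeddings $\eta_x$ in Step~\ref{it:HJ5} use this, since in a constant coordinate~$i$ a vertex of $V_x(L)$ is sent to the vertex of $g(i)$ in class~$x$.

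Your $L$ is at best $B$-partite. It is not even that, because in a complete host $G$ a pair $x<y$ can be an edge of one $B^i$ and a non-edge of $B^j$. Its copies of $F$ lie over different members of $\ccB^j_F$, so a vertex over $b\in V(B^j)$ has no counterpart in a constant copy $g(i)$ lying over some $F_0$ with $b\notin V(F_0)$.

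No black box fills this in either. The literal conclusion~\ref{it:HJa} is false for partite copies here: colour each copy of $F$ by the member of $\ccB^j_F$ it lies over. The weaker conclusion your backward induction actually needs is that colours inside $L^\eta$ depend only on the projection. That is a partite Ramsey statement for $(B,\ccB_F)$ which Proposition~\ref{lem:HJconstr} does not provide; it would itself need a partite construction over the individual copies of $F$ in $B^j$.

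This is exactly why the paper takes an ordered graph $G\lra(B)^F_r$ from Theorem~\ref{thm:41}, whose copies are induced. It then amalgamates over the copies $F(\alpha)\in\ccG_F=\binom GF$, so that each constituent is a genuine partite $F$-system.

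\textbf{The cleanness argument fails as stated.} Two distinct lines $L^\eta$, $L^{\eta'}$ of a Hales--Jewett conglomerate need not meet inside one copy of~$F$. Suppose they have the same moving coordinates and their constants differ only at $i_0$, where $g(i_0)$ and $g'(i_0)$ share an edge in classes $x,y$. Then $V(L^\eta)\cap V(L^{\eta'})=\eta_\bullet[V_x(L)\cup V_y(L)]$. Claim~\ref{clm:HJlines} only compares a single copy of $F$ with a single line; this is precisely the failure of clause~\ref{it:cliii} of Definition~\ref{dfn:54} that the paper notes for $\HJ$.

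With your complete host graph nothing controls what happens on such shared classes. For example, two duplicates of one $B$-copy lying over some $B^i$ in which $x,y$ are non-adjacent would meet in a non-edge.

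The paper's remedy has two layers. It first proves the clean partite lemma (Proposition~\ref{prop:CPL}), itself a partite construction with $\HJ$ vertically and Lemma~\ref{lem:pict-cl}. It then uses $\CPL_r$ horizontally, so that Lemma~\ref{lem:1738} propagates cleanness regardless of the vertical structure.

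Your arguments for the clique number, the ordering and the backward induction are fine once the construction is set up this way.
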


The proof involves two applications of the partite construction method.
In a first major step we obtain the so-called \emph{clean
partite lemma}---an
improved version of the Hales--Jewett construction which yields clean
partite conglomerates (see Proposition~\ref{prop:CPL}). Armed with this
result and utilising the well-known fact that ordered graphs and induced
embeddings form a Ramsey category (quoted as Theorem~\ref{thm:41} below)
we can then derive Proposition~\ref{prop:warmup} by means of a second
partite construction.

\subsection{Pictures}

Before starting a partite construction, one needs to define
an appropriate
notion of pictures and clarify how they amalgamate. For
reasons that will become
apparent later, we need to work with a slightly relaxed
version of conglomerates
to this end.

For a fixed graph~$F$, an~$F$-system $(B, \ccB_F)$ is said to
be a {\it semi-induced
subsystem} of another~$F$-system $(H, \ccH_F)$ if~$B$ is an
induced subgraph of~$H$
and~${\ccB_F\subseteq \ccH_F}$. So in contrast to induced subsystems the
reverse inclusion $\ccB_F\supseteq \ccH_F\cap \binom BF$ is
not necessarily satisfied.
Triples $(H, \ccH_B, \ccH_F)$ such that $(H, \ccH_F)$ is an
$F$-system and~$\ccH_B$
is a collection of semi-induced copies of $(B, \ccB_F)$ in
$(H, \ccH_F)$ will be referred
to as {\it weak $(B, \ccB_F)$-conglomerates}.

\begin{dfn}\label{dfn:picture}
	Suppose that~$F$ is a graph, $(B, \ccB_F)$ is an
	$F$-system, and $(G, \ccG_B, \ccG_F)$
	is a weak $(B, \ccB_F)$-conglomerate. A {\it picture} over
	$(G, \ccG_B, \ccG_F)$
	is a triple $(\Pi, \ccP_B, \ccP_F)$ consisting of a
	$G$-partite graph~$\Pi$
	and subsets $\ccP_F\subseteq \binom{\Pi}{F}$,
	$\ccP_B\subseteq \binom{(\Pi, \ccP_F)}{(B, \ccB_F)}$ such
	that the projection
	$\psi\colon V(\Pi)\lra V(G)$ sending each vertex class
	$V_x(\Pi)$ to~$x$ maps
	each copy in~$\ccP_F$ to a copy in~$\ccG_F$ and each copy
	in~$\ccP_B$ to a copy
	in~$\ccG_B$.
\end{dfn}

We would like to emphasise that the map~$\psi$, called the
{\it canonical projection}
of~$\Pi$, is uniquely determined by the~$G$-partite structure of~$\Pi$.
The vertex classes $V_x(\Pi)$ of a picture are also called
its {\it music lines}.
In figures, they are usually drawn horizontally.
For each copy $\Fbu\in \ccG_F$ we define the {\it
constituent} of $(\Pi, \ccP_B, \ccP_F)$
over~$\Fbu$ to be the following partite~$\Fbu$-system
$(\Pi^\Fbu, \ccP^\Fbu)$.
\begin{itemize}
	\item The underlying graph of $\Pi^\Fbu$ is the subgraph of
		$\Pi$ induced
		by $\bigdcup_{x\in V(\Fbu)}V_x(\Pi)$ and the vertex
		classes determining
		its~$\Fbu$-partite structure are the music lines $V_x(\Pi)$.
	\item The set $\ccP^\Fbu$ consists of all members of
		$\ccP_F$ projected
		by~$\psi$ onto~$\Fbu$. Equivalently, we set
		$\ccP^\Fbu=\{F_\circ\in\ccP_F\colon V(F_\circ)\subseteq
		V(\Pi^\Fbu)\}$.
\end{itemize}

Notice that each copy in~$\ccP_F$ belongs to a unique
constituent, while vertices
and edges of~$\Pi$ can belong to many constituents.

The picture every partite construction begins with, called
{\it picture zero}, can be
defined over every weak $(B, \ccB_F)$-conglomerate $(G,
\ccG_B, \ccG_F)$.
This picture $(\Pi_0, \ccP_{0, B}, \ccP_{0, F})$ is obtained
by taking vertex disjoint
copies of $(B, \ccB_F)$, one for each member of~$\ccG_B$, and
placing them onto
the music lines in such a way that the canonical projection
$\psi_0$ of~$\Pi_0$
sets up a bijection between the set~$\ccP_{0, B}$ of these
copies and~$\ccG_B$.
As an~$F$-system $(\Pi_0, \ccP_{0, F})$ is the disjoint union
of the~$F$-systems
constituting~$\ccP_{0, B}$. For clarity we point out that
picture zero satisfies
\begin{align*}
	|V(\Pi_0)|&=|V(B)|\cdot |\ccG_B|\,, &
	|E(\Pi_0)|&=|E(B)|\cdot |\ccG_B|\,, \\
	\intertext{and}
	|\ccP_{0, B}|&=|\ccG_B|\,, &
	|\ccP_{0, F}|&=|\ccB_F|\cdot |\ccG_B|\,.
\end{align*}

Given a picture $(\Pi, \ccP_B, \ccP_F)$ over a weak $(B,
\ccB_F)$-conglomerate
$(G, \ccG_B, \ccG_F)$, a copy $\Fbu\in\ccG_F$ and a partite
$(\Pi^\Fbu, \ccP^\Fbu)$-conglomerate $(H, \ccH_{\Pi^\Fbu},
\ccH_\Fbu)$ we define
the picture
\[
	(\Sigma, \ccQ_B, \ccQ_F)
	=
	(\Pi, \ccP_B, \ccP_F)\conc (H, \ccH_{\Pi^\Fbu}, \ccH_\Fbu)
\]
to be the result of the following amalgamation procedure.
\begin{itemize}
	\item Recall that each copy
		$(\Pi^\Fbu_\circ, \ccP^\Fbu_\circ)\in \ccH_{\Pi^\Fbu}$ is a
		partite~$\Fbu$-system isomorphic to the constituent
		$(\Pi^\Fbu, \ccP^\Fbu)$ of $(\Pi, \ccP_B, \ccP_F)$
		over~$\Fbu$. Thus each of these copies can be extended to a copy
		$(\Pi_\circ, \ccP_B^\circ, \ccP_F^\circ)$ of this
		picture. We suppose that these
		extensions are constructed as disjoint as possible, so that we have
		\[
			V(\Pi_\circ)\cap V(\Pi_\ccirc)
			=
			V(\Pi^\Fbu_\circ)\cap V(\Pi^\Fbu_\ccirc)
		\]
		for all distinct
		$(\Pi^\Fbu_\circ, \ccP^\Fbu_\circ), (\Pi^\Fbu_\ccirc,
		\ccP^\Fbu_\ccirc)
		\in \ccH_{\Pi^\Fbu}$.
	\item As a graph, $\Sigma$ is the union of~$H$ with all
		graphs $\Pi_\circ$ obtained
		in this way. Moreover, the~$G$-partite structure of
		$\Sigma$ is defined by setting
		\[
			V_x(\Sigma)=
			\begin{cases}
				\bigdcup\bigl\{V_x(\Pi_\circ)\colon (\Pi^\Fbu_\circ,
					\ccP^\Fbu_\circ)
				\in \ccH_{\Pi^\Fbu}\bigr\}\,, & \text{ if } x\not\in V(\Fbu) \cr
				V_x(H)\,, & \text{ if } x\in V(\Fbu)\,.
			\end{cases}
		\]
	\item Put
		\begin{align*}
			\ccQ_F
			&=
			\bigcup\bigl\{\ccP^\circ_F\colon (\Pi^\Fbu_\circ, \ccP^\Fbu_\circ)
			\in \ccH_{\Pi^\Fbu}\bigr\}\cup \ccH_\Fbu
			\intertext{and}
			\ccQ_B
			&=
			\bigcup\bigl\{\ccP^\circ_B\colon (\Pi^\Fbu_\circ, \ccP^\Fbu_\circ)
			\in \ccH_{\Pi^\Fbu}\bigr\}\,.
		\end{align*}
\end{itemize}

It is straightforward to check that $(\Sigma, \ccQ_B,
\ccQ_F)$ is indeed a picture
over~$(G, \ccG_B, \ccG_F)$. The copies $(\Pi_\circ,
\ccP_B^\circ, \ccP_F^\circ)$
of the given picture $(\Pi, \ccP_B, \ccP_F)$ constructed in
the first bullet are
called the {\it standard copies} of~${(\Pi, \ccP_B, \ccP_F)}$
in $(\Sigma, \ccQ_B, \ccQ_F)$.

\subsection{The clean partite lemma}
The partite conglomerates produced by the Hales--Jewett construction have
properties~\ref{it:cli} and~\ref{it:clii} of
Definition~\ref{dfn:54}, but
they are capable of violating~\ref{it:cliii}, so that they
are usually not
clean. Our first application of the partite construction method leads to
another partite lemma, which gives better control over the intersections
of two copies.

Let us point out that when we have a picture
$(\Pi, \ccP_B, \ccP_F)$ over a weak $(B,
\ccB_F)$-conglomerate $(G, \ccG_B, \ccG_F)$,
then by ``forgetting'' the~$G$-partite structure of~$\Pi$ we
can view this picture
as a $(B, \ccB_F)$-conglomerate (which, however, by
Definition~\ref{dfn:picture} cannot be ``only a weak one'').
Accordingly, we can also speak of clean pictures.
For instance, picture zero is clean as soon as the copies in
$\ccB_F$ have clean
intersections.
Next we show that, under suitable assumptions,
which are satisfied
when $(G, \ccG_B, \ccG_F)$ is obtained by means of the
Hales--Jewett construction,
being clean is preserved by partite amalgamations. This
observation can be traced back
to~\cite{BNRR18}*{Lemma~2.12} (see also~\cite{RR-girth}*{Lemma~4.5}).

\begin{lemma}\label{lem:pict-cl}
	Let~$F$ be a graph, $(B, \ccB_F)$ an~$F$-system with clean
	intersections,
	and $(G, \ccG_B, \ccG_F)$ a $(B, \ccB_F)$-conglomerate such that
	\begin{enumerate}[label=\alabel]
		\item\label{it:pict-cl-a} the copies in~$\ccG_F$ have
			clean intersections;
		\item\label{it:pict-cl-b} and for every $(B', \ccB'_F)\in
			\ccG_B$ and
			every $F_\star\in \ccG_F\sm \ccB'_F$
			the intersection of~$B'$ and~$F_\star$ is clean.
	\end{enumerate}
	If we have
	\[
		(\Sigma, \ccQ_B, \ccQ_F)
		=
		(\Pi, \ccP_B, \ccP_F)\conc (H, \ccH_{\Pi^{\Fbu}},
		\ccH_{\Fbu})
	\]
	for two pictures $(\Pi, \ccP_B, \ccP_F)$, $(\Sigma, \ccQ_B,
	\ccQ_F)$ over
	$(G, \ccG_B, \ccG_F)$, the copies in~$\ccH_{\Fbu}$
	have clean intersections,
	and the picture $(\Pi, \ccP_B, \ccP_F)$ is clean, then so
	is $(\Sigma, \ccQ_B, \ccQ_F)$.
\end{lemma}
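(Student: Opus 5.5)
We verify the three clauses of Definition~\ref{dfn:54} for $(\Sigma,\ccQ_B,\ccQ_F)$ using a single structural tool, the canonical projection $\psi\colon V(\Sigma)\to V(G)$. The copies in $\ccQ_F$ are of two kinds: those in $\ccH_\Fbu$, which live entirely on the music lines over $\Fbu$, and those lying in some standard copy $\Pi_\circ$. Likewise every member of $\ccQ_B$ lies in a unique standard copy. The basic fact we use repeatedly is that two distinct standard copies satisfy $V(\Pi_\circ)\cap V(\Pi_\ccirc)=V(\Pi^\Fbu_\circ)\cap V(\Pi^\Fbu_\ccirc)$. This intersection lies in $H$ on the music lines over $\Fbu$. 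Since $\ccH_{\Pi^\Fbu}$ is a partite conglomerate whose copies meet only inside $H$, any object from $\Pi_\circ$ meets any object from $\Pi_\ccirc$ only within $V(H)$.

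\textbf{Intersections inside one standard copy or in $\ccH_\Fbu$.} If both objects lie in the same standard copy, cleanness is inherited, because each standard copy is isomorphic to the clean picture $(\Pi,\ccP_B,\ccP_F)$. Two copies in $\ccH_\Fbu$ intersect cleanly by hypothesis.

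\textbf{Two objects from distinct standard copies.} Let $X\subseteq\Pi_\circ$ and $Y\subseteq\Pi_\ccirc$ be such objects, either copies of $F$ or of $B$. Their intersection $I$ lies in $V(H)$, and $\psi$ is injective on each of $X$ and $Y$. Hence $\psi[I]\subseteq\psi[X]\cap\psi[Y]$, where $\psi[X],\psi[Y]$ are members of $\ccG_F$ or $\ccG_B$.
\begin{enumerate}[label=\rmlabel]
\item Suppose $\psi[X]$ and $\psi[Y]$ are distinct, and not two $B$-copies. Hypotheses \ref{it:pict-cl-a} and \ref{it:pict-cl-b} give a clean intersection in $G$, so $|\psi[I]|\le1$ or $\psi[I]$ is a common edge. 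Since $\psi$ is injective on $X$ and $Y$ and all edges involved are induced, $I$ is then a single vertex or a common edge of $X$ and $Y$.
\item Suppose $\psi[X]=\psi[Y]$. Then $I$ projects into $V(\Fbu)$ only in the $F$-case, and a separate argument is needed.
\end{enumerate}
The cleanest route is probably to handle intersections through $H$ directly. Namely, $I\subseteq V(\Pi^\Fbu_\circ)\cap V(\Pi^\Fbu_\ccirc)$, and the traces $X\cap\Pi^\Fbu_\circ$ and $Y\cap\Pi^\Fbu_\ccirc$ are each contained in a single copy of $F$ from $\ccH_\Fbu$ or are small. This is analogous to the moreover-part of Claim~\ref{clm:HJlines}. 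From there, cleanness of $\ccH_\Fbu$ finishes.

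\textbf{Main obstacle: clause \ref{it:cliii} for two $B$-copies in distinct standard copies.} Here $\psi[X]$ and $\psi[Y]$ may be distinct members of $\ccG_B$, whose intersection in $G$ need not be clean. In that case I would again use that $I\subseteq V(H)$ lies over $\Fbu$. The trace of the $B$-copy $X$ on the music lines over $\Fbu$ is contained in the trace of $\psi[X]\cap\Fbu$. By \ref{it:pict-cl-b}, this is either clean or all of $\Fbu$ when $\Fbu\in\psi[X]$'s system. In the first case $I$ is already clean. In the second case $X\cap V(H)$ is a copy of $F$ from $\ccH_\Fbu$, and the same holds for $Y$. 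Then $I$ is the intersection of two members of $\ccH_\Fbu$, so it is clean, or equal, which gives a copy of $F$ in both systems. This last step, checking carefully that the traces are exactly members of $\ccH_\Fbu$ and that the systems match, is where I expect most of the care to be needed. Clause \ref{it:clii} is handled the same way, with one object a copy of $F$.
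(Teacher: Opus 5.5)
Your argument is essentially the paper's: you project via $\psi$, use (a) and (b) to see that any object whose projection does not contain $\Fbu$ meets $V(H)$ (the music lines over $\Fbu$) at most in a common edge---this is the paper's Claim~\ref{clm:1503}---and for two $B$-copies in distinct standard copies you reduce, when $\Fbu$ lies in both projected systems, to the intersection of their traces, which are members of $\ccH_\Fbu$, exactly as in Stage~D. The one inaccuracy is your case (i): if $X=B^\star$ and $Y$ is a copy of $F$ with $\psi[Y]\in\ccB'_F$ for $B'=\psi[X]$, then hypothesis (b) says nothing (indeed $\psi[Y]\subseteq B'$), so clause (ii) must really go through your trace argument; there, for $Y\in\ccH_\Fbu$ it is $Y\notin\ccB^\star_F$ that forces $Y$ to differ from the trace $F_\star$ of $B^\star$, and for $\psi[Y]\ne\Fbu$ it is the smallness of the trace of $Y$ given by (a), as in the paper's Stage~C.
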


\begin{proof}
	{\bf Stage A:}
	Let $\psi\colon V(\Sigma)\lra V(G)$ be the canonical projection
	of $(\Sigma, \ccQ_B, \ccQ_F)$. The following projection argument
	will often be used implicitly.

	\begin{clm}\label{clm:1503}
		Let~$Z$ be a subgraph of~$\Sigma$ that either belongs
		to~$\ccQ_F$ or underlies a member of~$\ccQ_B$.
		If for some copy $F'\in\ccG_F$ we have
		$V(F')\not\subseteq \psi[V(Z)]$,
		then the constituent of~$\Sigma$ over~$F'$ has a clean
		intersection with~$Z$.
	\end{clm}

	\begin{proof}
		By our assumptions~\ref{it:pict-cl-a}
		and~\ref{it:pict-cl-b}, the intersection
		of the graphs $\psi[Z]$ and $F'$ is clean, whence
		$\psi[V(Z)]\cap V(F')$ contains
		at most two vertices, and if there are two, then they
		form an edge of $\psi[Z]$.
		Since $\psi|_{V(Z)}$ is a graph isomorphism, this entails
		our claim.
	\end{proof}

	We need to show that the new picture $(\Sigma, \ccQ_B,
	\ccQ_F)$ satisfies
	clauses~\ref{it:cli}\,--\,\ref{it:cliii} of Definition~\ref{dfn:54}.

	\smallskip

	{\bf Stage B:}
	Beginning with~\ref{it:cli}, we consider two distinct
	copies $F_\star, F_\sstar\in \ccQ_F$. If their
	projections $\psi[F_\star], \psi[F_\sstar]\in \ccG_F$ are distinct,
	they have a clean intersection by
	assumption~\ref{it:pict-cl-a} and, hence,
	the intersection of $F_\star$, $F_\sstar$ is clean. So we may assume
	$F'=\psi[F_\star]=\psi[F_\sstar]$ for some~${F'\in \ccG_F}$ from
	now on. If $F'=\Fbu$ is the copy over which the
	amalgamation occurs,
	we appeal to the assumption that the copies in
	$\ccH_{\Fbu}$ have clean
	intersections. Turning now to the case $F'\ne \Fbu$ we
	let $(\Pi_\circ, \ccP_B^\circ, \ccP_F^\circ)$,
	$(\Pi_\ccirc, \ccP_B^\ccirc, \ccP_F^\ccirc)$
	be the standard copies of $(\Pi, \ccP_B, \ccP_F)$
	satisfying $F_\star\in \ccP_F^\circ$ and $F_\sstar\in \ccP_F^\ccirc$,
	respectively. If $\Pi_\circ=\Pi_\ccirc$ our claim follows
	from~$\ccP_F$
	having clean intersections, so the interesting case is
	$\Pi_\circ\ne\Pi_\ccirc$.
	Now we have
	\[
		V(F_\star)\cap V(F_\sstar)
		\subseteq V(\Pi_\circ)\cap V(\Pi_\ccirc)
		\subseteq V(H)\,,
	\]
	whence $\psi[V(F_\star)\cap V(F_\sstar)]\subseteq V(F')\cap V(\Fbu)$.
	The clean intersections of~$\ccG_F$ imply that
	this intersection has at most two elements. Moreover,
	Claim~\ref{clm:1503}
	applied to $Z=F_\star, F_\sstar$ and~$\Fbu$ here in place
	of $F'$ implies that $V(F_\star)\cap V(F_\sstar)\in
	E(F_\star)\cap E(F_\sstar)$,
	if $|V(F_\star)\cap V(F_\sstar)|=2$.

	\smallskip

	{\bf Stage C:}
	Continuing with~\ref{it:clii} we need to show that for
	every~$F$-system
	$(B^\star, \ccB^\star_F)\in \ccQ_B$ and every copy
	$F_\sstar\in \ccQ_F\sm \ccB^\star_F$
	the intersection of $B^\star$ and $F_\sstar$ is clean.
	Let~${(B', \ccB'_F)\in \ccG_B}$
	and $F'\in \ccG_F$ denote the projections of $(B^\star,
	\ccB^\star_F)$ and $F_\sstar$,
	respectively. If $F'\not\in \ccB'_F$ we just need to invoke
	assumption~\ref{it:pict-cl-b} and Claim~\ref{clm:1503}, so
	suppose $F'\in \ccB'_F$
	from now on.

	\smallskip

	{\hspace{2em} \it First case: $F'=\Fbu$}

	\smallskip

	This is equivalent to $F_\sstar\in \ccH_{\Fbu}$. Now $\Fbu=F'\in
	\ccB'_F$ leads to a
	copy $F_\star\in \ccB^\star_F\cap \ccH_{\Fbu}$;
	due to $F_\sstar\not\in \ccB^\star_F$ this copy is distinct
	from $F_\sstar$ and,
	therefore, the intersection of $F_\star, F_\sstar\in \ccH_F$ is clean.
	Together with $V(B^\star)\cap V(F_\sstar)=V(F_\star)\cap V(F_\sstar)$
	this shows that the intersection of~$B^\star$ and $F_\sstar$ is clean.

	\smallskip

	{\hspace{2em} \it Second case: $F'\ne \Fbu$}

	\smallskip

	Now there are standard copies $(\Pi_\circ, \ccP_B^\circ,
	\ccP_F^\circ)$ and
	$(\Pi_\ccirc, \ccP_B^\ccirc, \ccP_F^\ccirc)$
	such that $F_\sstar\in  \ccP_F^\ccirc$ and $(B^\star,
	\ccB_F^\star)\in \ccP_B^\circ$.
	In the special case $\Pi_\circ=\Pi_\ccirc$ our claim
	follows from the fact
	that $(\Pi, \ccP_B, \ccP_F)$ is a clean picture. On the other hand, if
	$\Pi_\circ\ne\Pi_\ccirc$, then
	\[
		\psi[V(B^\star)\cap V(F_\sstar)]\subseteq V(F')\cap V(\Fbu)
	\]
	has at most two elements and by another application of
	Claim~\ref{clm:1503}
	the intersection of $B^\star$ and~$F_\sstar$ is
	indeed clean. This proves assertion~\ref{it:clii} of
	Definition~\ref{dfn:54}.

	\smallskip

	{\bf Stage D:}
	Proceeding with part~\ref{it:cliii} of
	Definition~\ref{dfn:54} we consider two
	$F$-systems
	$(B^\star, \ccB^\star_F)$ and $(B^\sstar, \ccB^\sstar_F)$ in~$\ccQ_B$
	and standard copies
	$(\Pi_\circ, \ccP_B^\circ, \ccP_F^\circ)$,
	$(\Pi_\ccirc, \ccP_B^\ccirc, \ccP_F^\ccirc)$
such that
	$(B^\star, \ccB^\star_F)\in \ccP_B^\circ$
	and
	$(B^\sstar, \ccB^\sstar_F)\in \ccP_B^\ccirc$.
	If these standard copies coincide, we just need to exploit
	that $(\Pi, \ccP_B, \ccP_F)$ is clean.
	So suppose $\Pi_\circ\ne \Pi_\ccirc$ from now on, which yields
	\[
		V(B^\star)\cap V(B^\sstar)\subseteq V(H)\,.
	\]

	Let~$(B', \ccB'_F)$ and~$(B'', \ccB''_F)$ be the projections
	of $(B^\star, \ccB^\star_F)$ and $(B^\sstar,
	\ccB^\sstar_F)$ in~$\ccG_B$.
	In the special case $\Fbu\not\in \ccB'_F$
	hypothesis~\ref{it:pict-cl-b}
	shows that $B^\star$ meets~$H$ in at most two vertices and
	thus the intersection
	of $B^\star$ and $B^\sstar$ is clean, by another application
	of Claim~\ref{clm:1503}.

	So we can henceforth suppose $\Fbu\in \ccB'_F$ and, for the same
	reason, $\Fbu\in \ccB''_F$.
	This means that there are copies $F_\star\in \ccB^\star_F$
	and $F_\sstar\in \ccB^\sstar_F$ projecting to~$\Fbu$. Moreover, due
	to $\Pi_\circ\ne \Pi_\ccirc$ the graphs $B^\star$,
	$B^\sstar$ have the same
	intersection as $F_\star$, $F_\sstar$. So either $F_\star\ne F_\sstar$
	and this intersection is clean, or the intersection in
	question is $F_\star=F_\sstar$,
	and thus a member of~${\ccB^\star_F\cap \ccB^\sstar_F}$.
	This proves~\ref{it:cliii} and, hence, the lemma.
\end{proof}

Suppose now that a graph~$F$, a partite~$F$-system $(B,
\ccB_F)$ with clean intersections,
and a number of colours~$r$ are given. The partite
construction we have in mind produces a clean partite $(B,
\ccB_F)$-conglomerate $(H, \ccH_B, \ccH_F)$ with
$\ccH_B\lra (B, \ccB_F)^F_r$. This conglomerate will be
denoted by~${\CPL_r(B, \ccB_F)}$,
where~$\CPL$ abbreviates ``clean partite lemma''; it is
constructed as follows.
\begin{itemize}
	\item Set $(G, \ccG_B, \ccG_F)=\HJ_r(B, \ccB_F)$ and
		enumerate~$\ccG_F$
		arbitrarily as
		\[
			\ccG_F=\{F(1), \dots, F(N)\}\,.
		\]
	\item Starting with picture zero we recursively define a
		sequence $\bl\Pi_\alpha, \ccP_{\alpha, B}, \ccP_{\alpha,
		F}\br_{\alpha\le N}$
		of pictures over $(G, \ccG_B, \ccG_F)$. If for some
		$\alpha\in [N]$ the
		picture $\bl\Pi_{\alpha-1}, \ccP_{\alpha-1, B},
		\ccP_{\alpha-1, F}\br$ has just
		been constructed, we subject its constituent over $F(\alpha)$ to the
		construction~${\HJ_r(\cdot)}$, thereby obtaining a conglomerate
		$\bl H_\alpha, \ccH_{\Pi_{\alpha-1}^{F(\alpha)}},
		\ccH_{F(\alpha)}\br$,
		and then we set
		\begin{equation}\label{eq:0136}
			\bl\Pi_\alpha, \ccP_{\alpha, B}, \ccP_{\alpha, F}\br
			=
			\bl\Pi_{\alpha-1}, \ccP_{\alpha-1, B}, \ccP_{\alpha-1, F}\br
			\conc
			\bl H_\alpha, \ccH_{\Pi_{\alpha-1}^{F(\alpha)}},
			\ccH_{F(\alpha)}\br\,,
		\end{equation}
		where the amalgamation occurs over $F(\alpha)$.
	\item This recursive construction ends with a terminal
		picture $\bl\Pi_N, \ccP_{N, B}, \ccP_{N, F}\br$.
		As~$\Pi_N$ is a~$G$-partite graph, while~$G$ itself is an
		$F$-partite graph,
		we can define an~$F$-partite structure on~$\Pi_N$ by setting
		\[
			V_x(\Pi_N)=\bigdcup\bigl\{V_y(\Pi_N)\colon y\in V_x(G)\bigr\}
		\]
		for every $x\in V(F)$.
		Owing to the fact that $(G, \ccG_F)$ is a partite~$F$-system, we can
		view~${(\Pi_N, \ccP_{N, F})}$ as a partite~$F$-system as well. For a
		similar reason we can regard $(\Pi_N, \ccP_{N, B}, \ccP_{N, F})$ as
		a partite $(B, \ccB_F)$-conglomerate. This partite conglomerate
		will henceforth be denoted by $\CPL_r(B, \ccB_F)$.
\end{itemize}

\begin{prop}\label{prop:CPL}
	For every graph~$F$, every partite~$F$-system $(B, \ccB_F)$
	with clean intersections,
	and every number of colours~$r$ the partite $(B, \ccB_F)$-conglomerate
	\[
		\CPL_r(B, \ccB_F)=(H, \ccH_B, \ccH_F)
	\]
	is clean and satisfies
	\begin{enumerate}[label=\alabel]
		\item\label{it:CPLa} $\ccH_B\lra (B, \ccB_F)^F_r$,
		\item\label{it:CPLb} $\ccH_B\subseteq \binom HB_\dip$,
		\item\label{it:CPLc} $\omega(H)=\omega(B)$,
		\item\label{it:CPLd} as well as $\og(H)=\og(B)$.
	\end{enumerate}
\end{prop}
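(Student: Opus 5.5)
The plan is to verify the four properties and cleanness by induction along the recursive sequence of pictures $\bl\Pi_\alpha, \ccP_{\alpha,B}, \ccP_{\alpha,F}\br$, $\alpha=0,\dots,N$. We establish the following invariants for every $\alpha$.
\begin{enumerate}[label=\rmlabel]
	\item The picture is clean.
	\item There are graph homomorphisms $\Pi_\alpha\to\Pi_{\alpha-1}$ that are retractions onto each standard copy.
	\item Every copy of $B$ in $\ccP_{\alpha,B}$ is distance preserving.
\end{enumerate}
Cleanness comes from Lemma~\ref{lem:pict-cl}. Its hypotheses~\ref{it:pict-cl-a} and~\ref{it:pict-cl-b} on $(G,\ccG_B,\ccG_F)=\HJ_r(B,\ccB_F)$ hold by Proposition~\ref{lem:HJconstr}\ref{it:HJf},\ref{it:HJg}, since $\ccB_F$ has clean intersections. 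Picture zero is clean because it is a disjoint union of copies of $(B,\ccB_F)$. The copies in $\ccH_{F(\alpha)}$ produced by $\HJ_r$ applied to the constituent have clean intersections again by Proposition~\ref{lem:HJconstr}\ref{it:HJf}. For this we need the constituent $\ccP^{F(\alpha)}_{\alpha-1}$ to have clean intersections, which follows from cleanness of the previous picture. Induction then shows that the terminal picture is clean as a $(B,\ccB_F)$-conglomerate.

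For the partition relation~\ref{it:CPLa} we use the usual backwards argument of the partite construction. Given $\phi\colon\ccP_{N,F}\to[r]$, we go back in $\alpha$ from $N$ to $1$. At step $\alpha$, Proposition~\ref{lem:HJconstr}\ref{it:HJa} applied to $\ccH_{F(\alpha)}$ yields a standard copy of $\Pi_{\alpha-1}$ whose constituent over $F(\alpha)$ is monochromatic. We restrict to that copy and continue. This ends in a copy of picture zero in which, for each $F'\in\ccG_F$, all copies projecting to $F'$ have a common colour $c(F')$. Pulling $c$ back to $\ccG_F$ and applying Proposition~\ref{lem:HJconstr}\ref{it:HJa} for $\ccG_B$ gives a $B'\in\ccG_B$ with $\ccB'_F$ monochromatic. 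The copy of $(B,\ccB_F)$ in picture zero sitting over $B'$ is then monochromatic. The step to check is that the copies in the terminal picture are induced subsystems and not merely semi-induced. We argue this by cleanness (Definition~\ref{dfn:54}\ref{it:clii}): any $F_\sstar\in\ccH_F$ contained in $B^\star$ but not in $\ccB^\star_F$ would intersect $B^\star$ in all of $V(F_\sstar)$, which is not clean when $|V(F)|\ge 3$. Small $F$ are handled trivially.

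For~\ref{it:CPLc} and~\ref{it:CPLd} we build a homomorphism $\Pi_\alpha\to\Pi_{\alpha-1}$. On $H_\alpha$ we use the Hales--Jewett homomorphism $\pi_i$ composed with a retraction (Stage~B of Claim~\ref{clm:HJlines}). On each standard copy $\Pi_\circ$ we use the identification with $\Pi_{\alpha-1}$. These maps agree on the overlaps, because the standard copies meet $H_\alpha$ exactly in embedded constituents $\Pi^{F(\alpha)}_\circ$, and $\rho$ is the identity there after identification. Composing gives a homomorphism $\Pi_N\to\Pi_0$, and $\Pi_0$ is a disjoint union of copies of $B$. Since $B$ embeds in $\Pi_N$, clique number and odd girth are preserved.

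For~\ref{it:CPLb} we use the same retractions $\rho_\alpha$ onto a chosen standard copy. Each of them is a homomorphism $\Pi_\alpha\to\Pi_\circ\cong\Pi_{\alpha-1}$ that fixes $\Pi_\circ$. Hence $\Pi_\circ$ is distance preserving in $\Pi_\alpha$, and by induction every $B^\star\in\ccP_{\alpha,B}$ is distance preserving. I expect the main obstacle to be checking that these piecewise maps are well defined and are homomorphisms on all of $\Pi_\alpha$. This requires that every edge of $\Pi_\alpha$ lies either in $H_\alpha$ or in some standard copy, and that the retraction of $H_\alpha$ onto a line copy is compatible with the other standard copies. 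Other standard copies are mapped into $H_\alpha$-parts as well, so $\Pi_\circ'\to\Pi_{\alpha-1}$ has to be defined by first retracting the constituent and then extending by the identity on the private part. I would follow the corresponding lemma in~\cite{RR-girth} for this.
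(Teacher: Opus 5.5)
Your cleanness argument (Lemma~\ref{lem:pict-cl} together with Proposition~\ref{lem:HJconstr}) and your proof of the partition relation~\ref{it:CPLa} match the paper. The gap is in \ref{it:CPLb}--\ref{it:CPLd}. All three rest on your invariant (ii), and the retractions $\Pi_\alpha\to\Pi_{\alpha-1}$ you sketch are not well defined.

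Write $L=\Pi^{F(\alpha)}_{\alpha-1}$. On $H_\alpha$ you use one fixed coordinate projection $\pi_i$. Now take a standard copy $\Pi_{\circ'}$ coming from a line $\eta'$ for which $i$ is a \emph{constant} coordinate; such lines exist as soon as $n\ge 2$. Then $\pi_i\circ\eta'_\bullet$ sends every vertex of $V_x(L)$ to the single vertex $u'_{ix}$ of the copy $g'(i)$. The identification $\Pi_{\circ'}\cong\Pi_{\alpha-1}$, by contrast, sends $\eta'_x(v)$ to $v$. So the two maps disagree on $V(\Pi_{\circ'})\cap V(H_\alpha)$ whenever the constituent contains at least two copies of $F$.

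Your proposed repair is to collapse the constituent part of $\Pi_{\circ'}$ and use the identity on its private part. This is not a homomorphism. A private vertex $w$ adjacent to $\eta'_x(v)$ with $v\ne u'_{ix}$ is sent to a neighbour of $v$, and in general that vertex is not adjacent to $u'_{ix}$. This already happens for $\alpha=1$, where $v$ and $u'_{ix}$ can lie in different, vertex-disjoint copies of $B$ in picture zero. Hence neither the homomorphism $\Pi_N\to\Pi_0$ nor the distance preservation of standard copies is established, and pointing to the girth paper does not close this.

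The missing idea is that no maps between consecutive pictures are needed. Since $\Pi_N$ is $G$-partite for $(G,\ccG_B,\ccG_F)=\HJ_r(B,\ccB_F)$, the canonical projection $\psi\colon\Pi_N\to G$ is a graph homomorphism.

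\begin{itemize}
\item For \ref{it:CPLc} and \ref{it:CPLd}: $\omega(\Pi_N)\le\omega(G)=\omega(B)$ and $\og(\Pi_N)\ge\og(G)=\og(B)$ by Proposition~\ref{lem:HJconstr}\ref{it:HJc},\ref{it:HJd}. Equality holds because $\Pi_N$ contains copies of $B$.
\item For \ref{it:CPLb}: $\psi$ maps each $B^\star\in\ccP_{N,B}$ isomorphically onto some $B'\in\ccG_B$, which is distance preserving in $G$ by Proposition~\ref{lem:HJconstr}\ref{it:HJb}. An $x$-$y$-path in $\Pi_N$ with $x,y\in V(B^\star)$ maps to a $\psi(x)$-$\psi(y)$-walk in $G$ of the same length. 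So $B'$ contains a $\psi(x)$-$\psi(y)$-path of at most that length, and pulling it back along $\psi|_{V(B^\star)}$ gives the required path in $B^\star$.
\end{itemize}

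Only the copies of $B$ need to be distance preserving, not the standard copies of whole pictures. This property is inherited from $G$ in one step rather than proved inductively along the construction.
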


\begin{proof}
	Clearly, picture zero is clean. Repeated applications of
	Lemma~\ref{lem:pict-cl}
	show that all pictures $\bl\Pi_\alpha, \ccP_{\alpha, B},
	\ccP_{\alpha, F}\br$
	defined in the course of the partite construction are
	clean. Indeed, the
	vertical system $(G, \ccG_B, \ccG_F)$ fulfils the
	assumptions~\ref{it:pict-cl-a}
	and~\ref{it:pict-cl-b} of Lemma~\ref{lem:pict-cl} due to
	Proposition~\ref{lem:HJconstr}.
	Similarly, for every $\alpha\in [N]$ the copies in the sets
	$\ccH_{F(\alpha)}$
	appearing horizontally have clean intersections, provided
	that in the previous picture
	the copies in~$\ccP_{\alpha-1, F}$ have clean
	intersections. In particular, the final
	picture $\bl\Pi_N, \ccP_{N, B}, \ccP_{N, F}\br$ and thus the
	conglomerate $(H, \ccH_B, \ccH_F)$ is clean.

	The partition relation $\ccP_{N, B}\lra (B, \ccB_F)^F_r$
	follows from a standard
	argument in the area, so we shall be brief about it. The
	main point is that an
	easy induction on $\alpha\in\{0, 1, \dots, N\}$ discloses that
	\begin{quotation}
		\it
		for every colouring $\phi\colon \ccP_{\alpha, F}\lra [r]$
		there is a copy
		$\bl\wt{\Pi}_0, \wt{\ccP}_{0, B}, \wt{\ccP}_{0, F}\br$ of
		picture zero
		in $\bl\Pi_\alpha, \ccP_{\alpha, B}, \ccP_{\alpha, F}\br$
such that for every~${\beta\in [\alpha]}$ the constituent
		of $\bl\wt{\Pi}_0, \wt{\ccP}_{0, B}, \wt{\ccP}_{0, F}\br$
		over $F(\beta)$ is monochromatic\footnote{More precisely,
			if $\bl\wt{\Pi}^{F(\beta)}_0, \wt{\ccP}_{0, F}^{F(\beta)}\br$
			denotes that constituent, we mean that all elements
		of $\wt{\ccP}_{0, F}^{F(\beta)}$ have the same colour.}.
	\end{quotation}

	This statement is vacuously true for $\alpha=0$ and in the
	induction step
	from $\alpha-1$ to~$\alpha$ one exploits that the
	constituent of picture~$\alpha$
	over $F(\alpha)$ is obtained by means of the Hales--Jewett construction
	(cf.~\eqref{eq:0136} and Proposition~\ref{lem:HJconstr}\ref{it:HJa}).

	In particular, for $\alpha=N$ we learn that for every colouring
	$\phi\colon \ccP_{N, F}\lra [r]$ there is a
	copy $\bl\wt{\Pi}_0, \wt{\ccP}_{0, B}, \wt{\ccP}_{0, F}\br$
	of picture zero
	all of whose constituents are monochromatic. This gives rise to an
	auxiliary colouring $\chi\colon [N]\lra [r]$ such that for
	every $\beta\in [N]$
	all copies of~$F$ belonging to the constituent
	of $\bl\wt{\Pi}_0, \wt{\ccP}_{0, B}, \wt{\ccP}_{0, F}\br$
	over $F(\beta)$
	have the colour~$\chi(\beta)$. Now we apply the partition
	relation $\ccG_B\lra (B, \ccB_F)^F_r$, which follows from
	the construction
	of~${(G, \ccG_B, \ccG_F)}$ in the first bullet and from
	Proposition~\ref{lem:HJconstr}\ref{it:HJa},
	to the~$r$-colouring $F(\beta)\longmapsto \chi(\beta)$ of~$\ccG_F$.
	Thereby we find a monochromatic copy $(B', \ccB'_F)\in \ccG_B$.
	Now, if $(B^\star, \ccB^\star_F)$ denotes the copy
	in $\wt{\ccP}_{0, B}\subseteq \ccP_{N, B}$
	projected to $(B', \ccB'_F)$, then $\ccB^\star_F$ is
	monochromatic with respect
	to~$\phi$, which completes the proof of~\ref{it:CPLa}.

	For the remaining three statements we need the canonical
	projection~$\psi$
	from~$\Pi_N$ to~$G$.
	Since~$\psi$ is a graph homomorphism,~\ref{it:CPLc}
	and~\ref{it:CPLd} follow from the
	analogous properties of the Hales--Jewett construction established in
	Proposition~\ref{lem:HJconstr}.

	It remains to address~\ref{it:CPLb} and to this end we consider any
	copy $(B^\star, \ccB_F^\star)\in \ccP_{N, B}$ together with
	a path~$P$ in~$\Pi_N$
	connecting two vertices~$x$ and~$y$ of $B^\star$.
	By Proposition~\ref{lem:HJconstr}\ref{it:HJb} $\psi$
	sends~$B^\star$ to
	some $B'\in \binom GB_\dip$. Thus there is a
	$\psi(x)$-$\psi(y)$-path $Q\subseteq B'$
	whose length is at most the length of~$P$. The graph
	isomorphism $\psi|_{V(B^\star)}$
	between~$B^\star$ and $B'$ allows us to pull~$Q$ back to $B^\star$.
\end{proof}

\begin{rem}\label{rem:57}
	It seems less obvious that
the clean partite lemma also satisfies property~\ref{it:HJe}
	of Proposition~\ref{lem:HJconstr}, meaning that if
	$\ccB_F\subseteq \binom BF_m$ holds
	for some natural number~$m$, then $\ccH_F\subseteq \binom
	HF_m$ follows (provided that
	$\ccB_F$ has clean intersections). We shall return to this
	statement, which is central
	to our ability to preserve odd girth, in
	Corollary~\ref{cor:CPLe} below.
\end{rem}

\subsection{The second partite
construction}\label{subsec:Allgemeinbildung}

The following strengthening of the induced Ramsey theorem was
proved independently
by Abramson and Harrington~\cite{AH78},
and by Ne\v{s}et\v{r}il and R\"{o}dl~\cite{NR-PartitionsFiniteSystems}.

\begin{thm}\label{thm:41}
	For all ordered graphs~$F$ and~$B$ and every number of colours~$r$
	there exists an ordered graph~$G$ satisfying
	\[
		G\lra (B)^F_r\,,
	\]
	which means that for every colouring $\phi\colon \binom
	GF\lra [r]$ there exists
	a copy $B^\star\in \binom GB$ such that~$\binom{B^\star}F$
	is monochromatic.\qed
\end{thm}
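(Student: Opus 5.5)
The plan is to prove this by the partite construction method, with Ramsey's theorem for hypergraphs as the vertical structure and the Hales--Jewett construction of Proposition~\ref{lem:HJconstr} as the partite lemma. Put $m=|V(F)|$ and $b=|V(B)|$; we may assume $m\le b$. By Ramsey's theorem~\cite{R30} choose $N$ with $[N]\lra(b)^m_r$, i.e.\ every $r$-colouring of $\binom{[N]}{m}$ has a $b$-set $X\subseteq[N]$ all of whose $m$-subsets get the same colour.

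\emph{Pictures.} Here a picture is an ordered graph $\Pi$ with a vertex partition into music lines $V_1,\dots,V_N$. The vertex order first follows the index of the line and is arbitrary inside each line. We call a subgraph \emph{transversal} if it meets every line in at most one vertex. Each picture comes with a set of distinguished transversal induced copies of~$B$.
\begin{itemize}
\item \emph{Picture zero.} For every $b$-set $X\subseteq[N]$ we place a fresh copy of $B$ on the lines indexed by~$X$, order-preservingly, and take the disjoint union.
\item \emph{Amalgamation steps.} Enumerate $\binom{[N]}{m}=\{e_1,\dots,e_M\}$. Given picture $\Pi_{\alpha-1}$, its constituent over $e_\alpha$ is the subgraph induced on the lines indexed by~$e_\alpha$. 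Its distinguished $F$-copies are the transversal induced copies of~$F$ on these $m$ lines. Since the lines are ordered, each such copy sends its $i$-th vertex to the $i$-th line.
\item Apply the Hales--Jewett construction to this constituent with the given $r$, producing $H_\alpha$ and its family of copies of the constituent. Then amalgamate as in the $\conc$ operation, extending each copy to a copy of $\Pi_{\alpha-1}$ that is otherwise as disjoint as possible.
\item The final picture $\Pi_M$ is the required graph~$G$.
\end{itemize}

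The constituent is only $F$-partite in a weak sense: a pair of lines $i<j$ corresponding to a non-edge of $F$ may still carry edges. To handle this, I will run the Hales--Jewett product with the edge rule of Step~\ref{it:HJ3} coordinatewise on all pairs of lines. The proofs of Claim~\ref{clm:HJlines} and of Proposition~\ref{lem:HJconstr}\ref{it:HJa},\ref{it:HJb} only use that coordinate projections are homomorphisms. They also use that a tuple of transversal copies of~$F$ on the same lines yields a transversal induced copy, and that is exactly what the product rule gives. So these parts carry over and yield a copy of the constituent all of whose distinguished $F$-copies are monochromatic.

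\emph{Inducedness and order.} Each amalgamation copies the old picture so that two standard copies meet only inside $H_\alpha$. Distance preservation (Claim~\ref{clm:HJlines}, Stage~B), combined with the projection of the amalgam onto the old picture, shows that every standard copy is induced. Iterating, every distinguished copy of $B$ in $\Pi_M$ is an induced ordered copy. Every copy of~$B$ involved is transversal and its lines are taken in increasing order, so all vertex orderings are respected. I expect this bookkeeping of inducedness and order (including the non-edge pairs above) to be the main technical point. The Ramsey argument itself is standard.

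\emph{Ramsey argument.} Let $\phi\colon\binom GF\to[r]$ be given; we only use its restriction to transversal copies. By induction on $\alpha$, exactly as in the proof of Proposition~\ref{prop:CPL}, there is a copy of picture zero inside $\Pi_\alpha$ all of whose constituents over $e_1,\dots,e_\alpha$ are monochromatic. In the step from $\alpha-1$ to $\alpha$ we use the Hales--Jewett property of $H_\alpha$ to find a monochromatic copy of the constituent over $e_\alpha$. We then pass to the standard copy of $\Pi_{\alpha-1}$ through it and apply the induction hypothesis there; this is the usual backward pull through the amalgamations.

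This yields a colouring $\chi\colon\binom{[N]}m\to[r]$, where $\chi(e)$ is the colour of the constituent over~$e$. By the choice of $N$ there is a $b$-set $X$ all of whose $m$-subsets have the same $\chi$-colour. Let $B^\star$ be the copy of $B$ over $X$ in this copy of picture zero. Every copy of $F$ in $B^\star$ is transversal and sits over some $m$-subset of $X$, so $\binom{B^\star}{F}$ is monochromatic, as required.
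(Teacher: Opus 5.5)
\textbf{The gap: the partite lemma.} You claim that Claim~\ref{clm:HJlines} and Proposition~\ref{lem:HJconstr} carry over because they only use that coordinate projections are homomorphisms. That is not so. Stage~A of that proof uses that $L$ is $F$-partite: for an edge $v_xv_y\in E(L)$ one needs $xy\in E(F)$ to know that the constant coordinates $u_{ix}u_{iy}$, $i\in C$, are edges of the copy $g(i)$.

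Your constituents are not $F$-partite, as you yourself observe. Over an $m$-set $e$ of lines, picture zero already contains the subgraphs of the copies of~$B$ induced by the lines in~$e$. So there are edges between lines $s<t$ whose corresponding pair in~$F$ is a non-edge. For example, take $F$ two non-adjacent vertices and $B$ the ordered path $1<2<3$ with edges $12$, $23$. For such an edge $v_sv_t$ and any combinatorial line with $C\ne\vn$, each constant coordinate comes from an \emph{induced} copy $g(i)$ of~$F$. Hence $u_{is}u_{it}\notin E(L)$, and under your rule (adjacent iff adjacent in every coordinate) $\eta_s(v_s)\eta_t(v_t)\notin E(H_\alpha)$.

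So the images of the constituent along combinatorial lines are the constituent with all such edges deleted, not copies of it. Consequently $\conc$ does not apply as you use it, and the Stage~B inducedness argument, which needs $\eta_\bullet$ to be a homomorphism, breaks down. The Hales--Jewett theorem gives no control over~$C$, so this cannot be dodged. If you glued the standard copies on anyway, you would reinsert these edges into $H_\alpha$. You would then have to check from scratch that no product copy of~$F$ gains an edge and that standard copies stay induced, and your argument does neither. For cliques~$F$ your proof is fine as written.

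\textbf{The repair.} Change the product rule. For lines $s<t$ of the constituent, declare two tuples adjacent iff they are adjacent in \emph{every} coordinate when $st\in E(F)$, and iff they are adjacent in \emph{some} coordinate when $st\notin E(F)$. Equivalently, complement the bipartite graphs between non-edge pairs of lines. This turns transversal induced copies of~$F$ into transversal cliques and makes the constituent genuinely $K_m$-partite, so Proposition~\ref{lem:HJconstr} applies; then complement back. With this rule:
\begin{enumerate}
\item[(1)] the product of $n$ transversal induced copies of~$F$ is again an induced copy;
\item[(2)] the constant coordinates of a line are adjacent on edge pairs and non-adjacent on non-edge pairs, hence irrelevant to either rule, so $\eta_\bullet$ is an isomorphism onto an induced subgraph $L^\eta$;
\item[(3)] every transversal induced copy of~$F$ in $L^\eta$ is $\eta_\bullet[V(F')]$ for some transversal induced $F'\subseteq L$, i.e.\ a product copy along the line, which is what your Ramsey step needs.
\end{enumerate}
Projections are no longer homomorphisms, so replace the distance-preservation argument by the direct one. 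Standard copies are induced because $L^\eta$ is induced in $H_\alpha$ and distinct standard copies meet only inside~$H_\alpha$.

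With this change your outline (Ramsey's theorem vertically, amalgamation over $m$-sets, backward induction) is a correct self-contained proof of a result the paper only quotes from Abramson--Harrington and Ne\v{s}et\v{r}il--R\"odl. The paper's own partite constructions never meet this issue. They amalgamate over induced copies of~$F$ in a vertical graph~$G$ onto which pictures project homomorphically, so constituents are genuinely $F$-partite. But producing such a~$G$ is precisely the r\^ole Theorem~\ref{thm:41} plays there.
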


This result is sometimes expressed by saying that ordered
graphs form a Ramsey class
or a Ramsey category. We do not introduce the corresponding
abstract language here
and refer the interested reader to the preface of~\cite{RR-girth}.

By choosing~$G$ according to Theorem~\ref{thm:41} and setting
$\ccG_F=\binom GF$
one sees that for every ordered~$F$-system $(B, \ccB_F)$ and
every number of colours~$r$
there exists an ordered~$F$-system~${(G, \ccG_F)}$ such that
for every colouring
$\phi\colon \ccG_F\lra [r]$ there is a semi-induced copy
$(B^\star, \ccB^\star_F)$
of $(B, \ccB_F)$ such that $\ccB^\star_F$ is monochromatic.
In several respects this is
weaker than Proposition~\ref{prop:warmup} and thus we run
another partite construction,
this time with the clean partite lemma.
The advantage of horizontal clean intersections is most
clearly spelled out
in~\cite{RR-girth}*{Lemma~4.7} and~\cite{RR-girth}*{Lemma~4.14}.
Here is a straightforward adaptation of the basic ideas to
our current situation.

\begin{lemma}\label{lem:1738}
	Suppose that $(G, \ccG_B, \ccG_F)$ denotes a weak $(B,
	\ccB_F)$-conglomerate
	for some~$F$-system $(B, \ccB_F)$ and that we have
	\[
		(\Sigma, \ccQ_B, \ccQ_F)
		=
		(\Pi, \ccP_B, \ccP_F)\conc (H, \ccH_{\Pi^\Fbu}, \ccH_\Fbu)
	\]
	for two pictures $(\Pi, \ccP_B, \ccP_F)$, $(\Sigma, \ccQ_B,
	\ccQ_F)$ over
	$(G, \ccG_B, \ccG_F)$ and a partite $(\Pi^\Fbu,
	\ccP^\Fbu)$-conglomerate
	$(H, \ccH_{\Pi^\Fbu}, \ccH_{\Fbu})$.
	If $(\Pi, \ccP_B, \ccP_F)$ and $(H, \ccH_{\Pi^\Fbu},
	\ccH_{\Fbu})$ are clean,
	then so is~$(\Sigma, \ccQ_B, \ccQ_F)$.
\end{lemma}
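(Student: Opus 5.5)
We verify the three clauses \ref{it:cli}--\ref{it:cliii} of Definition~\ref{dfn:54} for $(\Sigma,\ccQ_B,\ccQ_F)$. In contrast to Lemma~\ref{lem:pict-cl} we have no cleanness assumption on the vertical conglomerate $(G,\ccG_B,\ccG_F)$ (it is only a weak conglomerate). We therefore cannot project to~$G$. Instead we exploit the horizontal structure: two distinct standard copies $\Pi_\circ\neq\Pi_\ccirc$ satisfy $V(\Pi_\circ)\cap V(\Pi_\ccirc)=V(\Pi^\Fbu_\circ)\cap V(\Pi^\Fbu_\ccirc)\subseteq V(H)$. The latter intersection is controlled by the cleanness of $(H,\ccH_{\Pi^\Fbu},\ccH_\Fbu)$, namely clause~\ref{it:cliii} for the two copies $(\Pi^\Fbu_\circ,\ccP^\Fbu_\circ)$ and $(\Pi^\Fbu_\ccirc,\ccP^\Fbu_\ccirc)$. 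Within a single standard copy everything follows from the cleanness of $(\Pi,\ccP_B,\ccP_F)$, since standard copies are isomorphic to $\Pi$ and the members of $\ccQ_F$, $\ccQ_B$ inside $\Pi_\circ$ are exactly $\ccP^\circ_F$, $\ccP^\circ_B$ (together with $\ccH_\Fbu$, which lies in~$H$).

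The key observation is the following. For distinct standard copies, the intersection $I=V(\Pi_\circ)\cap V(\Pi_\ccirc)$ is either clean, i.e.\ $|I|\le 1$ or $I$ spans an edge of both, or it equals $V(F_\diamond)$ for some $F_\diamond\in\ccP^\Fbu_\circ\cap\ccP^\Fbu_\ccirc\subseteq\ccP^\circ_F\cap\ccP^\ccirc_F$. Clause~\ref{it:cli} then goes as follows. Take $F_\star\neq F_\sstar$ in $\ccQ_F$. If both lie in $\ccH_\Fbu$, use clause~\ref{it:cli} for~$H$. If $F_\star\in\ccP^\circ_F$ and $F_\sstar\in\ccH_\Fbu$, then $V(F_\star)\cap V(F_\sstar)\subseteq V(F_\star)\cap V(\Pi^\Fbu_\circ)$. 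Here either $F_\sstar\in\ccP^\Fbu_\circ$, and cleanness of $\Pi$ (or of $H$'s $F$-copies) applies, or $F_\sstar\notin\ccP^\Fbu_\circ$, and clause~\ref{it:clii} of~$H$ makes $F_\sstar\cap\Pi^\Fbu_\circ$ clean; hence $F_\sstar\cap F_\star$ is clean. If $F_\star\in\ccP^\circ_F$ and $F_\sstar\in\ccP^\ccirc_F$ with $\Pi_\circ\neq\Pi_\ccirc$, then $V(F_\star)\cap V(F_\sstar)\subseteq I$. In the clean case of $I$ we are done. Otherwise $I=V(F_\diamond)$. If $F_\star=F_\diamond$ (viewed in both copies), use cleanness of $F_\diamond$ with $F_\sstar$ inside $\Pi_\ccirc$. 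If neither equals $F_\diamond$, intersect with $F_\diamond$ inside $\Pi_\circ$, which gives a clean intersection.

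Clauses~\ref{it:clii} and~\ref{it:cliii} are handled the same way, replacing $F_\star$ by $B^\star$. For \ref{it:clii}, with $(B^\star,\ccB^\star_F)\in\ccP^\circ_B$ and $F_\sstar$ in another standard copy or in $\ccH_\Fbu$, we reduce to $B^\star\cap F_\diamond$ or $B^\star\cap\Pi^\Fbu_\circ$. The case $F_\diamond\in\ccB^\star_F$ needs care: then $B^\star\cap F_\sstar\subseteq F_\diamond\cap F_\sstar$, which is clean unless $F_\sstar=F_\diamond\in\ccB^\star_F$, and that is excluded. Note that $F_\sstar\in\ccH_\Fbu\cap V(B^\star)$ need not be a member of $\ccB^\star_F$. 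In that case one uses that $B^\star\cap\Pi^\Fbu_\circ$ is covered by $\Pi$-cleanness, and then applies clause~\ref{it:clii} for~$H$ with $F_\sstar\notin\ccP^\Fbu_\circ$. For \ref{it:cliii} with $\Pi_\circ\neq\Pi_\ccirc$, $V(B^\star)\cap V(B^\sstar)\subseteq I$. If $I=V(F_\diamond)$ and $F_\diamond$ belongs to both $\ccB^\star_F$ and $\ccB^\sstar_F$, the intersection is exactly that copy, which is allowed. Otherwise we use clause~\ref{it:clii} of $\Pi$ inside one copy to get a clean intersection.

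The main obstacle is the bookkeeping in these subcases. One has to make sure that ``clean'' genuinely propagates through an intermediate intersection: if $X\cap Y\subseteq Z$ and $X\cap Z$ is clean, then $X\cap Y$ is clean, since an edge of $X$ between two vertices of~$Y$ is an edge of~$Y$ because $Y$ is induced. One also has to treat the case where the intersection is a full shared copy $F_\diamond$ that coincides with one of the two objects. I would first state the propagation remark and the dichotomy for~$I$ explicitly, and then run the case analysis.
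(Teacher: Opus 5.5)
Your proposal is correct and follows essentially the same route as the paper. The paper also splits by standard copies, uses clause~(iii) of the horizontal conglomerate to get the dichotomy for $V(\Pi^\Fbu_\circ)\cap V(\Pi^\Fbu_\ccirc)$, and handles copies from $\ccH_\Fbu$ by clause~(ii) of that conglomerate. The differences are cosmetic: you treat the case $F_\star=F_\diamond$ explicitly, whereas the paper rules it out by first assuming $F_\star\notin\ccH_\Fbu$, and you spell out the ``cleanness propagates through an intermediate intersection'' step that the paper uses implicitly.
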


\begin{proof}
	Again we need to check that $(\Sigma, \ccQ_B, \ccQ_F)$
	satisfies the three
	clauses~\ref{it:cli}\,--\,\ref{it:cliii} of Definition~\ref{dfn:54}.
	As usual, we denote the canonical projection from~$\Sigma$
	to~$G$ by~$\psi$.

	{\bf Stage A:}
	Starting with~\ref{it:cli} we consider two distinct copies
	$F_\star, F_\sstar\in \ccQ_F$.
	If $F_\star, F_\sstar\in \ccH_{\Fbu}$ we appeal to the
	assumption that~$\ccH_{\Fbu}$ has clean
	intersections. So we can assume $F_\star\not\in \ccH_{\Fbu}$
	from now on, so that there is
	a unique standard copy  $(\Pi_\circ, \ccP^\circ_B, \ccP^\circ_F)$
	of $(\Pi, \ccP_B, \ccP_F)$ with~${F_\star\in\ccP^\circ_F}$.

	\smallskip

	{\hspace{2em} \it First case: $F_\sstar\not\in\ccH_{\Fbu}$}

	\smallskip

	This means that there also exists a standard copy
	$(\Pi_\ccirc, \ccP^\ccirc_B, \ccP^\ccirc_F)$ with
	$F_\sstar\in\ccP^\ccirc_F$.
	If $\Pi_\circ=\Pi_\ccirc$ we use that $\ccP^\circ_F$ has
	clean intersections;
	so we can assume $\Pi_\circ\ne\Pi_\ccirc$ from now on,
	which implies that the
	constituents $\Pi_\circ^\Fbu$ and $\Pi_\ccirc^\Fbu$
	intersect each other either
	cleanly or in some copy of $\wt{F}\in \ccH_{\Fbu}$. In
	the former case
	\begin{equation}\label{eq:1144}
		V(F_\star)\cap V(F_\sstar)
		\subseteq
		V(\Pi_\circ)\cap V(\Pi_\ccirc)
		=
		V(\Pi^\Fbu_\circ)\cap V(\Pi^\Fbu_\ccirc)
	\end{equation}
	immediately yields $|V(F_\star)\cap V(F_\sstar)|\le 2$ and
	we are done.
	If, on the other hand, the aforementioned copy~$\wt{F}$ exists,
	then~\eqref{eq:1144} tells us $V(F_\star)\cap
	V(F_\sstar)\subseteq V(\wt{F})$
	and our assertion follows from the fact that~$\wt{F}$ has a
	clean intersection
	with each of $F_\star$, $F_\sstar$.

	\smallskip

	{\hspace{2em} \it Second case: $F_\sstar\in\ccH_{\Fbu}$}

	\smallskip

	If $F_\sstar\in\ccP^\circ_F$ we exploit that the copies in
	$\ccP^\circ_F$
	have clean intersections and otherwise property~\ref{it:clii}
	of $(H, \ccH_{\Pi^\Fbu}, \ccH_{\Fbu})$ being clean yields
	\[
		|V(F_\star)\cap V(F_\sstar)|
		\le |V(\Pi_\circ^\Fbu)\cap V(F_\sstar)|
		\le 2\,.
	\]

	\smallskip

	{\bf Stage B:}
	Moving on to the verification of~\ref{it:clii} we consider two
	copies $(B^\star, \ccB_F^\star)\in \ccQ_B$ and
	$F_\sstar\in\ccQ_F\sm\ccB^\star_F$,
	and need to show that the intersection of $B^\star$ and
	$F_\sstar$ is clean.
	Let $(\Pi_\circ, \ccP^\circ_B, \ccP^\circ_F)$ be a standard copy
	of $(\Pi, \ccP_B, \ccP_F)$ with $(B^\star,
	\ccB_F^\star)\in\ccP^\circ_B$.
	Since $(\Pi, \ccP_B, \ccP_F)$ is clean, only the case
	$F_\sstar\not\in \ccP^\circ_F$
	is interesting. If $F_\sstar\in\ccH_{\Fbu}\sm \ccP^\circ_F$ we
	know that $F_\sstar$
	intersects the constituent $\Pi_\circ^\Fbu$ cleanly and our
	claim follows again.
	So it remains to discuss the case that some standard
	copy $(\Pi_\ccirc, \ccP^\ccirc_B, \ccP^\ccirc_F)$ satisfies
	$\Pi_\circ\ne \Pi_\ccirc$
	and $F_\sstar\in\ccP^\ccirc_F\sm \ccH_{\Fbu}$.
	Now we have
	\[
		V(B^\star)\cap V(F_\sstar)
		\subseteq
		V(\Pi^\Fbu_\circ)\cap V(\Pi^\Fbu_\ccirc)\,.
	\]
	So if the graphs $\Pi^\Fbu_\circ$, $\Pi^\Fbu_\ccirc$ have a
	clean intersection
	we are done immediately and if they intersect in
	some copy $\wt{F}\in \ccP^\circ_F\cap\ccP^\ccirc_F$ we can exploit
	that~$\wt{F}$, $F_\sstar$ have a clean intersection due to
	the already established
	statement~\ref{it:cli}.

	\smallskip

	{\bf Stage C:}
	Finally addressing~\ref{it:cliii} we consider two distinct copies
	$(B^\star, \ccB_F^\star)$ and $(B^\sstar, \ccB_F^\sstar)$
	in~$\ccQ_B$ and
	take two standard copies $(\Pi_\circ, \ccP^\circ_B, \ccP^\circ_F)$,
	$(\Pi_\ccirc, \ccP^\ccirc_B, \ccP^\ccirc_F)$ with
	${(B^\star, \ccB_F^\star)\in \ccP^\circ_B}$
	and $(B^\sstar, \ccB_F^\sstar)\in \ccP^\ccirc_B$,
	respectively. As usual, it suffices
	to treat the case $\Pi_\circ\ne\Pi_\ccirc$, so that we have
	\[
		V(B^\star)\cap V(B^\sstar)
		\subseteq
		V(\Pi^\Fbu_\circ)\cap V(\Pi^\Fbu_\ccirc)\,.
	\]
	If the intersection of $\Pi^\Fbu_\circ$ and
	$\Pi^\Fbu_\ccirc$ is clean,
	everything is clear, and we can assume from now on that
	this intersection
	is some copy $\wt{F}\in \ccP^\circ_F\cap \ccP^\ccirc_F$. As long as
	$\wt{F}\not\in\ccB_F^\star\cap \ccB_F^\sstar$ it follows
	from the already established statement~\ref{it:clii}
	that~$\wt{F}$ intersects one of $B^\star$ and $B^\sstar$
	cleanly, which causes
	the intersection of $B^\star$ and $B^\sstar$ to be clean as well.
	In the remaining case, $\wt{F}\in\ccB_F^\star\cap \ccB_F^\sstar$, the
	second outcome of~\ref{it:cliii} occurs.
\end{proof}

We are now ready to describe the construction~$\Ups^{(1)}$
promised at the beginning of this section.
As mentioned earlier, the proof is rendered
by a partite construction and parallels the proof of the clean partite
lemma---Proposition~\ref{prop:CPL}. In fact, we shall use the
clean partite lemma
here. The other difference concerns the initial conglomerate
$(G,\ccG_B,\ccG_F)$
underlying the pictures in the construction. In the context
of Proposition~\ref{prop:warmup}
the conglomerate $(G,\ccG_B,\ccG_F)$ will be provided by
Theorem~\ref{thm:41}
on Ramsey classes. As a result we have less control over the
structural properties
of $(G,\ccG_B,\ccG_F)$ and this is reflected in the
conclusion of Proposition~\ref{prop:warmup}.
In particular, the delivered conglomerate $(H,\ccH_B,\ccH_F)$
falls short of preserving distances and odd girth.
These shortcomings will be addressed in
\ssign\ref{sec:conform-cong} by another
application of the partite construction method.

\begin{proof}[Proof of Proposition~\ref{prop:warmup}]
	Given an ordered~$F$-system $(B, \ccB_F)$ with clean
	intersections and a
	number of colours~$r$ we proceed as follows.

	\begin{itemize}
		\item By Theorem~\ref{thm:41} there exists an ordered graph~$G$
			satisfying $G\lra (B)^F_r$.
Set
			$\ccG_F=\binom GF$, let
			$\ccG_B$ be the collection of all semi-induced ordered
			copies of $(B, \ccB_F)$
			in the ordered~$F$-system $(G, \ccG_F)$, and observe
			that $(G, \ccG_B, \ccG_F)$
			is an ordered weak $(B, \ccB_F)$-conglomerate.
			Choose an arbitrary enumeration $\ccG_F=\{F(1), \dots,
			F(N)\}$ of~$\ccG_F$.
		\item Starting with picture zero we recursively define a
			sequence $\bl\Pi_\alpha, \ccP_{\alpha, B},
			\ccP_{\alpha, F}\br_{\alpha\le N}$
			of clean pictures over $(G, \ccG_B, \ccG_F)$. If for
			some $\alpha\in [N]$
			we have just constructed the
			clean picture $\bl\Pi_{\alpha-1}, \ccP_{\alpha-1, B},
			\ccP_{\alpha-1, F}\br$,
			then we know that the copies of~$F$ belonging
			to its constituent over $F(\alpha)$ have clean intersections.
			By Proposition~\ref{prop:CPL} the clean partite lemma
			$\CPL_r(\cdot)$
			applied to this constituent yields a clean partite conglomerate
			$\bl H_\alpha, \ccH_{\Pi_{\alpha-1}^{F(\alpha)}},
			\ccH_{F(\alpha)}\br$.
			By Lemma~\ref{lem:1738} the picture
			\[\bl\Pi_\alpha, \ccP_{\alpha, B}, \ccP_{\alpha, F}\br
				=
				\bl\Pi_{\alpha-1}, \ccP_{\alpha-1, B}, \ccP_{\alpha-1, F}\br
				\conc
				\bl H_\alpha, \ccH_{\Pi_{\alpha-1}^{F(\alpha)}},
				\ccH_{F(\alpha)}\br\,,
			\]where the amalgamation occurs over $F(\alpha)$, is
			clean again and, therefore,
			our construction continues.
		\item This recursive process culminates with a final
			picture $\bl\Pi_N, \ccP_{N, B}, \ccP_{N, F}\br$.
			We order the vertices of~$\Pi_N$ in such a manner that
			each music line
			becomes an interval and the ordering of these intervals
			reflects the
			vertex ordering of the ordered graph~$G$. Since the copies
			in $\ccP_{N, B}\cup\ccP_{N, F}$ intersect each music
			line at most once,
			this ensures that these copies are ordered correctly in~$\Pi_N$.
			So we have converted $\bl\Pi_N, \ccP_{N, B}, \ccP_{N,
			F}\br$ into an
			ordered $(B, \ccB_F)$-conglomerate.
	\end{itemize}

	The partition relation $\ccP_{N, B}\lra (B, \ccB_F)^F_r$
	follows from the
	same standard argument we have already presented in the proof of
	Proposition~\ref{prop:CPL} and
	we omit the details.

	It remains to address the preservation of the clique
	number. To this end,
	we prove $\omega(\Pi_\alpha)=\omega(B)$ by induction on $\alpha\le N$.
	The base case $\alpha=0$ is trivial; in the induction step we
	assume $\omega(\Pi_{\alpha-1})=\omega(B)$ for some
	$\alpha\in [N]$ and consider
	a clique~$K$ in $\Pi_\alpha$. If~$K$ is completely
	contained in $H_\alpha$, then
	its order is at most
	\[
		\omega(H_\alpha)
		= \omega\bl\Pi_{\alpha-1}^{F(\alpha)}\br
		\le \omega(\Pi_{\alpha-1})
		= \omega(B)
	\]
	by Proposition~\ref{prop:CPL}\ref{it:CPLc}.

	Otherwise there
	are a standard
	copy $\Pi^\circ_{\alpha-1}$ of~$\Pi_{\alpha-1}$ and a
	vertex~$v$ in~$K$ such that  $v\in
	V(\Pi^\circ_{\alpha-1})\sm V(H_\alpha)$.
	Since~$v$ is
	adjacent to all
	other vertices of~$K$, we have $K\subseteq \Pi^\circ_{\alpha-1}$ and,
	consequently, the order of~$K$ is at most
	$\omega(\Pi_{\alpha-1})=\omega(B)$.
	This proves $\omega(\Pi_\alpha)\le \omega(B)$ and the
	reverse inequality is obvious.
\end{proof}

\section{Conform Ramsey conglomerates}
\label{sec:conform-cong}

For the proof of Proposition~\ref{prop:final} we recall
that~$\Ups_r^{(1)}$ from Proposition~\ref{prop:warmup} already provides
a clean conglomerate $(H,\ccH_B,\ccH_F)$ preserving the
clique number. In
particular, this conglomerate
already satisfies
property~\ref{it:final-a} of Proposition~\ref{prop:final}
and parts~\ref{def:conf-clean} and~\ref{def:conf-omega}
from Definition~\ref{def:conf} required by property~\ref{it:final-b}.
Consequently,
only parts~\ref{def:conf-oddg} and~\ref{def:conf-induced} of
Definition~\ref{def:conf}, which concern the odd girth and
$n$-inducedness, are missing. For $n=1$ these two missing properties
are trivial and, in fact,~$\Ups_r^{(1)}$ provides the
base case for an inductive proof.
For the inductive step, we run a partite construction, where we employ
$\Ups^{(n-1)}_r$ vertically and the clean partite lemma~$\CPL_r$ from
Proposition~\ref{prop:CPL} horizontally. Since the standard
construction of picture zero
consists of disjoint copies, it clearly preserves odd girth
and~$n$-inducedness.

Roughly speaking, the following picturesque statement rendered
in Lemma~\ref{lem:61} shows that these properties are kept during
the partite construction described above
(see statements~\ref{it:61a} and~\ref{it:61b} of the lemma).
However, we remark that in Lemma~\ref{lem:61}
we also require \(n\)-induced copies of \(F\) provided by the
partite lemma, which
is not stated to be a property of the clean partite lemma in
Proposition~\ref{prop:CPL}; but we shall address this issue in
Corollary~\ref{cor:CPLe}.

\begin{lemma}\label{lem:61}
	Given a graph~$F$, let $(B, \ccB_F)$ be an~$F$-system and
	let $(G, \ccG_B, \ccG_F)$
	be a weak $(B, \ccB_F)$-conglomerate. Suppose further that we have
	\[
		(\Sigma, \ccQ_B, \ccQ_F)
		=
		(\Pi, \ccP_B, \ccP_F)\conc (H, \ccH_{\Pi^\Fbu}, \ccH_\Fbu)
	\]
	for two pictures $(\Pi, \ccP_B, \ccP_F)$, $(\Sigma, \ccQ_B,
	\ccQ_F)$ over
	$(G, \ccG_B, \ccG_F)$ and an~$\Fbu$-partite $(\Pi^\Fbu,
	\ccP^\Fbu)$-conglomerate
	$(H, \ccH_{\Pi^\Fbu}, \ccH_{\Fbu})$
	with $\ccH_{\Pi^\Fbu}\subseteq \binom H{\Pi^\Fbu}_\dip$.
	Finally let~${n\ge 2}$ be an integer such that
	$\ccG_F\subseteq \binom GF_{n-1}$.
	\begin{enumerate}[label=\alabel]
		\item\label{it:61a}
			We have $\ccQ_F\subseteq \binom{\Sigma}{F}_n$ provided that
			\begin{enumerate}[label=\nlabel]
				\item\label{it:61a1}
					$\ccP_F\subseteq \binom{\Pi}{F}_n$ and
					$\ccH_\Fbu\subseteq \binom HF_n$,
				\item\label{it:61a2}
					the copies in~$\ccG_F$ have clean intersections,
				\item\label{it:61a3}
					and for all copies $(\Pi_\circ^\Fbu, \ccP^\Fbu_\circ)\in
					\ccH_{\Pi^\Fbu}$
					and $F_\star\in \ccH_\Fbu\sm \ccP^\Fbu_\circ$ the intersection
					of~$\Pi_\circ^\Fbu$ and~$F_\star$ is clean.
\end{enumerate}
		\item\label{it:61b} If $\og(G)\ge 2n-1$, $\og(\Pi)\ge 2n+1$,
			and $\og(H)\ge 2n+1$, then $\og(\Sigma)\ge 2n+1$.
	\end{enumerate}
\end{lemma}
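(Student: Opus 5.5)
The plan is to exploit the standard structure of the amalgamation: $\Sigma=H\cup\bigcup_\circ\Pi_\circ$, where two standard copies meet only inside~$H$, and $\Pi_\circ\cap H=\Pi_\circ^\Fbu$. The tools are the canonical projection $\psi\colon V(\Sigma)\to V(G)$, which is a graph homomorphism, and the assumption $\ccH_{\Pi^\Fbu}\subseteq\binom H{\Pi^\Fbu}_\dip$.

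\emph{Decomposition.} Any path or cycle $W$ in $\Sigma$ that lies neither in $H$ nor in a single standard copy splits into maximal \emph{excursions}. An excursion is a subpath whose inner vertices lie in $V(\Pi_\circ)\sm V(H)$ for one standard copy, and whose endpoints $u,v$ lie in $V(\Pi_\circ^\Fbu)$. Consequently $\psi(u),\psi(v)\in V(\Fbu)$, while the inner vertices of $\psi[\text{excursion}]$ avoid $V(\Fbu)$. So $\psi$ maps an excursion of length $\ell$ to a walk of length $\ell$ in $G$ which connects two vertices of $\Fbu$ and leaves $\Fbu$.

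\emph{Key sublemma.} If $\ell\le n-1$, then $\ccG_F\subseteq\binom GF_{n-1}$ gives $\dist_\Fbu(\psi u,\psi v)<\ell$. I want to upgrade this to the existence of a $u$-$v$-path inside $\Pi_\circ^\Fbu$ that is strictly shorter than the excursion. I would then replace the excursion by that path in $H$, and use $\dip$ of $\Pi_\circ^\Fbu$ in $H$ to control lengths there. For parity in part~\ref{it:61b} I would argue as in the proof of Lemma~\ref{lem:union-conform}. If the replacement path has the same parity as the excursion, the resulting closed walk is shorter and still odd. Otherwise, excursion plus replacement form an odd closed walk inside $\Pi_\circ$ of length $<2\ell$, contradicting $\og(\Pi)\ge 2n+1$ when $\ell\le n$. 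Producing this $u$-$v$-path in $\Pi_\circ^\Fbu$ is the step I expect to be the main obstacle, since a short path in $\Fbu$ need not lift to $\Pi^\Fbu$. My first attempt is to use the constituent structure directly: the excursion lies in $\Pi_\circ$, and within $\Pi_\circ$ the pair $u,v$ plus the excursion is an odd or even short structure. If lifting fails, I would instead close the excursion with a shortest $u$-$v$-path in $H$. By $\dip$ this path may be taken in $\Pi_\circ^\Fbu\subseteq\Pi_\circ$, provided $u$ and $v$ are connected there at all. The resulting cycle is then compared using $\og(\Pi)$, together with $\og(G)\ge2n-1$ applied to the projection.

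\emph{Part~\ref{it:61b}.} Take a shortest odd cycle $C$ in $\Sigma$ with $|C|\le 2n-1$, assuming it lies neither in $H$ nor in a single $\Pi_\circ$. Since $|C|\le 2n-1$, at most one excursion has length $\ge n$, so all others have length $\le n-1$. I will replace excursions one at a time via the sublemma until the odd closed walk lies in $H$ or in a single standard copy. This contradicts $\og(H),\og(\Pi)\ge2n+1$. If the projected closed walk $\psi[C]$ is needed, it is odd of length $\le2n-1$, which is allowed by $\og(G)\ge 2n-1$ only in the extremal case; I would treat that case separately using the cleanness hypothesis~\ref{it:61a2}.

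\emph{Part~\ref{it:61a}.} Let $F_\star\in\ccQ_F$ and let $P$ be a $u$-$v$-path of length $\le n$ leaving $F_\star$. If $F_\star\in\ccH_\Fbu$, then every excursion of $P$ has length $<n$ unless $P$ is a single excursion. The sublemma shortcuts each excursion into $H$, and $\ccH_\Fbu\subseteq\binom HF_n$ finishes. Clause~\ref{it:61a3} handles the case where the shortcut lands in some $\Pi_\circ^\Fbu$ meeting $F_\star$ only cleanly, i.e.\ in at most an edge, which is already in $F_\star$. If $F_\star\in\ccP_F^\circ$, I treat excursions into other standard copies and into $H\sm\Pi_\circ$ symmetrically. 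Clauses~\ref{it:61a2} and~\ref{it:61a3} ensure that $P$ can re-enter $\Pi_\circ$ only through $\Pi_\circ^\Fbu$ in a controlled way. After the shortcuts the path lies in $\Pi_\circ$, and $\ccP_F\subseteq\binom\Pi F_n$ gives the strict inequality.
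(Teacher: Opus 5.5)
You correctly flag the weak point, and it is a genuine gap. Everything rests on the ``key sublemma'': an excursion $u\ldots v$ of length $\ell\le n-1$ through a standard copy $\Pi_\circ$ could be replaced by a strictly shorter $u$-$v$-path in $\Pi_\circ^\Fbu$. Nothing in the hypotheses gives this. The assumption $\ccG_F\subseteq\binom GF_{n-1}$ only yields $\dist_\Fbu(\psi(u),\psi(v))<\ell$. Since $\Pi_\circ^\Fbu$ is merely $\Fbu$-partite, this says nothing about $\dist_{\Pi_\circ^\Fbu}(u,v)$. For instance, if $\Fbu$ is an edge $ab$ and $G$ contains a path $a$-$c$-$b$, a picture may contain $u,w,v$ on the music lines of $a,c,b$ with edges $uw$, $wv$ but $uv\notin E(\Pi)$. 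Then $u$ and $v$ lie in different components of the constituent, and the sublemma fails. Distance preservation of $\Pi_\circ^\Fbu$ in $H$ does not help, since an excursion is not a path in $H$. Your fallback, closing the excursion by a shortest $u$-$v$-path in $H$, comes with no length bound. So the excursion replacements in part~(b), and the shortcuts ``into $H$'' and ``back into $\Pi_\circ$'' in part~(a), are unsupported. Moreover, in~(b) you plan to treat the ``extremal case'' via hypothesis~(2), which belongs to part~(a) only. That case is in fact the only one: $\psi[C]$ is an odd closed walk of length at most $2n-1$ in $G$, so $\og(G)\ge 2n-1$ forces $|C|=2n-1$ and $\psi$ to be injective on $V(C)$.

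The missing idea is never to lift a shortcut into $\Sigma$. Argue inside $G$, and pull back only along $\psi|_{V(F_\star)}$, which is an isomorphism onto $F'=\psi[F_\star]$.

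\emph{Part (b).} Suppose a segment $P_\rho$ of $C$ between consecutive vertices of $V(C)\cap V(H)$ has length at most $n-1$ and leaves $H$. Its projection is a path leaving $\Fbu$, so there is a strictly shorter $Q\subseteq\Fbu$. One of the two closed walks formed by $Q$ with $\psi[P_\rho]$, resp.\ with the $\psi$-image of the rest of $C$, is then odd of length at most $2n-2$, contradicting $\og(G)\ge 2n-1$. Hence short excursions do not occur at all. So $C$ is one excursion $P$ inside some $\Pi_\circ$ plus a path $Q\subseteq H$ of length at most $n-1$. Only now is distance preservation used: it gives $R\subseteq\Pi_\circ^\Fbu$ with $|E(R)|\le|E(Q)|$. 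Then $R\cup Q\subseteq H$ or $P\cup R\subseteq\Pi_\circ$ is an odd closed walk of length at most $2n-1$, a contradiction.

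\emph{Part (a), $F_\star\in\ccH_\Fbu$.} Split $P$ at an inner vertex $z\in V(H)$, and let $z_\star$ be the vertex of $F_\star$ on the music line of $z$. Pull the $\Fbu$-shortcuts of $\psi[xPz]$ and $\psi[zPy]$ (both of length at most $n-1$) back to paths in $F_\star$. If neither is strictly shorter, both halves of $P$ lie in $H$, and $\ccH_\Fbu\subseteq\binom HF_n$ applies. If $P$ has no inner vertex in $H$, it lies in one standard copy, and $\ccP_F\subseteq\binom\Pi F_n$ or hypothesis~(3) finishes.

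\emph{Part (a), $F'\ne\Fbu$.} If $P$ meets $H$ at most once, then $P\subseteq\Pi_\circ$. Otherwise let $u\ne v$ be the first and last vertices of $P$ in $H$.
\begin{itemize}
\item If $uPv\subseteq H$, use distance preservation and $\ccP_F\subseteq\binom\Pi F_n$.
\item If $u=x$ and $v=y$, hypothesis~(2) gives $xy\in E(F_\star)$.
\item Otherwise $uPv$ has length at most $n-1$. Shortcutting $\psi[uPv]$ inside $\Fbu$ yields a $\psi(x)$-$\psi(y)$-walk in $G$ shorter than $P$. Then $F'\in\binom GF_{n-1}$ turns it into a path in $F'$ shorter than $P$, which you pull back to $F_\star$.
\end{itemize}
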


\begin{proof}
	Starting with property~\ref{it:61a} we consider a copy $F_\star\in
	\ccQ_F$ and a
	path $P\not\subseteq F_\star$ of length at most~$n$ that
	connects two vertices~$x$
	and~$y$ of $F_\star$. We need to find some~$x$-$y$-path
	$R\subseteq F_\star$ which is
	shorter than~$P$. The canonical projection~$\psi$ of
	$\Sigma$ sends $F_\star$ to
	some copy $F'$ in the system~$\ccG_F$.

	\smallskip

	{\hspace{2em} \it First case: $F'=\Fbu$}

	\smallskip

	Suppose first that no inner vertex of~$P$ belongs to~$H$.
	Now the entire path~$P$
	is contained in a single standard copy $(\Pi_\circ,
	\ccP^\circ_B, \ccP^\circ_F)$
	of $(\Pi, \ccP_B, \ccP_F)$. So if $F_\star\in \ccP^\circ_F$
	we just need to appeal
	to the hypothesis $\ccP_F\subseteq \binom{\Pi}{F}_n$. On
	the other hand,
	if $F_\star\not\in \ccP^\circ_F$, then by~\ref{it:61a3} the
	intersection
	of $\Pi_\circ$ and $F_\star$ is clean, so that $xy$ is an
	edge of $F_\star$ and
	can play the r\^{o}le of the desired path~$R$.

	It remains to deal with the case that~$P$ is of the form
	$P=xP'zP''y$, where~$z$
	is an inner vertex of~$P$ belonging to $V(H)$ connected to
	$x$ and~$y$ by the
	paths~$P'$ and $P''$. Let the music line through~$z$
	intersect the copy $F_\star$
	in $z_\star$.

	We contend that there is an~$x$-$z_\star$-path $Q'\subseteq
	F_\star$ whose
	length is at most the length of~$P'$. To see this, we look at
	the $\psi(x)$-$\psi(z)$-path $\psi[P']$ in~$G$. If it is
	contained in~$\Fbu$,
	we just need to pull this path back to $F_\star$.
	Otherwise, $\Fbu\in \ccG_F\subseteq \binom GF_{n-1}$ yields
	a $\psi(x)$-$\psi(z)$-path $S'\subseteq \Fbu$ that is even
	shorter than $P'$ and
	the existence of $Q'$ follows again by pulling this path back.

	The same argument also yields a $z_\star$-$y$-path
	$Q''\subseteq F_\star$
	whose length is at most the length of~$P''$. The
	$x$-$y$-walk $xQ'z_\star Q''y$
	is contained in $F_\star$ and has at most the length of~$P$.
	If this walk fails
	to contain the desired path, then $P'$, $Q'$ have the same
	length, and so
	do $P''$, $Q''$. But then the entire path~$P$ is contained
	in~$H$ and the
	assumption $\ccH_\Fbu\subseteq \binom HF_n$ from~\ref{it:61a1}
	leads to the desired path.

	\smallskip

	{\hspace{2em} \it Second case: $F'\ne\Fbu$}

	\smallskip

	Let $\Pi_\circ$ denote the unique standard copy of~$\Pi$
	containing $F_\star$.
	If~$P$ crosses~$H$ in at most one vertex, then we have
	$P\subseteq \Pi_\circ$
	and $F_\star\in \binom{\Pi_\circ}F_n$ leads to the desired path.

	As we traverse~$P$ from~$x$ to~$y$, we can therefore
	assume that there are a
	first vertex~$u$ and a last vertex~$v$ belonging to~$H$,
	and that these vertices
	are distinct. We write $P=xP'uP''vP'''y$, where the paths
	$P', P'''\subseteq \Pi_\circ$
	might degenerate to single vertices (in case $x=u$ or
	$v=y$), while~$P''$ has at
	least one edge.

	Suppose first that $P''\subseteq H$. Since
	$\Pi_\circ^{\Fbu}$ is distance preserving in~$H$,
	there exists a~$u$-$v$-path $Q''\subseteq \Pi_\circ^{\Fbu}$ of at
	most the same length as $P''$.
	Now $xP'uQ''vP'''y$ is an~$x$-$y$-walk in $\Pi_\circ$ of at
	most the same length as~$P$
	and $F_\star\in\binom{\Pi_\circ}F_n$ leads again to the
	desired path. So we can assume
	$P''\not\subseteq H$ in the sequel.

	Suppose next that $u=x$ and $v=y$. By~\ref{it:61a2} the intersection of~$F'$ and~$\Fbu$ is
	clean; as~$\psi(x)$, $\psi(y)$
	belong to both graphs, we have $\psi(x)\psi(y)\in E(G)$.
	Consequently, $xy$
	is an edge of $F_\star$ and our claim is again clear.
So we can assume from now on that $P''$ has length at most $n-1$.

	By $\Fbu\in\binom GF_{n-1}$ applied to the
	$\psi(u)$-$\psi(v)$-walk $\psi[P'']$,
	which is not contained in~$\Fbu$, there is a $\psi(u)$-$\psi(v)$-path
	$R''\subseteq \Fbu$, which is shorter than $P''$. Now
	$\psi[xP'u]R''\psi[vP'''y]$
	is a $\psi(x)$-$\psi(y)$-walk in~$G$ and shorter than~$P$.
	Due to $F'\in\ccG_F\subseteq\binom GF_{n-1}$
	this entails that some $\psi(x)$-$\psi(y)$-path $S\subseteq
	F'$ is shorter
	than~$P$ as well, wherefore the desired~$x$-$y$-path
	$R\subseteq F_\star$ exists again.

	Having thereby proved~\ref{it:61a} we proceed with~\ref{it:61b}.
	Assume towards a contradiction that~$\Sigma$ contains an odd
	cycle~$C$ whose
	length is at most $2n-1$. The projection~$\psi$ sends~$C$
	to an odd closed walk~$D$
	in~$G$ of the same length as~$C$. Due to $\og(G)\ge 2n-1$
	this implies that both~$C$
	and~$D$ are cycles of length $2n-1$. If~$C$ intersects~$H$
	in at most one vertex,
	then~$C$ has to be contained in a single standard copy of
	$\Pi$, contrary
	to $\og(\Pi)\ge 2n+1$. Thus we can assume that $s=|V(C)\cap
	V(H)|$ is at least~$2$.
	Consequently, $C$ is of the form
	\begin{equation}\label{eq:2023}
		C=x_1P_1\dots x_sP_s\,,
	\end{equation}
	where $V(C)\cap V(H)=\{x_1, \dots, x_s\}$
	and $P_\rho$ is an $x_\rho$-$x_{\rho+1}$-path for every
	$\rho\in\ZZ/s\ZZ$.

	\begin{clm}
		If for some $\rho\in\ZZ/s\ZZ$ the length of $P_\rho$ is
		at most $n-1$,
		then this path is contained in~$H$.
	\end{clm}

	\begin{proof}
		Assume contrariwise that the length of $P_\rho$ is at
		most $n-1$, while
		$P_\rho\not\subseteq H$. Since~$\psi$ is injective on
		$V(C)$, the image
		$\psi[P_\rho]$ is a
		$\psi(x_\rho)$-$\psi(x_{\rho+1})$-path in~$G$. Together
		with $\psi[P_\rho]\not\subseteq \Fbu$ and $\Fbu\in \binom
		GF_{n-1}$ this shows
		that there exists a
		$\psi(x_\rho)$-$\psi(x_{\rho+1})$-path $Q\subseteq \Fbu$
		whose length is strictly smaller than the length of $P_\rho$. Both
		\[
			Q\psi[x_{\rho+1}P_{\rho+1}\dots x_\rho]
			\quad \text{ and } \quad
			\psi[x_\rho P_\rho x_{\rho+1}]Q^{-1}\,,
		\]
		where~$Q^{-1}$ denotes~$Q$ traversed in reverse order,
		are closed walks in~$G$
		whose lengths are at most $2n-2$. Moreover, the parities
		of their lengths are different
		and, therefore, one of them is odd. Thereby we reach a
		contradiction to $\og(G)\ge 2n-1$.
	\end{proof}

	Since $\og(H)\ge 2n+1$ implies $C\not\subseteq H$, it
	follows that one of the paths
	$P_1, \dots, P_s$ has length at least~$n$. This path needs
	to be unique and the other
	paths are contained in~$H$. For these reasons we can
	rewrite~\eqref{eq:2023}
	as $C=xPyQ$, where $x, y\in V(C)\cap V(H)$, the
	$x$-$y$-path~$P$ has no inner vertices
	in~$H$, and the~$y$-$x$-path $Q\subseteq H$ has length at
	most~${n-1}$.
	Clearly,~$P$ is completely contained in a single standard
	copy $(\Pi_\circ, \ccP^\circ_B, \ccP^\circ_F)$. As the
	constituent of this picture
	over~$\Fbu$ is distance preserving in~$H$, it contains an
	$x$-$y$-path~$R$ whose length
	is at most the length of~$Q$. Now
	\[
		xRyQ
		\quad \text{ and } \quad
		xPyR^{-1}
	\]
	are closed walks in~$H$ and $\Pi_\circ$, respectively, whose
	lengths are at most $2n-1$.
	Moreover, their parities are different, so that one of them
	is odd. Thus we get a
	contradiction either to $\og(H)\ge 2n+1$ or to
	$\og(\Pi_\circ)\ge 2n+1$.
\end{proof}

Keeping a promise made in Remark~\ref{rem:57} we can now
easily deduce a further
property of the clean partite lemma.

\begin{cor}\label{cor:CPLe}
	Let $(B, \ccB_F)$ be a partite~$F$-system with clean
	intersections. If for some
	natural number~$n$ we have $\ccB_F\subseteq \binom BF_n$, then for
	every number of colours~$r$ the clean
	conglomerate $\CPL_r(B, \ccB_F)=(H, \ccH_B, \ccH_F)$
	satisfies $\ccH_F\subseteq \binom HF_n$.
\end{cor}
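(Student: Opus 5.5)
We prove Corollary~\ref{cor:CPLe} by following the partite construction that defines $\CPL_r(B,\ccB_F)$ and showing, by induction on $\alpha\in\{0,1,\dots,N\}$, that
\[
	\ccP_{\alpha,F}\subseteq\binom{\Pi_\alpha}{F}_m
	\qquad\text{for every } m\le n\,.
\]
The plan is to apply part~\ref{it:61a} of Lemma~\ref{lem:61} at each amalgamation step. For the case $n=1$ there is nothing to show, since $\binom HF_1=\binom HF$ and $\ccH_F\subseteq\binom HF$ holds because $\ccH_F$ consists of induced copies. So assume $n\ge2$.

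Recall the data of the construction. The vertical conglomerate is $(G,\ccG_B,\ccG_F)=\HJ_r(B,\ccB_F)$. By Proposition~\ref{lem:HJconstr}\ref{it:HJe} applied with $m=n-1$, we get $\ccG_F\subseteq\binom GF_{n-1}$ from $\ccB_F\subseteq\binom BF_n\subseteq\binom BF_{n-1}$. By Proposition~\ref{lem:HJconstr}\ref{it:HJf} the copies in $\ccG_F$ have clean intersections, which is hypothesis~\ref{it:61a2} of Lemma~\ref{lem:61}.

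The base case is picture zero. It is a disjoint union of copies of $(B,\ccB_F)$, so $\ccP_{0,F}\subseteq\binom{\Pi_0}F_n$ follows directly from $\ccB_F\subseteq\binom BF_n$: any short path between two vertices of a copy stays inside the component containing that copy.

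For the induction step, let $\alpha\in[N]$ and assume $\ccP_{\alpha-1,F}\subseteq\binom{\Pi_{\alpha-1}}F_n$. The horizontal conglomerate is
\[
	\bl H_\alpha,\ccH_{\Pi^{F(\alpha)}_{\alpha-1}},\ccH_{F(\alpha)}\br=\HJ_r\bl\Pi^{F(\alpha)}_{\alpha-1},\ccP^{F(\alpha)}_{\alpha-1}\br\,.
\]
We check the hypotheses of Lemma~\ref{lem:61}\ref{it:61a} for it.
\begin{enumerate}[label=\rmlabel]
	\item The constituent is an induced subgraph of $\Pi_{\alpha-1}$. An $n$-induced copy of $F$ in the larger graph remains $n$-induced in an induced subgraph containing it, because paths in the subgraph are paths in the larger graph. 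Hence $\ccP^{F(\alpha)}_{\alpha-1}\subseteq\binom{\Pi^{F(\alpha)}_{\alpha-1}}F_n$.
	\item Proposition~\ref{lem:HJconstr}\ref{it:HJe} then gives $\ccH_{F(\alpha)}\subseteq\binom{H_\alpha}F_n$. Together with the induction hypothesis, this is hypothesis~\ref{it:61a1}.
	\item Proposition~\ref{lem:HJconstr}\ref{it:HJb} gives the distance-preservation requirement $\ccH_{\Pi^\Fbu}\subseteq\binom H{\Pi^\Fbu}_\dip$.
	\item The pictures are clean, as shown in the proof of Proposition~\ref{prop:CPL}. So the copies of $F$ in the constituent have clean intersections, and Proposition~\ref{lem:HJconstr}\ref{it:HJg} yields hypothesis~\ref{it:61a3}.
\end{enumerate}
Lemma~\ref{lem:61}\ref{it:61a} now gives $\ccP_{\alpha,F}\subseteq\binom{\Pi_\alpha}F_n$, which completes the induction.

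For the final step, take $\alpha=N$. Regarding $\Pi_N$ as an $F$-partite graph changes neither the graph nor the family $\ccP_{N,F}$, so $\ccH_F=\ccP_{N,F}\subseteq\binom HF_n$. The main point to double-check is that Lemma~\ref{lem:61} needs the vertical hypothesis only at level $n-1$, which is why $\binom BF_n\subseteq\binom BF_{n-1}$ suffices there. The other point is that the lemma holds for a weak conglomerate as well as for $\HJ$, which is a genuine conglomerate, so it applies here.
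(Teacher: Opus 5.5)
Your proposal is correct and follows essentially the same route as the paper. It argues by induction over the pictures of the $\CPL$ partite construction: the base case uses the disjointness of picture zero, and each amalgamation step applies Lemma~\ref{lem:61}\ref{it:61a}, with hypotheses supplied by Proposition~\ref{lem:HJconstr}\ref{it:HJb}, \ref{it:HJe}, \ref{it:HJf}, \ref{it:HJg} and the cleanness of the pictures. You have simply written out the hypothesis check that the paper summarises as ``one easily checks''.
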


\begin{proof}
	This is clear for $n=1$, so we can assume $n\ge 2$ from now
	on. Let us recall that
	$(H, \ccH_B, \ccH_F)$ is obtained by means of a partite
	construction. We keep using
	the notation from the explanation of this construction
	given immediately before
	Proposition~\ref{prop:CPL}.

	Owing to the items~\ref{it:HJe} and~\ref{it:HJf} of
	Proposition~\ref{lem:HJconstr}
	we know $\ccG_F\subseteq \binom GF_{n-1}$ and that the
	copies in~$\ccG_F$ have
	clean intersections. We want to establish
	$\ccP_{\alpha, F}\subseteq \binom{\Pi_\alpha}{F}_n$ for
	every non-negative integer
	$\alpha\le N$ by induction on~$\alpha$. The case $\alpha=N$
	of this assertion
	will then prove our corollary.

	The base case $\alpha=0$ is clear, because picture zero is
	just a disjoint union
	of copies of $(B, \ccB_F)$ and $\ccB_F\subseteq \binom
	BF_n$ is assumed.
	Suppose now that we already know
	$\ccP_{\alpha-1, F}\subseteq \binom{\Pi_{\alpha-1}}{F}_n$
	for some $\alpha\in [N]$. Utilising
	Proposition~\ref{lem:HJconstr} one easily
	checks that all assumptions of
	Lemma~\ref{lem:61}\ref{it:61a} are satisfied
	for the partite amalgamation
	\[
		\bl\Pi_\alpha, \ccP_{\alpha, B}, \ccP_{\alpha, F}\br
		=
		\bl\Pi_{\alpha-1}, \ccP_{\alpha-1, B}, \ccP_{\alpha-1, F}\br
		\conc
		\bl H_\alpha, \ccH_{\Pi_{\alpha-1}^{F(\alpha)}}, \ccH_{F(\alpha)}\br
	\]
	and thus we have indeed $\ccP_{\alpha, F}\subseteq
	\binom{\Pi_\alpha}{F}_n$.
\end{proof}

We conclude by establishing the following strengthening of
Proposition~\ref{prop:final}.

\begin{prop}\label{prop:Ups}
	For every ordered graph~$F$ and every integer $n\geq 1$
	there exists a construction~$\Ups^{(n)}$
	which, given an ordered~$F$-system $(B, \ccB_F)$ with clean
	intersections
	satisfying $\ccB_F\subseteq\binom BF_n$ and a number of
	colours~$r$, yields
	an ordered $(B, \ccB_F)$-conglomerate $\Ups^{(n)}_r(B,
	\ccB_F)=(H, \ccH_B, \ccH_F)$
	such that
	\begin{enumerate}[label=\alabel]
		\item\label{it:46a} $\ccH_B\lra (B, \ccB_F)^F_r$,
		\item\label{it:46b}
			\begin{enumerate}[label=\nlabel]
				\item\label{it:46bi} $\ccH_F\subseteq \binom HF_n$,
				\item\label{it:46bii} $\omega(H)=\omega(B)$,
				\item\label{it:46biii} $\og(H)\ge \min\{2n+1, \og(B)\}$,
			\end{enumerate}
		\item\label{it:46c} and the $(B, \ccB_F)$-conglomerate $(H,
			\ccH_B, \ccH_F)$ is clean.
	\end{enumerate}
\end{prop}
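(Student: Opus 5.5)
We proceed by induction on~$n$. For $n=1$ we let $\Ups^{(1)}_r$ be the construction of Proposition~\ref{prop:warmup}. It provides~\ref{it:46a}, \ref{it:46bii} and~\ref{it:46c}. Part~\ref{it:46bi} holds trivially because $\binom HF_1=\binom HF$. Part~\ref{it:46biii} is also trivial, since every odd cycle has length at least~$3=2\cdot1+1$.

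For $n\ge2$ we assume that $\Ups^{(n-1)}$ has been constructed. Given $(B,\ccB_F)$ with clean intersections and $\ccB_F\subseteq\binom BF_n\subseteq\binom BF_{n-1}$, we run a partite construction. Vertically we use $(G,\ccG_B,\ccG_F)=\Ups^{(n-1)}_r(B,\ccB_F)$. This is a clean ordered conglomerate with
\begin{itemize}
\item $\ccG_F\subseteq\binom GF_{n-1}$,
\item $\omega(G)=\omega(B)$,
\item $\og(G)\ge\min\{2n-1,\og(B)\}$.
\end{itemize}
We enumerate $\ccG_F=\{F(1),\dots,F(N)\}$ and start from picture zero. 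In step~$\alpha$ we apply $\CPL_r$ to the constituent of picture $\alpha-1$ over $F(\alpha)$ and amalgamate as in~\eqref{eq:0136}. At the end we order the music lines as intervals according to the ordering of~$G$, exactly as in the proof of Proposition~\ref{prop:warmup}.

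Property~\ref{it:46a} then follows from the standard induction used in the proof of Proposition~\ref{prop:CPL}, combined with $\ccG_B\lra(B,\ccB_F)^F_r$.

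Cleanliness is maintained by Lemma~\ref{lem:1738}. Picture zero is clean, and the horizontal conglomerates are clean by Proposition~\ref{prop:CPL}. Their copies of~$F$ lie in the constituent of a clean picture and therefore have clean intersections, so $\CPL_r$ is applicable. This gives~\ref{it:46c}. The clique number is preserved by the same induction as at the end of the proof of Proposition~\ref{prop:warmup}, using Proposition~\ref{prop:CPL}\ref{it:CPLc}; this gives~\ref{it:46bii}.

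For~\ref{it:46bi} we show $\ccP_{\alpha,F}\subseteq\binom{\Pi_\alpha}F_n$ by induction on~$\alpha$, using Lemma~\ref{lem:61}\ref{it:61a}. Its hypotheses are checked as follows.
\begin{itemize}
\item $\ccG_F\subseteq\binom GF_{n-1}$ holds by the induction hypothesis on~$n$.
\item The copies in $\ccG_F$ have clean intersections, by Definition~\ref{dfn:54}\ref{it:cli} applied to the clean conglomerate $(G,\ccG_B,\ccG_F)$.
\item The horizontal copies satisfy $\ccH_{F(\alpha)}\subseteq\binom{H_\alpha}F_n$ by Corollary~\ref{cor:CPLe}, because the constituent's copies lie in $\binom{\Pi_{\alpha-1}}F_n$ and so in particular are $n$-induced in the constituent.
\item The distance-preservation assumption $\ccH_{\Pi^\Fbu}\subseteq\binom H{\Pi^\Fbu}_\dip$ comes from Proposition~\ref{prop:CPL}\ref{it:CPLb}.
\item Clause~\ref{it:61a3} is Definition~\ref{dfn:54}\ref{it:clii} for the clean conglomerate $\CPL_r$.
\end{itemize}
The base case $\alpha=0$ holds because picture zero is a disjoint union of copies of $(B,\ccB_F)$.

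For~\ref{it:46biii}, the case $\og(B)\ge2n+1$ is handled by Lemma~\ref{lem:61}\ref{it:61b}, by induction on~$\alpha$. We need $\og(G)\ge2n-1$, which holds because $\og(B)\ge 2n+1$. Picture zero has $\og=\og(B)\ge2n+1$. Finally, $\og(H_\alpha)=\og(\Pi^{F(\alpha)}_{\alpha-1})\ge\og(\Pi_{\alpha-1})\ge 2n+1$ by Proposition~\ref{prop:CPL}\ref{it:CPLd}.

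The main obstacle is the remaining case $\og(B)\le 2n-1$, where the target is $\og(\Pi_N)\ge\og(B)=:2m+1$ with $m<n$. Here we would run the odd-cycle argument of Lemma~\ref{lem:61}\ref{it:61b} with~$m$ in place of~$n$. That argument needs an odd cycle of length at most $2m-1$ to be ruled out in~$G$, together with bounds $\og\ge2m+1$ in $\Pi$ and~$H$. We have $\og(G)\ge\min\{2n-1,\og(B)\}=2m+1$, and the latter two bounds come from the induction on~$\alpha$ and from Proposition~\ref{prop:CPL}\ref{it:CPLd}. The proof of Lemma~\ref{lem:61}\ref{it:61b} uses only $\Fbu\in\binom GF_{m}$, which holds because $m\le n-1$, and $\og(G)\ge2m-1$. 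So that proof should go through verbatim with $n$ replaced by~$m$, and we would verify this carefully.

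Alternatively, $\og(\Pi_N)\ge\og(G)$ may be available directly, since the canonical projection $\psi\colon\Pi_N\to G$ is a homomorphism and odd closed walks map to odd closed walks. This gives $\og(\Pi_N)\ge\og(G)\ge\min\{2n-1,\og(B)\}$. Since $\og(B)\le 2n-1$ in the present case, this minimum equals $\og(B)$, which is exactly what is needed. This settles the case directly, and the heavier lemma is only required when $\og(B)\ge2n+1$.
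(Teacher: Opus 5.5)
Your proposal is correct and follows the paper's proof: the same induction on $n$ with $\Ups^{(n-1)}_r$ used vertically and $\CPL_r$ horizontally, cleanliness via Lemma~\ref{lem:1738}, and $n$-inducedness via Lemma~\ref{lem:61}\ref{it:61a} together with Corollary~\ref{cor:CPLe} and Proposition~\ref{prop:CPL}. The only deviation is in the odd girth. The paper applies Lemma~\ref{lem:61}\ref{it:61b} uniformly with $\ol{n}=\min\{n,\frac12(\og(B)-1)\}$, whereas your homomorphism bound $\og(\Pi_N)\ge\og(G)\ge\min\{2n-1,\og(B)\}$ correctly settles the case $\og(B)\le 2n-1$, so you need the lemma only when $\og(B)\ge 2n+1$.
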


Before proving Proposition~\ref{prop:Ups} we note that it implies
Proposition~\ref{prop:final}. First we observe that the assumptions
of Proposition~\ref{prop:Ups} are less restrictive. Indeed,
given an~$n$-conform ordered
$F$-system $(B, \ccB_F)$, as assumed in
Proposition~\ref{prop:final}, conditions~\ref{def:conf-clean}
and~\ref{def:conf-induced} of Definition~\ref{def:conf} ensure that
$(B, \ccB_F)$ has clean intersections and
$\ccB_F\subseteq\binom{B}{F}_n$.

Concerning the conclusion of Proposition~\ref{prop:Ups}, we note that
the ordered $(B, \ccB_F)$-conglomerate
$\Ups^{(n)}_r(B, \ccB_F)=(H, \ccH_B, \ccH_F)$ clearly
satisfies the Ramsey property~\ref{it:final-a}
of Proposition~\ref{prop:final}.
Moreover, part~\ref{it:46c} of Proposition~\ref{prop:Ups}
implies that~$\ccH_F$ has clean
intersections. The other three requirements needed for the
$n$-conformity
of $(H,\ccH_F)$, asserted in part~\ref{it:final-b}
of Proposition~\ref{prop:final}, follow directly from the
three properties in part~\ref{it:46b} of Proposition~\ref{prop:Ups}
combined with the assumption that $(B,\ccB_F)$ is~$n$-conform itself.

\begin{proof}
	We argue by induction on~$n$, using the construction
	$\Ups^{(1)}$ provided by
	Proposition~\ref{prop:warmup} as a base case. Assuming now
	that for some $n\ge 2$
	we have the construction~$\Ups^{(n-1)}$ at our disposal and that an
	ordered~$F$-system $(B, \ccB_F)$ with clean intersections
	such that $\ccB_F\subseteq \binom BF_n$
	as well as a number of colours~$r$ are given, we perform
	the same partite construction
	as in the proof of Proposition~\ref{prop:warmup}, the only
	difference being that
	vertically we use the conglomerate $(G, \ccG_B,
	\ccG_F)=\Ups^{(n-1)}_r(B, \ccB_F)$.
	Again this yields a sequence $\bl\Pi_\alpha, \ccP_{\alpha, B},
	\ccP_{\alpha, F}\br_{\alpha\le N}$
	of clean pictures and the last of them can be converted into a
	clean ordered $(B, \ccB_F)$-conglomerate, which we shall denote
	by $\Ups^{(n)}_r(B, \ccB_F)=(H, \ccH_B, \ccH_F)$ from now on.
	In particular, $(H, \ccH_B, \ccH_F)$ satisfies
	property~\ref{it:46c} of Proposition~\ref{prop:Ups}.

	The partition relation~\ref{it:46a} holds for the usual
	reason. Assertion~\ref{it:46bii} of part~\ref{it:46b}
	follows from arguments already presented in the proof of
	Proposition~\ref{prop:warmup}, which we do not repeat.

	Next, a straightforward induction on~$\alpha$ based on
	Lemma~\ref{lem:61}\ref{it:61a},
	Proposition~\ref{prop:CPL}, Corollary~\ref{cor:CPLe}, and
	our induction hypothesis
	on~$\Ups^{(n-1)}$ reveals $\ccP_{\alpha, F}\subseteq
	\binom{\Pi_\alpha}F_n$
	for every $\alpha\le N$.
	The case $\alpha=N$ of this assertion proves~\ref{it:46bi}
	of part~\ref{it:46b}.

	Finally,~\ref{it:46biii} only requires attention if $\og(B)\ge 5$. Now
	$\ol{n}=\min\bigl\{n, \frac12(\og(B)-1)\bigr\}$ is at least~$2$ and
	we can apply Lemma~\ref{lem:61}\ref{it:61b} with~$\ol{n}$
	instead of~$n$.
	This allows us to show $\og(\Pi_\alpha)\ge 2\ol{n}+1$ for
	every $\alpha\le N$
	by induction on~$\alpha$, and thus we have indeed
	$\og(H)=\og(\Pi_N)\ge 2\ol{n}+1$.
\end{proof}

\section{Concluding remarks}

\subsection{Generalisations of the main result}
The first generalisation concerns the girth of the canonical Ramsey graph~$H$ in
Theorem~\ref{thm:main-simple}. It seems likely that, as long as \(F\) is not a forest,
Theorem~\ref{thm:main} remains valid when we add the demand that \(H\) and \(F\)
should have the same girth. In the non-canonical setting, such a result was obtained
recently by Reiher and R\"odl. In fact,
the proof of their result provides
a system~$\ccH_F$ of copies of~$F$ in~$H$, which itself contains
no short cycles (see~\cite{RR-girth}*{\ssign5}), and we believe that those results
extend to the canonical setting.

The second generalisation concerns hypergraphs. The proof of Theorem~\ref{thm:main-simple}
presented here can be partly extended to linear hypergraphs. For hypergraphs
the clique number is replaced by the size of the largest subset~$X$
of vertices such that any two vertices in~$X$ are contained in a hyperedge.
However, it is perhaps less clear what the analogous concepts of odd girth,
$n$-inducedness, and distance preservation for such hypergraphs are.
It seems plausible that versions of
Theorems~\ref{thm:main-simple} and~\ref{thm:main} hold for general hypergraphs.

The last generalisation we briefly mention concerns a common generalisation of
our result and Theorem~\ref{thm:41} in the context of Ramsey classes, when we colour
arbitrary subgraphs. In that context an induced canonical Ramsey theorem
with maintained clique number was provided
by Pr\"omel and Voigt~\cite{PV85} and it would be of some interest to establish
a result subsuming both their work and Theorem~\ref{thm:main}.

\subsection{Classical versus canonical Ramsey theory}
As it turned out, many results in Ramsey theory can be
paralleled in the canonical context.
For example, it follows from the work of
Erd\H os, Hajnal, and Rado~\cite{EHR65}*{\ssign16.4} (see
also reference~\cite{EH89}*{(4.2)})
and of Shelah~\cite{Sh96} that in terms of the number of exponentiations
the canonical Ramsey number and the non-canonical
Ramsey number (for at least four colours) for cliques in
hypergraphs display the same behaviour.
Another example, in the somewhat different context of random graphs,
concerns the threshold for the corresponding Ramsey
properties. For graph
cliques with at least four vertices those thresholds coincide,
which follows from the work of
R\"odl and Ruci\'nski~\cites{RR93,RR95}
and of Kam\v cev and Schacht~\cite{KS25}.
A far-reaching generalisation of these parallel developments
would be an affirmative answer to the question of whether having the Ramsey property
for sufficiently many colours yields the canonical Ramsey property as well.
For graphs this na\"{\i}ve question reads as follows.

\begin{nquest}
	Given an ordered graph~$F$, does there exist an integer~$r$ such that
	for every ordered graph~$H$ that satisfies the relation
	\(H \lra (F)_r\)
	we also have
	\(H \clra (F)\)?
\end{nquest}
Note that for graphs~$F$ with at most two edges this question is trivial, since
any colouring of such small graphs is canonical.

More interestingly, by applying the machinery developed in~\cite{RR-girth} we
can answer this question in the negative when we replace~$H$
by a system~$\ccH_F$ of copies of an ordered graph~$F$ with at least three edges.
For such graphs there are at least five equivalence relations on~\(E(F)\) and, hence,
at least one of them corresponds to a non-canonical colouring. We fix such
a non-canonical equivalence relation,
say \(\equiv^F\), and let \(r\in\NN\) be given.

With the notation developed in~\cite{RR-girth} the pair \((F, \equiv^F)\) is a pretrain
and~\cite{RR-girth}*{Proposition~9.1} applied to \((F, \equiv^F)\) and~$r$ yields
a pretrain system \((H, \equiv^H, \ccH_F)\) with the Ramsey property
$\ccH_F\lra (F)_r$. Moreover, the restriction of~$\equiv^H$ to copies of~$F$ in~$\ccH_F$ is identical to~$\equiv^F$. So, we can colour $E(H)$
in such a way that no copy of~$F$ from~$\ccH_F$ is canonically coloured.

Consequently, the version of the question for the system~$\ccH_F$
has the expected negative answer.
Moreover, if~\(F=K_k\) for some \(k\ge 3\) (and for certain other graphs~$F$), the
constructions involved in the proof of~\cite{RR-girth}*{Proposition~9.1}
yield \(\ccH_{K_k}=\binom{H}{K_k}\). This answers the question posed above
negatively for such instances.

Nevertheless, in light of the discussion above one may wonder whether these parallel
developments in canonical and non-canonical Ramsey theory are just anecdotal
or whether there is some profound connection.

\end{document}